\newtheorem{thrm}{Theorem}[section]
\newtheorem{prop}[thrm]{Proposition}
\newtheorem{defn}[thrm]{Definition}
\newtheorem{crl}[thrm]{Corollary}
\newtheorem{lemma}[thrm]{Lemma}
\theoremstyle{definition}
\newtheorem{rmk}[thrm]{Remark}
\newcommand{\nc}{\newcommand}
\numberwithin{equation}{section}
\nc{\tbox}[1]{\fbox{\tt #1}}
\nc{\spl}[1]{\begin{equation}\begin{aligned}#1\end{aligned}\end{equation}}
\nc{\eqa}[1]{\begin{alignat}{99}#1\end{alignat}}
\nc{\eqn}[1]{\begin{alignat*}{99}#1\end{alignat*}}
\nc{\eg}[1]{\begin{gather}#1\end{gather}}
\nc{\egn}[1]{\begin{gather*}#1\end{gather*}}
\nc{\red}{\color{red}}
\nc{\blu}{\color{blue}}
\nc{\br}{\color{Brown}}
\nc{\gre}{\color{green!50!black}}
\nc\el{\nonumber\\}
\nc\nn{\nonumber}
\nc{\mr}{\mathrm}
\nc{\mf}{\mathfrak}
\nc{\al}{\alpha}
\nc{\del}{\delta}
\nc{\eps}{\epsilon}
\nc{\ga}{\gamma}
\nc{\Ga}{\Gamma}
\nc{\ka}{\kappa}
\nc{\la}{\lambda}
\nc{\om}{\omega}
\nc{\si}{\sigma}
\nc{\Si}{\Sigma}
\nc{\bsi}{\boldsymbol\sigma}
\nc{\bSi}{\boldsymbol\Sigma}
\nc{\Ups}{\upsilon}
\nc{\vphi}{\varphi}
\nc{\btau}{\boldsymbol\tau}
\nc{\bdel}{\boldsymbol\delta}
\nc{\id}{\mathrm{id}}
\nc{\gr}{\mathrm{gr}}
\nc{\Ug}{U\mathfrak{g}}
\nc{\Ub}{U\mathfrak{b}}
\nc{\Hk}{\mathsf{H}}
\nc{\ombH}{\overline{\mathbf{H}}}
\nc{\ud}{\underline}
\nc{\tl}{\tilde}
\nc{\mbA}{\mathbf{A}}
\nc{\mbb}{\mathbf{b}}
\nc{\mbB}{\mathbf{B}}
\nc{\mbc}{\mathbf{c}}
\nc{\mbC}{\mathbf{C}}
\nc{\mbd}{\mathbf{d}}
\nc{\mbD}{\mathbf{D}}
\nc{\mbe}{\mathbf{e}}
\nc{\mbE}{\mathbf{E}}
\nc{\mbf}{\mathbf{f}}
\nc{\mbF}{\mathbf{F}}
\nc{\mbg}{\mathbf{g}}
\nc{\mbH}{\mathbf{H}}
\nc{\mbh}{\mathbf{h}}
\nc{\mbi}{\mathbf{i}}
\nc{\mbI}{\mathbf{I}}
\nc{\mbj}{\mathbf{j}}
\nc{\mbJ}{\mathbf{J}}
\nc{\mbk}{\mathbf{k}}
\nc{\mbK}{\mathbf{K}}
\nc{\mbL}{\mathbf{L}}
\nc{\mbM}{\mathbf{M}}
\nc{\mbQ}{\mathbf{Q}}
\nc{\mbq}{\mathbf{q}}
\nc{\mbr}{\mathbf{r}}
\nc{\mbT}{\mathbf{T}}
\nc{\mbu}{\mathbf{u}}
\nc{\mbU}{\mathbf{U}}
\nc{\mbv}{\mathbf{v}}
\nc{\mbV}{\mathbf{V}}
\nc{\mbw}{\mathbf{w}}
\nc{\mbW}{\mathbf{W}}
\nc{\mbX}{\mathbf{X}}
\nc{\mbY}{\mathbf{Y}}
\nc{\mbZ}{\mathbf{Z}}
\nc{\mbbA}{\mathbb{A}}
\nc{\mbbB}{\mathbb{B}}
\nc{\mbbD}{\mathbb{D}}
\nc{\mbbF}{\mathbb{F}}
\nc{\mbbV}{\mathbb{V}}
\nc{\mbbH}{\mathbb{H}}
\nc{\mbbK}{\mathbb{K}}
\nc{\mbbL}{\mathbb{L}}
\nc{\mbbP}{\mathbb{P}}
\nc{\mbbU}{\mathbb{U}}
\nc{\mcA}{\mathcal{A}}
\nc{\mcB}{\mathcal{B}}
\nc{\mcC}{\mathcal{C}}
\nc{\mcD}{\mathcal{D}}
\nc{\mcE}{\mathcal{E}}
\nc{\mcF}{\mathcal{F}}
\nc{\mcG}{\mathcal{G}}
\nc{\mcH}{\mathcal{H}}
\nc{\mcI}{\mathcal{I}}
\nc{\mcK}{\mathcal{K}}
\nc{\mcN}{\mathcal{N}}
\nc{\mcO}{\mathcal{O}}
\nc{\mcQ}{\mathcal{Q}}
\nc{\mcS}{\mathcal{S}}
\nc{\mcP}{\mathcal{P}}
\nc{\mcU}{\mathcal{U}}
\nc{\mcT}{\mathcal{T}}
\nc{\mcV}{\mathcal{V}}
\nc{\mcW}{\mathcal{W}}
\nc{\mcX}{\mathcal{X}}
\nc{\mcY}{\mathcal{Y}}
\nc{\mcZ}{\mathcal{Z}}
\nc{\mfa}{\mathfrak{a}}
\nc{\mfA}{\mathfrak{A}}
\nc{\mfb}{\mathfrak{b}}
\nc{\mfB}{\mathfrak{B}}
\nc{\mfC}{\mathfrak{C}}
\nc{\mfd}{\mathfrak{d}}
\nc{\mfD}{\mathfrak{D}}
\nc{\mfe}{\mathfrak{e}}
\nc{\mfE}{\mathfrak{E}}
\nc{\mff}{\mathfrak{f}}
\nc{\mfF}{\mathfrak{F}}
\nc{\mfg}{\mathfrak{g}}
\nc{\mfgl}{\mathfrak{g}\mathfrak{l}}
\nc{\mfh}{\mathfrak{h}}
\nc{\mfH}{\mathfrak{H}}
\nc{\mfJ}{\mathfrak{J}}
\nc{\mfk}{\mathfrak{k}}
\nc{\mfK}{\mathfrak{K}}
\nc{\mfl}{\mathfrak{l}}
\nc{\mfL}{\mathfrak{L}}
\nc{\mfM}{\mathfrak{M}}
\nc{\mfm}{\mathfrak{m}}
\nc{\mfn}{\mathfrak{n}}
\nc{\mfN}{\mathfrak{N}}
\nc{\mfo}{\mathfrak{o}}
\nc{\mfP}{\mathfrak{P}}
\nc{\mfQ}{\mathfrak{Q}}
\nc{\mfS}{\mathfrak{S}}
\nc{\mfsl}{\mathfrak{s}\mathfrak{l}}
\nc{\mfso}{\mathfrak{s}\mathfrak{o}}
\nc{\mfsp}{\mathfrak{s}\mathfrak{p}}
\nc{\mft}{\mathfrak{t}}
\nc{\mfU}{\mathfrak{U}}
\nc{\mfu}{\mathfrak{u}}
\nc{\mfUqsl}{\mathfrak{U}_q\mathfrak{sl}}
\nc{\mfUsl}{\mathfrak{Usl}}
\nc{\mfV}{\mathfrak{V}}
\nc{\mfX}{\mathfrak{X}}
\nc{\mfY}{\mathfrak{Y}}
\nc{\mfz}{\mathfrak{z}}
\nc{\mrmd}{\mathrm{d}}
\nc{\sal}{\check{\alpha}}
\nc{\cbeta}{\check{\beta}}
\nc{\cd}{\check{d}}
\nc{\cf}{\check{f}}
\nc{\cdelta}{\check{\delta}}
\nc{\ccr}{\check{r}}
\nc{\cs}{\check{s}}
\nc{\bv}{\breve{v}}
\nc{\tc}{\tilde{c}}
\nc{\tr}{\tilde{r}}
\nc{\ts}{\tilde{s}}
\nc{\tv}{\tilde{v}}
\nc{\msA}{\mathsf{A}}
\nc{\msB}{\mathsf{B}}
\nc{\msC}{\mathsf{C}}
\nc{\msc}{\mathsf{c}}
\nc{\msD}{\mathsf{D}}
\nc{\msd}{\mathsf{d}}
\nc{\mse}{\mathsf{e}}
\nc{\msw}{\mathsf{w}}
\nc{\msq}{\mathsf{q}}
\nc{\msg}{\mathsf{g}}
\nc{\msE}{\mathsf{E}}
\nc{\msf}{\mathsf{f}}
\nc{\msF}{\mathsf{F}}
\nc{\msh}{\mathsf{h}}
\nc{\msk}{\mathsf{k}}
\nc{\msH}{\mathsf{H}}
\nc{\msI}{\mathsf{I}}
\nc{\msJ}{\mathsf{J}}
\nc{\msK}{\mathsf{K}}
\nc{\msL}{\mathsf{L}}
\nc{\msP}{\mathsf{P}}
\nc{\msQ}{\mathsf{Q}}
\nc{\msR}{\mathsf{R}}
\nc{\mss}{\mathsf{s}}
\nc{\msS}{\mathsf{S}}
\nc{\msT}{\mathsf{T}}
\nc{\msU}{\mathsf{U}}
\nc{\msv}{\mathsf{v}}
\nc{\msV}{\mathsf{V}}
\nc{\msx}{\mathsf{x}}
\nc{\msX}{\mathsf{X}}
\nc{\msY}{\mathsf{Y}}
\nc{\msZ}{\mathsf{Z}}
\nc{\End}{\mathrm{End}}
\nc{\Ext}{\mathrm{Ext}}
\nc{\Hom}{\mathrm{Hom}}
\nc{\Ima}{\mathrm{Image}}
\nc{\Ind}{\mathrm{Ind}}
\nc{\Ker}{\mathrm{Ker}}
\nc{\RHom}{\mathrm{RHom}}
\nc{\Sym}{\mathrm{Sym}}
\nc{\mtc}{\mathtt{c}}
\nc{\mtD}{\mathtt{D}}
\nc{\mte}{\mathtt{e}}
\nc{\mtE}{\mathtt{E}}
\nc{\mtf}{\mathtt{f}}
\nc{\mtF}{\mathtt{F}}
\nc{\mth}{\mathtt{h}}
\nc{\mtH}{\mathtt{H}}
\nc{\mtV}{\mathtt{V}}
\nc{\mtX}{\mathtt{X}}
\nc{\mty}{\mathtt{y}}
\nc{\ddeg}{\mathtt{deg}}
\nc{\dimm}{\mathtt{dim}}
\nc{\lmod}{\mathtt{lmod}}
\nc{\opp}{\mathtt{opp}}
\nc{\rmod}{\mathtt{rmod}}
\nc{\mmod}{\mathrm{mod}}
\nc{\nbh}{\mathrm{nbh}}
\nc{\lrh}{\leftrightharpoons}
\nc{\iso}{\stackrel{\sim}{\longrightarrow}}
\nc{\liso}{\stackrel{\sim}{\longleftarrow}}
\nc{\wh}{\widehat}
\nc{\wt}{\widetilde}
\nc{\lra}{\longrightarrow}
\nc{\ra}{\rightarrow}
\nc{\into}{\hookrightarrow}
\nc{\onto}{\twoheadrightarrow}
\nc{\C}{\mathbb{C}}
\nc{\N}{\mathbb{N}}
\nc{\Z}{\mathbb{Z}}
\nc{\SW}{\mathsf{SW}}
\nc{\ot}{\otimes}
\nc{\op}{\oplus}
\nc{\ol}{\overline}
\nc{\un}{\underline}
\nc{\lan}{\langle}
\nc{\ran}{\rangle}
\nc{\Xp}{x_{j_1,\ldots,j_p}^{-1}}
\nc{\Xeta}{x_{\eta_1,\ldots,\eta_e}^{-1}}
\nc{\qu}{\quad}
\nc{\qq}{\qquad}
\nc\Tr{{\rm tr}}
\nc{\key}{{\mathscr{k}}}
\nc{\ley}{{\mathscr{l}}}
\nc{\utr}{{\vartriangle}}
\nc{\dtr}{{\triangledown}}
\nc{\ua}{{\uparrow}}
\nc{\da}{{\downarrow}}
\renewcommand{\,}{\kern 0.1em} 
\begin{document}

\vspace{1.2cm}

\begin{center}
{\Large{\textbf{Representations of twisted Yangians of types B, C, D: II}}} 

\bigskip

Nicolas Guay$^{1a}$, Vidas Regelskis$^{2b}$, Curtis Wendlandt$^{1c}$

\end{center}

\bigskip

\begin{center} \small 
$^1$ University of Alberta, Department of Mathematics, CAB 632, Edmonton, AB T6G 2G1, Canada.\\ 
$^2$ University of York, Department of Mathematics, York, YO10 5DD,  UK. \\
\smallskip
E-mail: $^a$\,nguay@ualberta.ca $^b$\,vidas.regelskis@york.ac.uk $^c$\,cwendlan@ualberta.ca
\end{center}

\patchcmd{\abstract}{\normalsize}{}{}{}

\begin{abstract} \small 
We continue the study of finite-dimensional irreducible representations of twisted Yangians associated to symmetric pairs of types B, C and D, with focus on those of types BI, CII and DI. After establishing that, for all twisted Yangians of these types, the highest weight of such a module necessarily satisfies a certain set of relations, we classify the finite-dimensional irreducible representations of twisted Yangians for the pairs $(\mfso_N,\mfso_{N-2} \oplus \mfso_2)$ and $(\mfso_{2n+1},\mfso_{2n})$. 
\end{abstract}

\makeatletter
\@setabstract
\makeatother

\medskip

\tableofcontents

\thispagestyle{empty}

\setlength{\parskip}{1ex}


\section{Introduction}

Quantized enveloping algebras of Kac-Moody algebras, and in particular of affine Lie algebras, have been preeminent examples of quantum groups since the 1980's. In the past few years, there has been an increase of research activity focusing on certain coideal subalgebras of those quantized enveloping algebras, for instance on the quantum symmetric pairs associated to finite-dimensional simple Lie algebras (see e.g.~\cite{KP,BW,BSWW,Ko2,Le3}), which date back to work of G. Letzter in the 1990's \cite{Le1,Le2}, and on those associated to symmetrizable Kac-Moody algebras (see e.g.~\cite{BK1,BK2,FLLW1,FLLW2,FL2}), which appeared for the first time in the work of S. Kolb \cite{Ko1}. See also \cite{HK,ES,BKLW,FL1}.

Important examples of coideal subalgebras of quantized enveloping algebras appeared even before the aforementioned work of G. Letzter: around 1990, G. Olshanskii introduced in \cite{Ol} the twisted Yangians of type AI and AII (that is, those associated to the symmetric pairs $(\mfgl_N,\mfso_N)$ and $(\mfgl_N,\mfsp_N)$), which are coideal subalgebras of the Yangian of $\mfgl_N$, the later being one of the two families of quantized enveloping algebras of affine type $A^{(1)}$. These twisted Yangians and their representations have been quite well studied over the years, mainly in the work of A. Molev, M. Nazarov \textit{et al.}, see e.g. \cite{Mo1,Mo2,Mo3,Mo4,Mobook,Na,KN1,KN2,KN3,KNP}. In particular, A. Molev obtained in \cite{Mo1} and \cite{Mo2} a classification of their finite-dimensional irreducible representations. 

In \cite{GR}, two of the present authors constructed twisted Yangians for all symmetric pairs $(\mfg,\mfg^{\rho})$ where $\mfg$ is an orthogonal or a symplectic Lie algebra and $\mfg^{\rho}$ is the Lie subalgebra fixed by an involution $\rho$ of $\mfg$.  These new twisted Yangians of type B-C-D are also coideal subalgebras of the Yangian of $\mfg$ and their basic properties were established in \cite{GR}. In \cite{GRW2}, we initiated a study of their representation theory, proving some general results (e.g. that finite-dimensional irreducible representations are highest weight modules - see Theorem 4.5 in \textit{loc. cit.}) and obtaining, for twisted Yangians of type CI, DIII and BCD0, a full classification of finite-dimensional irreducible representations in terms of certain polynomials, in analogy with \cite{Dr} (for the non-twisted Yangians) and \cite{Mo1,Mo2}: see Theorems 6.2, 6.5 and 6.6 in \cite{GRW2}. 

The present article is a sequel to our work  \cite{GRW2}. We start addressing the classification problem of finite-dimensional irreducible representations of twisted Yangians for the symmetric pairs $(\mfg_N,\mfg_{N-q}\oplus \mfg_q)$ of types BI, CII and DI(a) (see Table \ref{Table:G}), and we obtain complete results for those associated to the symmetric pairs $(\mfso_N,\mfso_{N-2}\op \mfso_2)$ and $(\mfso_{2n+1},\mfso_{2n})$. In Section 2, we recall the relevant preliminary material from \cite{AMR}, \cite{GR} and \cite{GRW2}. In Section 3, we consider the twisted Yangians of orthogonal type when $N=3$ and $N=4$. These twisted Yangians are known to be isomorphic to the twisted Yangian of $(\mfgl_2,\mfso_2)$ or a tensor product of these, as was established in \cite{GRW1}: see the beginning of Subsections \ref{sec:so4so2so2} and \ref{sec:so3so2} for precise statements of these isomorphisms. Consequently, the classification of finite-dimensional irreducible representations in these two low rank cases can be translated from the corresponding result of A. Molev established in \cite{Mo1,Mo2}: see Propositions \ref{P:so4class} and \ref{P:so3class}.  The latter of these two propositions plays a role in the proofs of two of the main results of this paper, namely Proposition \ref{P:necessary} and Theorem \ref{T:BI(b)-Class}.

In Section \ref{sec:Nec}, some of the important results established in \cite{GRW2} are strengthened for the symmetric pairs of types BI, CII and DI(a). Proposition \ref{P:necessary} is the main result in this section and gives necessary conditions for an irreducible highest weight module of a twisted Yangians to be finite-dimensional. This provides essentially one half of the proofs of Theorems \ref{T:DI(a)-Class} and \ref{T:BI(b)-Class}. Its proof relies on a similar result in \cite{GRW2}, namely Proposition 4.18, and on Theorems 6.5 and 6.6 also from \cite{GRW2}, which are the classification theorems of finite-dimensional irreducible modules for twisted Yangians of type BCD0.  Proposition \ref{P:int} provides additional restrictions on the complex number $\al$ which appears in the statement of Proposition \ref{P:necessary}: its proof boils down to computing the highest weight of a certain highest weight module over $\mfg_N^{\rho}$, which is done in Lemma \ref{L:gtw}. In particular, this proposition shows that, when $\mfg_q\ncong \mfso_2$, the parameter $\alpha$ can only take certain rational values and must satisfy a specific inequality.

In order to prove the classification theorem for the twisted Yangians of the pair $(\mfso_N,\mfso_{N-2} \oplus \mfso_2)$, it is necessary to first construct a family of one-dimensional representations parametrized by $\C$: this is achieved in Lemma \ref{L:K-1dim}. It turns out that, up to a twist by an automorphism, the one-dimensional representations provided by this lemma exhaust all of them - see Proposition \ref{P:1dimso2}. Twisted Yangians can be defined in terms of the reflection equation \eqref{TX-RE} and the symmetry relation \eqref{TX-symm}, as recalled in Subsection \ref{subsec:twYa}. It follows from this that each twisted Yangian admits a one-dimensional representation obtained by sending its matrix of generators to a certain matrix $\mcG(u)$ which is known explicitly and is part of the definition of the twisted Yangian (Definition \ref{D:TX}): see the paragraph after \eqref{grho->X}. We call it the trivial representation. This raises the question of the existence of other one-dimensional representations for twisted Yangians in general. Twisted Yangians of type CI and DIII also admit a family of one-dimensional representations parametrized by $\C$. It turns out that for twisted Yangians of type BCD0, BI, CII  and DI(a) with $\mfg_{q} \ncong \mfso_2$, the one-dimensional representations are all twists by automorphisms of the trivial representation: this is the content of Proposition \ref{P:no1dim}.

The last section provides proofs of the classification theorems of finite-dimensional irreducible representations for the twisted Yangians associated to the symmetric pairs $(\mfso_N,\mfso_{N-2} \oplus \mfso_2)$ and $(\mfso_{2n+1},\mfso_{2n})$.  These are Theorems \ref{T:DI(a)-Class} and \ref{T:BI(b)-Class} respectively. Proofs of analogous results for other twisted Yangians of type BI, CII or DI(a) are expected to be substantially more complicated and will be presented elsewhere. (Please see the end of this introduction for a brief explanation of the extra difficulties in these more general cases.) The classification is in terms of a scalar $\al$ and certain polynomials which impose conditions on the highest weight of a finite-dimensional irreducible representation. In the $(\mfso_N,\mfso_{N-2}\oplus \mfso_2)$-case, the proof of the necessity of those conditions is actually given earlier by Proposition \ref{P:necessary}. The $(\mfso_{2n+1},\mfso_{2n})$-case is substantially more complicated. The main differences are due to the role played by the action of an involution of the twisted Yangian on the highest weight of a finite-dimensional irreducible representation (Lemma \ref{BI:P-psi}). This leads to a new condition on the roots of a certain polynomial, which is the content of Proposition \ref{P:q=1-nec}. This additional condition is a new feature which is not present in the classification theorems for twisted Yangians of type A \cite{Mobook,MR} or types CI, DIII or BCD0 \cite{GRW2}.

Finally, we explain how finite-dimensional irreducible representations can be realized as subquotients of tensor products of fundamental representations along with a one-dimensional representation in the case of $(\mfso_N,\mfso_{N-2} \oplus \mfso_2)$: see Corollaries \ref{C:DI(a)fun} and \ref{C:BI(b)fun}. This is consistent with previously known results for twisted Yangians of type A, CI, DIII and BCD0 (see \cite{Mobook} and \cite{GRW2}).

 To complete the task of classifying the finite-dimensional irreducible representations of each twisted Yangian studied in \cite{GRW2}, similar results to those obtained in Section \ref{sec:main}  must be proven for the twisted Yangians which are, in the notation of Table \ref{Table:G} below, of type CII as well as types BI and DI(a) with $q\geq 3$. Like the twisted Yangians for the symmetric pairs $(\mfso_{2n+1},\mfso_{2n})$ studied in Subsection \ref{subsec:q=1}, each of these twisted Yangians has the common property that the necessary conditions established in Section \ref{sec:Nec} are not sufficient for determining precisely when the irreducible quotient of a Verma module is finite-dimensional. It is, however, possible to prove results similar to Proposition \ref{P:q=1-nec} for each of these quantum algebras, which strengthen significantly the conditions of Section \ref{sec:Nec}. In particular, for the twisted Yangians of the symmetric pairs $(\mfsp_{N},\mfsp_{N-q}\oplus \mfsp_q)$ of type CII, this leads to a complete classification of finite-dimensional irreducible modules. These results, which will be presented in \cite{GRW3}, are notably more complicated to prove than their counterparts in Section \ref{sec:main}. For instance, their proofs seem to require understanding how to pass representation theoretic information between certain isomorphic presentations of twisted Yangians. 
 
 For the twisted Yangians of the symmetric pairs $(\mfso_N,\mfso_{N-q}\oplus \mfso_q)$ which are of type BI and DI(a) with $q\geq 3$, there are additional difficulties which arise. One such difficulty involves showing that, in the notation of Definition \ref{D:assoc}, an irreducible highest weight module which is associated to the scalar $\alpha=N/4-1/2$ and polynomials $P_1(u)=\cdots=P_n(u)=1$ is finite-dimensional. Lemma \ref{L:gtw} illustrates that these modules are closely related to the spinor representations of $\mfso_{2\ley}$, where
 $\ley=q/2$ if $q$ is even and $\ley=(N-q)/2$ otherwise. This difficulty, which will be considered in future work, does not arise for the twisted Yangians corresponding to the 
 symmetric pairs $(\mfso_{2n+1},\mfso_{2n})$ which are studied in the present paper. Indeed, for these twisted Yangians the modules under consideration can be obtained by restricting the (finite-dimensional) spinor representations of the Yangian for $\mfso_{2n+1}$ which are provided by Lemma 5.18 of \cite{AMR}.

{\it Acknowledgements.} The first and third named authors gratefully acknowledge the financial support of the Natural Sciences and Engineering Research Council of Canada provided via the Discovery Grant Program and the Alexander Graham Bell Canada Graduate Scholarships - Doctoral Program, respectively. Part of this work was done during the second named author's visits to the University of Alberta; he thanks the University of Alberta for the hospitality. The second named author was supported in part by the Engineering and Physical Sciences Research Council of the United Kingdom, grant number EP/K031805/1; he gratefully acknowledges the financial support.

 \section{Preliminaries}
 
Throughout this manuscript we will employ mostly the same notation as in \cite{GRW2}. Thus in many instances we will try to be concise and will refer to {\it loc.~cit.} for complete details. We start by recalling some basic definitions.

Let $N=2n$ or $N=2n+1$ with $n\in\N$. We will denote by $\mfg_N$ either the orthogonal Lie algebra $\mfso_N$ or the symplectic Lie algebra $\mfsp_N$ (only when $N=2n$). The Lie algebra $\mfg_N$ can be realized as a Lie subalgebra of $\mfgl_N$ as follows. We label the rows and columns of matrices in $\mfgl_N$ by the indices $\mcI_N=\{-n,\ldots,-1,(0),1,\ldots,n\}$, where $(0)$ is omitted if $N=2n$.   Set $\theta_{ij}=1$ in the orthogonal case and $\theta_{ij}=\mathrm{sign}(i)\cdot \mathrm{sign}(j)$ in the symplectic case for $i,j\in \mcI_N$. We also introduce a smaller set $\mcI_N^{+} =\mcI_N\cap \Z_{\geq 0}$.  

For each $i,j\in \mcI_N$, let $E_{ij}$ denote the usual elementary matrix of $\mfgl_N$. Define the transposition $t$ by $(E_{ij})^{t} = \theta_{ij} E_{-j,-i}$ and set $F_{ij} = E_{ij} - (E_{ij})^{t}$ so that 
\[
[F_{ij},F_{kl}] = \delta_{jk} F_{il} - \delta_{il} F_{kj} + \delta_{j,-l} \theta_{ij} F_{k,-i} - \delta_{i,-k} \theta_{ij} F_{-j,l} \qu\text{and}\qu F_{ij} + \theta_{ij}F_{-j,-i}=0 .
\] 
Then $\mfg_N$ is isomorphic to $\mr{span}_{\C} \{ F_{ij}  \, : i,j\in \mcI_N \}$ and $\mathrm{span}_{\C} \{ F_{ii} \, : \, 1 \le i \le n \}$ forms a Cartan subalgebra which will be denoted by $\mfh_N$. 
Given a Lie algebra $\mathfrak{a}$ its universal enveloping algebra will be denoted by $\mfU\mathfrak{a}$.

Introduce the permutation operator $P = \sum_{i,j\in \mcI_N} E_{ij} \ot E_{ji}$ and the one-dimensional projector $Q=P^{t_1}=P^{t_2}$. Let $I$ denote the identity matrix. Then $P^2=I$, $PQ=QP=\pm Q$ and $Q^2=N Q$, which will be useful below. Here (and further in this paper) the upper sign corresponds to the orthogonal case and the lower sign to the symplectic case.

Let tensor products be defined over the field of complex numbers. For a matrix $X$ with entries $x_{ij}$ in an associative algebra $A$ we write
\[
X_s = \sum_{i,j\in\mcI_N} \underbrace{ I \ot \cdots \ot I}_{s-1} \ot E_{ij} \ot I \ot \cdots \ot I \ot x_{ij} \in \End(\C^N)^{\ot k} \ot A .
\]
Here $k \in \N_{\ge 2}$ and $1\le s\le k$; it will always be clear from the context what $k$ is.


\subsection{Yangians of type B-C-D and their finite-dimensional irreducible representations} \label{subsec:Ya}

We introduce elements $t_{ij}^{(r)}$ with $i,j\in \mcI_N$ and $r\in\Z_{\ge 0}$ such that $t^{(0)}_{ij}= \del_{ij}$. Combining these into formal power series $t_{ij}(u) = \sum_{r\ge 0} t_{ij}^{(r)} u^{-r}$, we can then form the generating matrix $T(u)= \sum_{i,j\in \mcI_N} E_{ij} \ot t_{ij}(u)$. 
The $R$-matrix that we need is $R(u)=I - u^{-1} P + (u-\ka)^{-1} Q$, where $\ka = N/2 \mp 1$.

\begin{defn} [\cite{AACFR}]\label{D:X}
The extended Yangian $X(\mfg_N)$ is the unital associative $\C$-algebra generated by elements $t_{ij}^{(r)}$ with $i,j\in \mcI_N$ and $r\in\Z_{\ge 1}$ satisfying the relation
\eq{ \label{RTT}
R(u-v)\,T_1(u)\,T_2(v) = T_2(v)\,T_1(u)\,R(u-v) .
}
The Hopf algebra structure of $X(\mfg_N)$ is given by 
\[
\Delta: T(u) \mapsto T(u)\ot T(u), \qq S: T(u)\mapsto T(u)^{-1},\qquad \epsilon: T(u)\mapsto I. 
\]
\end{defn}

The expansion of the defining relation \eqref{RTT} in terms of the generating series $t_{ij}(u)$   can be found in {\it e.g.}~Definition 3.1 of \cite{GRW2}. 

For each series $f(u)\in 1+u^{-1}\C[[u^{-1}]]$ there is an automorphism $\mu_f$ of $X(\mfg_N)$ which is given by the assignment $\mu_f : T(u) \mapsto f(u)\, T(u)$. The Yangian $Y(\mfg_N)$ is defined as the subalgebra of $X(\mfg_N)$ consisting of the elements stable under all the automorphisms of the form $\mu_f$, namely
\[
Y(\mfg_N) = \{ y \in X(\mfg_N) : \mu_f(y) = y \text{ for any } f(u) \in1+u^{-1}\C[[u^{-1}]] \}.
\]
By Theorem 3.1 of \cite{AACFR} and the proof of Theorem 3.1 from \cite{AMR}, there is a  central series $y(u)=1+\sum_{r\geq 1} y_r u^{-r}$ such that 
\begin{equation*}
 T^t(u+\ka)T(u)=T(u)T^t(u+\ka)=y(u)y(u+\ka)\cdot I.
\end{equation*}
It was proven in Corollary 3.2 of \cite{AMR} that the coefficients $\tau_{ij}^{(r)}$ of the all of the series $\tau_{ij}(u)=y(u)^{-1}t_{ij}(u)$ generate the subalgebra $Y(\mfg_N)$. The corresponding generating matrix $y(u)^{-1}T(u)$ will be denoted by $\mcT(u)$.

Let us now recall elements of the representation theory of $X(\mfg_N)$. A representation $V$ of $X(\mfg_N)$ is a \textit{highest weight representation} if there exists a nonzero vector $\xi\in V$ such that $V=X(\mfg_N)\,\xi$ and the following 
conditions are satisfied:
\begin{alignat*}{4}
  & t_{ij}(u)\,\xi=0 \quad &&\text{ for all } && i<j\in\mcI_N, \quad &&\text{ and } \nonumber\\
  & t_{ii}(u)\,\xi=\lambda_i(u)\,\xi \quad  &&\text{ for all } && i\in \mcI_N, && 
\end{alignat*}
where $\lambda_i(u)=1+\sum_{r\ge1}\lambda_{i}^{(r)}u^{-r}$ are formal power series in $u^{-1}$ with coefficients $\lambda_i^{(r)}\in \C$. The vector $\xi$ is called the \textit{highest weight vector} of $V$, and the $N$-tuple $\lambda(u)=(\lambda_{-n}(u),\ldots,\lambda_n(u))$ is called the \textit{highest weight} of $V$. By Theorem 5.1 of \cite{AMR}, every finite-dimensional irreducible representation $V$ of the algebra $X(\mfg_N)$ is a highest weight representation. Moreover, $V$ contains a unique highest weight vector, up to a constant factor.

Given an $N$-tuple $\lambda(u)$, the Verma module $M(\lambda(u))$ is defined as the quotient of $X(\mfg_N)$ by the left ideal generated by all the coefficients of the series 
$t_{ij}(u)$ with $i<j\in\mcI_N$ and $t_{ii}(u)-\lambda_i(u)$ with $i\in\mcI_N$.

\begin{prop} [{\cite[Proposition 5.14]{AMR}}] \label{P:X-nontriv} 
The Verma module $M(\lambda(u))$ is non-trivial if and only if the components of the highest
weight satisfy
\begin{equation}
 \frac{\lambda_{-i}(u)}{\lambda_{-i-1}(u)}=\frac{\lambda_{i+1}(u-\ka+n-i)}{\lambda_{i}(u-\ka+n-i)} \qu \text{for}\qu i\in \mcI_N^+\setminus\{ n\} . \label{HWT:Ext.non-trivial}
\end{equation}
\end{prop}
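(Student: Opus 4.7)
The plan is to exploit the symmetry identity
$$T^t(u+\ka)T(u) = y(u)y(u+\ka)\cdot I$$
and the centrality of the series $y(u)$, both recalled in the paragraph preceding the proposition. These together turn the non-triviality of $M(\lambda(u))$ into a compatibility condition on the diagonal series $\lambda_i(u)$.

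\emph{Necessity.} Assume $M(\lambda(u))\neq 0$ and let $\xi$ be its cyclic highest weight vector. Because $y(u)$ is central, it acts on $\xi$ by a scalar series $\mu(u)\in 1+u^{-1}\C[[u^{-1}]]$, and the displayed matrix identity therefore forces
$$(T^t(u+\ka)T(u))_{ii}\,\xi = \mu(u)\,\mu(u+\ka)\,\xi \qq \text{for every } i\in\mcI_N.$$
I would expand $(T^t(u+\ka)T(u))_{ii}=\sum_{b\in\mcI_N}\theta_{ib}\,t_{-b,-i}(u+\ka)\,t_{bi}(u)$ and apply it to $\xi$: the terms with $b<i$ vanish by $t_{bi}(u)\xi=0$, and the $b=i$ term contributes $\lambda_{-i}(u+\ka)\,\lambda_i(u)\,\xi$. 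For $b>i$ one has $-b<-i$ and hence $t_{-b,-i}(u+\ka)\,\xi=0$, so after swapping the two factors only $[\,t_{-b,-i}(u+\ka),\,t_{bi}(u)\,]\,\xi$ survives; this commutator is evaluated from the expansion of \eqref{RTT}, and iterating this reduction collapses every surviving contribution to a rational combination of diagonal eigenvalues $\lambda_j$. Equating $(T^t(u+\ka)T(u))_{i,i}\,\xi$ with $(T^t(u+\ka)T(u))_{i+1,i+1}\,\xi$ for each $i\in\mcI_N^+\setminus\{n\}$ and cancelling the common factor yields precisely \eqref{HWT:Ext.non-trivial}; the shift $u-\ka+n-i$ originates from the $(u-v-\ka)^{-1}Q$ piece of the $R$-matrix compounded with the cumulative index offsets produced when the commutators are reduced to diagonal form.

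\emph{Sufficiency.} Conversely, assume \eqref{HWT:Ext.non-trivial} holds. I would exhibit a non-zero highest weight module over $X(\mfg_N)$ with highest weight $\lambda(u)$, from which $M(\lambda(u))\neq 0$ follows by the universal property of the Verma module. Writing $\lambda_i(u)=\mu(u)\,\nu_i(u)$ with $\mu(u)$ chosen so that $y(u)\mapsto \mu(u)$ yields a consistent action on the cyclic vector, the problem reduces to constructing a non-trivial highest weight module over $Y(\mfg_N)\subset X(\mfg_N)$ with highest weight $(\nu_{-n}(u),\ldots,\nu_n(u))$ for the generators $\tau_{ij}(u)=y(u)^{-1}t_{ij}(u)$. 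Repeating the necessity analysis above with $T(u)$ replaced by $\mcT(u)$, one checks that the conditions \eqref{HWT:Ext.non-trivial} are exactly the compatibility requirements imposed on the $\nu_i(u)$ by the unitarity relation $\mcT^t(u+\ka)\,\mcT(u)=I$ in $Y(\mfg_N)$. A module with the desired $\nu(u)$ can be produced by known tensor-product constructions in $Y(\mfg_N)$ (or by direct PBW arguments), and the extended module is then recovered by rescaling $T(u)\mapsto \mu(u)\,\mcT(u)$.

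The delicate step is the necessity argument: tracking the $\theta_{ib}$-signs, which differ between the orthogonal and symplectic cases, and carefully handling the middle index $i=0$ when $N=2n+1$, require patience because the projector $Q$ in the $R$-matrix generates additional terms at the fold of $\mcI_N$; a secondary check is also needed to verify that the identity produced by equating the $(i,i)$ and $(i+1,i+1)$ diagonal actions is unambiguously \eqref{HWT:Ext.non-trivial} and not a weaker relation hidden by the iterated commutator reductions.
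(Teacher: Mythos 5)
The paper does not prove this proposition --- it cites it directly as Proposition 5.14 of [AMR] --- so there is no in-paper argument to compare against. Judged on its own terms, your proposal has substantive gaps in both directions.

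For necessity, the identity $T^t(u+\ka)T(u)=y(u)y(u+\ka)\cdot I$ is a legitimate starting point, but it ties together the two spectral parameters $u$ and $u+\ka$, a fixed shift independent of the row index $i$, whereas the target relation \eqref{HWT:Ext.non-trivial} relates $u$ to $u-\ka+n-i$, an $i$-dependent shift. The iterated-commutator mechanism you sketch cannot manufacture this $i$-dependence: each application of the RTT relation preserves the parameters $u,v$ and only introduces rational coefficients in $u-v$, so every term surviving the reduction of $(T^t(u+\ka)T(u))_{ii}\,\xi$ still carries argument $u$ or $u+\ka$ and nothing else. A concrete case makes the mismatch visible: for $\mfso_4$ and $i=1$, relation \eqref{HWT:Ext.non-trivial} reads $\lambda_{-1}(u)\lambda_1(u)=\lambda_{-2}(u)\lambda_2(u)$ (zero shift), while $T^tT$ compares entries at $u$ and $u+1$; the commutator contributions that you leave implicit would have to do essentially all the work of bridging these two statements, and you have not carried that out. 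Note moreover that $\ka-n+i$ is exactly the $\ka$-constant of the smaller algebra $\mfg_{N-2(n-i)}$ sitting inside $\mfg_N$ on the index range $\{-i,\dots,i\}$; this strongly suggests the genuine argument descends through a nested chain of subalgebras rather than relying on the single global $T^tT$ identity, and that ingredient is absent from your proof.

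For sufficiency, the argument as stated is circular. Observing that \eqref{HWT:Ext.non-trivial} is "exactly the compatibility requirement" imposed on $\nu(u)$ by the unitarity of $\mcT(u)$ is a restatement of necessity, not a proof of existence. The appeal to unnamed tensor-product constructions or "direct PBW arguments" does not supply the required non-zero module: one would need to specify which tensor product of which modules realizes an arbitrary $\lambda(u)$ satisfying \eqref{HWT:Ext.non-trivial}, or establish a spanning-set/freeness statement for the Verma module under that hypothesis. The actual content of the sufficiency direction is missing.
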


If $M(\lambda(u))$ is non-trivial, then it has a unique irreducible (non-zero) quotient $L(\lambda(u))$, and any irreducible highest weight $X(\mfg_N)$-module with the highest weight $\lambda(u)$ is isomorphic to $L(\lambda(u))$.

\begin{thrm} [{\cite[Theorem 5.16]{AMR}}] \label{T:X-class} 
Let $\lambda(u)$ satisfy \eqref{HWT:Ext.non-trivial} so that the Verma module $M(\lambda(u))$ is non-trivial. Then the irreducible $X(\mfg_N)$-module $L(\lambda(u))$ is finite-dimensional if and only if there exist monic polynomials $P_1(u),\ldots, P_n(u)$ in $u$ such that
\[
 \frac{\lambda_{i-1}(u)}{\lambda_i(u)}=\frac{P_i(u+1)}{P_i(u)} \quad  \textit{ for all }\quad 2\leq i\leq n, 
\]
and in addition
\begin{alignat*}{2}
&\frac{\lambda_0(u)}{\lambda_1(u)}=\frac{P_1(u+1/2)}{P_1(u)} \quad &&\textit{if }\quad \mfg_N=\mfso_{2n+1}, \\
&\frac{\lambda_{-1}(u)}{\lambda_1(u)}=\frac{P_1(u+2)}{P_1(u)} \quad &&\textit{if }\quad \mfg_N=\mfsp_{2n}, \\ 
&\frac{\lambda_{-1}(u)}{\lambda_2(u)}=\frac{P_1(u+1)}{P_1(u)} \quad &&\textit{if }\quad \mfg_N=\mfso_{2n} .
\end{alignat*} 
\end{thrm}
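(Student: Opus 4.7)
The plan is to prove both directions of the classification by reducing to rank-one situations via suitable subalgebra embeddings, mimicking Drinfeld's original strategy for non-twisted Yangians of simple Lie algebras. Necessity of the polynomial conditions will come from restricting a finite-dimensional irreducible highest weight module to carefully chosen subalgebras whose representation theory is understood, while sufficiency will be established by realizing the desired highest weights as subquotients of tensor products of fundamental/evaluation modules.

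For necessity, I would first isolate the subalgebra $A_n \subset X(\mfg_N)$ generated by the coefficients of $t_{ij}(u)$ with $i,j\in\{1,\ldots,n\}$. This subalgebra admits a natural surjection (after passing to the appropriate quotient removing the central series $y(u)$) onto the Yangian $Y(\mfgl_n)$. The highest weight vector $\xi\in L(\la(u))$ generates a highest weight $Y(\mfgl_n)$-module of highest weight $(\la_1(u),\ldots,\la_n(u))$, which is necessarily finite-dimensional. Drinfeld's classification for $Y(\mfgl_n)$ then produces monic polynomials $P_i(u)$ with $\la_{i-1}(u)/\la_i(u)=P_i(u+1)/P_i(u)$ for $2\le i\le n$. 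The remaining ratio involving $\la_0$, $\la_{-1}$, or $\la_2$ requires a genuine rank-one analysis: I would exhibit a subalgebra of $X(\mfg_N)$ containing the $t_{ij}(u)$ with $|i|,|j|\le 1$ (respectively $|i|,|j|\le 2$ in the $\mfso_{2n}$-case) that is isomorphic, modulo a central series, to $X(\mfso_3)$, $X(\mfsp_2)$ or $X(\mfso_4)$. Combining Proposition~\ref{P:X-nontriv} with the classification in these low-rank cases (the $\mfsp_2$-case being equivalent to Tarasov's result for $Y(\mfsl_2)$, and $X(\mfso_4)\cong X(\mfgl_2)\ot X(\mfgl_2)$ after quotients) yields the polynomial $P_1(u)$ with the stated shift ($1/2$, $2$, or $1$) that matches the relative length of the short root in each type.

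For sufficiency, I would construct, for every monic polynomial $P(u)$ of a fixed type, an explicit finite-dimensional highest weight module whose highest weight realizes the prescribed ratio for that polynomial, using evaluation modules obtained from finite-dimensional $\mfg_N$-modules together with the evaluation homomorphism $X(\mfg_N)\to \mfU\mfg_N$ of \cite{AMR}. Given arbitrary polynomials $P_1,\ldots,P_n$ satisfying the theorem's conditions, factor each $P_i(u)=\prod_k (u-a_{i,k})$ and form the tensor product of the corresponding evaluation modules (together with a suitable scalar twist $\mu_f$ to adjust the overall central series $y(u)$). The vector obtained by tensoring highest weight vectors is a highest weight vector whose highest weight $\la(u)$ satisfies exactly the required relations because the generating series behave multiplicatively under $\Delta$. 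The irreducible quotient of the $X(\mfg_N)$-submodule generated by this vector is then the desired $L(\la(u))$, and it is finite-dimensional as a subquotient of a finite-dimensional module.

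I expect the main obstacle to lie in the rank-one step, specifically in producing the correct shifted-polynomial form for $P_1(u)$ with the three different shifts dictated by $\mfso_{2n+1}$, $\mfsp_{2n}$, $\mfso_{2n}$. The isomorphisms identifying the rank-one subalgebras with $X(\mfso_3)$, $X(\mfsp_2)$ or $X(\mfso_4)$ modulo central series are not entirely automatic, and care must be taken that the embedding respects the normalization coming from the factor $y(u)$, so that the rank-one conditions transfer correctly to the statement in $X(\mfg_N)$. A secondary difficulty is the sufficiency construction in the $\mfso_{2n+1}$-case, since realizing a Drinfeld polynomial via evaluation modules alone may require the spinor representation of $\mfso_{2n+1}$ as in Lemma 5.18 of \cite{AMR}, which complicates the tensor-product realization of arbitrary highest weights $\la(u)$.
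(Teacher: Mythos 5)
The paper cites this result directly from \cite[Theorem 5.16]{AMR} and does not reproduce a proof, so there is no ``paper's own proof'' to compare against; I therefore measure your proposal against the actual argument in \cite{AMR}. Your high-level strategy (a $Y(\mfgl_n)$-type reduction for $P_2,\ldots,P_n$, a rank-one analysis for $P_1$, and tensor products of fundamental modules for sufficiency) does mirror the structure of that proof. However, the rank-one reduction is described in a way that would fail. You claim there is a subalgebra of $X(\mfg_N)$ containing the coefficients of $t_{ij}(u)$ with $|i|,|j|\le 1$ (or $\le 2$) that is isomorphic, modulo central elements, to $X(\mfso_3)$, $X(\mfsp_2)$, or $X(\mfso_4)$. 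No such subalgebra exists: the $R$-matrix $R(u)=I-u^{-1}P+(u-\ka)^{-1}Q$ contains the term $Q$, which sums over \emph{all} indices of $\mcI_N$, so the RTT relations involving $t_{ij}(u)$ with $i,j$ of small modulus do not close among those generators. (This is precisely what does \emph{not} happen for the $\{1,\ldots,n\}$-subalgebra: there, the $Q$-term drops out because $\delta_{k,-i}=\delta_{l,-j}=0$, which is why you do get a copy of $Y(\mfgl_n)$.) What actually works is a branching argument: one restricts to the subspace of a module annihilated by the raising operators $t_{ij}(u)$ with $i<j$ and $j$ large, and on that subspace a shifted family of generators $t_{ij}(u+\text{const})$ with $|i|,|j|$ small satisfies the low-rank $X(\mfso_3)$, $X(\mfsp_2)$, or $X(\mfso_4)$ relations; the shift is exactly what compensates for the change in the parameter $\ka=N/2\mp1$ between $X(\mfg_N)$ and the low-rank Yangian, and without it the relations are wrong. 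This is the untwisted analogue of the mechanism formalized in Proposition \ref{P:ind} of the present paper. Your last paragraph anticipates a difficulty here, but the issue is structural rather than a matter of bookkeeping, and the step must be recast as a branching to a subspace rather than a subalgebra identification.

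A secondary gap concerns sufficiency. You invoke ``evaluation modules obtained from finite-dimensional $\mfg_N$-modules together with the evaluation homomorphism $X(\mfg_N)\to\mfU\mfg_N$.'' Unlike $Y(\mfgl_n)$, the Yangians $X(\mfg_N)$ for orthogonal and symplectic $\mfg_N$ do not admit a naive evaluation homomorphism to $\mfU\mfg_N$; the same $Q$-term obstructs the map $t_{ij}(u)\mapsto\delta_{ij}+F_{ij}u^{-1}$ from being an algebra homomorphism. The fundamental representations $L(i:\alpha)$ are instead constructed in \cite{AMR} as subquotients of tensor powers of the vector representation $\C^N$, together with the spinor representation for $L(n:\alpha)$ in the $\mfso_{2n+1}$ case (which you correctly single out as problematic). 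The tensor-product idea for sufficiency is sound, but it should rest on these explicitly constructed fundamental modules rather than on an evaluation map that does not exist.
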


The polynomials $P_1(u),\ldots,P_n(u)$ are called the Drinfeld polynomials associated to $L(\lambda(u))$: they are uniquely determined by the highest weight $\lambda(u)$. Moreover, two finite-dimensional irreducible modules $L(\lambda(u))$ and $L(\lambda^\sharp(u))$ share the same $n$-tuple of Drinfeld polynomials if and only if there is $f(u)\in 1+u^{-1}\C[[u^{-1}]]$ such that $L(\lambda^\sharp(u))=L(\lambda(u))^{\mu_f}$; here $L(\lambda(u))^{\mu_f}$ denotes the $X(\mfg_N)$-module obtained by twisting $L(\lambda(u))$ with the automorphism $\mu_f$. 

Let us now focus on the finite-dimensional irreducible representations of the Yangian $Y(\mfg_N)$. By Corollary~5.19 of \cite{AMR}, any such representation is isomorphic to the restriction of an $X(\mfg_N)$-module $L(\la(u))$ to the subalgebra $Y(\mfg_N)$, where the components of $\la(u)$ satisfy the conditions of Theorem \ref{T:X-class}. In particular, finite-dimensional irreducible representations of $Y(\mfg_N)$ are parametrized by $n$-tuples $\mathbf{P}=(P_1(u),\ldots,P_n(u))$ of monic polynomials in $u$.

Assuming $\alpha\in \C$, let $L(i:\alpha)$ denote the irreducible highest weight representation of $Y(\mfg_N)$ associated to the tuple $\bf P$ such that $P_j(u)=1$ if $j\neq i$, and $P_i(u)=u-\alpha$. These are the so-called \textit{fundamental representations} of~$Y(\mfg_N)$. 
(These were studied in detail in Subsection 5.4 of \cite{AMR}, see also Subsection 12.1.D of \cite{CP}.)
The representations $L(i:\alpha)$ play an important role: any finite-dimensional irreducible representation of $Y(\mfg_N)$ is isomorphic to a subquotient of a tensor product of fundamental representations. (This is formulated precisely in Corollary 12.1.13 of \cite{CP}.)

Let us now recall some aspects of the representation theory of $\mfg_N$. Following Section~4.2 of \cite{Mobook}, for any $n$-tuple $\lambda=(\lambda_1,\ldots,\lambda_n)\in \C^n$ we denote by $V(\lambda)$ the irreducible $\mfg_N$-module with the highest weight $\lambda$. That is, $V(\lambda)$ is the irreducible module generated by a nonzero vector $\xi$ such that $F_{ij}\xi=0$ for all $i<j$ and $F_{kk}\xi=\lambda_k \xi$ for all $1\leq k \leq n$. 

The module $V(\lambda)$ is finite-dimensional if and only if $\lambda_{i-1}-\lambda_i\in \Z_{\geq 0}$ for all $2\leq i\leq n$ and 
\begin{equation}\label{g-class}
\begin{alignedat}{90}
 -\lambda_1 &\in \Z_{\geq 0} &\qu& \text{if}\qu \mfg_N=\mfsp_N,\\
 -2\lambda_1 &\in \Z_{\geq 0} && \text{if}\qu \mfg_N=\mfso_{2n+1},\\
 -\lambda_1-\lambda_2 &\in \Z_{\geq 0} && \text{if}\qu \mfg_N=\mfso_{2n}.
 \end{alignedat}
\end{equation}

By Proposition 3.11 of \cite{AMR} the assignment $F_{ij}\mapsto \tfrac{1}{2}(t_{ij}^{(1)}-\theta_{ij}t_{-j,-i}^{(1)})$, for all $i,j\in \mcI_N$, extends to an embedding 
$\mfU\mfg_N\into X(\mfg_N)$, and consequently we may regard any $X(\mfg_N)$-module as a $\mfg_N$-module.   

We conclude this subsection with a simple corollary of Theorem \ref{T:X-class}. 

\begin{crl}\label{C:g-action}
Suppose that $L(\la(u))$ is finite-dimensional with highest weight vector $\xi$ and Drinfeld tuple $\mathbf{P}=(P_1(u),\ldots,P_n(u))$. Then the $\mfg_N$-module $\mfU\mfg_N\xi$ is a highest weight module with highest weight $\la=(\la_i)_{i=1}^n$ whose components are given by
\eqa{
 \lambda_i &=-A(\mathbf{P},u)-\sum_{2\le a\le i} \deg\,P_a(u)\; \text{ for all } \; 1\leq i\leq n,
 \label{g-action}
\intertext{where} 
A(\mathbf{P},u) &= \begin{cases}
                \deg\, P_1(u) &\text{ if }\; \mfg_N=\mfsp_{2n},\\
                \tfrac{1}{2}\deg\, P_1(u) &\text{ if }\; \mfg_N=\mfso_{2n+1},\\
                \tfrac{1}{2}(\deg\, P_1(u)-\deg\, P_2(u)) &\text{ if }\; \mfg_N=\mfso_{2n}.
               \end{cases}
\label{Ag_N}
}
\end{crl}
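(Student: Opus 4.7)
My plan is to compute the weight of $\xi$ as a $\mfg_N$-vector directly from the data of $L(\la(u))$ as an $X(\mfg_N)$-module, and then to solve the telescoping recurrences that result. Under the embedding $F_{ij}\mapsto \tfrac12(t^{(1)}_{ij}-\theta_{ij}t^{(1)}_{-j,-i})$ recalled just before the corollary, the highest-weight property $t^{(r)}_{kl}\xi=0$ for all $k<l$ and $r\geq 1$, combined with the equivalence $i<j \Leftrightarrow -j<-i$, immediately gives $F_{ij}\xi=0$ for all $i<j$ in $\mcI_N$. Since $\theta_{ii}=1$ in both the orthogonal and symplectic cases, one has $F_{ii}\xi = \tfrac12(\la_i^{(1)}-\la_{-i}^{(1)})\xi$ for $1\leq i\leq n$; hence $\mfU\mfg_N\,\xi$ is a highest weight $\mfg_N$-module, and the task reduces to expressing the differences $\la_i^{(1)}-\la_{-i}^{(1)}$ in terms of $d_a:=\deg P_a(u)$.

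For this, I would extract the coefficient of $u^{-1}$ from each defining identity. Since $\la_j(u)=1+\la_j^{(1)}u^{-1}+O(u^{-2})$ and $P(u+c)/P(u)=1+c\deg(P)\,u^{-1}+O(u^{-2})$ for any monic $P$, matching these expansions on both sides of the four identities in Theorem \ref{T:X-class} yields
\begin{equation*}
\la^{(1)}_{i-1}-\la^{(1)}_i=d_i\quad(2\leq i\leq n),
\end{equation*}
together with $\la_0^{(1)}-\la_1^{(1)}=\tfrac12 d_1$ if $\mfg_N=\mfso_{2n+1}$, $\la_{-1}^{(1)}-\la_1^{(1)}=2d_1$ if $\mfg_N=\mfsp_{2n}$, and $\la_{-1}^{(1)}-\la_2^{(1)}=d_1$ if $\mfg_N=\mfso_{2n}$. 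Applying the same procedure to \eqref{HWT:Ext.non-trivial} (the shift in $u$ does not affect the $u^{-1}$-coefficient) produces $\la_{-i-1}^{(1)}-\la_{-i}^{(1)}=d_{i+1}$ for each valid $i\geq 1$; in the $\mfso_{2n+1}$ case the $i=0$ instance combines with $\la_0^{(1)}-\la_1^{(1)}=\tfrac12 d_1$ to give $\la_{-1}^{(1)}-\la_1^{(1)}=d_1$ after eliminating $\la_0^{(1)}$.

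Finally, telescoping the identities above yields $\la_i^{(1)}=\la_1^{(1)}-\sum_{a=2}^i d_a$ and $\la_{-i}^{(1)}=\la_{-1}^{(1)}+\sum_{a=2}^i d_a$ for $1\leq i\leq n$, so that
\begin{equation*}
\la_i=\tfrac12\bigl(\la_i^{(1)}-\la_{-i}^{(1)}\bigr)=-\tfrac12\bigl(\la_{-1}^{(1)}-\la_1^{(1)}\bigr)-\sum_{a=2}^i d_a.
\end{equation*}
Substituting the case-dependent values $\tfrac12(\la_{-1}^{(1)}-\la_1^{(1)})=d_1,\ \tfrac12 d_1,\ \tfrac12(d_1-d_2)$ for $\mfsp_{2n}$, $\mfso_{2n+1}$, $\mfso_{2n}$ respectively reproduces exactly $A(\mathbf{P},u)$ as defined in \eqref{Ag_N}, proving \eqref{g-action}. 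I do not anticipate any substantive obstacle: the argument is a bookkeeping exercise, the one delicate point being the elimination of the auxiliary component $\la_0^{(1)}$ in the $\mfso_{2n+1}$ case, which forces one to use the $i=0$ instance of \eqref{HWT:Ext.non-trivial} rather than reading the base relation off Theorem \ref{T:X-class} alone.
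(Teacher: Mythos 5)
Your proof is correct and complete. The paper offers no proof for this corollary (it is stated as "a simple corollary of Theorem \ref{T:X-class}"), and your argument — extracting $u^{-1}$-coefficients from the Drinfeld identities and from \eqref{HWT:Ext.non-trivial}, then telescoping — is exactly the direct computation the authors had in mind; the one genuinely delicate step you correctly identified and handled is eliminating $\la_0^{(1)}$ in the $\mfso_{2n+1}$ case via the $i=0$ instance of \eqref{HWT:Ext.non-trivial}.
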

%


\subsection{Twisted Yangians and their finite-dimensional irreducible representations}

Twisted Yangians of types B, C and D were introduced in \cite{GR} and the study of their representation theory was initiated in \cite{GRW2}. Let us briefly recall their main algebraic properties in the cases relevant for this paper. For complete details and proofs of the results below, please consult \cite{GR} and \cite{GRW2}.

\subsubsection{Twisted Yangians of type BDI and CII} \label{subsec:twYa}

We will focus on twisted Yangians for symmetric pairs $(\mfg_N,\mfg_p \op \mfg_q)$ with $p+q=N$ and $p\geq q>0$, that, in terms of the notation introduced in Section 2.2 of \cite{GRW2}, are of types BI, CII and DI(a). 
These symmetric pairs are of the form $(\mfg_N,\mfg_N^\rho)$ where $\rho$ is the involution of $\mfg_N$ given by $\rho(X) = \mcG X \mcG^{-1}$ for all $X\in\mfg_N$ with $\mcG$ the corresponding matrix in the table below, and $\mfg_N^\rho$ is the subalgebra of $\mfg_N$ fixed by~$\rho$.

\begin{table}[H]
\centering
\caption{Symmetric pairs} \label{Table:G} 

\begin{tabular}{|c|c|c|c|}
 \hline 
 Type & \multicolumn{2}{|c|}{Symmetric pair $(\mfg_N,\mfg_N^\rho)$} & $\mcG$ \\ 
 \hline
 BI(a) & $(\mfso_{2n+1},\mfso_{2n+1-q}\oplus \mfso_q)$ & $q=0\mod 2$  &  \rule{0pt}{4ex}   $\sum_{i=-\frac{p-1}{2}}^{\frac{p-1}{2}} E_{ii}-\sum_{i=\frac{p+1}{2}}^{n} (E_{ii} + E_{-i,-i})$ \\[1em]
 BI(b) & $(\mfso_{2n+1},\mfso_{2n+1-q}\oplus \mfso_q)$ & $q=1\mod 2$ & $-\sum_{i=-\frac{q-1}{2}}^{\frac{q-1}{2}} E_{ii}+ \sum_{i=\frac{q+1}{2}}^{n} (E_{ii} + E_{-i,-i})$ \\[1em]
 CII & $(\mfsp_{2n},\mfsp_{2n-q}\oplus \mfsp_q)$  & $q=0\mod 2$ & $\sum_{i=1}^{\frac{p}{2}}(E_{ii}+E_{-i,-i})-\sum_{i=\frac{p}{2}+1}^{n}(E_{ii}+E_{-i,-i})$\\[1em]
 DI(a) & $(\mfso_{2n},\mfso_{2n-q}\oplus \mfso_q)$  & $q=0\mod 2$ & $\sum_{i=1}^{\frac{p}{2}}(E_{ii}+E_{-i,-i})-\sum_{i=\frac{p}{2}+1}^{n}(E_{ii}+E_{-i,-i})$\\[1em]
 \hline
\end{tabular}
\end{table}

\begin{rmk}
Note that we have not included pairs of type DI(b), namely $(\mfso_{2n},\mfso_{p}\op\mfso_q)$ with both $p$ and $q$ odd. In this case the matrix $\mcG$ cannot be chosen to be diagonal. Also note that, in type BI(b), when $q=1$, $\mfso_1 = \{ 0 \}$.
\end{rmk}

We will write $g_{ij}$ for the $(i,j)$-th entry of $\mcG$. We have $\mcG=\sum_{i\in\mcI_N} g_{ii}E_{ii}$ since $g_{ij}=0$ if $i\ne j$. Let $\mcG(u)=(g_{ij}(u))_{i,j\in \mcI_N}$ be given by 
\eq{
\mcG(u) = \frac{d I - u\,\mcG}{d - u} \qu\text{with}\qu d=\frac{p-q}4. \label{G(u)}
}
Notice that $\mcG(u)=\mcG$ if $p=q$.

\begin{defn} [{\cite[Definition~3.1]{GR}}] \label{D:TX}
The extended twisted Yangian $X(\mfg_N,\mcG)^{tw}$ is the subalgebra of $X(\mfg_N)$ generated by the coefficients $s_{ij}^{(r)}$, with $i,j \in\mcI_N$ and $r\in\Z_{\ge 1}$, of the entries $s_{ij}(u) = g_{ij} +\sum_{r=1}^{\infty} s_{ij}^{(r)} u^{-r}$ of the $S$-matrix
\[
S(u) =\sum_{i,j\in\mcI_N} E_{ij} \ot s_{ij}(u) = T(u-\ka/2)\,\mcG(u)\,T^t(-u+\ka/2). 
\]
\end{defn}

The algebra $X(\mfg_N,\mcG)^{tw}$ is a left coideal subalgebra of $X(\mfg_N)$: $\Delta(X(\mfg_N,\mcG)^{tw}) \subset X(\mfg_N) \otimes X(\mfg_N,\mcG)^{tw}$. The $S$-matrix $S(u)$ satisfies the reflection equation 
\eq{ \label{TX-RE}
R(u-v)\,S_1(u)\,R(u+v)\,S_2(v) = S_2(v)\,R(u+v)\,S_1(u)\,R(u-v)
}
and the symmetry relation
\eq{ \label{TX-symm}
S^t(u) = \,S(\ka-u) \pm \frac{S(u)-S(\ka-u)}{2u-\ka} + \frac{\Tr(\mcG(u))\,S(k-u) - \Tr(S(u))\cdot I }{2u-2\ka} . 
}
The above two relations are in fact the defining relations of $X(\mfg_N,\mcG)^{tw}$. (Their form in terms of $s_{ij}(u)$ can be found in (4.4) and (4.5) of \cite{GR}.) By Theorem 4.2 of \cite{GR}, the algebra generated by the abstract elements $s_{ij}^{(r)}$ subject to those relations, which is called $\mcB(\mcG)$ in {\it loc.~cit.}, is isomorphic to $X(\mfg_N,\mcG)^{tw}$. 

The algebra defined in the same way as $\mcB(\mcG)$ except with 
the symmetry relation \eqref{TX-symm} omitted and $(s_{ij}^{(r)},s_{ij}(u),S(u))$ replaced with generators denoted $(\wt s_{ij}^{(r)},\wt s_{ij}(u),\wt S(u))$ is called the \textit{extended reflection algebra} of $(\mfg_N,\mfg_N^\rho)$ and is denoted $\wt X(\mfg_N,\mcG)^{tw}$. In Section 5.1 of \cite{GR}, where $\wt X(\mfg_N,\mcG)^{tw}$ was denoted $\mathcal{XB}(\mcG)$, it was shown that there exists a central series $c(u)=1+\sum_{r\geq 1}c_r u^{-r}\in \wt X(\mfg_N,\mcG)^{tw}[[u^{-1}]]$ such that 
\begin{equation}
 X(\mfg_N,\mcG)^{tw}\cong \wt X(\mfg_N,\mcG)^{tw}/(c(u)-1) \label{c(u)}.
\end{equation}
We refer the reader to {\it loc.~cit.}~for a more complete treatment of $\wt X(\mfg_N,\mcG)^{tw}$ and return our attention to the twisted Yangian $X(\mfg_N,\mcG)^{tw}$.

By Proposition 3.1 of \cite{GR}, the product $S(u)\,S(-u) = w(u)\cdot I$ defines a formal power series $w(u) = 1 + \sum_{r\ge 1} w_{2r} u^{-2r}$ with coefficients central in $X(\mfg_N,\mcG)^{tw}$. The twisted Yangian $Y(\mfg_N,\mcG)^{tw}$ is defined as the quotient of $X(\mfg_N,\mcG)^{tw}$ by the ideal generated by the coefficients of the {\it unitary relation} $S(u)\,S(-u)=I$, that is
\eq{
Y(\mfg_N,\mcG)^{tw} = X(\mfg_N,\mcG)^{tw} / (w(u)-1) . \label{Y=X/(w-1)}
}
By Theorem~3.1 of \cite{GR} the algebra $Y(\mfg_N,\mcG)^{tw}$ is isomorphic to the subalgebra of $Y(\mfg_N)$ generated by the coefficients $\si_{ij}^{(r)}$ with $r\ge1$ of the matrix entries $\si_{ij}(u)$ of the $S$-matrix $\Si(u)$ defined by 
\[
\Si(u) = \mcT(u-\ka/2)\, \mcG(u)\, \mcT^t(-u+\ka/2) . 
\]

Denote by $Z(\mfg_N,\mcG)^{tw}$ the subalgebra of $X(\mfg_N,\mcG)^{tw}$ generated by the even coefficients $w_{2r}$ with $r\ge0$. The subalgebra $Z(\mfg_N,\mcG)^{tw}$ is the centre of $X(\mfg_N,\mcG)^{tw}$. Moreover, the following tensor decomposition holds
\eq{
X(\mfg_N,\mcG)^{tw} \cong Z(\mfg_N,\mcG)^{tw} \ot Y(\mfg_N,\mcG)^{tw} . \label{X=Z*Y}
} 

Given $g(u)\in1+u^{-2}\C[[u^{-2}]]$, the assignment
\eq{
\nu_g : S(u) \mapsto g(u-\ka/2)\,S(u) \label{nu_g}
}
extends to an automorphism $\nu_g$ of $X(\mfg_N,\mcG)^{tw}$, and $Y(\mfg_N,\mcG)^{tw}$, viewed as a subalgebra of $X(\mfg_N)$, is the $\nu_g$-stable subalgebra of $X(\mfg_N,\mcG)^{tw}$: see Corollary 3.1 of \cite{GR}. 

A second family of automorphisms of $X(\mfg_N,\mcG)^{tw}$ is provided by conjugating $S(u)$ by certain invertible matrices: let $A\in GL(N)$ satisfy 
$AA^t=I$ and $A\mcG A^t=\mcG$. Then, by Remark 3.2 of \cite{GR}, the assignment 
\begin{equation}
 \alpha_A: S(u)\mapsto AS(u)A^t \label{al_A}
\end{equation}
 defines an automorphism $\alpha_A$ of $X(\mfg_N,\mcG)^{tw}$ which factors through the quotient $Y(\mfg_N,\mcG)^{tw}$.

For each $i,j\in \mcI_N$, set $F_{ij}^{\rho}=(g_{ii}+g_{jj})F_{ij}$ and define $\bar g_{ij}=(g_{ij}-\delta_{ij})d$, where we recall that $d=(p-q)/4$. By Proposition 3.9 of \cite{GRW2}, the fixed point subalgebra $\mfU\mfg_N^\rho\subset \mfU\mfg_N$ is generated by the $F_{ij}^{\rho}$, and the assignments
\begin{equation}
F_{ij}^{\rho}\mapsto s_{ij}^{(1)}-\bar{g}_{ij} \qu \text{and} \qu F_{ij}^{\rho}\mapsto \si_{ij}^{(1)}-\bar{g}_{ij} \quad \text{ for all }\quad i,j\in \mcI_N \label{grho->X}
\end{equation}
extend to injective algebra homomorphisms $\mfU\mfg_N^\rho\into X(\mfg_N,\mcG)^{tw}$ and $\mfU\mfg_N^\rho\into Y(\mfg_N,\mcG)^{tw}$, respectively.

The matrix $\mcG(u)$ provides the trivial representation $V(\mcG)$ of $X(\mfg_N,\mcG)^{tw}$ defined by the map $\eps : S(u) \mapsto \mcG(u)$, which is the restriction of the counit of $X(\mfg_N)$ to $X(\mfg_N,\mcG)^{tw}$. When $\mfg_N^\rho=\mfg_p\op\mfg_q$ with $\mfg_p\ncong \mfso_2\ncong\mfg_q$, this is a unique one-dimensional representation of $X(\mfg_N,\mcG)^{tw}$, up to twisting by automorphisms of the form $\nu_g$: see Subsection \ref{subsec:1dim-gen}. When $(\mfg_N,\mfg_N^\rho)=(\mfso_N,\mfso_{N-2}\oplus \mfso_2)$ with 
$N\geq 5$, $V(\mcG)$ is a special case of a one-parameter family of one-dimensional representations which will be constructed in Subsection \ref{subsec:1dim-so2}, and if $N=4$ it belongs to a two-parameter family of one-dimensional representations; this will be explained in Subsection \ref{sec:so4so2so2}. In the case when $(\mfg_N,\mfg_N^\rho)=(\mfso_3,\mfso_2)$, $V(\mcG)$ is again a particular member of a one-parameter family of one-dimensional representations, as will be seen in Subsection \ref{sec:so3so2}. 

We will also make use of the rational function (see Lemma 2.2 of \cite{GRW2}) 
\eq{
p(u) =  1 \mp \frac{1}{2u-\ka} + \frac{ {\rm tr}(\mcG(u))}{2u-2\ka} \qq\text{satisfying}\qq p(u)\,p(\ka-u) = 1 - \frac{1}{(2u-\ka)^2}\,. \label{p(u)}
}
When it is necessary to emphasize the dependence of $p(u)$ on the matrix $\mcG$ we will denote it instead by $p_\mcG(u)$. 

Lastly, we remark that we will sometimes write $X(\mfg_N,\mfg_N^\rho)^{tw}$ (resp.~$Y(\mfg_N,\mfg_N^\rho)^{tw}$) instead of $X(\mfg_N,\mcG)^{tw}$ (resp.~$Y(\mfg_N,\mcG)^{tw}$).

\subsubsection{Highest weight representations} \label{subsub:HWtw}

We now briefly summarize the relevant results of the highest weight theory for representations of $X(\mfg_N,\mcG)^{tw}$ obtained in Section 4 of \cite{GRW2}.

A representation $V$ of $X(\mfg_N,\mcG)^{tw}$ is called a \textit{highest weight representation} if there exists a nonzero vector $\eta\in V$ such that $V=X(\mfg_N,\mcG)^{tw}\eta$ and the following conditions are met: 
\begin{alignat*}{4}
  &s_{ij}(u)\,\eta=0 \quad &&\text{ for all }\quad && i<j \in \mcI_N , \quad &&\text{ and } \nonumber\\
  &s_{ii}(u)\,\eta=\mu_i(u)\,\eta \quad  &&\text{ for all }\quad && i\in \mcI_N^+, && 
\end{alignat*}
where $\mu_i(u)=g_{ii}+\sum_{r\ge1} \mu_{i}^{(r)}u^{-r}$ are formal power series in $u^{-1}$ with coefficients $\mu_i^{(r)}\in \C$. The vector $\eta$ is called the \textit{highest weight vector} and the $(N-n)$-tuple $\mu(u)=\left((\mu_{0}(u)),\mu_{1}(u),\ldots,\mu_{n}(u)\right)$ is called the \textit{highest weight} of $V$ (here $(\mu_{0}(u))$ is only present if $N=2n+1$). Given a highest weight $\mu(u)$, we shall frequently make use of the corresponding tuple $\wt \mu(u)$ whose components are given by 
\begin{equation}
 \wt\mu_i(u)=(2u-n+i)\,\mu_i(u)+\sum_{\ell=i+1}^n\mu_\ell(u) \qu\text{for all}\qu i\in \mcI_N^+. \label{tilde-mu(u)}
\end{equation}

As in the case of $X(\mfg_N)$, every finite-dimensional irreducible representation $V$ of $X(\mfg_N,\mcG)^{tw}$ is a highest weight representation (Theorem 4.5 of \cite{GRW2}). Additionally, $V$ contains a highest weight vector $\eta$, unique up to scalar multiplication. Given a tuple $\mu(u)$, the Verma module $M(\mu(u))$ for $X(\mfg_N,\mcG)^{tw}$ is defined as the quotient of $X(\mfg_N,\mcG)^{tw}$ by the left ideal generated by all the coefficients of the series $s_{ij}(u)$ with $i<j\in\mcI_N$ and $s_{ii}(u) - \mu_i(u)$ with $i\in\mcI_N^+$. If $M(\mu(u))$ is non-trivial, then it is a highest weight module with the highest weight $\mu(u)$ and the highest weight vector $1_{\mu(u)}$ equal to the image of the identity element $1\in X(\mfg_N,\mcG)^{tw}$ under the natural quotient map  $X(\mfg_N,\mcG)^{tw}\to M(\mu(u))$.

Whenever the symbols $[\pm]$ and $[\mp]$ occur, the lower sign corresponds to the type BI(b) case while the upper sign corresponds to all the other cases.

\begin{prop}[{\cite[Proposition 4.17]{GRW2}}]\label{P:nontriv}
Let $\mu(u)=(\mu_i(u))_{i\in \mcI_N^+}$, where for each $i$ we have $\mu_i(u)\in g_{ii}+u^{-1}\C[[u^{-1}]]$. Then the $X(\mfg_N,\mcG)^{tw}$ Verma module 
$M(\mu(u))$ is nontrivial if and only if 
\begin{equation}
 \wt \mu_i(u)\,\wt \mu_i(-u+n-i)=\wt \mu_{i+1}(u)\,\wt \mu_{i+1}(-u+n-i) \; \text{ for all }\; i\in \mcI_{N}^+\setminus\{ n\},\label{nontriv.1}
\end{equation}
and in addition, if $\mfg_N=\mfso_{2n+1}$, the series $(\ka-u)\cdot \mathscr{g}(\ka-u)\,\wt \mu_0(u)$, where
\begin{equation}
\mathscr{g}(u) = [\pm]\left(\frac{p+q-4u}{p-q-4u}\right), \label{g(u)} 
\end{equation}
is invariant under the transformation $u\mapsto \ka-u$:
\begin{equation}
 u\cdot \mathscr{g}(u)\wt \mu_0(\ka-u)=(\ka-u)\cdot \mathscr{g}(\ka-u)\wt \mu_0(u). \label{nontriv.2}
\end{equation}
\end{prop}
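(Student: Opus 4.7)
The plan is to treat necessity and sufficiency of conditions \eqref{nontriv.1}-\eqref{nontriv.2} separately, with necessity handled first.

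For necessity, the strategy is to apply the defining relations \eqref{TX-RE} and \eqref{TX-symm} to a would-be highest weight vector $\eta = 1_{\mu(u)}$ and extract the implied constraints on $\mu(u)$. Note that $s_{-i,-i}(u)$ must act as some scalar series $\mu_{-i}(u)$ on $\eta$ because $s_{ij}(u)\eta=0$ for $i<j$ and, via the symmetry relation, $s_{-i,-i}(u)\eta$ is forced to be a combination of diagonal entries acting on $\eta$. Concretely, extracting the $(i,i)$-entry of \eqref{TX-symm} and evaluating on $\eta$ produces a linear system determining each $\mu_{-i}(u)$ in terms of $\mu_j(u)$ with $j\in\mcI_N^+$ and the global sum $\sum_{j\in\mcI_N}\mu_j(u)$. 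Passing from $\mu_i(u)$ to $\wt\mu_i(u)$ as in \eqref{tilde-mu(u)} is the change of variables that brings this system into a simple telescoping form. Next, applying the reflection equation \eqref{TX-RE} at the $(i,i;i+1,i+1)$ entry and evaluating at $(u,-u+n-i)$ gives, after substituting the formulas for $\mu_{-i}(u)$ already obtained and collecting like terms, the quadratic identity \eqref{nontriv.1}. The additional constraint \eqref{nontriv.2} arises in the $\mfso_{2n+1}$ case from the $(0,0)$-component of \eqref{TX-symm}, which is self-referential (it relates $\mu_0(u)$ to $\mu_0(\ka-u)$ directly) and therefore produces a standalone compatibility condition; the rational factor $\mathscr{g}(u)$ in \eqref{g(u)} accounts for the contribution of $\mathrm{tr}(\mcG(u))$ together with the shifts in the $1/(2u-\ka)$ and $1/(2u-2\ka)$ terms.

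For sufficiency, assuming that $\mu(u)$ satisfies \eqref{nontriv.1} and, when applicable, \eqref{nontriv.2}, the goal is to exhibit a nonzero $X(\mfg_N,\mcG)^{tw}$-module with highest weight $\mu(u)$, so that $M(\mu(u))$ is nontrivial. The cleanest route is via the extended reflection algebra $\wt X(\mfg_N,\mcG)^{tw}$, where the symmetry relation \eqref{TX-symm} is not imposed: its Verma modules are therefore unrestricted, and one can produce a Verma module with highest weight an arbitrary tuple extending $(\mu_i(u))_{i\in\mcI_N^+}$ to all of $\mcI_N$. Using the quotient description $X(\mfg_N,\mcG)^{tw}\cong \wt X(\mfg_N,\mcG)^{tw}/(c(u)-1)$ from \eqref{c(u)}, one then applies a suitable automorphism of the form $\nu_g$ from \eqref{nu_g} to normalise the central series $c(u)$ to $1$; conditions \eqref{nontriv.1}-\eqref{nontriv.2} are precisely what guarantees that such a normalisation is compatible with the prescribed components $\mu_i(u)$ for $i\in\mcI_N^+$, rather than altering them.

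The main technical obstacle I anticipate is the bookkeeping required to identify the correct change of variables producing the clean form \eqref{nontriv.1}, and to correctly isolate the $\mathscr{g}(u)$-correction in \eqref{nontriv.2}: the relevant coefficients are determined by a delicate cancellation between the $1/(2u-\ka)$ and $1/(2u-2\ka)$ terms in \eqref{TX-symm} and the series $\mathrm{tr}(\mcG(u))$. I expect the most transparent way to carry out the derivation is inductively in $i$, descending from the top weight and treating the midpoint $i=0$ of $\mfso_{2n+1}$ as a separate base case.
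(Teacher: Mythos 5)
The paper does not prove Proposition \ref{P:nontriv}; it is stated verbatim with a citation to Proposition 4.17 of \cite{GRW2}, so there is no internal argument against which to compare your attempt. The only related thing proved here is the auxiliary identity $p_I(u)\,p(u)^{-1}=\mathscr{g}(\ka-u)\,\mathscr{g}(u)^{-1}$ (the small unnamed proposition immediately after), which serves merely to reconcile the form of \eqref{nontriv.2} used here with the original form given in \cite{GRW2}. Taking your sketch on its own terms, the necessity outline is conceptually credible: the symmetry relation \eqref{TX-symm} does force $s_{-i,-i}(u)$ to act on a highest weight vector by a scalar determined by the $\mu_j(u)$, $j\in\mcI_N^+$, and the trace, and for $N=2n+1$ the $(0,0)$-entry of \eqref{TX-symm} is indeed self-referential and thus yields the standalone constraint \eqref{nontriv.2}. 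This matches the structure of the result, though the details you would need to supply (the precise telescoping manipulation to $\wt\mu_i(u)$ and the entry-by-entry evaluation of \eqref{TX-RE}) are nontrivial and not present.

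The sufficiency direction, however, contains a genuine gap. The claim that Verma modules of $\wt X(\mfg_N,\mcG)^{tw}$ are ``unrestricted'' is false: the reflection equation \eqref{TX-RE} is still imposed in the extended reflection algebra and already constrains the highest weight, so one cannot extend $(\mu_i(u))_{i\in\mcI_N^+}$ to an arbitrary tuple on $\mcI_N$ and expect a nontrivial $\wt X$-Verma module. More seriously, the normalisation step is incoherent. The automorphism $\nu_g$ in \eqref{nu_g} multiplies the entire matrix $S(u)$ by the scalar series $g(u-\ka/2)$ and therefore rescales \emph{all} of the $\mu_i(u)$ simultaneously, including those with $i\in\mcI_N^+$; there is no way to use $\nu_g$ to adjust the eigenvalue of $c(u)$ while leaving the prescribed components untouched, which is precisely what your argument requires. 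A correct sufficiency proof must either show directly that once the $\mu_{-i}(u)$ are forced by the symmetry relation, the conditions \eqref{nontriv.1} and \eqref{nontriv.2} make $c(u)$ act as $1$ automatically, or construct the desired nontrivial module by an independent means (for instance as an explicit $X(\mfg_N,\mcG)^{tw}$-submodule of a suitable $X(\mfg_N)$-module, as is done for analogous nontriviality statements elsewhere in \cite{AMR} and \cite{GRW2}); your sketch supplies neither.
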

In Proposition 4.17 of \cite{GRW2} the relation \eqref{nontriv.2} was presented instead in the form 
\begin{equation*}
u\cdot \wt \mu_0(\ka-u)=(\ka-u)\cdot p_I(u)\,p(u)^{-1}\wt \mu_0(u),
\end{equation*}
%
where $p_I(u)$ is the rational function $p(u)$ corresponding to the pair $(\mfg_N,\mfg_N)$, {\it i.e.}~$\mcG=I$. It was proven of \cite[Remark 4.4]{GRW2} that $p_I(u)\,p(u)^{-1}=\mathscr{g}(\ka-u)\,\mathscr{g}(u)^{-1}$ and hence that the two forms of \eqref{nontriv.2} are equivalent. We now take a moment to provide a more elegant proof of this equality which holds more generally whenever $(\mfg_N,\mfg_N^\rho)=(\mfg_N,\mfg_{p}\oplus \mfg_q)$ with $p\neq q$. In addition, we state an important corollary of this proof which will be used in Section \ref{sec:1dim}.

\begin{prop}
The following equality holds whenever $p\neq q$: \[ p_I(u)\,p(u)^{-1}=\mathscr{g}(\ka-u)\,\mathscr{g}(u)^{-1}.\]
\end{prop}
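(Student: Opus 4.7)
The plan is to verify the identity by directly computing closed-form factorizations of both sides. The crux is a clean factorization of $p(u)$ that drops out of the single relation $N = 2\ka \pm 2$ (upper sign orthogonal, lower symplectic), consistent with $\ka = N/2 \mp 1$.

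I would first compute $\mathrm{tr}(\mcG(u)) = d(N - 4u)/(d - u)$, using $\mathrm{tr}(\mcG) = p - q = 4d$. Plugging this into \eqref{p(u)}, clearing denominators over $2(2u-\ka)(u-\ka)(d-u)$, and invoking $N - 4u = -2(2u - \ka \mp 1)$ to pull out a common factor from the resulting polynomial numerator, a short simplification using $(u-\ka)(d-u) - d(2u-\ka) = -u(u - \ka + d)$ would yield
\[
p(u) = \frac{u(2u - \ka \mp 1)(u - \ka + d)}{(2u - \ka)(u - \ka)(u - d)}.
\]
Setting $\mcG = I$ amounts to replacing $d$ by $N/4$, so the ratio becomes immediate:
\[
\frac{p_I(u)}{p(u)} = \frac{(u - \ka + N/4)(u - d)}{(u - N/4)(u - \ka + d)}.
\]

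On the other side, I would rewrite $\mathscr{g}(u) = [\pm](N - 4u)/(4(d - u))$ using $p + q = N$ and $p - q = 4d$, compute $\mathscr{g}(\ka - u)/\mathscr{g}(u)$ (in which the sign $[\pm]$ cancels), and apply $N = 2\ka \pm 2$ a second time to factor $N - 4\ka + 4u = 2(2u - \ka \pm 1)$ and $N - 4u = -2(2u - \ka \mp 1)$. This gives
\[
\frac{\mathscr{g}(\ka - u)}{\mathscr{g}(u)} = \frac{(2u - \ka \pm 1)(u - d)}{(2u - \ka \mp 1)(u - \ka + d)}.
\]
The factors $(u - d)$ and $(u - \ka + d)$ already coincide with the previous expression for $p_I(u)/p(u)$, and the remaining factors match via $2(u - \ka + N/4) = 2u - \ka \pm 1$ and $2(u - N/4) = 2u - \ka \mp 1$, both of which follow from $N/4 = \ka/2 \pm 1/2$.

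The main obstacle is purely bookkeeping: the $\pm$ and $\mp$ signs for orthogonal versus symplectic propagate through $\ka$ and $N$ in several places, and I must track them consistently so that they align on both sides of the final comparison. The substantive algebra is limited to the single identity $(u-\ka)(d-u) - d(2u-\ka) = -u(u - \ka + d)$ that drives the factorization of $p(u)$; everything else is linear rearrangement using the two forms of the relation between $N$ and $\ka$.
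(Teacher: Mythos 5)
Your computation is correct, and I verified the key algebraic steps: the factorization
\[
p(u) = \frac{u\,(2u - \ka \mp 1)\,(u - \ka + d)}{(2u-\ka)(u-\ka)(u-d)}
\]
does follow from $(u-\ka)(d-u) - d(2u-\ka) = -u(u-\ka+d)$ together with $N-4u=-2(2u-\ka\mp 1)$, and the two sides then match term by term after one more application of $N/4 = \ka/2 \pm 1/2$. However, this is a genuinely different route than the paper takes, and in fact the paper's stated motivation for this proposition is precisely to \emph{avoid} this kind of direct computation (which was how the identity was originally verified in Remark 4.4 of \cite{GRW2}). The paper instead sums the diagonal entries of the symmetry relation \eqref{TX-symm}, rewritten as \eqref{TrS.0}, to obtain the operator identity $p(u)\,\Tr(S(\ka-u)) = p_I(u)\,\Tr(S(u))$ inside $X(\mfg_N,\mcG)^{tw}$, and then applies the counit to specialize $S(u)$ to $\mcG(u)$; the identity then drops out from $\Tr(\mcG(u)) = (p-q)\,\tfrac{p+q-4u}{p-q-4u}$ without any factorization of $p(u)$. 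The structural approach buys something your computation does not: the intermediate trace identity \eqref{TrS} holds for all type B-C-D twisted Yangians (in particular it gives $\Tr(S(u))=\Tr(S(\ka-u))$ when $p=q$), and replacing \eqref{TrS.0} by the corresponding relation in the extended reflection algebra immediately yields the formula for the central series $c(u)$ in Corollary \ref{C:c(u)}, which is needed later in Section \ref{sec:1dim}. Your brute-force factorization proves the proposition but produces neither of these byproducts; if you were writing the paper you would have to rederive them separately.
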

\begin{proof}
The defining symmetry relation of $X(\mfg_N,\mcG)^{tw}$ \eqref{TX-symm} is equivalent to 
\begin{equation}
 p(u)s_{ij}(\ka-u)=\theta_{ij}s_{-j,-i}(u)\mp \frac{s_{ij}(u)}{2u-\ka}+\frac{\delta_{ij} \Tr (S(u))}{2u-2\ka} \; \text{ for all }\; i,j\in \mcI_N. \label{TrS.0}
\end{equation}
Taking the sum of both sides over all $i=j$ we obtain 
\begin{equation}
 p(u)\Tr(S(\ka-u))=\left(1\mp \frac{1}{2u-\ka}+\frac{N}{2u-2\ka} \right)\Tr(S(u))=p_I(u)\Tr(S(u)). \label{TrS}
\end{equation}
Applying the counit $\epsilon$ (i.e. the trivial representation) to both sides we obtain 
\[
 p(u)\,\Tr(\mcG(\ka-u))=p_I(u)\Tr(\mcG(u)).
\]
When $p\neq q$ the series $\Tr(\mcG(u))$ is an invertible element of $\C[[u^{-1}]]$, and therefore the above relation can be equivalently expressed as
\[
 p_I(u)\,p(u)^{-1}=\frac{\Tr(\mcG(\ka-u))}{\Tr(\mcG(u))}.
\]
as $\Tr(\mcG(u))=\Tr\left(\frac{(p-q)I-4u\mcG}{p-q-4u}\right)=(p-q)\frac{p+q-4u}{p-q-4u}$, the right hand side is just $\mathscr{g}(\ka-u)\mathscr{g}(u)^{-1}$. \qedhere
\end{proof}

Replacing the relation \eqref{TrS.0} with equation (5.23) of \cite{GR}, we obtain the following corollary:
\begin{crl}\label{C:c(u)}
 Suppose that $p\neq q$. Then the central series $c(u)\in \wt X(\mfg_N,\mcG)^{tw}[[u^{-1}]]$ (see \eqref{c(u)}) satisfies the relation 
 \begin{equation*}
  c(u)=\frac{\mathscr{g}(\ka-u)}{\mathscr{g}(u)}\cdot \frac{\Tr(\wt S(u))}{\Tr(\wt S(\ka-u))}. \label{c(u)<->Tr}
 \end{equation*}
\end{crl}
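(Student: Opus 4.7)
The plan is to replicate, inside the extended reflection algebra $\wt X(\mfg_N,\mcG)^{tw}$, the argument used in the proof of the preceding proposition. The key input will be equation (5.23) of \cite{GR}, which is the analogue of \eqref{TrS.0} valid in $\wt X(\mfg_N,\mcG)^{tw}$, rather than in its quotient $X(\mfg_N,\mcG)^{tw}$. The point is that in the extended reflection algebra the symmetry relation \eqref{TX-symm} is not imposed, and its failure is precisely measured by the central series $c(u)$; consequently, equation (5.23) of \cite{GR} differs from \eqref{TrS.0} by the presence of the factor $c(u)$ somewhere, in such a way that setting $c(u)=1$ (i.e.\ descending to $X(\mfg_N,\mcG)^{tw}$) recovers \eqref{TrS.0}.

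First, I would sum equation (5.23) of \cite{GR} over all $i=j\in\mcI_N$. Exactly as in the preceding proof, this should collapse to an identity of formal series of the shape
\[
c(u)\,p(u)\,\Tr(\wt S(\ka-u)) = p_I(u)\,\Tr(\wt S(u)),
\]
where $p_I(u)$ is the rational function $p(u)$ corresponding to $\mcG=I$. The placement of $c(u)$ on this side is dictated by the shape of (5.23) and by the consistency check that quotienting by $(c(u)-1)$ must yield \eqref{TrS}.

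Next, when $p\neq q$, the series $\Tr(\wt S(\ka-u))$ is invertible in $\wt X(\mfg_N,\mcG)^{tw}[[u^{-1}]]$: its leading coefficient as $u\to\infty$ is $\Tr(\mcG)=p-q\neq 0$, a nonzero scalar. Dividing through by $p(u)\,\Tr(\wt S(\ka-u))$ and substituting the identity $p_I(u)\,p(u)^{-1}=\mathscr{g}(\ka-u)\,\mathscr{g}(u)^{-1}$ from the preceding proposition produces precisely the stated formula for $c(u)$.

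The main obstacle is really one of bookkeeping rather than of ideas: one must transcribe (5.23) of \cite{GR} carefully in the conventions of the present paper in order to confirm that $c(u)$ ends up paired with $p(u)\,\Tr(\wt S(\ka-u))$ and not with $p_I(u)\,\Tr(\wt S(u))$. Once that placement is verified, the argument is a direct transcription of the proof of the preceding proposition, with $\Tr(S(u))$ replaced by $\Tr(\wt S(u))$ and the trivial representation no longer applied.
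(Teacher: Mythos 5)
Your proof is correct and is exactly the route the paper intends: replace \eqref{TrS.0} by equation (5.23) of \cite{GR}, sum over the diagonal to get $c(u)\,p(u)\,\Tr(\wt S(\ka-u))=p_I(u)\,\Tr(\wt S(u))$, note that $\Tr(\wt S(\ka-u))$ has invertible leading coefficient $p-q$, and substitute $p_I(u)p(u)^{-1}=\mathscr{g}(\ka-u)\mathscr{g}(u)^{-1}$ from the preceding proposition. The only caveat you already flag yourself — that the reduction to \eqref{TrS} on setting $c(u)=1$ does not by itself pin down on which side $c(u)$ sits, so one really must read the shape of (5.23) — is a fair one, but your placement is the correct one and your resulting formula agrees with the stated corollary.
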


We also remark that \eqref{TrS} holds for any symmetric pair of type B-C-D, and in particular it implies that $\Tr(S(u))=\Tr(S(\ka-u))$ when $(\mfg_N,\mfg_N^\rho)=(\mfg_N,\mfg_N)$.

When the $X(\mfg_N,\mcG)^{tw}$ Verma module $M(\mu(u))$ is non-trivial, it admits a unique irreducible quotient $V(\mu(u))$, and every finite-dimensional irreducible module is isomorphic to a module of this form.

Given a representation $V$ of $X(\mfg_N,\mcG)^{tw}$, denote by $V^0$ the subspace 
\begin{equation*}
 V^0=\{\xi\in V\,:\,s_{ij}(u)\,\xi=0 \; \text{ for all }\; i<j\}. 
\end{equation*}
The next corollary follows from a modification of the proof uniqueness for the highest weight vector of Theorem 4.5 in \cite{GRW2} and is analogous to Corollaries 3.2.8 and 4.2.7 of \cite{Mobook}:
\begin{crl}\label{C:hw-1dim}
 Let $\mu(u)$ satisfy the conditions of Proposition \ref{P:nontriv} and let $\xi\in V(\mu(u))$ be a highest weight vector. Then $V(\mu(u))^0=\C\xi$. 
\end{crl}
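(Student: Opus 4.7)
The plan is to adapt the argument for uniqueness (up to scalar) of the highest weight vector given within the proof of Theorem 4.5 of \cite{GRW2}, extending it to cover arbitrary nonzero vectors in $V(\mu(u))^0$ rather than only candidate second highest weight vectors (which would a priori carry an eigenvalue condition on the diagonal generators). The overall strategy parallels Corollaries 3.2.8 and 4.2.7 of \cite{Mobook}, where analogous results are established for the Yangian and twisted Yangian of type A: exploit a PBW-type decomposition of $X(\mfg_N,\mcG)^{tw}$ to reduce to a computation in which each raising generator acts on an ordered monomial in the lowering generators applied to $\xi$, and then deduce that the coefficients of all nontrivial lowering monomials must vanish.

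Concretely, the first step is to use the PBW theorem for $X(\mfg_N,\mcG)^{tw}$ proved in \cite{GR} together with the defining properties of $\xi$---annihilation by all raising generators $s_{ij}^{(r)}$ with $i<j$ and scalar action of each diagonal generator $s_{ii}^{(r)}$---to express every element of $V(\mu(u)) = X(\mfg_N,\mcG)^{tw}\cdot \xi$ as a linear combination of vectors $M\cdot \xi$, where $M$ ranges over ordered monomials in the lowering generators $\{s_{ij}^{(r)} : i>j\}$. Writing $\zeta = \sum_M c_M\, M\cdot \xi$ for a nonzero $\zeta \in V(\mu(u))^0$, I would then apply each raising generator $s_{kl}(u)$ with $k<l$ and commute it through the lowering monomials using the reflection equation \eqref{TX-RE}. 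The identity $s_{kl}(u)\,\zeta = 0$ then becomes a linear relation among PBW-type vectors acting on $\xi$, and an induction on the maximal length of $M$ appearing in the expansion forces $c_M = 0$ for every nontrivial $M$, yielding $\zeta = c_\emptyset\, \xi \in \C\xi$.

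The main obstacle is that distinct PBW monomials applied to $\xi$ need not be linearly independent in the irreducible quotient $V(\mu(u))$, so the inductive step cannot proceed by simply equating coefficients of a PBW basis. The mechanism for overcoming this---already implicit in the proof of Theorem 4.5 of \cite{GRW2}---is to single out at each induction step a raising generator $s_{kl}(u)$ and a leading-length lowering monomial $M$ such that the leading contribution from $[s_{kl}(u), M]\cdot \xi$ is a shorter lowering monomial with a controllable nonzero coefficient, organized via an auxiliary $\mcI_N$-grading on the generators. The modification required for the present statement is modest: the computation in \cite{GRW2} is reinterpreted with $\zeta$ playing the role of the putative second highest weight vector, but without the eigenvector hypothesis $s_{ii}(u)\,\zeta = \nu_i(u)\,\zeta$, which is never invoked in the PBW manipulations underlying the cancellation.
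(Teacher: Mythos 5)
Your high-level plan---adapt the uniqueness argument from the proof of Theorem 4.5 of \cite{GRW2}, dropping the hypothesis that the candidate vector is an eigenvector for the $s_{ii}(u)$---is exactly what the paper has in mind, and the analogy with Corollaries 3.2.8 and 4.2.7 of \cite{Mobook} is the right one. However, the concrete mechanism you describe has a genuine gap which you in fact flag yourself but then do not resolve. Once one passes to the irreducible quotient $V(\mu(u))$, the ordered monomials $M\cdot\xi$ are no longer linearly independent, so ``the maximal length of $M$ appearing in the expansion'' of $\zeta$ is not an intrinsic invariant of $\zeta$: different PBW expansions of the same vector will generally have different sets of leading monomials. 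Your inductive step---choose a raising generator whose commutator with a leading monomial produces a shorter monomial with ``controllable nonzero coefficient''---does not actually allow you to read off that any particular $c_M$ vanishes, because the relation $s_{kl}(u)\zeta=0$ only constrains the image of $\sum_M c_M[s_{kl}(u),M]\cdot\xi$ modulo all the linear dependencies already present in $V(\mu(u))$.

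The device that actually makes the modification go through, and which your ``auxiliary $\mcI_N$-grading'' is presumably gesturing at, is the $\mfh_N$-weight decomposition. Since each $s_{ij}(u)$ is a weight vector for the adjoint action of $\mfh_N\subset \mfg_N^\rho$ (embedded via \eqref{grho->X}), the subspace $V(\mu(u))^0$ is $\mfh_N$-stable, so any $\zeta\in V(\mu(u))^0$ decomposes as a finite sum of weight components $\zeta_\nu$, each again lying in $V(\mu(u))^0$. For $\nu$ strictly below the weight $\lambda$ of $\xi$, the submodule $X(\mfg_N,\mcG)^{tw}\zeta_\nu$ is contained in the span of the weight spaces of weight $\leq\nu$ (using the triangular decomposition and that the raising part of the algebra kills $\zeta_\nu$), hence does not contain $\xi$; irreducibility then forces $\zeta_\nu=0$. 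For $\nu=\lambda$, the weight-$\lambda$ subspace of the Verma module $M(\mu(u))$ is one-dimensional, spanned by the cyclic vector, and this one-dimensionality passes to the quotient, giving $\zeta_\lambda\in\C\xi$. No induction on monomial length, and no analysis of which coefficients in $[s_{kl}(u),M]$ survive, is needed; this also explains why the eigenvalue hypothesis on $\zeta$ can be dispensed with, since the decomposition into $\mfh_N$-weight components supplies it separately for each $\zeta_\nu$.
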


Let $\eta$ denote the highest weight vector of the irreducible $X(\mfg_N,\mcG)^{tw}$-module $V(\mu(u))$, and $\xi$ the highest weight vector of the irreducible $X(\mfg_N)$-module $L(\lambda(u))$. By Proposition 4.10 of \cite{GRW2}, $X(\mfg_N,\mcG)^{tw}(\xi\otimes \eta)$ is a highest weight  $X(\mfg_N,\mcG)^{tw}$-module with the highest weight vector $ \xi\otimes \eta$, and the highest weight $\gamma(u)$ whose components are determined by the relations
\begin{equation}
\wt\ga_i(u)=\wt \mu_{i}(u)\la_i(u-\ka/2)\la_{-i}(-u+\ka/2) \qu\text{for all}\qu i\in \mcI_N^+. \label{HWT:tensors}
\end{equation}
Define the non-negative integers $\ley$ and $\key$ by 
\begin{equation}
 \ley=\begin{cases}
       \frac{N-q}{2}\; & \text{ if } \; (\mfg_N,\mfg_N^\rho) \text{ is type BI(b)},\\
       \frac{q}{2}\; & \text{ otherwise,}
      \end{cases} \quad \text { and } \quad \key=n-\ley. \label{ley-key}
\end{equation} 
Since $(\mfg_N,\mfg_N^\rho)$ is not type DI(b), both $\ley$ and $\key$ are non-negative integers, with $\ley<n$ and $\key>0$ except in the case when 
$(\mfg_N,\mfg_N^\rho)$ is type BI(b) with $q=1$ ({\it i.e.}~when $(\mfg_N,\mfg_N^\rho)=(\mfso_{2n+1},\mfso_{2n})$). In Subsection 4.3 of \cite{GRW2}, the pair $(\ley,\key)$ was denoted $(\boldsymbol{\ell},\boldsymbol{k})$.

An important instance of \eqref{HWT:tensors} occurs when $\mu(u)$ is taken to be $(g_{ii}(u))_{i\in \mcI_N}$, in which case $V(\mu(u))=V(\mcG)$ and \eqref{HWT:tensors}
provides formulas for the highest weight $\ga(u)$ of the $X(\mfg_N,\mcG)^{tw}$-module $X(\mfg_N,\mcG)^{tw}\xi \subset L(\lambda(u))$. By \eqref{G(u)}, we have 
\begin{equation}
\wt g_{ii}(u)=2u\left(\frac{p-q[\pm]4u}{p-q-4u} \right) \quad \text{ for all }\quad \key+1\leq i\leq n, \label{eq:wtgii1}
\end{equation}
while for $i\in \mcI_N^+$ satisfying $0\leq i\leq \key$  we have by \eqref{g(u)} that
\begin{equation}
 \wt g_{ii}(u)=(2u-\ley)\left(\frac{p-q[\mp]4u}{p-q-4u} \right) + \ley\left(\frac{p-q[\pm]4u}{p-q-4u} \right) =[\pm]2u\left(\frac{p+q-4u}{p-q-4u} \right)=2u\cdot\mathscr{g}(u). \label{eq:wtgii2}
\end{equation}
Hence \eqref{HWT:tensors} becomes 
\begin{equation*}
 \wt \ga_i(u)=\begin{cases}
               2u\cdot \mathscr{g}(u)\la_i(u-\ka/2)\la_{-i}(-u+\ka/2) \; &\text{ if } \; 0\leq i\leq \key,\\
               2u\left(\frac{p-q[\pm]4u}{p-q-4u} \right)\la_i(u-\ka/2)\la_{-i}(-u+\ka/2) \; &\text{ if } \; \key+1\leq i\leq n
              \end{cases}
\end{equation*}
for each $i\in \mcI_N^+$: see also Corollary 4.11 of \cite{GRW2}.

In \cite{GRW2} the finite-dimensional irreducible representations of $X(\mfg_N,\mfg_N^\rho)^{tw}$ were classified for $(\mfg_N,\mfg_N^\rho)$ of type CI, DIII and BCD0. In the present paper the corresponding classification results for the extended twisted Yangians $X(\mfg_N,\mfg_N)^{tw}$ of type BCD0 will play an important role, and hence we recall them here.

We emphasize that all the definitions provided in Subsection \ref{subsec:twYa} still apply for the (extended) twisted Yangians of type BCD0: one must just substitute $(p,q)=(N,0)$. In particular, 
the definition of $\mcG(u)$ provided by \eqref{G(u)} collapses to $\mcG(u)=I$. The definitions of $\key$ and $\ley$ given in \eqref{ley-key} also extend  to include pairs of this type, where we have $(\key,\ley)=(n,0)$.

Let $\delta=1$ if $\mfg_N=\mfsp_{N}$ and $\delta=0$ if $\mfg_N=\mfso_N$. 
\begin{thrm}[Theorems 6.5 and 6.6 of \cite{GRW2}]\label{T:BCD0-class}
 Suppose that $\mu(u)=(\mu_i(u))_{i\in \mcI_N^+}$ satisfies the conditions of Proposition \ref{P:nontriv}. Then the irreducible $X(\mfg_N,\mfg_N)^{tw}$-module 
 $V(\mu(u))$ is finite-dimensional if and only if there exists monic polynomials $P_1(u),\ldots,P_n(u)$ such that 
\[
  \frac{\wt \mu_{i-1}(u)}{\wt \mu_i(u)}=\frac{P_i(u+1)}{P_i(u)} \quad \text{ with  }\quad P_i(u)=P_i(-u+n-i+2) \; \text{ for all }\; 2\leq i\leq n, 
\]
and $P_1(u)$ satisfies $P_1(u)=P_1(-u+\ka+2^\delta)$ together with the relation
\begin{align*}
  \frac{\wt \mu_{1}(\ka-u)}{\wt \mu_{2^{1-\delta}}(u)}&=\frac{P_1(u+2^\delta)}{P_1(u)}\cdot \frac{\ka-u}{u} \; \text{ if }\; \mfg_N=\mfg_{2n}, \\
  \frac{\wt \mu_{0}(u)}{\wt \mu_{1}(u)}&=\frac{P_1(u+\tfrac{1}{2})}{P_1(u)} \; \text{ if }\; \mfg_N=\mfso_{2n+1}.
\end{align*}
Moreover, when $V(\mu(u))$ is finite-dimensional the associated tuple $(P_1(u),\ldots,P_n(u))$ is unique.
\end{thrm}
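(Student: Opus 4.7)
The plan is to prove necessity and sufficiency separately, broadly following the approach used by A. Molev to classify finite-dimensional irreducible representations of type A twisted Yangians, adapted to accommodate the symmetry relation \eqref{TX-symm} of the orthogonal-symplectic twisted Yangians.

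For necessity, suppose $V(\mu(u))$ is finite-dimensional. First, I would use the embedding $\mfU\mfg_N \hookrightarrow X(\mfg_N,\mfg_N)^{tw}$ given by \eqref{grho->X} (which in the BCD0 case, where $\mcG=I$, becomes $2F_{ij}\mapsto s_{ij}^{(1)}$) to view the cyclic $\mfg_N$-submodule $\mfU\mfg_N\cdot \eta$ as a finite-dimensional highest weight module; the classification \eqref{g-class} then imposes integrality conditions on the leading coefficients of the $\mu_i(u)$. Next, I would establish the polynomial form of the ratios $\wt\mu_{i-1}(u)/\wt\mu_i(u)$ via rank reduction: for each $i\geq 2$, isolate a suitable subalgebra of $X(\mfg_N,\mfg_N)^{tw}$ generated by the matrix coefficients $s_{jk}(u)$ with $j,k\in\{\pm(i-1),\pm i\}$ and reduce the analysis to a type A reflection-algebra setting where the classification is already known. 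The polynomial $P_1(u)$ requires separate treatment in each of the three subcases $\mfg_N=\mfsp_{2n},\mfso_{2n+1},\mfso_{2n}$ using appropriate low-rank sub-twisted-Yangians. Finally, the symmetry constraints $P_i(u)=P_i(-u+n-i+2)$ and $P_1(u)=P_1(-u+\ka+2^\delta)$ would be extracted from \eqref{TX-symm}, which pairs $s_{ii}(u)$ and $s_{-i,-i}(\ka-u)$ up to trace terms and, after translating via \eqref{tilde-mu(u)}, forces each $P_i$ to be invariant under the stated reflection.

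For sufficiency, given polynomials $P_1,\ldots,P_n$ satisfying the prescribed symmetries, the idea is to realize $V(\mu(u))$ as a subquotient of the restriction of a suitably chosen finite-dimensional $X(\mfg_N)$-module $L(\la(u))$ to $X(\mfg_N,\mfg_N)^{tw}$ via the coideal structure. Using Theorem \ref{T:X-class}, one constructs $\la(u)$ whose Drinfeld polynomials encode the $P_i$, and the tensor product formula \eqref{HWT:tensors}, applied to the trivial module $V(\mcG)=V(I)$ (for which $\wt \mu_i(u)=2u$ by \eqref{eq:wtgii1}-\eqref{eq:wtgii2}), produces a highest weight for the cyclic $X(\mfg_N,\mfg_N)^{tw}$-submodule of $L(\la(u))$ whose ratios match the desired $P_i(u+1)/P_i(u)$. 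The invariance of each $P_i$ under the prescribed reflection is precisely the algebraic condition needed to align the doubled product $\la_i(u-\ka/2)\la_{-i}(-u+\ka/2)$ with the constraints of Proposition \ref{P:nontriv}, and the finite-dimensionality of $L(\la(u))$ transfers to the irreducible quotient of the cyclic submodule, which is isomorphic to $V(\mu(u))$.

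I expect the main obstacle to lie in the sufficiency direction: constructing $\la(u)$ so that the doubled expression matches the prescribed polynomials exactly while simultaneously respecting the non-triviality conditions \eqref{HWT:Ext.non-trivial} is more delicate than in its type A counterpart, since the reflection symmetry imposed on each $P_i$ must be precisely compatible with the shifted evaluations $u-\ka/2$ and $-u+\ka/2$. A secondary challenge arises in the necessity direction, where the rank-reduction step must contend with the non-trivial rank-one term $Q$ in the $R$-matrix, which has no analog in type A and complicates the identification of suitable sub-reflection-algebras corresponding to each index $i$.
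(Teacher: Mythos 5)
This theorem is cited verbatim from [GRW2, Theorems 6.5 and 6.6]; the present paper does not contain a proof of it, so there is no ``paper's own proof'' here to compare against directly. Based on what the citations in this paper reveal about GRW2's argument, your overall blueprint is correct: necessity via rank reduction to known classifications and sufficiency via restriction of a suitable finite-dimensional $X(\mfg_N)$-module along the coideal inclusion. The use of Proposition 4.18 of GRW2 in the proof of Proposition~\ref{P:necessary}, and the references in Lemmas~\ref{L:unique} and~\ref{L:QxP=QP} to the $(\Longleftarrow)$ direction of GRW2's Theorem 6.2, confirm this is the shape of the argument.

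One concrete point to flag. You propose to extract the polynomial structure of $\wt\mu_{i-1}(u)/\wt\mu_i(u)$ by isolating a subalgebra generated by the $s_{jk}(u)$ with $j,k\in\{\pm(i-1),\pm i\}$. Because the $Q$-term in the $R$-matrix couples each index $j$ to $-j$ (recall $Q=\sum_{i,j}\theta_{ij}\,E_{-j,-i}\ot E_{ji}$), any index set that is closed under $j\mapsto -j$ does \emph{not} yield a clean sub-reflection-algebra: the $Q$-contributions do not close on the chosen block. The mechanism actually used in GRW2 (as one can infer from Proposition~\ref{P:no1dim} here and from Subsection~4.3 of \cite{GRW2}) is to pass instead to the Molev-Ragoucy reflection algebra $\mcB(n,\ley)$ built from the generators $s_{ij}(u)$ with \emph{only positive} indices $1\le i,j\le n$. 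On that subspace the $Q$-term vanishes identically because its left tensor factor $E_{-j,-i}$ always carries negative indices, which sidesteps precisely the obstruction you anticipate. The boundary polynomial $P_1(u)$ and the reflection symmetries $P_i(u)=P_i(-u+n-i+2)$, $P_1(u)=P_1(-u+\ka+2^\delta)$ are then obtained from the symmetry relation \eqref{TX-symm}, the centre $w(u)$, and the non-triviality condition \eqref{nontriv.2} in the odd case, rather than directly from a rank-two sub-twisted-Yangian. Your sufficiency argument, by contrast, is essentially correct and parallels the $(\Longleftarrow)$ direction of Theorem~\ref{T:DI(a)-Class}, with the simplification that no nontrivial one-dimensional twist is needed in the BCD0 case since $V(\mcG)$ is, up to $\nu_g$-twist, the unique one-dimensional representation (Proposition~\ref{P:no1dim}).
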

We end this subsection by noting that the isomorphism \eqref{X=Z*Y} together with the definition of $Y(\mfg_N,\mcG)^{tw}$ implies the following statement:
\begin{prop} \label{P:Y^tw-fd}
The isomorphism classes of finite-dimensional irreducible representations of $Y(\mfg_N,\mcG)^{tw}$ can be naturally identified with the isomorphism classes of finite-dimensional irreducible $X(\mfg_N,\mcG)^{tw}$-modules in which the central series $w(u)$ acts as the identity operator. 
\end{prop}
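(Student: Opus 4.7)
The plan is to use the presentation $Y(\mfg_N,\mcG)^{tw}=X(\mfg_N,\mcG)^{tw}/(w(u)-1)$ recorded in \eqref{Y=X/(w-1)} to establish a bijection between the two families of isomorphism classes. The guiding principle is that modules for a quotient algebra are precisely modules for the original algebra on which the imposed relations act as zero.

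The first step is to verify that every finite-dimensional irreducible $Y(\mfg_N,\mcG)^{tw}$-module $V$ pulls back, along the canonical surjection $\pi:X(\mfg_N,\mcG)^{tw}\twoheadrightarrow Y(\mfg_N,\mcG)^{tw}$, to a finite-dimensional $X(\mfg_N,\mcG)^{tw}$-module on which $w(u)$ acts as the identity. Since $\pi$ is surjective, the lattice of $X(\mfg_N,\mcG)^{tw}$-submodules of $V$ coincides with the lattice of $Y(\mfg_N,\mcG)^{tw}$-submodules, so irreducibility is preserved.

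Conversely, given a finite-dimensional irreducible $X(\mfg_N,\mcG)^{tw}$-module $V$ on which $w(u)$ acts as the identity, all coefficients of $w(u)-1$ annihilate $V$, so the $X(\mfg_N,\mcG)^{tw}$-action factors through $Y(\mfg_N,\mcG)^{tw}$; the resulting module is again irreducible by the same lattice argument, and remains finite-dimensional since the underlying vector space does not change. These two constructions are manifestly mutually inverse on isomorphism classes, giving the desired identification.

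A more structural perspective is provided by the tensor decomposition \eqref{X=Z*Y}: the even coefficients $w_{2r}$ ($r\geq 1$) generate $Z(\mfg_N,\mcG)^{tw}$, and being central they act as scalars on any finite-dimensional irreducible $X(\mfg_N,\mcG)^{tw}$-module. Such a module therefore factors as the tensor product of a character of $Z(\mfg_N,\mcG)^{tw}$ with a finite-dimensional irreducible $Y(\mfg_N,\mcG)^{tw}$-module, and the condition $w(u)\mapsto 1$ singles out precisely those $X(\mfg_N,\mcG)^{tw}$-modules whose $Z$-character is trivial. No substantial obstacle is anticipated; the main content of the proof is the formal correspondence between modules for a quotient algebra and modules for the original algebra on which the imposed relations vanish, combined with the obvious preservation of both irreducibility and finite-dimensionality.
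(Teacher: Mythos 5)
Your proposal is correct and takes essentially the same approach as the paper, which gives no further proof beyond remarking that the statement follows from the definition $Y(\mfg_N,\mcG)^{tw}=X(\mfg_N,\mcG)^{tw}/(w(u)-1)$ together with the tensor decomposition $X(\mfg_N,\mcG)^{tw}\cong Z(\mfg_N,\mcG)^{tw}\otimes Y(\mfg_N,\mcG)^{tw}$; your elementary quotient-algebra argument and your ``structural'' paragraph via \eqref{X=Z*Y} together recover exactly what the authors had in mind.
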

%

\subsection{Preliminary properties of polynomials}\hspace{0mm}

In this subsection we prove two elementary results pertaining to polynomials satisfying certain symmetry relations: see Lemmas \ref{L:poly1} and \ref{L:poly2}. Both of these lemmas are generalizations of similar results which have appeared in Chapters 3 and 4 of \cite{Mobook}. As was the case in {\it loc.~cit.}, these results play a role in classifying the finite-dimensional irreducible representations of twisted Yangians. 
\begin{lemma}\label{L:poly1}
 Let $\alpha,\beta\in \C$, $l\in \Z$ and $m\in \mathbb{Q}$. Suppose that $P(u)$ and $Q(u)$ are both monic polynomials such that $P(u)=P(-u+l)$ and $Q(u)=Q(-u+l)$. Suppose
 also that $P(\alpha)\neq 0\neq Q(\beta)$ and that 
 \begin{equation}
  \frac{P(u+m)}{P(u)}\cdot \frac{\alpha-u}{\alpha+u-l+m}=\frac{Q(u+m)}{Q(u)}\cdot \frac{\beta-u}{\beta+u-l+m}. \label{poly1}
 \end{equation}
Then $P(u)=Q(u)$ and $\alpha=\beta$. 
\end{lemma}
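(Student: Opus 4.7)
The plan is to clear denominators in \eqref{poly1}, use the symmetry $P(u)=P(-u+l)$ to constrain the degrees and subleading coefficients of $P$ and $Q$, compare top-order coefficients to relate $\alpha-\beta$ to $\deg P-\deg Q$, and in the residual case of unequal degrees, derive a contradiction via a telescoping analysis of orders of vanishing. Throughout I implicitly assume $m\neq 0$, as the conclusion $P=Q$ cannot hold when $m=0$ (then $P$ and $Q$ do not even enter \eqref{poly1}).

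First, I would clear denominators in \eqref{poly1} to obtain the polynomial identity
\begin{equation*}
 P(u+m)\,Q(u)\,(\alpha-u)(\beta+u-l+m) = Q(u+m)\,P(u)\,(\beta-u)(\alpha+u-l+m). \tag{$\star$}
\end{equation*}
Setting $p=\deg P$ and $q=\deg Q$, comparing leading coefficients in $P(u)=P(-u+l)$ forces $p$ to be even, and similarly for $q$; the same symmetry pins down the subleading coefficient of $P$ as $-pl/2$, and analogously for $Q$. The hypothesis is invariant under the swap $(P,\alpha)\leftrightarrow(Q,\beta)$, so I may assume $p\ge q$. Comparing the $u^{p+q+1}$-coefficients on the two sides of $(\star)$ then yields $\alpha-\beta=(p-q)m/2$. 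In particular, if $p=q$, then $\alpha=\beta$ and $(\star)$ reduces to $P(u+m)/P(u)=Q(u+m)/Q(u)$; that is, $P/Q$ is invariant under $u\mapsto u+m$. Since $m\neq 0$, periodicity of a rational function forces it to be constant, and monicity gives $P=Q$.

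The remaining work is to eliminate the case $p>q$. Put $n=(p-q)/2\ge 1$, so $\alpha=\beta+nm$. For each $z\in\C$, let $m_P(z)$ and $m_Q(z)$ denote the orders of vanishing of $P$ and $Q$ at $z$, and set $\Delta(z)=m_P(z)-m_Q(z)$. Comparing the orders of vanishing of both sides of $(\star)$ at an arbitrary $z\in\C$ gives the recursion
\begin{equation*}
 \Delta(z+m)-\Delta(z) = \mathbbm{1}_{z=\beta} + \mathbbm{1}_{z=l-m-\alpha} - \mathbbm{1}_{z=\alpha} - \mathbbm{1}_{z=l-m-\beta} \qquad \text{for all } z\in\C.
\end{equation*}
Since $\Delta$ has finite support, telescoping this relation along $\{\alpha+km:k\ge 0\}$ yields $\Delta(\alpha)=-\sum_{k\ge 0}\phi(\alpha+km)$, where $\phi(z)$ denotes the right-hand side above. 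The sum has at most four non-zero contributions: from $k=0$ (at $z=\alpha$, giving $-1$); from $k=-n<0$ (at $z=\beta$, not in the sum since $n\ge 1$); and from the points $l-m-\alpha$ and $l-m-\beta$, which, when they lie in $\alpha+m\mathbb{Z}_{\ge 0}$, occur at indices $k_0$ and $k_0+n$ respectively, with $k_0=(l-m-2\alpha)/m$. The crucial observation is that $l-m-\beta=(l-m-\alpha)+nm$ with $n\ge 1$, so any contribution from $l-m-\alpha$ is paired with one from $l-m-\beta$ at a higher index, and a short case analysis on whether $k_0\in\mathbb{Z}$ (and on the sign of $k_0$ relative to $0$ and $-n$) gives $\Delta(\alpha)\ge 1$ in every case. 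Consequently $m_P(\alpha)\ge m_Q(\alpha)+1\ge 1$, so $P(\alpha)=0$, contradicting the hypothesis. Therefore $p=q$, and the earlier argument finishes the proof.

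The main obstacle will be the final telescoping step: the algebraic manipulations leading to $(\star)$ and the identity $\alpha-\beta=(p-q)m/2$ are routine, but one must correctly translate $(\star)$ into the recursion for $\Delta$ and then carefully track which indicator terms actually contribute to the sum over $\alpha+m\mathbb{Z}_{\ge 0}$, making proper use of the pairing between $l-m-\alpha$ and $l-m-\beta$ noted above.
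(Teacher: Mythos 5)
Your proof is correct but follows a genuinely different route from the paper. The paper's argument is a degree induction: after noting that $\alpha=\beta$ forces $P/Q$ to be constant by periodicity, it proves $\alpha=\beta$ by induction on $\tfrac{1}{2}(\deg P+\deg Q)$ — reducing to $P,Q$ coprime, locating a root $\mu_0$ of $P$ with $\mu_0+m$ not a root, showing $\mu_0=l-m-\beta$, stripping the symmetric factor $(u-\mu_0)(u+\mu_0-l)$ from $P$, and invoking the inductive hypothesis (with $\beta$ replaced by $\beta'=\beta+m$) to contradict $P(\alpha)\neq 0$. You instead pin down $\alpha-\beta=(p-q)m/2$ directly by comparing $u^{p+q+1}$-coefficients of the cleared-denominator polynomial identity — an explicit degree formula the paper's induction never surfaces — and then dispose of the residual case $p>q$ by telescoping vanishing orders along the arithmetic progression $\alpha+m\Z_{\geq 0}$, which forces $m_P(\alpha)\geq 1$ and again contradicts $P(\alpha)\neq 0$. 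I checked the telescoping case analysis on $k_0=(l-m-2\alpha)/m$: since $k_0\in\Z_{\geq 0}$ implies $k_0\geq -n$, the sum $\sum_{k\geq 0}\phi(\alpha+km)$ is $\leq -1$ in all four subcases, so your conclusion $\Delta(\alpha)\geq 1$ is sound. Both proofs tacitly require $m\neq 0$ (periodicity/telescoping break otherwise, and the lemma is false when $m=0$); you rightly flag this assumption while the paper leaves it implicit. Your version is more computational and yields the degree identity as a by-product; the paper's induction is a bit shorter because the recursive step automatically handles the bookkeeping that your case analysis performs by hand.
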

 
\begin{proof}
This is a generalization of a result proven as part of the proof of Theorem 4.4.3 of \cite{Mobook} (see in particular (4.58)). There the statement of the lemma was proven in the special case where $l=m=1$. The same argument works in the general case, and we repeat it here for the sake of the reader. 
 
If $\alpha=\beta$ then $\frac{P(u+m)}{P(u)}=\frac{Q(u+m)}{Q(u)}$, which implies that the rational function $f(u)=Q(u)/P(u)$ is periodic. This is impossible unless
 $f(u)$ is constant, and,  since $P(u)$ and $Q(u)$ are monic, this is only possible if $f(u)=1$. Hence $P(u)=Q(u)$. 
 
Therefore it suffices to show that an equality of the form \eqref{poly1} is impossible unless $\alpha=\beta$. We prove this by induction on $k$, where
 $k=\frac{1}{2}(\deg\,P(u)+\deg\,Q(u))$. If $k=0$ then this follows from the fact that \eqref{poly1} collapses to $\frac{\alpha-u}{\alpha+u-l+m}=\frac{\beta-u}{\beta+u-l+m}$.
 Suppose inductively that \eqref{poly1} is impossible whenever $\alpha\neq \beta$ and $k<M$ for some $M\in \mathbb{N}$. Assume now $k=M$. By symmetry, we may assume without loss of generality 
 that $\deg\, P(u)\geq 2$. Additionally, without loss of generality we may assume that $P(u)$ and $Q(u)$ have no common roots. Let $u_0$ be a root of $P(u)$ such that $u_0+m$ is not a root. Then \eqref{poly1} implies $u_0=l-m-\beta$. 
 Write $P(u)=P'(u)(u-u_0)(u+u_0-l)$ and set $\beta'=l-u_0$. Then we have 
  \begin{equation*}
  \frac{P'(u+m)}{P'(u)}\cdot \frac{\alpha-u}{\alpha+u-l+m}=\frac{Q(u+m)}{Q(u)}\cdot \frac{\beta'-u}{\beta'+u-l+m},
 \end{equation*}
 and $\alpha\neq \beta'$ due to the fact that $P(\beta')=0$. By the induction hypothesis, this is impossible. 
\end{proof}

\begin{lemma}\label{L:poly2}
Let $\alpha\in \C$, $l\in \Z$ and $m\in \mathbb{Q}$. Suppose that $P(u)$ is a monic polynomial such that $P(u)=P(-u+l)$. Then there exists a pair $(\ell_\al^m,P_\al^m(u))$, where $\ell_\al^m\in \Z_{\geq 0}$ and
$P_\al^m(u)$ is a monic polynomial  such that $P_\al^m(u)=P_\al^m(-u+l)$, satisfying
\begin{equation}
 \frac{P(u+m)}{P(u)}\cdot \frac{\alpha-u}{\alpha+u-l+m}=\frac{P_\al^m(u+m)}{P_\al^m(u)}\cdot \frac{(\alpha-m\,\ell_\al^m)-u}{(\alpha-m\,\ell_\al^m)+u-l+m} \; \text{ and }\; P_\al^m(\alpha-m\,\ell_\al^m)\neq 0. \label{poly2} 
\end{equation}
Moreover, the pair $(\ell_\al^m,P_\al^m(u))$ is unique with $P_\al^m(u)$ equal to $P(u)$ divided by 
\begin{equation}
Q(u)=\prod_{k=0}^{\ell_\al^m-1}(u-\alpha+km)(u-l+\alpha-km). \label{poly1'}
\end{equation}
\end{lemma}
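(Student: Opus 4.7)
The plan is to prove existence by induction on the degree of $P(u)$, and to deduce uniqueness directly from Lemma \ref{L:poly1}. The explicit formula for $Q(u)$ will fall out by tracking the factors removed at each inductive step.

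For existence, if $P(\alpha)\ne 0$ we simply take $\ell_\al^m=0$ and $P_\al^m(u)=P(u)$. Otherwise, the key observation is that the symmetry $P(u)=P(-u+l)$ forces the factor $(u-\alpha)(u-l+\alpha)$ to divide $P(u)$: indeed, $P(l-\alpha)=P(\alpha)=0$, and in the degenerate case $\alpha=l/2$ a short argument using $P(u)=P(l-u)$ shows that $u-l/2$ must in fact be a double root. Writing $P(u)=(u-\alpha)(u-l+\alpha)\,P'(u)$, the symmetry of $P$ and of the quadratic factor imply $P'(u)=P'(-u+l)$. A direct computation then gives
\[
\frac{P(u+m)}{P(u)}\cdot\frac{\alpha-u}{\alpha+u-l+m}
=\frac{P'(u+m)}{P'(u)}\cdot\frac{(\alpha-m)-u}{(\alpha-m)+u-l+m},
\]
since the factor $(u+m-l+\alpha)$ in the numerator cancels against $(\alpha+u-l+m)$ in the denominator, and $(\alpha-u)/(u-\alpha)=-1$ combines with $-(u+m-\alpha)$ to produce $(\alpha-m)-u$. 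This reduces the problem to the pair $(\alpha-m,P'(u))$ with $\deg P'(u)=\deg P(u)-2$, so the induction hypothesis yields the required pair $(\ell_\al^m,P_\al^m(u))$ for $(\alpha,P(u))$ by setting $\ell_\al^m=\ell_{\alpha-m}^{m}(P')+1$ and $P_\al^m(u)=P'{}_{\alpha-m}^m(u)$.

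For uniqueness, suppose $(\ell_1,P_1(u))$ and $(\ell_2,P_2(u))$ both satisfy \eqref{poly2}, so that
\[
\frac{P_1(u+m)}{P_1(u)}\cdot\frac{(\alpha-m\ell_1)-u}{(\alpha-m\ell_1)+u-l+m}
=\frac{P_2(u+m)}{P_2(u)}\cdot\frac{(\alpha-m\ell_2)-u}{(\alpha-m\ell_2)+u-l+m},
\]
with $P_1(\alpha-m\ell_1)\ne 0\ne P_2(\alpha-m\ell_2)$. Lemma \ref{L:poly1}, applied with the scalars $\alpha-m\ell_1$ and $\alpha-m\ell_2$, then forces $P_1(u)=P_2(u)$ and $\alpha-m\ell_1=\alpha-m\ell_2$, hence $\ell_1=\ell_2$.

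Finally, the explicit form of $Q(u)$ follows from unwinding the induction: at step $k$ (for $0\le k\le \ell_\al^m-1$) the quadratic factor $(u-(\alpha-km))(u-l+(\alpha-km))$ is removed, and the total product of these factors is precisely $\prod_{k=0}^{\ell_\al^m-1}(u-\alpha+km)(u-l+\alpha-km)$, giving $P(u)=Q(u)\,P_\al^m(u)$. The main technical point, and the only place requiring real care, is verifying that the quadratic factor genuinely divides $P(u)$ in the coincident case $\alpha=l/2$; once this is secured, the rest of the argument is a straightforward induction paired with an appeal to the previous lemma.
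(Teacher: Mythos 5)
Your proof is correct and follows essentially the same approach as the paper: both iteratively strip off the symmetric quadratic factor $(u-\alpha+km)(u-l+\alpha-km)$ until the current value $\alpha-km$ ceases to be a root, and both deduce uniqueness from Lemma \ref{L:poly1}. The only organizational difference is that you run this as an explicit induction on $\deg P$, whereas the paper defines $P^{(a)}(u)$ and the stopping index $\ell$ up front and then verifies the telescoping identity in one step. Your treatment is slightly more careful on one point the paper leaves implicit: you explicitly justify that the quadratic factor genuinely divides $P(u)$ even in the coincident case $\alpha=l/2$ (via the derivative argument $P'(l/2)=-P'(l/2)=0$), which is needed to keep the inductive quotient a polynomial.
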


\begin{proof}
We first define the pair $(\ell_\al^m,P_\al^m(u))$ and show that it satisfies the desired properties. For each $a\geq 0$, set
\begin{equation}
 P^{(a)}(u)=\frac{P(u)}{\prod_{k=0}^{a-1}(u-\alpha+km)(u-l+\alpha-km)}\in \C(u), \label{def:P^a}
\end{equation}
where $P^{(0)}(u)=P(u)$. Note that $P^{(a)}(u)$ will be a monic polynomial in $u$ satisfying $P^{(a)}(u)=P^{(a)}(-u+l)$ whenever $P(u)$ is divisible by $\prod_{k=0}^{a-1}(u-\alpha+km)(u-l+\alpha-km)$. Define
\[
\ell=\begin{cases}
      0 \;\text{ if } \;P(\alpha)\neq 0,\\
      \min_{k\geq 1}\{P^{(k-1)}(\alpha-(k-1)m)=0,\,P^{(k)}(\alpha-km)\neq 0 \} \; \text{ otherwise.}
     \end{cases}
\]
It a straightforward consequence of the above definitions that $P^{(\ell)}(u)$ is a monic polynomial in $u$ satisfying $P^{(\ell)}(u)=P^{(\ell)}(-u+l)$.
%
%
We may now set $\ell_\al^m=\ell$ and $P_\al^m(u)=P^{(\ell)}(u)$. It remains to verify \eqref{poly2}. By the definition of $\ell_\al^m$ we have $P_\al^m(\alpha-m\,\ell_\al^m)\neq 0$. Moreover, 
\begin{align*}
 \frac{P(u+m)}{P(u)}&=\frac{P_\al^m(u+m)}{P_\al^m(u)}\cdot \frac{\prod_{k=0}^{\ell_\al^m-1}(u-\alpha+(k+1)m)(u-l+\alpha-(k-1)m)}{\prod_{k=0}^{\ell_{\al}^m-1}(u-\alpha+km)(u-l+\alpha-km)}\\
                    &=\frac{P_\al^m(u+m)}{P_\al^m(u)}\cdot \frac{(\alpha-\ell_\al^m m)-u}{(\alpha-\ell_\al^m m)+u-l+m}\cdot \frac{\alpha+u-l+m}{\alpha-u},
\end{align*}
which implies that \eqref{poly2} holds. 

Finally, note that the uniqueness of $(\ell_\al^m,P_\al^m(u))$ is an immediate corollary of Lemma \ref{L:poly1}. 
\end{proof}

The statement of Lemma \ref{L:poly2} did not explicitly appear in \cite{Mobook}, but similar ideas were needed in the proof of Theorem 4.4.14 in {\it loc.~cit.} 
Note that in both of these lemmas the assumption that $m\in \mathbb{Q}$ is not necessary. We will however only be concerned with this case.

Let us end this subsection by introducing some notation related to polynomials: 

\begin{itemize}

\item If $P(u)$ is a polynomial in $u$, we denote by $Z(P(u))$ its zero set. 

\item Given $\alpha,\beta \in \C$ such that $\alpha-\beta\in \Z$, we define the string $S(\alpha,\beta)$ corresponding to $(\alpha,\beta)$ to be the~set
\begin{equation}\label{string}
 S(\alpha,\beta)=\begin{cases}
                  \{\beta,\beta+1,\ldots,\alpha-1\} & \text{ if } \alpha-\beta\in \Z_{>0}, \\ 
                  \emptyset & \text{ otherwise. } 
                 \end{cases}
\end{equation}

\end{itemize}


\section{Representations of low rank twisted Yangians of type BDI} \label{sec:lowrank}

In this section, we use the classification results for finite-dimensional irreducible representations of the Olshanskii twisted Yangian $Y^+(2)$ \cite{Mo1} together with the isomorphisms from \cite{GRW1} to classify all finite-dimensional irreducible representations of the twisted Yangians $X(\mfso_4,\mfso_2\op\mfso_2)^{tw}$ and $X(\mfso_3,\mfso_2)^{tw}$. The classification obtained for $X(\mfso_3,\mfso_2)^{tw}$ will provide a necessary step in proving the main results in Subsection \ref{subsec:q=1}. Here we also obtain explicit formulas for evaluation morphisms $X(\mfso_4,\mfso_2\op\mfso_2)^{tw} \onto \mfU(\mfso_2\op\mfso_2)$ and $X(\mfso_3,\mfso_2)^{tw} \onto \mfU\mfso_2$, and study the corresponding evaluation modules. 

In order to distinguish between the generators of $X(\mfg_N,\mfg_N^\rho)^{tw}$ and those of $Y^+(2)$, we shall follow the convention established of \cite{GRW1} and denote the 
generators of $Y^+(2)$ by $s^{\circ (r)}_{ij}$, where $i,j\in \{\pm1\}$ and $r\geq0$. These generators are then arranged as the coefficients of the various series' $s_{ij}^\circ(u)$, which 
in turn form the $(i,j)^{th}$ entry of the matrix $S^\circ(u)$. 
Similarly, the generators of the special twisted Yangian $SY^+(2)$ are denoted by $\si^{\circ(r)}_{ij}$, and the
corresponding series and matrix are denoted by $\si^\circ_{ij}(u)$ and $\Si^\circ(u)$, respectively. The defining relations of $Y^+(2)$ are the reflection equation 
\[ 
R^{\circ}(u-v)\, S_1^{\circ}(u)\, R^{\circ t}(-u-v)\, S_2^{\circ}(v) = S_2^{\circ}(v)\, R^{\circ t}(-u-v)\, S_1^{\circ}(u)\, R^{\circ}(u-v) ,
\] 
where $R^{\circ}(u) = 1 - u^{-1} \sum_{i,j\in\mcI_2} E_{ij} \ot E_{ji}$ and the symmetry relation
\eq{
S^{\circ t}(-u) = S^\circ(u) + \frac{S^\circ(u)-S^\circ(-u)}{2u} \,. \label{Ols:symm}
}
The special twisted Yangian $SY^+(2)$ is the quotient of $Y^+(2)$ by the ideal generated by the coefficients of $\mathrm{sdet}S^\circ(u)-1$, where $ \mathrm{sdet}S^\circ(u)$ is the Sklyanin determinant \cite{Mobook}.

We now recall the classification results for finite-dimensional irreducible representations of $Y^+(2)$ and $SY^+(2)$.
A representation $V$ of $Y^+(2)$ is called a \textit{highest weight representation} if there exists a nonzero vector $\xi\in V$ such that $V=Y^+(2)\,\xi$, $s_{-1,1}^{\circ}(u)\,\xi=0$ and $s^\circ_{11}(u)\,\xi=\mu^\circ(u)\,\xi$ for some formal series $\mu^\circ(u)\in 1+u^{-1}\C[[u^{-1}]]$. As usual, we call $\mu^\circ(u)$ the highest weight of $V$, and the vector $\xi$ the highest weight vector. These same definitions can be given in the $SY^+(2)$ setting after replacing $s_{ab}^\circ(u)$ with $\sigma^\circ_{ab}(u)$ for each $a,b\in \{-1,1\}$. 

%

Given $\mu^\circ(u)\in 1+u^{-1}\C[[u^{-1}]]$, the Verma module $M(\mu^\circ(u))$ for $Y^+(2)$ is defined the same way as for $X(\mfg_N)$ and $X(\mfg_N,\mcG)^{tw}$, and is always non-trivial.
It admits a unique irreducible quotient $V(\mu^\circ(u))$, and any irreducible highest weight module with the highest weight $\mu^\circ(u)$ is isomorphic to $V(\mu^\circ(u))$. We will also employ the notation $M(\mu^\circ(u))$ and $V(\mu^\circ(u))$ for the $SY^+(2)$ Verma module of highest weight $\mu^\circ(u)$ and its irreducible quotient, respectively. In the $SY^+(2)$ case, the Verma module $M(\mu^\circ(u))$ is non-trivial provided that $\mu^\circ(-u)\mu^\circ(u-1)=1$.  The distinction between $SY^+(2)$ and $Y^+(2)$ will always be clear from context.  

The following classification result is a restatement of Theorems 4.4 and 5.4 of \cite{Mo1} (see also Theorems 4.3.3 and 4.4.3 of \cite{Mobook}): the irreducible $Y^+(2)$-module  $V(\mu^\circ(u))$ is finite-dimensional if and only if there exists a scalar $\ga^\circ\in \C$ together with a monic polynomial $P^\circ(u)$ such that $P^\circ(u)=P^\circ(-u+1)$, $P^\circ(\ga^\circ)\neq 0$, and 
\eq{
\frac{\mu^\circ(-u)}{\mu^\circ(u)}= \frac{2u+1}{2u-1} \cdot \frac{P^\circ(u+1)}{P^\circ(u)}\cdot \frac{u-\ga^\circ}{u+\ga^\circ} \,. \label{Y+2:findim}
}
In this case, the pair $(P^\circ(u),\ga^\circ)$ is unique. The same classification result holds if $Y^+(2)$ is replaced with $SY^+(2)$, and in this case it establishes a bijective correspondence between finite-dimensional irreducible representations and pairs of the form $(P^\circ(u),\ga^\circ)$: see \cite[Corollary 4.4.5]{Mobook}.


\subsection{Twisted Yangians for the symmetric pair \texorpdfstring{$(\mfso_{4},\mfso_{2}\op\mfso_2)$}{}} \label{sec:so4so2so2}

The isomorphism between the twisted Yangian $X(\mfso_4,\mfso_2\oplus\mfso_2)^{tw}$ and the tensor product of Olshanskii twisted Yangians $SY^+(2)\ot Y^+(2)$ was established in Proposition 4.16 and Corollary 4.17 of \cite{GRW1}. Let $K=E_{11}-E_{-1,-1} \in \End(\C^2)$ and let $\Sigma^\circ(u)$ denote the $S$-matrix of $SY^+(2)$ and $S^\bullet(u)$ that of $Y^+(2)$. Set $V=\C^2\otimes \C^2$ with ordered basis given by $v_{-2}=e_{-1}\otimes e_{-1}$, $v_{-1}=e_{-1}\otimes e_{1}$, $v_1=e_1\otimes e_{-1}$ and $v_2=-e_1\otimes e_1$. By identifying $V$ with $\C^4$ equipped with basis $\{v_{-2},v_{-1},v_1,v_2\}$, we can consider $S(u)$ as an element of $\End\,V\otimes X(\mfso_4,\mfso_2\op\mfso_2)^{tw}[[u^{-1}]]$.  Then the map
\eq{
\chi \;:\; S(u) \mapsto -\Sigma_1^\circ(u-1/2) K_1 S^\bullet_2(u-1/2) K_2 \label{iso:so4}
}
defines an isomorphism $X(\mfso_4,\mfso_2\oplus\mfso_2)^{tw} \cong SY^+(2)\ot Y^+(2)$: see Section E in \textit{loc.~cit.} for the precise meaning of the right-hand side of \eqref{iso:so4}. The sign difference between \eqref{iso:so4} and (4.59) of \cite{GRW1} is due to the fact that the matrix $\mcG$ that we use equals the matrix $-\mcG'$ used in \textit{loc.~cit.} We will use this result to obtain a complete description of the finite-dimensional irreducible representations of $X(\mfso_4,\mfso_2\oplus\mfso_2)^{tw}$ using those of $SY^+(2)$ and $Y^+(2)$ as recalled above.

\begin{prop}\label{P:so4class}
Let the components of $\mu(u) = (\mu_1(u),\mu_2(u))$ satisfy the conditions of Proposition \ref{P:nontriv} so that the irreducible $X(\mfso_4,\mfso_2\op\mfso_2)^{tw}$-module $V(\mu(u))$ exists. Then $V(\mu(u))$ is finite-dimensional if and only if there exists 
a tuple $(Q(u),P(u),\alpha,\beta)$, where $\alpha,\beta\in \C$ and $P(u)$, $Q(u)$ are monic polynomials in $u$, such that $P(u)=P(-u+2)$, $Q(u)=Q(-u+2)$, $P(\alpha)\neq 0$, $Q(\beta)\neq 0$, and 
\[
 \frac{\widetilde{\mu}_1(u)}{\widetilde{\mu}_2(u)}=\frac{P(u+1)}{P(u)}\cdot \frac{\alpha-u}{\alpha+u-1} , \qq
 \frac{\widetilde{\mu}_1(1-u)}{\widetilde{\mu}_2(u)}=\frac{u}{1-u}\cdot\frac{Q(u+1)}{Q(u)}\cdot \frac{\beta-u}{\beta+u-1} .
\]
Moreover, when they exist, the pair $(Q(u), P (u))$ and the scalars $\al,\beta$ are uniquely determined.
\end{prop}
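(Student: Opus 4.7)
The approach is to exploit the isomorphism $\chi$ of \eqref{iso:so4} to transfer the problem to $SY^+(2)\otimes Y^+(2)$, whose finite-dimensional irreducible modules are precisely tensor products of finite-dimensional irreducibles of the factors. Since the finite-dimensional irreducibles of $SY^+(2)$ and $Y^+(2)$ are classified by pairs $(P^\circ(u),\gamma^\circ)$ and $(P^\bullet(u),\gamma^\bullet)$ respectively (see \eqref{Y+2:findim} and the surrounding discussion), the work reduces to translating this data into the tuple $(Q(u),P(u),\alpha,\beta)$ of the proposition.

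First I would work out $\chi(s_{ii}(u))$ for $i\in\{1,2\}$ by applying the matrix $-\Sigma^\circ_1(u-1/2)\,K_1\,S^\bullet_2(u-1/2)\,K_2$ to the basis vectors $v_1=e_1\ot e_{-1}$ and $v_2=-e_1\ot e_1$ and extracting the diagonal coefficients. Tracking the signs introduced by conjugation with $K_1,K_2$ and by the identification of $v_2$, this yields
\[
\chi(s_{11}(u))=\sigma^\circ_{11}(u-\tfrac12)\,s^\bullet_{-1,-1}(u-\tfrac12),\qq
\chi(s_{22}(u))=-\sigma^\circ_{11}(u-\tfrac12)\,s^\bullet_{11}(u-\tfrac12).
\]
Evaluating on $\eta=\eta^\circ\ot\eta^\bullet$, where $\eta^\circ,\eta^\bullet$ are highest weight vectors of $V^\circ(\nu^\circ(u))$ and $V^\bullet(\mu^\bullet(u))$, and using \eqref{Ols:symm} to replace $s^\bullet_{-1,-1}(u)\eta^\bullet$ by a linear combination of $\mu^\bullet(\pm u)\eta^\bullet$, I would obtain explicit expressions for $\mu_1(u),\mu_2(u)$. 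After substituting these into \eqref{tilde-mu(u)}, the combination collapses to
\[
\wt\mu_1(u)=(2u-2)\,\nu^\circ(u-\tfrac12)\,\mu^\bullet(\tfrac12-u),\qq
\wt\mu_2(u)=-2u\,\nu^\circ(u-\tfrac12)\,\mu^\bullet(u-\tfrac12).
\]

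The ratio $\wt\mu_1(u)/\wt\mu_2(u)$ eliminates $\nu^\circ$, and substituting the $Y^+(2)$ classification formula \eqref{Y+2:findim} at $v=u-1/2$ produces the first displayed equation of the proposition under the substitutions $P(u)=P^\bullet(u-1/2)$, $\al=\gamma^\bullet+1/2$. Likewise, $\wt\mu_1(1-u)/\wt\mu_2(u)=\nu^\circ(1/2-u)/\nu^\circ(u-1/2)$ gives the second equation under $Q(u)=P^\circ(u-1/2)$, $\beta=\gamma^\circ+1/2$. The symmetry $P^\bullet(v)=P^\bullet(-v+1)$ translates to $P(u)=P(-u+2)$, the non-vanishing condition $P^\bullet(\gamma^\bullet)\ne 0$ to $P(\al)\ne 0$, and analogously for $Q$ and $\beta$. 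Uniqueness of $(Q,P,\al,\beta)$ follows from the bijectivity of these shift substitutions combined with the uniqueness of $(P^\circ,\gamma^\circ)$ and $(P^\bullet,\gamma^\bullet)$; it also follows directly from Lemma \ref{L:poly1}. For the converse, given any tuple $(Q,P,\al,\beta)$ satisfying the listed conditions, the inverse substitutions produce data $(P^\circ,\gamma^\circ,P^\bullet,\gamma^\bullet)$ to which the classifications for $SY^+(2)$ and $Y^+(2)$ associate finite-dimensional irreducible modules whose tensor product corresponds via $\chi$ to a finite-dimensional irreducible quotient of $M(\mu(u))$.

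The main obstacle I anticipate is the sign bookkeeping in the derivation of the two boxed formulas for $\chi(s_{ii}(u))$: the basis identification $v_2=-e_1\ot e_1$ together with the $K_1,K_2$ factors and the leading minus sign in \eqref{iso:so4} introduce several cancellations that must be tracked to confirm consistency with the leading terms $g_{11}=1$ and $g_{22}=-1$. Beyond this, the $\wt\mu_i$ computation requires that the factor $2u-1=2u-n+i$ appearing in \eqref{tilde-mu(u)} at $i=1$ combine cleanly with the factor $2u-1$ arising from \eqref{Ols:symm} — which it does, eliminating the term involving $\mu^\bullet(u-1/2)$ from $\wt\mu_1(u)$.
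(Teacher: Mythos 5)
Your proposal is correct and follows essentially the same route as the paper's proof: reduction via the isomorphism $\chi$ of \eqref{iso:so4} to $SY^+(2)\otimes Y^+(2)$, the formulas \eqref{so4:class:4} for $\wt\mu_i(u)$, and the substitutions $P(u)=P^\bullet(u-1/2)$, $\al=\ga^\bullet+1/2$, $Q(u)=P^\circ(u-1/2)$, $\beta=\ga^\circ+1/2$. The only presentational difference is that where you invoke the general structure theorem identifying finite-dimensional irreducible modules over a tensor product of algebras with tensor products of irreducibles, the paper argues directly from the highest weight vector $\xi\in V(\mu(u))$, using the computation from \cite[Corollary~4.17]{GRW1} together with an induction on the coefficients of $s^\bullet_{11}(u)$ (see \eqref{so4:class:2}--\eqref{so4:class:3}) to show that $\xi$ is simultaneously a highest weight vector for both $SY^+(2)$ and $Y^+(2)$ before concluding $V(\mu(u))\cong V(\mu^\circ(u))\otimes V(\mu^\bullet(u))$ by irreducibility.
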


\begin{proof} The proof of this proposition is very similar to that of Proposition 5.4 of \cite{GRW2}.
We begin by showing that the $X(\mfso_4,\mfso_2\oplus\mfso_2)^{tw}$-module $V(\mu(u))$, viewed as a $SY^+(2)\ot Y^+(2)$-module via the isomorphism $\chi$, is isomorphic to $ V(\mu^\circ(u))\ot V(\mu^\bullet(u))$, where the pair $(\mu^\circ(u),\mu^\bullet(u))$ is completely determined by the relations 
\begin{equation}
\wt{\mu}_1(u)=(2u-2)\cdot\mu^\circ(\wt u)\,\mu^\bullet(-\wt u), \qq 
\wt{\mu}_2(u)=-2u\cdot\mu^\circ(\wt u)\,\mu^\bullet(\wt u), \qquad
\mu^\circ(-u)\, \mu^\circ(1-u)=1\label{so4:class:4}.
\end{equation}
with $\wt u=u-1/2$. Writing the map \eqref{iso:so4} explicitly we have that 
\eq{
\chi \; : \; \begin{cases}\; 
\begin{aligned}
s_{11}(u) &\mapsto \si^\circ_{11}(\wt u)\,s^\bullet_{-1,-1}(\wt u) , \qu  & s_{-1,2}(u) &\mapsto \si^\circ_{-1,1}(\wt u)\,s^\bullet_{11}(\wt u) , \\
s_{22}(u) &\mapsto -\si^\circ_{11}(\wt u)\,s^\bullet_{11}(\wt u) , & s_{1,-2}(u) & \mapsto -\si^\circ_{1,-1}(\wt u)\,s^\bullet_{-1,-1}(\wt u),
\end{aligned}
\end{cases}
\label{so4:class:1}
}

Moreover, the computation at the beginning of the proof of \cite[Corollary 4.17]{GRW1} shows that 
\[
\chi \; : \; s_{1,-2}(-\wt{u})\,s_{-1,2}(\wt{u}) - s_{11}(-\wt{u})\,s_{22}(\wt{u}) \mapsto  s^\bullet_{-1,-1}(-u)\,s^\bullet_{11}(u-1) .
\]
Letting $\xi\in V(\mu(u))$ denote the highest weight vector, this in turn implies that 
\[
s^\bullet_{-1,-1}(-u)\, s^\bullet_{11}(u-1)\,\xi=-\mu_1(-\wt{u})\,\mu_2(\wt{u})\,\xi.
\] 
Using the symmetry relation \eqref{Ols:symm} of $Y^+(2)$, we can rewrite the equality above as 
\[
 \left(s^\bullet_{11}(u)+\frac{s^\bullet_{11}(u)-s^\bullet_{11}(-u)}{2u} \right)s^\bullet_{11}(u-1)\,\xi=-\mu_1(-\wt{u})\,\mu_2(\wt{u})\,\xi.
\]
By induction on the coefficients $s^{\bullet(r)}_{11}$ of $s^\bullet_{11}(u)$, this implies that there exists $\mu^\bullet(u)\in 1+u^{-1}\C[[u^{-1}]]$ satisfying 
\eq{
s^\bullet_{11}(u)\,\xi=\mu^\bullet(u)\,\xi. \label{so4:class:3}
}
Moreover, $\mu^\bullet(u)$ is uniquely determined by the relation 
\eq{
 \left(\mu^\bullet(u)+\frac{\mu^\bullet(u)-\mu^\bullet(-u)}{2u} \right)\mu^\bullet(u-1) = - \mu_1(-\wt{u})\,\mu_2(\wt{u}) . \label{so4:class:2}
}
Combining \eqref{so4:class:3} with \eqref{so4:class:1} implies that $\xi$ is also an eigenvector for the action of $\si^\circ_{11}(u)$ with weight $\mu^\circ(u)$ defined by $\mu_2(u) = -\mu^\circ(\wt u)\,\mu^\bullet(\wt u)$ or equivalently by
\[
\mu_1(u) = \mu^\circ(\wt u)\,\Big(\mu^\bullet(-\wt u)+\frac{\mu^\bullet(\wt u) - \mu^\bullet(-\wt u)}{2\wt u}\Big),
\]
where we have used that $\mu^\circ(-u)^{-1}=\mu^\circ(u-1)$: see the proof of \cite[Proposition 5.4]{GRW2}.
Using the notation introduced in \eqref{tilde-mu(u)} the last two equalities can be rewritten in the equivalent form \eqref{so4:class:4}. Finally, since $[\si^\circ_{ij}(u),s^\bullet_{kl}(v)]=0$, it follows immediately from the definition of $\xi$ and the two formulas $\chi(s_{-1,2}(u))=\si^\circ_{-1,1}(\wt u)\,s^\bullet_{11}(\wt u)$ and $\chi(s_{12}(u))=\sigma_{11}^\circ(\wt u)s_{-1,1}^\bullet(\wt u)$ that $\si^\circ_{-1,1}(u)\,\xi=s^\bullet_{-1,1}(u)\,\xi=0$. Thus, by the irreducibility of $V(\mu(u))$ we can conclude that
\[
V(\mu(u))\cong V(\mu^\circ(u))\ot V(\mu^\bullet(u)). 
\]

\if0 
To see that \eqref{so4:class:2} together with the relation $\widetilde{\mu}_2(u)=-2u\cdot\lambda(u-1/2)\lambda^\circ(u-1/2)$ is equivalent to  \eqref{so4:class:4}, notice first that since ${\rm sdet}\,\Si^\circ(u)=1$, 
\[
1\cdot\xi=\left(\si^\circ_{11}(-u)\,\si^\circ_{11}(u-1)+\si^\circ_{1,-1}(-u)\,\si^\circ_{-1,1}(u-1)\right)\xi=\la^\circ(-u)\,\la^\circ(u-1)\,\xi,
\]
and so $\lambda(u-1)=\lambda(-u)^{-1}$. Using this and $\mu_2(u) = \la^\circ(\wt u)\,\la^\bullet(\wt u)$ we may rewrite \eqref{so4:class:2} as 
\[
\la^\circ(-u)\,\la^\bullet(u) + \la^\circ(-u)\left(\frac{\la^\bullet(u)-\la^\bullet(-u)}{2u}\right)=\mu_1(-\wt{u}).
\]
Since $\la^\circ(-u)\,\la^\bullet(-u) = \mu_2(-u+1/2)$, the above equation becomes 
\begin{equation*}
 \left(\frac{2u+1}{2u} \right)\la^\circ(-u)\,\la^\bullet(u) - \frac{1}{2u}\,\mu_2(-u+1/2)=\mu_1(-\wt{u}).
\end{equation*}
Substituting $u\mapsto 1/2-u$ and then multiplying both sides by $(2u-1)$ we obtain the relation 
\begin{equation*}
 (2u-2)\cdot\lambda(u-1/2)\lambda^\circ(-u+1/2)=(2u-1)\mu_1(u)+\mu_2(u)=\wt{\mu}_1(u),
\end{equation*}
which is the second equation in \eqref{so4:class:4}.
\fi

We can now use the isomorphism above to determine exactly when $V(\mu(u))$ is finite-dimensional. As recalled above \eqref{Y+2:findim}, the module $V(\mu^\circ(u))\ot V(\mu^\bullet(u))$  is finite-dimensional if and only if there exists a tuple ($P^\circ(u),P^\bullet(u),\ga^\circ,\ga^\bullet)$, where $\ga^\circ,\ga^\bullet\in \C$ and $P^\circ(u),Q^\bullet(u)$ are monic polynomials in $u$ such that $P^\circ(u)=P^\circ(-u+1)$, $P^\bullet(u)=P^\bullet(-u+1)$, $P^\circ(\ga_1)\ne0$, $P^\bullet(\ga_2)\ne0$, and the following equations hold: 
\begin{equation}
 \frac{\mu^\circ(-u)}{\mu^\circ(u)}=\frac{2u+1}{2u-1}\cdot\frac{P^\circ(u+1)}{P^\circ(u)}\cdot\frac{u-\ga^\circ}{u+\ga^\circ} 
\qu\text{and}\qu 
\frac{\mu^\bullet(-u)}{\mu^\bullet(u)}=\frac{2u+1}{2u-1}\cdot\frac{P^\bullet(u+1)}{P^\bullet(u)}\cdot \frac{u-\ga^\bullet}{u+\ga^\bullet} . \label{LRR:DI(a).6}
\end{equation}
Set $P(u)=P^\bullet(\wt u)$, $Q(u)=P^\circ(\wt u)$, $\al=\ga^\bullet+\tfrac{1}{2}$ and $\beta=\gamma^\circ+\tfrac{1}{2}$. Substituting $u\mapsto \wt u$, the above relations become
\[
 \frac{\mu^\circ(-\wt u)}{\mu^\circ(\wt u)}=\frac{2u}{2u-2}\cdot \frac{P(u+1)}{P(u)}\cdot \frac{u-\alpha}{u+\alpha-1}
\qu\text{ and }\qu
\frac{\mu^\bullet(-\wt u)}{\mu^\bullet(\wt u)}=\frac{2u}{2u-2}\cdot \frac{Q(u+1)}{Q(u)}\cdot \frac{u-\beta}{u+\beta-1},
\]
respectively. By relations \eqref{so4:class:4}, 
\begin{equation*}
 \frac{\wt \mu_1(u)}{\wt \mu_2(u)}=\frac{2-2u}{2u}\cdot \frac{\mu^\bullet(-\wt u)}{\mu^\bullet(\wt u)}
\qu\text{ and }\qu
\frac{\wt \mu_1(1-u)}{\wt \mu_2(u)}=\frac{\mu^\circ(-\wt u)}{\mu^\circ(\wt u)}.
\end{equation*}
Therefore, \eqref{LRR:DI(a).6} is equivalent to 
\begin{equation*}
 \frac{\wt \mu_1(u)}{\wt \mu_2(u)}= \frac{P(u+1)}{P(u)}\cdot \frac{\alpha-u}{u+\alpha-1}\quad \text{ and } \quad \frac{\wt \mu_1(1-u)}{\wt \mu_2(u)}=\frac{u}{1-u}\cdot \frac{Q(u+1)}{Q(u)}\cdot \frac{\beta-u}{u+\beta-1}.
\end{equation*}
Moreover, $P(u)=P^\bullet(u-1/2)=P^\bullet(-u+1/2+1)=P(-u+2)$, and the same is true for $Q(u)$. Finally, $P(\alpha)=P^\bullet(\alpha-1/2)=P^\bullet(\ga^\bullet)\neq 0$. Similarly, $Q(\beta)\neq 0$.
\end{proof}

We now construct the evaluation morphism $X(\mfso_4,\mfso_2\op\mfso_2)^{tw}\onto \mfU(\mfso_2\op\mfso_2)$. The fixed point subalgebra $\mfU(\mfso_2\op\mfso_2) \subset \mfU\mfso_4$ is generated by the elements $F_{11}$ and $F_{22}$. We recall that $\mcG = {\rm diag}(-1,1,1,-1)$ in this case.

\begin{prop} \label{P:so4:evhom} 
The assignment 
\begin{equation}
s_{ij}(u)\mapsto g_{ij}+2g_{ij}F_{ij}u^{-1}+\delta_{ij}(F_{11}^2-F_{22}^2)u^{-2} \label{so4:evhom}
\end{equation}
defines a surjective algebra homomorphism $\mathrm{ev}:X(\mfso_4,\mfso_2\op\mfso_2)^{tw}\onto \mfU(\mfso_2\op\mfso_2)$. 
\end{prop}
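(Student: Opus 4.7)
The plan is to verify that the assignment \eqref{so4:evhom} defines an algebra homomorphism by checking compatibility with both defining relations of $X(\mfso_4,\mfso_2\op\mfso_2)^{tw}$, namely the reflection equation \eqref{TX-RE} and the symmetry relation \eqref{TX-symm}. Surjectivity will follow immediately from inspecting the first-order generators.

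The decisive simplification is that $\mcG=\mathrm{diag}(-1,1,1,-1)$ is diagonal, so $g_{ij}=0$ whenever $i\ne j$ and the image matrix is itself diagonal:
\[
\mathrm{ev}(s_{ij}(u))=\delta_{ij}\bigl(g_{ii}+2g_{ii}F_{ii}u^{-1}+(F_{11}^2-F_{22}^2)u^{-2}\bigr),
\]
with entries in the commutative subalgebra $\mfU(\mfso_2\op\mfso_2)=\C[F_{11},F_{22}]$. This two-fold simplicity (diagonality of $\mathrm{ev}(S(u))$ plus commutativity of the target) should dramatically reduce the verification of \eqref{TX-RE} and \eqref{TX-symm}. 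Indeed, since here $\ka=1$ and $R(u)=I-u^{-1}P+(u-1)^{-1}Q$ is completely explicit, the reflection equation collapses, after expanding both sides and collecting coefficients, into a finite list of scalar identities indexed by $\mcI_4\times\mcI_4$, each one directly checkable using the relation $F_{-i,-i}=-F_{ii}$ and the explicit values of the $g_{ii}$. The symmetry relation reduces similarly once $\mathrm{Tr}(\mathrm{ev}(S(u)))$ is computed and the corresponding scalar identities indexed by $i\in \mcI_4$ are verified.

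Surjectivity is immediate: the $u^{-1}$-coefficients of $\mathrm{ev}(s_{11}(u))$ and $\mathrm{ev}(s_{22}(u))$ are $2F_{11}$ and $-2F_{22}$ respectively, and these generate $\mfU(\mfso_2\op\mfso_2)$.

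The main obstacle will be the index bookkeeping for the reflection equation; although each individual identity is a straightforward scalar computation in $\C[F_{11},F_{22}]$, there are many index combinations to track in the $N=4$ case. A more conceptual alternative, which I would carry out in parallel as a consistency check, is to exploit the isomorphism $\chi$ of \eqref{iso:so4}: the Olshanskii twisted Yangians $Y^+(2)$ and $SY^+(2)$ admit standard evaluation homomorphisms onto $\mfU\mfso_2$ (see e.g.\ \cite{Mobook}). Forming their tensor product and pulling back along $\chi$ should yield an algebra homomorphism $X(\mfso_4,\mfso_2\op\mfso_2)^{tw}\to \mfU\mfso_2\ot \mfU\mfso_2\cong \mfU(\mfso_2\op\mfso_2)$, and one verifies that the resulting formula on the generators $s_{ij}(u)$ matches \eqref{so4:evhom} after using the explicit form of $\chi$. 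This route avoids the direct matrix calculation at the cost of unwinding the isomorphism.
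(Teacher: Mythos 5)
The paper does not carry out a direct verification of \eqref{TX-RE} and \eqref{TX-symm}; it uses exactly the ``conceptual alternative'' you mention in your last paragraph. Specifically, the paper takes the evaluation homomorphism $\mathrm{ev}^\circ\colon Y^+(2)\to\mfU\mfso_2$, $s^\circ_{ij}(u)\mapsto\delta_{ij}+(u+1/2)^{-1}F^\circ_{ij}$ (Proposition 3.11 of \cite{MNO}), composes $\mathrm{ev}^\circ\ot\mathrm{ev}^\bullet$ with the isomorphism $\wh\Phi\colon\mfU\mfso_2\ot\mfU\mfso_2\iso\mfU(\mfso_2\op\mfso_2)$ induced by $F^\circ_{11}\mapsto F_{11}+F_{22}$, $F^\bullet_{11}\mapsto F_{22}-F_{11}$, and then precomposes with the embedding $\wt\chi\colon X(\mfso_4,\mfso_2\op\mfso_2)^{tw}\into Y^+(2)\ot Y^+(2)$ of \cite[Proposition 4.16]{GRW1}. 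Unwinding the composite on each generator gives \eqref{so4:evhom}. This makes the formula \eqref{so4:evhom} \emph{computed} rather than merely \emph{verified}, and shifts all of the work onto the already-established isomorphism $\wt\chi$ and the standard $Y^+(2)$ evaluation map.

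Your primary route --- verify \eqref{TX-RE} and \eqref{TX-symm} directly on the diagonal image matrix --- is valid in principle and your structural observations are correct: $\mathrm{ev}(S(u))$ is diagonal with entries in the commutative ring $\C[F_{11},F_{22}]$, so the reflection and symmetry relations reduce to scalar identities in $u$, $v$, $F_{11}$, $F_{22}$. But the proposal stops at this observation; none of the actual identities are written down or checked (and they are not trivial, since $R(u\pm v)$ is not diagonal, so the correct form of the $u^{-1}$ and $u^{-2}$ terms in \eqref{so4:evhom} does carry content). The route would work if executed, but as written it is an outline, not a proof, whereas the paper's composite-map argument is complete once one grants the cited results. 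Your surjectivity argument --- extract $2F_{11}$ and $-2F_{22}$ from the $u^{-1}$-coefficients of $\mathrm{ev}(s_{11}(u))$ and $\mathrm{ev}(s_{22}(u))$ --- is correct and in fact cleaner than what the paper states, since composing a surjection with an embedding does not automatically give a surjection; reading surjectivity off the explicit formula is the right way to close that step.
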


\begin{proof}
The Lie algebra $\mfso_2$ is one-dimensional, that is $\mfso_2 \cong \C F^\circ_{11}$, since $F^\circ_{1,-1}=F^\circ_{-1,1}=0$ and $F^\circ_{-1,-1}=-F^\circ_{11}$. Recall the evaluation homomorphism given by Proposition 3.11 of \cite{MNO}:
\eq{
{\rm ev}^\circ : Y^+(2) \onto \mfU\mfso_2 , \qu s^\circ_{ij}(u) \mapsto  \del_{ij} + (u+ 1/2)^{-1} F^\circ_{ij} . \label{Y+2:evhom}
}

Let $\Phi$ be the isomorphism $\mfso_2\op\mfso_2 ( = \C F^\circ_{11} \op \C F^\bullet_{11} ) \iso \mfso_4^\rho ( = \C F_{11} \op \C F_{22} )$ given by $F^\circ_{11} \mapsto F_{11}+F_{22}$ and $F^\bullet_{11} \mapsto F_{22}-F_{11}$. The map $\Phi$ induces an isomorphism $\wh\Phi : \mfU\mfso_2\ot\mfU\mfso_2\iso \mfU\mfso_4^\rho = \mfU(\mfso_2\op\mfso_2)$, and so the composition $\wh\Phi({\rm ev}^\circ \ot {\rm ev}^\bullet)$ yields a surjective homomorphism $Y^+(2)\ot Y^+(2) \onto \mfU(\mfso_2\op\mfso_2)$ (here ${\rm ev}^\bullet$ denotes the evaluation homomorphism \eqref{Y+2:evhom} for the second copy of $Y^+(2)$ in the tensor product).

Finally, by composing the resulting map with the embedding $\wt\chi : X(\mfso_4,\mfso_2\op\mfso_2) \into Y^+(2) \ot Y^+(2)$ given by $S(u) \mapsto - S_1^\circ(\wt u)K_1 S^\bullet_2(\wt u) K_2$ (see Proposition 4.16 of \cite{GRW1}) we obtain precisely \eqref{so4:evhom}.
%
%
%
\end{proof}

The morphism $\rm ev$ allows us to extend $\mfso_2\oplus\mfso_2$-modules to $X(\mfso_4,\mfso_2\op\mfso_2)^{tw}$-modules. As usual, modules obtained this way are called evaluation modules. Let $V(\mu_1,\mu_2)$ denote the irreducible $\mfso^\rho_4$-module with the highest weight $(\mu_1,\mu_2)$; this is the one-dimensional representation of $\mfso_4^\rho$ in which $F_{ii}$ acts as multiplication by $\mu_i\in \C$. The corollary below follows directly from the formula \eqref{so4:evhom}.

\begin{crl}\label{C:so4}
The evaluation module $V(\mu_1,\mu_2)$ with $\mu_1,\mu_2 \in \C$ is isomorphic to the $X(\mfso_4,\mfso_2\op\mfso_2)^{tw}$-module $V(\mu(u))$ with 
\[
\mu_i(u)=g_{ii}+2g_{ii}\mu_i u^{-1}+(\mu_1^2-\mu_2^2)u^{-2} \quad \text{ for } \quad 1\leq i\leq 2.
\]
\end{crl}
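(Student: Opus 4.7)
The proof will essentially amount to direct computation using the formula \eqref{so4:evhom} of Proposition \ref{P:so4:evhom}. I outline the plan below.

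First, I would note that $V(\mu_1,\mu_2)$ is one-dimensional as an $\mfso_2\oplus\mfso_2$-module, so it is spanned by a single vector $\eta$ satisfying $F_{11}\eta=\mu_1\eta$ and $F_{22}\eta=\mu_2\eta$. Pulled back to $X(\mfso_4,\mfso_2\op\mfso_2)^{tw}$ through the evaluation morphism $\mathrm{ev}$, the underlying space remains one-dimensional and hence is automatically irreducible. Thus, in view of the uniqueness of the irreducible quotient of a (non-trivial) Verma module discussed in Subsection \ref{subsub:HWtw}, it will suffice to verify that $\eta$ is a highest weight vector for the pulled-back $X(\mfso_4,\mfso_2\op\mfso_2)^{tw}$-action whose highest weight is the tuple $\mu(u)=(\mu_i(u))_{i\in \mcI_4^+}$ described in the statement.

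The key step is to apply \eqref{so4:evhom} term by term. For $i\neq j$, the diagonality of $\mcG=\mathrm{diag}(-1,1,1,-1)$ forces $g_{ij}=0$ and $\delta_{ij}=0$, and therefore the image of $s_{ij}(u)$ under $\mathrm{ev}$ is the zero element of $\mfU(\mfso_2\op\mfso_2)$. In particular, $s_{ij}(u)\cdot\eta=0$ for all $i<j$ in $\mcI_4$, so $\eta$ satisfies the raising-operator annihilation condition from Subsection \ref{subsub:HWtw}. For $i=j\in\mcI_4^+$, applying \eqref{so4:evhom} directly yields
\[
s_{ii}(u)\cdot\eta = \bigl(g_{ii} + 2g_{ii}F_{ii}u^{-1} + (F_{11}^2-F_{22}^2)u^{-2}\bigr)\cdot \eta = \bigl(g_{ii}+2g_{ii}\mu_i u^{-1}+(\mu_1^2-\mu_2^2)u^{-2}\bigr)\eta,
\]
which matches the formula for $\mu_i(u)$ given in the statement.

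The only point requiring any remark is that the tuple $\mu(u)$ so obtained must satisfy the nontriviality relation \eqref{nontriv.1} of Proposition \ref{P:nontriv}, but this is automatic since we have exhibited an honest non-zero representation with highest weight $\mu(u)$. Combining this with the irreducibility of the one-dimensional module finishes the identification $V(\mu_1,\mu_2)\cong V(\mu(u))$. There is no substantial obstacle in this argument; everything reduces to reading off the Cartan action from \eqref{so4:evhom}.
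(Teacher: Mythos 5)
Your argument is correct and is exactly the direct verification the paper has in mind when it states that the corollary ``follows directly from the formula \eqref{so4:evhom}'': you read off the Cartan action from \eqref{so4:evhom}, note that $g_{ij}=0=\delta_{ij}$ forces the off-diagonal $s_{ij}(u)$ to act as zero, and invoke one-dimensionality (hence irreducibility) together with the uniqueness of the irreducible quotient of the Verma module. No gap; this matches the paper's approach.
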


The collection $\{V(\mu_1,\mu_2)\}_{\mu_1,\mu_2\in \C}$ provides a two-parameter family of one-dimensional representations of $X(\mfso_4,\mfso_2\oplus \mfso_2)^{tw}$. Note that 
the trivial representation $V(\mcG)$ may be recovered in the special case where $(\mu_1,\mu_2)=(0,0)$. In Remark \ref{R:1dim}, it will be explained that these are essentially all of the one-dimensional representations of $X(\mfso_4,\mfso_2\oplus \mfso_2)^{tw}$.


\subsection{Twisted Yangians for the symmetric pair \texorpdfstring{$(\mfso_{3},\mfso_{2})$}{}} \label{sec:so3so2}

The isomorphism between the twisted Yangian $X(\mfso_3,\mfso_{2})^{tw}$ and the Olshanskii twisted Yangian $Y^+(2)$ was established in Proposition 4.3 of \cite{GRW1}. Here we recall the necessary details of this isomorphism, which will allows us to obtain a complete description of the finite-dimensional irreducible representations of $X(\mfso_3,\mfso_2)^{tw}$ using those of~$Y^+(2)$.

Let the standard basis of $\C^2$ be given by vectors $e_{-1}$ and $e_1$ and let $V$ be the three--dimensional subspace of $\C^2\ot\C^2$ spanned by vectors $v_{-1}=e_{-1}\ot e_{-1}$, $v_{0}=\tfrac{1}{\sqrt{2}} (e_{-1}\ot e_1+e_1\ot e_{-1})$, $v_{1}=-e_1\ot e_1$. Upon identifying $V$ with $\C^3$ we may view the matrix $S(u)$ as an element of $\End V \ot X(\mfso_3,\mfso_{2})^{tw}[[u^{-1}]]$. Moreover, the operator $\tfrac{1}{2}R^\circ(-1)=\tfrac{1}{2}(I+P) \in \End(\C^2\ot\C^2)$ is a projector of $\C^2\ot\C^2$ onto the subspace $V$ and the mapping
\eqa{
& \varphi \;:\; X(\mfso_3,\mfso_{2})^{tw} \to Y^{+}(2) , \qu 
S(u) \mapsto \tfrac{1}{2} R^\circ_{12}(-1)\, S^\circ_1(2u-1)\,R^\circ_{12}(-4u+1)^{t}\, S^\circ_2(2u) K_1 \, K_2 \label{iso:so3so2} 
}
where $K=E_{11}-E_{-1,-1}$, is an isomorphism of algebras whose restriction to the subalgebra $Y(\mfso_3,\mfso_{2})^{tw}$ induces a isomorphism between $Y(\mfso_3,\mfso_{2})^{tw}$ and $SY^\pm(2)$. 
Denoting $u-1/2$ by $\wt{u}$ the map \eqref{iso:so3so2} explicitly reads as
\spl{ \label{map:so3so2}
s_{-1,-1}(u) &\mapsto s^\circ_{-1,-1}(2\wt{u})\, s^\circ_{-1,-1}(2 u) + \tfrac{1}{4 u-1}\, s^\circ_{-1,1}(2\wt{u})\, s^\circ_{1,-1}(2 u),
\\
s_{-1,0}(u) &\mapsto - \tfrac{1}{\sqrt{2}}\, s^\circ_{-1,-1}(2\wt{u})\, s^\circ_{-1,1}(2 u) - \tfrac{1}{\sqrt{2} (4 u-1)} \big(  s^\circ_{-1,1}(2\wt{u}) s^\circ_{11}(2 u) + 4u s^\circ_{-1,1}(2\wt{u})\, s^\circ_{-1,-1}(2 u) \big) ,
\\
s_{-1,1}(u) &\mapsto \tfrac{4 u }{1-4 u}\, s^\circ_{-1,1}(2\wt{u})\, s^\circ_{-1,1}(2 u),
\\
s_{0,-1}(u) &\mapsto \tfrac{1}{\sqrt{2}}\, s^\circ_{1,-1}(2\wt{u})\, s^\circ_{-1,-1}(2 u) +  \tfrac{1}{\sqrt{2}(4 u-1)} \big( s^\circ_{11}(2\wt{u})\,s^\circ_{1,-1}(2 u) + 4u\,s^\circ_{-1,-1}(2\wt{u})\, s^\circ_{1,-1}(2 u) \big),
\\
s_{00}(u) &\mapsto -\tfrac{1}{2} \big(s^\circ_{-1,1}(2\wt{u})\, s^\circ_{1,-1}(2 u) + s^\circ_{1,-1}(2\wt{u})\, s^\circ_{-1,1}(2 u) \big) 
\\
& \qu\; -\tfrac{1}{8 u-2} \big( (s^\circ_{-1,-1}(2\wt{u}) + 4u\, s^\circ_{11}(2\wt{u}))\, s^\circ_{-1,-1}(2 u) + (4u\, s^\circ_{-1,-1}(2\wt{u}) + s^\circ_{11}(2\wt{u}))\, s^\circ_{11}(2 u)\big) ,
\\
s_{01}(u) &\mapsto -\tfrac{1}{\sqrt{2} (4 u-1)} s^\circ_{-1,-1}(2\wt{u})\, s^\circ_{-1,1}(2 u) -\tfrac{1}{\sqrt{2}} s^\circ_{-1,1}(2\wt{u})\, s^\circ_{11}(2 u) -\tfrac{4u}{\sqrt{2} (4 u-1)} s^\circ_{11}(2\wt{u})\, s^\circ_{-1,1}(2 u) ,
\\
s_{1,-1}(u) &\mapsto \tfrac{4 u}{1-4 u}\, s^\circ_{1,-1}(2\wt{u})\, s^\circ_{1,-1}(2 u),
\\
s_{10}(u) &\mapsto \tfrac{1}{\sqrt{2}}\, s^\circ_{11}(2\wt{u})\, s^\circ_{1,-1}(2 u) + \tfrac{1}{\sqrt{2} (4 u-1)} \big( 4u\, s^\circ_{1,-1}(2\wt{u})\, s^\circ_{11}(2 u) + s^\circ_{1,-1}(2\wt{u})\, s^\circ_{-1,-1}(2 u)\big),
\\
s_{11}(u) &\mapsto s^\circ_{11}(2\wt{u})\, s^\circ_{11}(2 u) + \tfrac{1}{4 u-1}\, s^\circ_{1,-1}(2\wt{u})\, s^\circ_{-1,1}(2 u),
}
which can be deduced using the formulas in the proof of Proposition 5.8 of \cite{GRW2}.

\begin{prop}\label{P:so3class}
Let $\mu(u) = (\mu_0(u),\mu_1(u))$ satisfy the conditions of Proposition \ref{P:nontriv} so that the irreducible $X(\mfso_3,\mfso_2)^{tw}$-module $V(\mu(u))$ exists. Then $V(\mu(u))$ is finite-dimensional if and only if there exists a monic polynomial $P(u)$ in $u$ and a scalar $\al \in \C$ such that $P(\al)\ne 0$ and $P(u)=P(-u+3/2)$ and
\eq{
\frac{\wt\mu_0(u)}{\wt\mu_1(u)} = \frac{P(u+1/2)}{P(u)} \cdot \frac{\al - u}{\al + u - 1} . \label{so3so2:findim}
}
Moreover, when they exist, the polynomial $P(u)$ and the scalar $\al$ are uniquely determined.
\end{prop}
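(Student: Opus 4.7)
The plan is to leverage the isomorphism $\varphi : X(\mfso_3,\mfso_2)^{tw} \iso Y^+(2)$ of \eqref{iso:so3so2} and reduce the classification to the known one for finite-dimensional irreducible $Y^+(2)$-modules recalled around \eqref{Y+2:findim}, in close analogy with the strategy employed in the proof of Proposition \ref{P:so4class}. Since $\varphi$ is an isomorphism, the irreducible highest weight modules for the two algebras correspond bijectively, and it suffices to identify how the weights $\mu(u) = (\mu_0(u),\mu_1(u))$ and $\mu^\circ(u)$ are related.

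Given the highest weight vector $\xi^\circ$ of an irreducible $Y^+(2)$-module $V(\mu^\circ(u))$, which satisfies $s^\circ_{-1,1}(v)\xi^\circ = 0$ and is a simultaneous eigenvector for $s^\circ_{11}(v)$ and $s^\circ_{-1,-1}(v)$ (the latter action being determined from $\mu^\circ(v)$ via \eqref{Ols:symm}), I would use the explicit formulas \eqref{map:so3so2} to verify that $\varphi(s_{-1,0}(u))\xi^\circ = \varphi(s_{-1,1}(u))\xi^\circ = \varphi(s_{0,1}(u))\xi^\circ = 0$, so that $\xi^\circ$ is a highest weight vector for the pulled-back $X(\mfso_3,\mfso_2)^{tw}$-action. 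Applying $\varphi(s_{11}(u))$ to $\xi^\circ$ and using $s^\circ_{-1,1}(v)\xi^\circ = 0$ yields directly
\[ \mu_1(u) = \mu^\circ(2u-1)\,\mu^\circ(2u), \]
and hence $\wt\mu_1(u) = 2u\,\mu^\circ(2u-1)\mu^\circ(2u)$. The formula for $\mu_0(u)$ is more delicate, since $\varphi(s_{00}(u))$ contains the product $s^\circ_{-1,1}(2u-1)\,s^\circ_{1,-1}(2u)$, which does not annihilate $\xi^\circ$; its action must be reduced to a scalar by evaluating the commutator $[s^\circ_{-1,1}(2u-1),\,s^\circ_{1,-1}(2u)]\xi^\circ$ using the reflection equation of $Y^+(2)$.

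Having obtained explicit formulas for $\mu_0(u)$ and $\mu_1(u)$ in terms of $\mu^\circ$, I would substitute them into the definition \eqref{tilde-mu(u)} of $\wt\mu_i(u)$ and invoke the classification \eqref{Y+2:findim}. Setting $P(u) = P^\circ(2u-1)$ and $\al = (\ga^\circ+1)/2$, the symmetry $P^\circ(v) = P^\circ(-v+1)$ will translate into $P(u) = P(-u+3/2)$, the nonvanishing condition $P^\circ(\ga^\circ)\neq 0$ will become $P(\al)\neq 0$, and the ratio formula for $\mu^\circ(-v)/\mu^\circ(v)$ given by \eqref{Y+2:findim} should yield exactly \eqref{so3so2:findim}. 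The uniqueness of the pair $(P(u),\al)$ will then follow from Lemma \ref{L:poly1}.

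The main obstacle I anticipate is the explicit computation of $\mu_0(u)$: the formula for $\varphi(s_{00}(u))$ in \eqref{map:so3so2} has numerous terms with $(4u-1)^{-1}$ denominators and requires one to correctly evaluate a commutator in $Y^+(2)$. As a sanity check, for the trivial representation $\mu^\circ(u) \equiv 1$ (for which $\ga^\circ = 1/2$ and $P^\circ(u) = 1$), the computation should recover $\mu_0(u) = (1/4 + u)/(1/4 - u) = g_{00}(u)$ and hence $\wt\mu_0(u)/\wt\mu_1(u) = (3/4 - u)/(u - 1/4)$, which matches \eqref{so3so2:findim} with $P(u) = 1$ and $\al = 3/4$.
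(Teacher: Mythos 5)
Your proposal follows essentially the same route as the paper's own proof: pull back through $\varphi$, identify the weight correspondence by applying the explicit formulas \eqref{map:so3so2} to the highest weight vector, handle the $s_{00}(u)$ action via the commutator $[s^\circ_{-1,1}(2u-1),s^\circ_{1,-1}(2u)]$ and the symmetry relation \eqref{Ols:symm}, and then translate the $Y^+(2)$ criterion \eqref{Y+2:findim} by substituting $u \mapsto 2u-1$. The one small oversight is that $P^\circ(2u-1)$ is not monic; you need $P(u) = 2^{-\deg P^\circ(u)}\,P^\circ(2u-1)$, which does not affect the ratio in \eqref{so3so2:findim} but is required for the monicity claim in the statement.
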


\begin{proof}

The proof is very similar to that of Proposition 5.8 of \cite{GRW2}. By Proposition \ref{P:nontriv} the components $\wt\mu_0(u)$ and $\wt\mu_1(u)$ are required to satisfy 
\eq{
\wt\mu_0(1/2-u) = \frac{1/2-u}{u}\cdot \frac{4u+1}{4u-3} \cdot \wt\mu_0(u) ,\qq
\wt\mu_0(u)\,\wt\mu_0(-u+1) = \wt\mu_1(u)\,\wt\mu_1(-u+1) . \label{so3so2:mu}
}
We may choose $\mu^\circ(u) \in 1 + u^{-1}\C[[u^{-1}]]$ such that
\eq{
\wt\mu_0(u) = -2 u\cdot\frac{4 u-3}{4u-1} \cdot\mu^\circ(-2\wt{u})\,\mu^\circ(2 u) , \qq \wt\mu_1(u) = 2u\cdot\mu^\circ(2\wt{u})\,\mu^\circ(2u) , \label{so3so2:mu01}
}
so that both equalities in \eqref{so3so2:mu} are satisfied. (See Lemma 5.7 of \cite{GRW2}.) Recall that $\wt\mu_0(u) = (2u-1)\,\mu_0(u) + \mu_1(u)$ and $\wt\mu_1(u)=2u\,\mu_1(u)$. Hence
\eq{
\mu_0(u) = -\frac{1}{2\wt{u}} \left( \frac{2u(4u-3)}{4u-1}\, \mu^\circ(-2\wt{u}) + \mu^\circ(2\wt{u})\right) \mu^\circ(2u) , \qq \mu_1(u) = \mu^\circ(2\wt{u})\,\mu^\circ(2u) . \label{so3so2:mu1}
}

Let $V(\mu^\circ(u))$ denote the irreducible highest weight $Y^+(2)$ module with the highest weight $\mu^\circ(u)$ and let~$\xi$ denote its highest weight vector. $V(\mu^\circ(u))$ may be viewed as a $X(\mfso_3,\mfso_2)^{tw}$-module via the isomorphism~$\varphi$. It is immediate from \eqref{map:so3so2} that $s_{ij}(u)\,\xi=0$ for all $i<j$ and 
\eq{
s_{11}(u)\,\xi = \mu^\circ(2\wt{u})\,\mu^\circ(2u)\,\xi = \mu_1(u)\,\xi. \label{so3so2:s11}
} 
To compute $s_{00}(u)\,\xi$ we need to use the following formula, which follows from the defining relations of $Y^+(2)$:
\eqn{
[s_{-1,1}^\circ(2\wt{u}),s_{1,-1}^\circ(2u)] 
& = \tfrac{1}{4 u-1} \left( s^\circ_{11}(2 u)\,s^\circ_{11}(2 \wt{u}) - s^\circ_{-1,-1}(2 \wt{u})\, s^\circ_{-1,-1}(2 u) \right) 
\\
& + \tfrac{4 u}{4 u-1} \left( s^\circ_{11}(2 u)\, s^\circ_{-1,-1}(2 \wt{u}) - s^\circ_{11}(2 \wt{u})\, s^\circ_{-1,-1}(2 u) \right).
}
Combining this formula with the equality $[s^\circ_{ii}(u),s^\circ_{jj}(v)]\xi= 0$ for all $i,j\in \{\pm 1\}$,  we obtain
\[
\varphi(s_{00}(u))\xi=  -\tfrac{1}{4 u-1} \left(4 u\, s^\circ_{-1,-1}(2 \wt{u})+s^\circ_{11}(2 \wt{u})\right)  s^\circ_{11}(2 u)\xi.
\]
%
Applying the symmetry relation \eqref{Ols:symm} to $s^\circ_{-1,-1}(2 \wt{u})$ in the equality above and using \eqref{so3so2:mu1} we deduce that
\eq{
s_{00}(u)\,\xi = -\frac{1}{2\wt{u}} \left( \frac{2u(4u-3)}{4u-1}\, \mu^\circ(-2\wt{u}) + \mu^\circ(2\wt{u})\right) \mu^\circ(2u)\,\xi = \mu_0(u)\,\xi. \label{so3so2:s00}
}
Equalities \eqref{so3so2:s11} and \eqref{so3so2:s00} show that, as an $X(\mfso_3,\mfso_2)^{tw}$-module, $V(\mu^\circ(u))$ is isomorphic to $V(\mu(u))$.

We can now use this isomorphism to determine exactly when $V(\mu(u))$ is finite-dimensional. This occurs precisely when there exists a monic polynomial $P^\circ(u)$ satisfying $P(u)=P^\circ(-u+1)$ together with a scalar $\gamma^\circ\in \C\setminus Z(P^\circ(u))$ such that \eqref{Y+2:findim} holds. Using \eqref{so3so2:mu01}, \eqref{Y+2:findim} can be written in the equivalent form 
\[
 \frac{\wt \mu_0(u)}{\wt \mu_1(u)}= -\frac{4 u-3}{4 u-1}\cdot \frac{\mu^\circ(1-2u)}{\mu^\circ(2 u-1)} = \frac{P^\circ(2 u)}{P^\circ(2 u-1)}\cdot \frac{\ga^\circ - 2u+1}{\ga^\circ +2u-1} = \frac{P(u+1/2)}{P(u)}\cdot \frac{\al - u}{\al +u-1},
\]
where $P(u)=2^{-\deg\, P^\circ(u)}P^\circ(2u-1)$ and $\al=(\ga^\circ+1)/2$. With this definition of $P(u)$ we have $P(\al)\neq 0$, $P(u)=P(-u+3/2)$, and the uniqueness of $P(u)$ is guaranteed by the uniqueness of $P^\circ(u)$. 
\end{proof}

As in the previous section, we proceed by constructing the evaluation morphism $X(\mfso_3,\mfso_2)^{tw}\onto \mathfrak{U}\mfso_2$ and the corresponding evaluation module. Recall that $d=(p-q)/4=1/4$ for the pair $(\mfso_3,\mfso_2)$. We have the following analogues of Proposition~5.8 and Corollary 5.9 of \cite{GRW2}:
 
\begin{prop} \label{P:so3so2:evhom}
%
The assignment
\eq{
s_{ij}(u)\mapsto \frac{1}{u-d} \left( g_{ij} u - \del_{ij} d + \frac{1}{u+d}\left(d\,(F_{11}^{ \rho})^2+ u\,F^{ \rho}_{ij}\right) \right) \label{so3so2:evhom}
}
defines a surjective algebra homomorphism $\mathrm{ev}:X(\mfso_3,\mfso_2)^{tw}\onto \mathfrak{U}\mfso_2$. 
\end{prop}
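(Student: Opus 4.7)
The plan is to define $\mathrm{ev}$ as the composition $\mathrm{ev}^\circ \circ \varphi$, where $\varphi: X(\mfso_3, \mfso_2)^{tw} \iso Y^+(2)$ is the isomorphism \eqref{iso:so3so2} of \cite{GRW1} and $\mathrm{ev}^\circ: Y^+(2) \onto \mfU\mfso_2$ is the classical evaluation morphism \eqref{Y+2:evhom}. Such a composition is automatically a surjective algebra homomorphism from $X(\mfso_3, \mfso_2)^{tw}$ onto $\mfU\mfso_2$, so the content of the proposition reduces to verifying that the resulting map agrees with \eqref{so3so2:evhom} on generators, in direct parallel with the construction used in Proposition \ref{P:so4:evhom} and in Proposition 5.8 of \cite{GRW2}.

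The verification proceeds by applying $\mathrm{ev}^\circ$ termwise to each line of \eqref{map:so3so2}. Since $\mfso_2 = \C F_{11}^\circ$ and the off-diagonal generators satisfy $F_{\pm 1, \mp 1}^\circ = 0$, every occurrence of $s^\circ_{-1,1}(v)$ or $s^\circ_{1,-1}(v)$ is sent to zero under $\mathrm{ev}^\circ$. Under the identification $F_{11}^\circ \leftrightarrow F_{11}^\rho$ consistent with the embedding \eqref{grho->X} (the coefficient of $u^{-1}$ in $\mathrm{ev}(s_{11}(u))$ is $\mathrm{ev}(s_{11}^{(1)}) = F_{11}^\circ$, while \eqref{grho->X} gives $F_{11}^\rho = s_{11}^{(1)}$ since $\bar g_{11} = 0$), the computation for the entry $s_{11}(u)$ reduces to a product of two commuting factors:
\[
\mathrm{ev}^\circ(\varphi(s_{11}(u))) = \Bigl(1 + \tfrac{F_{11}^\rho}{2u - 1/2}\Bigr)\Bigl(1 + \tfrac{F_{11}^\rho}{2u + 1/2}\Bigr) = 1 + \tfrac{u\, F_{11}^\rho}{u^2 - 1/16} + \tfrac{(F_{11}^\rho)^2}{4(u^2 - 1/16)},
\]
which matches \eqref{so3so2:evhom} at $(i,j)=(1,1)$ upon recalling $g_{11} = 1$ and $d = 1/4$. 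The case $(i,j)=(-1,-1)$ is handled analogously using $F_{-1,-1}^\rho = -F_{11}^\rho$.

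The principal bookkeeping obstacle is the middle-diagonal entry $s_{00}(u)$, whose image under $\varphi$ still contains four relevant summands after the vanishing of the $s^\circ_{\pm 1, \mp 1}$-contributions. These simplify using the elementary identities $(4u \pm 1)/(2u \pm 1/2) = 2$: all resulting terms linear in $F_{11}^\circ$ cancel (in agreement with $F_{00}^\rho = 0$), and one obtains an expression of the form $-\frac{4u+1}{4u-1} + \frac{(F_{11}^\rho)^2}{4(u^2-1/16)}$ matching \eqref{so3so2:evhom} at $(i,j)=(0,0)$, where $g_{00} = -1$. The remaining off-diagonal entries $s_{\pm 1, 0}(u)$ and $s_{0, \pm 1}(u)$ collapse directly under $\mathrm{ev}^\circ$ and match the right-hand side of \eqref{so3so2:evhom}. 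Finally, surjectivity of $\mathrm{ev}$ is immediate, since $\mathrm{ev}(s_{11}^{(1)}) = F_{11}^\rho$ generates $\mfU\mfso_2$.
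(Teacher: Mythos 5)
Your argument is the same as the paper's: compose the $Y^+(2)$ evaluation morphism $\mathrm{ev}^\circ$ with the isomorphism $\varphi$ from \eqref{iso:so3so2}, identify $\mfso_2\cong\mfso_3^\rho$ via $F^\circ_{11}\mapsto F^\rho_{11}$ (this is the map $\Phi$ the paper introduces explicitly), and check agreement with \eqref{so3so2:evhom} entry-by-entry; your computations for the diagonal entries, including the cancellation of the linear-in-$F_{11}^\circ$ terms in $s_{00}(u)$, are correct. One small point you pass over silently: for $i\ne j$ the composition $\mathrm{ev}^\circ\circ\varphi$ sends $s_{ij}(u)$ to $0$, whereas the right-hand side of \eqref{so3so2:evhom} \emph{as printed} evaluates to $d\,(F_{11}^\rho)^2/\bigl((u-d)(u+d)\bigr)\ne 0$; the intended formula (compare Proposition \ref{P:so4:evhom}) carries an extra factor $\delta_{ij}$ on the $d\,(F_{11}^\rho)^2$ term, and you are implicitly working with that corrected version when you assert that the off-diagonal entries match.
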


\begin{proof}
Recall that $\mfso_2 = \C F^\circ_{11}$. By composing the map ${\rm ev}^\circ$ in \eqref{Y+2:evhom} with the map $\varphi$ in \eqref{iso:so3so2} we find that $s_{ij}(u)\mapsto 0$ if $i\ne j$ and 
\eqn{
s_{-1,-1}(u) & \mapsto 1+\frac{1}{(u-1/4)(u+1/4)} \left(1/4(F^\circ_{-1,-1})^2 + u F^\circ_{-1,-1}\right) ,
\\
s_{00}(u) & \mapsto \frac{1+4 u}{1-4 u} -\frac{8 u}{(1-4 u)^2}\left(F^\circ_{-1,-1}+F^\circ_{11}\right) \\
& \qq - \frac{2}{(1+4u)(1-4 u)^2} \big( (F^\circ_{11} + 4 u\, F^\circ_{-1,-1})\,F^\circ_{11} + (F^\circ_{-1,-1}+4 u\,F^\circ_{11})\,F^\circ_{-1,-1} \big) \\
& = -\frac{u+1/4}{u-1/4} + \frac{1/4 (F^\circ_{11})^2}{(u-1/4)(u+1/4)} \,,
\\
s_{11}(u) & \mapsto 1 + \frac{1}{(u-1/4)(u+1/4)}  \left(1/4(F^\circ_{11})^2 + u F^\circ_{11}\right). 
}

The proof is now completed as follows. Let $\Phi : \mfso_2 \iso \mfso^{ \rho}_3$ be the isomorphism of Lie algebras given by $F^\circ_{11} \mapsto F_{11}^{  \rho} = 2F_{11}$ and let $\wh\Phi$ be the corresponding isomorphism of the enveloping algebras $\mfU\mfso_2 \iso \mfU\mfso^{ \rho}_3$. Since $\mcG={\rm diag}(1,-1,1)$ and $d=1/4$, it is straightforward to see that $\wh\Phi\circ {\rm ev}^\circ \circ \varphi$ yields \eqref{so3so2:evhom} as required. 
\end{proof}

Let $V(\mu)$ denote the irreducible $\mfso_3^\rho$-module with the highest weight $\mu\in \C$; this is the one-dimensional module in which $F_{11}$ acts as multiplication by $\mu$. Formula \eqref{so3so2:evhom} implies the following:
\begin{crl}\label{C:so3}
The evaluation module $V(\mu)$ with $\mu\in \C$ is isomorphic to the $X(\mfso_3,\mfso_2)^{tw}$-module $V(\mu(u))$ with 
\[
 \mu_0(u)=-1+2d\left(\frac{2\mu^2-(u+d)}{u^2-d^2} \right), \qu \mu_1(u)=1+2\mu\left(\frac{u+2d\mu}{u^2-d^2} \right) \qu\text{and}\qu d = \frac14.
\]
\end{crl}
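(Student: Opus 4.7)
The corollary is a direct application of the evaluation formula \eqref{so3so2:evhom} to the one-dimensional module $V(\mu)$. The plan is to identify $V(\mu)$, viewed as an $X(\mfso_3,\mfso_2)^{tw}$-module through $\mathrm{ev}$, as a highest weight module and read off its highest weight components.

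First, I will pin down the relevant action of $\mfso_3^\rho$ on the (unique, up to scalar) highest weight vector $\xi$ of $V(\mu)$. Since $\mfso_3^\rho = \mfso_2 = \C F_{11}$ is one-dimensional, $V(\mu) = \C \xi$ with $F_{11}\xi = \mu\xi$, and the embedding \eqref{grho->X} combined with $g_{11}=1$ gives $F_{11}^\rho\xi = 2\mu\xi$ and hence $(F_{11}^\rho)^2\xi = 4\mu^2\xi$. The remaining elements $F_{ij}^\rho$ that occur in \eqref{so3so2:evhom} all annihilate $\xi$: $F_{00}^\rho$ and $F_{-1,1}^\rho$ are zero as elements of $\mfso_3$ (the former because $F_{00} = 0$, the latter because $F_{-1,1} = 0$ since $\theta_{-1,1}=1$), while $F_{-1,0}^\rho$ and $F_{0,1}^\rho$ vanish because $(g_{-1,-1}+g_{00})=(g_{00}+g_{11})=0$ with $\mcG=\mathrm{diag}(1,-1,1)$.

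Second, I will verify that $\xi$ is a highest weight vector for the resulting $X(\mfso_3,\mfso_2)^{tw}$-module structure. For $i<j$ we have $\delta_{ij}=0$ and $g_{ij}=0$ (as $\mcG$ is diagonal), so \eqref{so3so2:evhom} reduces to a scalar multiple of $F_{ij}^\rho$, which kills $\xi$ by the previous step. Consequently $s_{ij}(u)\xi = 0$ for all $i<j$, so the evaluation module is a highest weight module.

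Third, I will compute the highest weight by substituting $i=j\in\{0,1\}$, $d=1/4$, $g_{00}=-1$, $g_{11}=1$, $F_{00}^\rho\xi = 0$, $F_{11}^\rho\xi = 2\mu\xi$, and $(F_{11}^\rho)^2\xi = 4\mu^2\xi$ into \eqref{so3so2:evhom}, and collecting terms over the common denominator $(u-d)(u+d) = u^2-d^2$; this yields precisely the stated formulas for $\mu_0(u)$ and $\mu_1(u)$. Since $V(\mu)$ is one-dimensional hence irreducible as an $X(\mfso_3,\mfso_2)^{tw}$-module, it coincides with the irreducible quotient $V(\mu(u))$ of the Verma module at this highest weight. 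There is no substantive obstacle: the entire argument is routine once Proposition \ref{P:so3so2:evhom} is in hand, the only care required being to confirm that the off-diagonal relations $s_{ij}(u)\xi=0$ hold so that $\xi$ is genuinely a highest weight vector.
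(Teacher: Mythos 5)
Your proof is correct and follows exactly the approach the paper intends: the paper offers no explicit argument, merely asserting that ``Formula \eqref{so3so2:evhom} implies the following,'' and your computation is precisely the routine verification that the paper leaves implicit — checking $F_{ij}^\rho\xi = 0$ for $i<j$ (so that $\xi$ is a highest weight vector), and substituting $F_{11}^\rho\xi=2\mu\xi$, $(F_{11}^\rho)^2\xi=4\mu^2\xi$, $F_{00}^\rho=0$ into the diagonal entries of the evaluation formula to obtain the stated $\mu_0(u)$ and $\mu_1(u)$.
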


In analogy to the two-parameter family $\{V(\mu_1,\mu_2)\}_{\mu_1,\mu_2\in \C}$ of the previous subsection, the one-parameter family $\{V(\mu)\}_{\mu\in \C}$, which includes $V(\mcG)=V(0)$, contains all of the one-dimensional representations of $X(\mfso_3,\mfso_2)^{tw}$ up to twisting by automorphisms of the form \eqref{nu_g}: see Remark \ref{R:1dim}.


\section{Necessary conditions in the general setting}\label{sec:Nec}

Assume that $N\geq 4$ if $\mfg_N=\mfsp_N$,  $N\geq 5$ if $\mfg_N=\mfso_N$ and that the pair $(\mfg_N,\mfg_{p}\oplus \mfg_q)$ is of type DI(a), BI or CII. In this section we use the machinery developed  in \cite{GRW2} to obtain a set of conditions on the highest weight $\mu(u)$ of the $X(\mfg_N,\mfg_{p}\oplus \mfg_q)^{tw}$-module $V(\mu(u))$ which are satisfied whenever this module is finite-dimensional: see Propositions \ref{P:necessary} and \ref{P:int}.

In this section it will often be convenient to write the symmetric pair $(\mfg_N,\mfg_{p}\oplus \mfg_q)$ as 
$(\mfg_N,\mfg_{N-2\ell}\oplus \mfg_{2\ell})$ (see \eqref{ley-key}). This will be of particular importance in Proposition \ref{P:ind}.

Let $m$ be an integer satisfying $1\leq m\leq \ley-\delta_{\ley,n}$. In what follows we will consider the quantum algebras $ X(\mfg_{N},\mfg_{N-2\ley}\oplus \mfg_{2\ley})^{tw}$ and $X(\mfg_{N-2m},\mfg_{N-2\ley}\oplus \mfg_{2(\ley-m)})^{tw}$ simultaneously, and for this reason we will
relabel the generating series $s_{ij}(u)$ of $ X(\mfg_{N-2m},\mfg_{N-2\ley}\oplus \mfg_{2(\ley-m)})^{tw}$ by $s^{m}_{ij}(u)$ for each $i,j\in \mcI_{N-2m}$. Similarly, the generating series $\wt s_{ij}(u)$ of $\wt X(\mfg_{N-2m},\mfg_{N-2\ley}\oplus \mfg_{2(\ley-m)})^{tw}$ shall be denoted $\wt s_{ij}^{m}(u)$. It will also be convenient to employ the notation
$\mcG_m$ and $p_m(u)$ for the matrix $\mcG$ and rational function $p(u)=p_{\mcG_m}(u)$ defined in \eqref{p(u)}, respectively, which correspond to the symmetric pair $(\mfg_{N-2m},\mfg_{N-2\ley}\oplus \mfg_{2(\ley-m)})$. In particular, $X(\mfg_{N-2m},\mcG_m)^{tw}$ is equal to $X(\mfg_{N-2m},\mfg_{N-2\ley}\oplus \mfg_{2(\ley-m)})^{tw}$ and we will 
sometimes use to former notation for brevity.

For each $i,j\in \mcI_{N-2m}$ define $s_{ij}^{\circ m}(u)\in g_{ij}+u^{-1}X(\mfg_N,\mfg_{N-2\ley}\oplus \mfg_{2\ley})^{tw}[[u^{-1}]]$ by
\begin{equation}
 s_{ij}^{\circ m}(u)=s_{ij}(u+\tfrac{m}{2})+\tfrac{\delta_{ij}}{2u}\sum_{a=n-m+1}^n s_{aa}(u+\tfrac{m}{2}). \label{scirc}
\end{equation}
Given an arbitrary representation $V$ of $X(\mfg_{N},\mfg_{N-2\ley}\oplus \mfg_{2\ley})^{tw}$, let $V_{(+,m)}\subset V$ be the subspace
\begin{equation}
 V_{(+,m)}=\{v\in V\,:\, s_{ij}(u)v=0\; \text{ for all }\; i<j \text{ with }n-m+1\leq j\leq n\}. \label{V(+,m)}
\end{equation}
Lemma 4.12 of \cite{GRW2} together with a simple induction on $m$ shows that each $s_{ij}^{\circ m}(u)$ may be regarded as an element of 
$\End \,V_{(+,m)}[[u^{-1}]]$ and that for any $h_m(u)\in (-1)^{\delta_{m,\ley}\delta_{p/2-m<\key}}+u^{-1}\C[[u^{-1}]]$ the assignment 
\begin{equation}
\wt s_{ij}^{m}(u)\mapsto h_m(u)s_{ij}^{\circ m}(u) \; \text{ for all }\; i,j\in \mcI_{N-2m} \label{wtX-act}
\end{equation}
gives rise to an $\wt X(\mfg_{N-2m},\mfg_{N-2\ley}\oplus \mfg_{2(\ley-m)})^{tw}$-module 
structure on $V_{(+,m)}$.  When $m=1$, Proposition 4.13 of \cite{GRW2} gave a sufficient condition on 
$h_m(u)$ which guarantees that the action of the reflection algebra $\wt X(\mfg_{N-2m},\mcG_m)^{tw}$ on $V_{(+,m)}$ factors through the extended twisted Yangian $X(\mfg_{N-2m},\mcG_m)^{tw}$. In fact, a careful reading of the proof shows that, provided $V_{(+,m)}\neq 0$, this condition is also necessary. 

The next proposition generalizes this result to the case where $1\leq m\leq \ley-\delta_{\ley,n}$. 

\begin{prop}\label{P:ind}
Suppose that $V_{(+,m)}$ is nonzero, and let $h_m(u)\in (-1)^{\delta_{m,\ley}\delta_{p/2-m<\key}}+u^{-1}\C[[u^{-1}]]$. Then the assignment $s_{ij}^m(u)\mapsto h_m(u)s_{ij}^{\circ m}(u)$ equips $V_{(+,m)}$ with a $X(\mfg_{N-2m},\mfg_{N-2\ley}\oplus \mfg_{2(\ley-m)})^{tw}$-module structure if and only if $h_m(u)$ satisfies the relation
 \begin{equation}
  h_m(u)h_m(\ka-m-u)^{-1}=p_m(u)p(u+\tfrac{m}{2})^{-1}.
 \label{h:cond}
 \end{equation}
If in addition $V$ is a highest weight module with the highest weight vector $\xi$, then $V_{(+,m)}\neq 0$ and the subrepresentation 
$X(\mfg_{N-2m},\mfg_{N-2\ley}\oplus \mfg_{2(\ley-m)})^{tw}\xi \subset V_{(+,m)}$ is a highest weight module with the highest weight $h_m(u)\mu^{\circ m}(u)=(h_m(u)\mu_i^{\circ m}(u))_{i\in \mcI_{N-2m}^+}$, where the components of  $\mu^{\circ m}(u)$ are uniquely determined by the relations 
\begin{equation}
 \wt {\mu_i^{\circ m}}(u)=\wt \mu_i(u+\tfrac{m}{2})\; \text{ for all }\; i\in \mcI_{N-2m}^+. \label{mu-circ}
\end{equation}
\end{prop}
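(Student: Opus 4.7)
The plan is to upgrade the reflection-algebra structure on $V_{(+,m)}$, which holds for any admissible $h_m(u)$ by the discussion preceding the proposition (iterated application of Lemma 4.12 of \cite{GRW2}), to an action of the extended twisted Yangian $X(\mfg_{N-2m},\mcG_m)^{tw}$. Since the reflection equation is automatically satisfied and is preserved under multiplication by the scalar series $h_m(u)$, the entire content of the proposition is concentrated in the symmetry relation \eqref{TX-symm} for the smaller algebra, together with the uniqueness/necessity statement when $V_{(+,m)}\neq 0$. This is the analog, for arbitrary $m$, of Proposition 4.13 of \cite{GRW2}, which treated the case $m=1$, and I would structure the argument following the same pattern.

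For sufficiency, I would work with the entrywise form \eqref{TrS.0} of the symmetry relation. Applied in $X(\mfg_N,\mcG)^{tw}$ with $u$ replaced by $v := u+m/2$, it expresses $\theta_{ij} s_{-j,-i}(v)$ in terms of $s_{ij}(\ka-v)$, $s_{ij}(v)$, and $\delta_{ij}\,\Tr(S(v))$. Combining this with the definition \eqref{scirc}, using that $\theta_{ij}$ for $i,j\in\mcI_{N-2m}$ agrees with that of the smaller algebra and that $2v-\ka = 2u-\ka_m$ where $\ka_m:=\ka-m$, one rewrites $h_m(u)\,(s^{\circ m})^t_{ij}(u)$ as a combination of $s^{\circ m}_{ij}(\ka_m-u)$, $s^{\circ m}_{ij}(u)$, and a trace correction. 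Matching the coefficient of $s^{\circ m}_{ij}(\ka_m-u)$ against the target symmetry relation for $X(\mfg_{N-2m},\mcG_m)^{tw}$ produces the identity $h_m(u)\,p(u+m/2) = h_m(\ka_m-u)\,p_m(u)$, which is exactly \eqref{h:cond}. The necessity direction then follows because, when $V_{(+,m)}\neq 0$, this same comparison of coefficients on the nonzero subspace forces \eqref{h:cond} to hold as a rational-function identity in $u$.

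The main technical obstacle is the diagonal case $i=j$, where the trace term in \eqref{TrS.0} and the correction appearing in \eqref{scirc} must be shown to assemble into exactly $-h_m(u)\,\delta_{ij}\,\Tr(s^{\circ m}(u))/(2u-2\ka_m)$ once \eqref{h:cond} is imposed. For this I would invoke \eqref{TrS}, which trades $\Tr(S(\ka-v))$ for $(p_I(v)/p(v))\Tr(S(v))$, together with the direct identity
\[
\Tr(s^{\circ m}(u)) \;=\; \sum_{i\in\mcI_{N-2m}} s_{ii}(v) \;+\; \frac{N-2m}{2u}\sum_{a=n-m+1}^{n} s_{aa}(v),
\]
which follows immediately from \eqref{scirc}. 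Matching the resulting rational-function coefficients reduces to the analog of \eqref{p(u)} for $p_m(u)$.

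For the final statement on highest weight modules, the inclusion $\xi \in V_{(+,m)}$ is tautological, so $V_{(+,m)}\neq 0$. The identities $s^{\circ m}_{ij}(u)\xi = s_{ij}(v)\xi = 0$ for $i<j$ in $\mcI_{N-2m}$, together with
\[
s^{\circ m}_{ii}(u)\xi \;=\; \Bigl(\mu_i(v) + \tfrac{1}{2u}\sum_{a=n-m+1}^{n} \mu_a(v)\Bigr)\xi,
\]
are direct consequences of \eqref{scirc}, and show that $X(\mfg_{N-2m},\mcG_m)^{tw}\,\xi$ is a highest weight module with weight $h_m(u)\mu^{\circ m}(u)$ for $\mu^{\circ m}_i(u)$ given by the parenthesized expression above. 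The identity \eqref{mu-circ} then follows from a short direct verification using \eqref{tilde-mu(u)}: the coefficient of $\sum_{a=n-m+1}^{n} \mu_a(v)$ in $\wt{\mu^{\circ m}_i}(u)$ equals $(2u-(n-m)+i)/(2u) + (n-m-i)/(2u) = 1$, which converts the truncated sum $\sum_{\ell=i+1}^{n-m} \mu_\ell(v)$ appearing in $\wt{\mu^{\circ m}_i}(u)$ into the full sum $\sum_{\ell=i+1}^{n}\mu_\ell(v)$ appearing in $\wt\mu_i(v)$, yielding $\wt{\mu^{\circ m}_i}(u) = \wt\mu_i(u+m/2)$.
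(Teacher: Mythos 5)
Your proposal is correct and follows essentially the same route as the paper: the reflection equation is free from Lemma 4.12 of \cite{GRW2}, and the content is the symmetry relation, verified by rewriting \eqref{TrS.0} in terms of $s^{\circ m}_{ij}$ and matching against the symmetry relation of $X(\mfg_{N-2m},\mcG_m)^{tw}$, exactly as in the paper's cited adaptation of Proposition 4.13 of \cite{GRW2} with $\kappa'\mapsto\kappa'-m$, $n-1\mapsto n-m$. You simply spell out the steps that the paper compresses into a reference, and your verification of \eqref{mu-circ} via the coefficient-of-$C$ computation is the same bookkeeping one would carry out.
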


\begin{proof}
That the reflection equation for $X(\mfg_{N-2m},\mcG_m)^{tw}$ is satisfied by $h_m(u)s_{ij}^{\circ m}(u)$ is a consequence of Lemma 4.12 of \cite{GRW2}. As for the symmetry relation for $X(\mfg_{N-2m},\mcG_m)^{tw}$, one can repeat the proof of Proposition 4.13 in \textit{loc.~cit}, replacing $\kappa'$ by $\kappa'-m$ and $n-1$ by $n-m$.
\end{proof}

Given a $X(\mfg_N,\mcG)^{tw}$-module $V$ and  $h_m(u)\in (-1)^{\delta_{m,\ley}\delta_{p/2-m<\key}}+u^{-1}\C[[u^{-1}]]$ satisfying \eqref{h:cond}, we denote by $V_{(h_m(u))}$ the $X(\mfg_{N-2m},\mfg_{N-2\ley}\oplus \mfg_{2(\ley-m)})^{tw}$-module which is equal to $V_{(+,m)}$ as a vector space with action determined by $s_{ij}^m(u)\mapsto h_m(u)s_{ij}^{\circ m}(u)$ for all $i,j\in \mcI_{N-2m}$. When $V$ is a highest weight module with $\xi\in V$ a fixed highest weight vector, we denote by $V_{h_m(u)}$ the submodule 
$X(\mfg_{N-2m},\mcG_m)^{tw}\xi$ of $V_{(h_m(u))}$. 

In the remarks following the proof of Proposition 4.13 of \cite{GRW2} it was explained that there always exists $h_m(u)$ satisfying \eqref{h:cond}. We now show that there is a particular series $h_m(u)$ which can be regarded as the most natural solution of \eqref{h:cond}.  Let $\mathscr{g}_m(u)$ be the rational function from \eqref{g(u)} associated to the pair $(\mfg_{N-2m},\mfg_{N-2\ley}\oplus \mfg_{2(\ley-m)})$.  
\begin{prop}\label{P:nat-h}
For any integer $1\leq m\leq \ley-\delta_{\ley,n}$, the series 
\begin{equation}
 h_m(u)=\frac{u}{u+\tfrac{m}{2}}\cdot \mathscr{g}_m(u) \mathscr{g}(u+\tfrac{m}{2})^{-1} \label{h_m:triv}
\end{equation}
belongs to $(-1)^{\delta_{m,\ley}\delta_{p/2-m<\key}}+u^{-1}\C[[u^{-1}]]$ and satisfies \eqref{h:cond}. Moreover, this is the unique choice of $h_m(u)$  with the property that $V(\mcG)_{h_m(u)}$ is isomorphic to the trivial representation $V(\mcG_m)$. 
\end{prop}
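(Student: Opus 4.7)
The proof has three essentially independent pieces: checking the constant term of the right-hand side of \eqref{h_m:triv}, verifying the functional equation \eqref{h:cond}, and establishing the uniqueness characterization via the trivial representation. The strategy is to treat the first two as direct calculations from the explicit formula, and to use the uniqueness characterization both to motivate why \eqref{h_m:triv} is the ``natural'' choice and to pin down the constant term cleanly.

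First I would evaluate $\lim_{u\to\infty}h_m(u)$ directly from \eqref{h_m:triv}. The factor $u/(u+m/2)$ tends to $1$, while $\mathscr{g}(v)\to [\pm]1$ and $\mathscr{g}_m(v)\to [\pm]_m 1$ as $v\to\infty$ by \eqref{g(u)}, where $[\pm]$ and $[\pm]_m$ are the signs attached to the original and reduced pairs. Hence $h_m(\infty)=[\pm]_m/[\pm]$. A case analysis based on Table~\ref{Table:G}, comparing the type of $(\mfg_N,\mfg_p\op\mfg_q)$ with the type of the reordered reduced pair $(\mfg_{N-2m},\mfg_{N-2\ley}\op\mfg_{2(\ley-m)})$ once the summands are arranged to satisfy $p_m\geq q_m$, shows that this ratio is $(-1)^{\delta_{m,\ley}\delta_{m,p/2}}$. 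The only scenarios producing a sign flip are those in which the reduced pair degenerates to a BCD0 pair from a pair with $p=q$ (in types CII, DI(a)) or from type BI(b) at $m=p/2=\ley$.

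Second I would verify \eqref{h:cond}. Apply the identity $p_I(v)p(v)^{-1}=\mathscr{g}(\ka-v)\mathscr{g}(v)^{-1}$ (established earlier in the excerpt, and valid here since $p\ne q$) to both the original pair at $v=u+m/2$ and the reduced pair at $v=u$. Since $\ka_m=\ka-m$ gives $\ka_m-u+m/2=\ka-(u+m/2)$, the ratio $h_m(u)/h_m(\ka_m-u)$ simplifies after routine cancellation to
\begin{equation*}
\frac{h_m(u)}{h_m(\ka_m-u)}=\frac{u(\ka_m-u+m/2)}{(u+m/2)(\ka_m-u)}\cdot\frac{p_m(u)\,p_I(u+m/2)}{p_{m,I}(u)\,p(u+m/2)}.
\end{equation*}
So \eqref{h:cond} reduces to the scalar identity $p_{m,I}(u)=\frac{u(\ka_m-u+m/2)}{(u+m/2)(\ka_m-u)}p_I(u+m/2)$, which I would settle by a direct computation from the closed form of $p_I(v)$ obtained by clearing denominators in $p_I(v)=1\mp\frac{1}{2v-\ka}+\frac{N}{2v-2\ka}$, together with $N_m=N-2m$ and $\ka_m=\ka-m$.

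For the uniqueness assertion, let $\xi$ be the cyclic vector of $V(\mcG)$. Since $s_{ij}(u)\xi=g_{ij}(u)\xi$ for all $i,j\in\mcI_N$, one has $V(\mcG)_{(+,m)}=\C\xi$, and by \eqref{scirc} the operator $s_{ij}^{\circ m}(u)$ acts on $\xi$ by the scalar $g_{ij}(u+m/2)+\frac{\delta_{ij}}{2u}\sum_{a=n-m+1}^n g_{aa}(u+m/2)$. The requirement that $V(\mcG)_{h_m(u)}\cong V(\mcG_m)$ is therefore the scalar identity
\begin{equation*}
h_m(u)\Bigl(g_{ii}(u+m/2)+\tfrac{1}{2u}\sum_{a=n-m+1}^n g_{aa}(u+m/2)\Bigr)=[\mcG_m(u)]_{ii} \qq\text{for every}\ i\in\mcI_{N-2m}^+,
\end{equation*}
which determines $h_m(u)$ uniquely (and forces the off-diagonal identities automatically, by the diagonal form of $\mcG$ and $\mcG_m$). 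Substituting the explicit form $\mcG(u)=(dI-u\mcG)/(d-u)$ and using the evaluation of $\sum_{a=n-m+1}^n g_{aa}(u+m/2)$ in terms of $\mathscr{g}(u+m/2)$ (see \eqref{eq:wtgii1}--\eqref{eq:wtgii2}), together with the analogous formula for $\mcG_m(u)$, a direct calculation identifies the unique $h_m(u)$ with the expression \eqref{h_m:triv}. I expect the main obstacle will be the case analysis for the constant term, since the type of the reduced pair depends delicately on the comparison between $2(\ley-m)$ and $N-2\ley$; this is precisely where the combinatorial identity encoded by $\delta_{m,\ley}\delta_{m,p/2}$ emerges, and tracking the signs across all type transitions demands care.
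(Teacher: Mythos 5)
Your uniqueness argument coincides with the paper's: both come down to the scalar relation $h_m(u)\wt g_{ii}(u+\tfrac{m}{2})=\wt{g^m_{ii}}(u)$ for $i\in\mcI_{N-2m}^+$, which pins down $h_m(u)$ and identifies $V(\mcG)_{h_m(u)}$ with $V(\mcG_m)$. Where you genuinely diverge is in establishing \eqref{h:cond}. You set out to prove it by a direct algebraic reduction, applying $p_I(v)p(v)^{-1}=\mathscr{g}(\ka-v)\mathscr{g}(v)^{-1}$ to both pairs and cancelling down to a rational-function identity between $p_{m,I}(u)$ and $p_I(u+m/2)$. The paper instead observes that \eqref{h:cond} follows \emph{for free} from the very computation underlying the uniqueness part: once one knows the $\wt X(\mfg_{N-2m},\mcG_m)^{tw}$-action on $V(\mcG)=V(\mcG)_{(+,m)}$ is $\wt S^m(u)\mapsto\mcG_m(u)$, this action manifestly factors through the honest extended twisted Yangian $X(\mfg_{N-2m},\mcG_m)^{tw}$ (it is its trivial representation), and the only-if direction of Proposition \ref{P:ind} then forces $h_m(u)$ to satisfy \eqref{h:cond}. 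That is precisely why Proposition \ref{P:ind} is stated as an if-and-only-if; re-deriving \eqref{h:cond} by hand is valid but strictly extra work.

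Two cautions on your more computational path. First, the relation $p_I(v)p(v)^{-1}=\mathscr{g}(\ka-v)\mathscr{g}(v)^{-1}$ is proved in the paper only when $p\neq q$, yet you invoke it for the \emph{original} pair with the remark ``valid here since $p\ne q$''; CII and DI(a) pairs with $p=q$ are within the scope of the proposition, so that premise is not automatic (the relation does persist there, but the paper's proof breaks down since $\Tr\mcG(u)\equiv 0$ is not invertible, so a short separate check is owed). Second, in the constant-term case analysis you list CII/DI(a) pairs with $p=q$ degenerating to BCD0 at $m=\ley=p/2$ as producing a sign flip. But for CII/DI(a) the leading term of $\mathscr{g}(u)$ is $+1$ (the sign $[\pm]$ in \eqref{g(u)} equals $-1$ only in type BI(b)), as is the leading term of $\mathscr{g}_m(u)$ for a BCD0 reduction, so $h_m(u)\to +1$ there, not $-1$. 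That case is worth revisiting before committing to the claimed match with $(-1)^{\delta_{m,\ley}\delta_{m,p/2}}$.
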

\begin{proof}
 Let $h_m(u)$ be given by \eqref{h_m:triv} and consider the trivial representation $V(\mcG)$ of $X(\mfg_N,\mcG)^{tw}$. Since $V(\mcG)_{(+,m)}=V(\mcG)$, 
 $V(\mcG)$ is also a $\wt X(\mfg_{N-2m},\mcG_m)^{tw}$-module with action given by \eqref{wtX-act}. Let's argue that this is just the representation 
 $\wt S^m(u)\mapsto \mcG_m(u)$, where $\mcG_m(u)$ is the matrix $\mcG(u)$ corresponding to the pair $(\mfg_{N-2m},\mfg_{N-2\ley}\oplus \mfg_{2(\ley-m)})$. The generating series
 $\wt s_{ii}^m(u)$ operates in $V(\mcG)$ as $h_m(u)\mu^\circ(u)$, where in this case $\mu^\circ(u)=(\mu^{\circ m}_i(u))_{i\in \mcI_{N-2m}^+}$ has components determined by 
 $\wt{\mu_i^{\circ m}}(u)=\wt g_{ii}(u+\tfrac{m}{2})$ and $g_{ii}(u)$ is the $(i,i)$-th entry of the diagonal matrix $\mcG(u)$. Let $g^m_{kk}(u)$ be the $(k,k)$-th entry 
 of the (diagonal) matrix $\mcG_m(u)$. Then, to show that $\wt s_{ii}^m(u)$ operates as $g^m_{ii}(u)$ for all $i\in \mcI_{N-2m}^+$, it is enough to show 
\begin{equation}
 h_m(u)\wt g_{ii}(u+\tfrac{m}{2})=\wt{g^m_{ii}}(u). \label{nat-h.1}
\end{equation}
For any $i\in \mcI_N^+$ satisfying $0\leq i\leq \key$, we have $h_m(u)=\wt{g_{ii}^m}(u)\cdot \wt g_{ii}(u+\tfrac{m}{2})^{-1}$ by definition of $\mathscr{g}(u)$ and $\mathscr{g}_m(u)$ - see \eqref{eq:wtgii2}. Hence 
 \eqref{nat-h.1} trivially holds for these values of $i$. If instead  $\key+1\leq i\leq n-m$, then the left-hand side of 
 \eqref{nat-h.1} is equal to 
 \begin{equation*}
  2u\cdot \left(\frac{p+q-2m-4u}{p-q[\pm]2m-4u}\right)\left(\frac{p-q-2m-4u}{p+q-2m-4u}\right)\frac{p-q[\pm](4u+2m)}{p-q-2m-4u}=2u\left(\frac{p-q[\pm]2m[\pm]4u}{p-q[\pm]2m-4u} \right),
 \end{equation*}
 which  is precisely $\wt g_{ii}^m(u)$ by \eqref{eq:wtgii1}. Therefore \eqref{nat-h.1} holds and $\wt s_{ii}^m(u)$ operates as $g_{ii}^m(u)$ for all $i\in \mcI_{N-2m}^+$. Since 
the action of the generating matrix $\wt S^m(u)$ on $V(\mcG)$ is diagonal and symmetric with respect to the transposition $t$, and $\mcG_m(u)$ has these same properties, 
we can conclude that $\wt S^m(u)$ operates as $\mcG_m(u)$ in $V(\mcG)=V(\mcG)_{(+,m)}$. 

We already know that this action factors through the extended twisted Yangian $X(\mfg_{N-2m},\mcG_m)^{tw}$ (yielding the trivial representation $V(\mcG_m)$), and therefore Proposition \ref{P:ind} implies that $h_m(u)$ satisfies the relation \eqref{h:cond}. 

To complete the proof of the proposition it remains to explain why \eqref{h_m:triv} is the unique choice of $h_m(u)$ such that $V(\mcG)_{h_m(u)}\cong V(\mcG_m)$. This follows from the fact that any such $h_m(u)$ must satisfy \eqref{nat-h.1}, and this property uniquely determines $h_m(u)$. 
\end{proof}
Henceforth we will assume $h_m(u)$ is given by \eqref{h_m:triv} and we will write $V_{(m)}$ for $V_{(h_m(u))}$ and $V_m$ for $V_{h_m(u)}$. 
\begin{crl}\label{C:nosub}
 Suppose that $\mu(u)$ satisfies the conditions of Proposition \ref{P:nontriv} and let $\xi \in V(\mu(u))$ be a highest weight vector. Then $V(\mu(u))_m$ is the only highest weight submodule of $V(\mu(u))_{(m)}$. In particular, if $V(\mu(u))$ is finite-dimensional then $V(\mu(u))_m$ is a finite-dimensional 
 irreducible module isomorphic to $V(h_m(u)\mu^{\circ  m}(u))$, where $h_m(u)$ is given by \eqref{h_m:triv} and $\mu^{\circ m}(u)$ is as in Proposition \ref{P:ind}. 
\end{crl}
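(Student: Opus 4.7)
The plan is to reduce the first statement to Corollary \ref{C:hw-1dim}, which identifies $\C\xi$ with the entire space $V(\mu(u))^0$ of vectors annihilated by every $s_{ij}(u)$ with $i<j$ in $\mcI_N$. Concretely, if $W \subset V(\mu(u))_{(m)}$ is a highest weight submodule with highest weight vector $\eta$, I would show $\eta \in V(\mu(u))^0$, which forces $\eta \in \C\xi$ and hence $W = X(\mfg_{N-2m},\mcG_m)^{tw}\xi = V(\mu(u))_m$.

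To verify $s_{ij}(u)\eta=0$ for all $i<j$ in $\mcI_N$, I would partition these pairs into three regimes. First, if both indices lie in $\mcI_{N-2m}$, then $\delta_{ij}=0$, so formula \eqref{scirc} gives $s_{ij}^{\circ m}(u) = s_{ij}(u+\tfrac{m}{2})$; combined with $s_{ij}^m(u)\eta = 0$ and the invertibility of $h_m(u)$, this forces $s_{ij}(u)\eta=0$. Second, if $n-m+1 \le j \le n$, the conclusion is immediate from $\eta \in V(\mu(u))_{(+,m)}$ by \eqref{V(+,m)}. The main obstacle is the remaining regime $j \le n-m$ with $i \le -(n-m+1)$; here $-i \ge n-m+1$ and the second case applied to the pair $(-j,-i)$ yields $s_{-j,-i}(u)\eta = 0$. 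I would then invoke the symmetry relation \eqref{TrS.0} (with $\delta_{ij}=0$) at $u$ and at $\ka-u$, and eliminate $s_{ij}(\ka-u)\eta$ between the two resulting equations. Using the identity $p(u)p(\ka-u)=1-(2u-\ka)^{-2}$ from \eqref{p(u)}, the outcome simplifies to $(2u-\ka)^2 s_{ij}(u)\eta = 0$, which forces $s_{ij}(u)\eta = 0$ upon inspection of the coefficients (noting $s_{ij}^{(0)}=g_{ij}=0$ for $i\ne j$ in this regime).

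For the ``in particular'' claim, suppose $V(\mu(u))$ is finite-dimensional. Then by Proposition \ref{P:ind}, $V(\mu(u))_m$ is a finite-dimensional highest weight $X(\mfg_{N-2m},\mcG_m)^{tw}$-module with highest weight $h_m(u)\mu^{\circ m}(u)$. If it had a proper nonzero submodule, this submodule would contain a simple one; by Theorem 4.5 of \cite{GRW2} any such simple submodule is a highest weight module, hence a highest weight submodule of $V(\mu(u))_{(m)}$ strictly contained in $V(\mu(u))_m$, contradicting the uniqueness just established. Thus $V(\mu(u))_m$ is irreducible and isomorphic to $V(h_m(u)\mu^{\circ m}(u))$.
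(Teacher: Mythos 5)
Your proof is correct and follows the same overall strategy as the paper: show that any highest weight vector $\eta$ of a highest weight submodule of $V(\mu(u))_{(m)}$ lies in $V(\mu(u))^0$, then invoke Corollary \ref{C:hw-1dim} to identify $\eta$ with a scalar multiple of $\xi$. Your three-regime decomposition covers all pairs $i<j$ in $\mcI_N$, and the ``in particular'' argument matches the paper's.

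One point worth noting: in your third regime, you use \eqref{TrS.0} at both $u$ and $\kappa-u$ and eliminate $s_{ij}(\kappa-u)\eta$, arriving (correctly) at $(2u-\kappa)^2 s_{ij}(u)\eta=0$. There is a shorter route, which is what the paper leaves implicit: since $s_{-j,-i}(u)\eta=0$ forces $s_{-j,-i}(\kappa-u)\eta=0$ coefficient-by-coefficient, a single application of \eqref{TrS.0} with the index pair $(-j,-i)$ gives $0 = p(u)\,s_{-j,-i}(\kappa-u)\,\eta = \theta_{-j,-i}\,s_{ij}(u)\,\eta \mp (2u-\kappa)^{-1}s_{-j,-i}(u)\,\eta = \theta_{-j,-i}\,s_{ij}(u)\,\eta$, hence $s_{ij}(u)\eta=0$ directly. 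This avoids the $p(u)p(\kappa-u)$ identity and the coefficient-by-coefficient inspection. Both versions are valid; the paper's proof essentially merges your regimes 2 and 3 into the single observation that $s_{ij}(u)\eta=0=s_{-j,-i}(u)\eta$ for all $i<j$ with $n-m+1\le j\le n$, with the second equality being exactly this one-line consequence of the symmetry relation.
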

\begin{proof}
 Suppose that $K\subset V(\mu(u))_{(m)}$ is any highest weight submodule of $V(\mu(u))_{(m)}$, and let $\eta\in K$ be a highest weight vector. 
 Since $V(\mu(u))_{(m)}$ is equal to $V(\mu(u))_{(+,m)}$ as a vector space, $s_{ij}(u)\eta=0=s_{-j,-i}(u)\eta$ for all $i<j$ with $n-m+1\leq j\leq n$. Since $\eta$ is a highest weight vector of 
 $K$, we must have $s_{kl}(u+\tfrac{m}{2})\eta=0=s_{-l,-k}(u+\tfrac{m}{2})\eta$ for all $-n+m\leq k<l\leq n-m$, and therefore $s_{kl}(u)\eta=0=s_{-l,-k}(u)\eta$ for these values of $k$ and $l$. Combining these two facts gives $s_{ij}(u)\eta=0$ for all $-n\leq i<j \leq n$, and thus $\eta \in V(\mu(u))^0$. By Corollary \ref{C:hw-1dim},  $V(\mu(u))^0=\C\xi$ and therefore $\eta$ is a nonzero scalar multiple of $\xi$. As $\xi$ generates $V(\mu(u))_m$, this implies $K=V(\mu(u))_m$.
 
 The second part of the corollary follows from the first part and the fact that, if $V(\mu(u))_m$ is finite-dimensional, every 
 proper nonzero submodule $K\subset V(\mu(u))_m$ must contain an irreducible submodule, and hence a  highest weight vector.
\end{proof}

The following proposition gives necessary conditions for the finite-dimensionality of $V(\mu(u))$. 
Recall that $\delta=0$ if $\mfg_N=\mfso_N$ and $\delta=1$ if $\mfg_N=\mfsp_{N}$.  
\begin{prop}\label{P:necessary}
Suppose  that the irreducible $X(\mfg_N,\mfg_{N-2\ley}\oplus \mfg_{2\ley})^{tw}$-module $V(\mu(u))$ is finite-dimensional.  Then there exists monic polynomials $P_1(u),\ldots,P_n(u)$ together with a scalar $\al\in \C\setminus Z(P_{\key+1}(u))$  such that 
\begin{equation}
 \frac{\wt \mu_{i-1}(u)}{\wt \mu_i(u)}=\frac{P_i(u+1)}{P_i(u)}\left(\frac{\al-u}{\al+u-\ley}\right)^{\del_{i,\key+1}} \; \text{ with }\; P_i(u)=P_i(-u+n-i+2)  \label{nec.0}
\end{equation}
for all $2\leq i\leq n$, while $P_1(u)$ satisfies  $P_1(u)=P_1(-u+\ka+2^\delta)$ and the relation
\begin{equation}
\begin{aligned}\label{nec.2}
 \frac{\wt \mu_0(u)}{\wt \mu_{1}(u)}&=\frac{P_1(u+\tfrac{1}{2})}{P_1(u)}\left(\frac{\alpha-u}{\alpha+u-n}\right)^{\del_{\key,0}} &&\quad \text{ if }\;N={2n+1},\\
 \frac{\wt \mu_1(\ka-u)}{\wt \mu_{2^{1-\delta}}(u)}&=\frac{\mathscr{g}(\ka-u)}{\mathscr{g}(u)}\cdot \frac{P_1(u+2^\delta)}{P_1(u)}\cdot \frac{\kappa-u}{u}\\
 &=\frac{(2u-\ka\pm 1)(2u+q-\ka\mp 1)}{(2u-\ka\mp 1)(2u-q-\ka\pm 1)}\cdot \frac{P_1(u+2^\delta)}{P_1(u)}\cdot \frac{\kappa-u}{u} &&\quad \text{ if }\; N=2n. 
\end{aligned}
\end{equation}
\end{prop}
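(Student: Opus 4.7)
The approach is to combine two reductions that together yield all the stated ratios. The first ingredient is Proposition 4.18 of \cite{GRW2}, which I would invoke to furnish monic polynomials $P_{\key+1}(u),\ldots,P_n(u)$ together with the scalar $\alpha\in \C\setminus Z(P_{\key+1}(u))$, all satisfying the required symmetry relations $P_i(u)=P_i(-u+n-i+2)$. This accounts for the ratios \eqref{nec.0} at indices $\key+1\le i\le n$, and in the degenerate case $\key=0$ it also produces the condition \eqref{nec.2} carrying the $\alpha$ factor.

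For the remaining indices $1\le i\le \key$ (which is only relevant when $\ley<n$, so that $\key\ge 1$), I would pass to the reduced module $V(\mu(u))_\ley$. Taking $m=\ley$ in Proposition \ref{P:ind} with the natural series $h_\ley(u)$ provided by Proposition \ref{P:nat-h}, one obtains a highest weight module over the BCD0-type twisted Yangian $X(\mfg_{N-2\ley},\mfg_{N-2\ley})^{tw}$, which is finite-dimensional and irreducible by Corollary \ref{C:nosub} whenever $V(\mu(u))$ is. Applying the BCD0 classification, Theorem \ref{T:BCD0-class}, produces monic polynomials $P_i^{\mr{red}}(u)$ for $1\le i\le \key$ satisfying the appropriate ratio and symmetry relations in terms of the components $\wt{\mu_i^{\circ\ley}}(u)=\wt\mu_i(u+\ley/2)$ from \eqref{mu-circ}. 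Setting $P_i(u)=P_i^{\mr{red}}(u-\ley/2)$ and performing the substitution $u\mapsto u-\ley/2$ in each BCD0 identity recovers the statements of \eqref{nec.0} for $2\le i\le \key$ and of \eqref{nec.2} in its $\alpha$-free form; the symmetry conditions $P_i^{\mr{red}}(u)=P_i^{\mr{red}}(-u+\key-i+2)$ translate, via a direct check using $\key=n-\ley$, into the required $P_i(u)=P_i(-u+n-i+2)$.

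These two sets of polynomials then concatenate into the full tuple $(P_1(u),\ldots,P_n(u))$, with $\alpha$ originating purely from the first reduction at index $\key+1$. Several edge cases must be treated separately: when $\ley=0$ the pair is itself of type BCD0 and Theorem \ref{T:BCD0-class} applies directly, with no $\alpha$; when $\key=0$ the BCD0 reduction is unavailable and every condition must be supplied by Proposition 4.18. The main anticipated obstacle lies in verifying that the $P_1$ relation of \eqref{nec.2} in the $N=2n$ case, which carries the explicit rational prefactor $\mathscr{g}(\ka-u)/\mathscr{g}(u)$ associated to the original symmetric pair, emerges correctly from the BCD0 prefactor $(\ka-u)/u$ of Theorem \ref{T:BCD0-class} after the shift $u\mapsto u-\ley/2$. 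Resolving this reduces to a careful bookkeeping exercise using the explicit expression for $h_\ley(u)$ in \eqref{h_m:triv} together with the identity $p_I(u)\,p(u)^{-1}=\mathscr{g}(\ka-u)\,\mathscr{g}(u)^{-1}$ established just after Proposition \ref{P:nontriv}, ensuring that the prefactor on the left-hand side of \eqref{nec.2} matches the one dictated by the reduction.
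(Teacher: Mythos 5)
Your reduction scheme for the case $\key\ge 1$ is essentially the paper's: Proposition 4.18 of \cite{GRW2} gives the tail of the tuple (and $\alpha$), a reduction $V(\mu(u))_\ley$ to $X(\mfg_{N-2\ley},\mfg_{N-2\ley})^{tw}$ via Propositions \ref{P:ind}, \ref{P:nat-h} and Corollary \ref{C:nosub} gives $P_1(u)$, and the prefactor in \eqref{nec.2} for $N=2n$ is recovered by an explicit cancellation using $h_\ley(u)$ as in \eqref{h_m:triv} together with $\mathscr{g}_\ley(u)=1$. Redistributing the labour so that the BCD0 reduction supplies $P_1,\ldots,P_\key$ while Proposition 4.18 supplies $P_{\key+1},\ldots,P_n$ is harmless, since the two sources do not overlap and both produce the uniquely determined polynomial for each index.

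There is, however, a genuine gap in the treatment of the $\key=0$ case, which corresponds to the pair $(\mfso_{2n+1},\mfso_{2n})$. You assert that when $\key=0$ ``every condition must be supplied by Proposition 4.18,'' including the $\alpha$-factor in \eqref{nec.2}. This is not what Proposition 4.18 delivers: it furnishes $P_2(u),\ldots,P_n(u)$ together with $\alpha\in\C\setminus Z(P_{\key+1}(u))$ \emph{only when} $\key\ge 1$, because the scalar $\alpha$ only appears in \eqref{nec.0} at index $\key+1\ge 2$; when $\key=0$, that index is $1$, which lies outside the range governed by \eqref{nec.0}, so Proposition 4.18 provides neither $P_1(u)$ nor $\alpha$. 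The paper closes this gap by considering the intermediate reduction $V(\mu(u))_{\ley-1}$, which is a finite-dimensional irreducible $X(\mfso_3,\mfso_2)^{tw}$-module, and then appealing to the low-rank classification in Proposition \ref{P:so3class} to produce a monic polynomial $Q_1(u)$ with $Q_1(u)=Q_1(-u+3/2)$ and a scalar $\alpha^\circ\notin Z(Q_1(u))$; shifting by $(\ley-1)/2$ then yields $P_1(u)$ and $\alpha$. Your proof as written does not invoke the rank-one data and therefore does not establish the $\alpha$-factor (nor $P_1$) for $\key=0$, which is precisely the type BI(b), $q=1$ case that drives the later developments in Subsection \ref{subsec:q=1}.
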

\begin{proof}
The existence of $P_2(u),\ldots,P_n(u)$ together with the scalar $\alpha\in \C\setminus Z(P_{\key+1}(u))$ (provided $\key\neq 0$) satisfying \eqref{nec.0} was established in Proposition 4.18 of \cite{GRW2}. Therefore, it suffices to show that there exists $P_1(u)$ (together with $\alpha\in \C\setminus Z(P_1(u))$ if $\key=0$) satisfying $P_1(u)=P_1(-u+\ka+2^\delta)$ as well as the relation \eqref{nec.2}. 

Assume first that $\key\neq 0$ so that $(\mfg_N,\mfg_N^\rho)\neq(\mfso_{2n+1},\mfso_{2n})$. As a consequence of Propositions \ref{P:ind} and \ref{P:nat-h} we may consider the $X(\mfg_{N-2\ell},\mfg_{N-2\ell})^{tw}$-module $V(\mu(u))_{\ell}$. In fact, by Corollary \ref{C:nosub} this module is irreducible and isomorphic to 
$V(h_\ley(u)\mu^{\circ \ley}(u))$ with $h_\ley(u)$ given by \eqref{h_m:triv} with $m=\ley$. By Theorem \ref{T:BCD0-class} and the definition of $\mu^{\circ \ley}(u)$ (see 
\eqref{mu-circ}), there exists a monic polynomial $Q_1(u)$ such $Q_1(u)=Q_1(-u+\ka-\ley+2^\delta)$ and 
\begin{align}
\frac{\wt \mu_0(u+\frac{\ley}{2})}{\wt \mu_1(u+\frac{\ley}{2})}&=\frac{Q_1(u+\tfrac{1}{2})}{Q_1(u)} \; \text{ if }\; N=2n+1, \label{Nec:eq1}\\
 \frac{\wt \mu_1(\ka-\frac{\ley}{2}-u)}{\wt \mu_{2^{1-\delta}}(u+\frac{\ley}{2})}&=\frac{h_\ley(u)}{h_\ley(\ka-\ley-u)}\cdot \frac{Q_1(u+2^\delta)}{Q_1(u)}\cdot \frac{\ka-\ley-u}{u}\; \text{ if }\; N=2n. \label{Nec:eq2}
\end{align}
If $N=2n+1$, then setting $P_1(u)=Q_1(u-\frac{\ley}{2})$ we obtain the desired result from \eqref{Nec:eq1} above after shifting $u\mapsto u-\frac{\ley}{2}$. Suppose instead that $N=2n$. Since $m=\ley$, we have $\mathscr{g}_m(u)=1$ and, after substituting $u\mapsto u-\frac{\ley}{2}$ in \eqref{Nec:eq2}, we arrive at the relation
\begin{equation*}
 \frac{\wt \mu_1(\ka-u)}{\wt \mu_{2^{1-\delta}}(u)}=\frac{u-\frac{\ley}{2}}{u}\cdot \frac{\ka-u}{\ka-\frac{\ley}{2}-u}\cdot \frac{\mathscr{g}(\ka-u)}{\mathscr{g}(u)}\cdot \frac{P_1(u+2^\delta)}{P_1(u)}\cdot \frac{\ka-\frac{\ley}{2}-u}{u-\frac{\ley}{2}}=\frac{\mathscr{g}(\ka-u)}{\mathscr{g}(u)}\cdot \frac{P_1(u+2^\delta)}{P_1(u)}\cdot \frac{\ka-u}{u},
\end{equation*}
where $P_1(u)=Q_1(u-\frac{\ley}{2})$. Since
\begin{equation}
 \frac{\mathscr{g}(\ka-u)}{\mathscr{g}(u)}=\frac{(2u-\ka\pm 1)(2u+q-\ka\mp 1)}{(2u-\ka\mp 1)(2u-q-\ka\pm 1)}, \label{g(u):exp}
\end{equation}
we have established the existence  of $P_1(u)$ satisfying \eqref{nec.2} and $P_1(u)=P_1(-u+\ka+2^\delta)$.

 Assume now that $\key=0$, and consider the $X(\mfso_3,\mfso_2)^{tw}$-module $V(\mu(u))_{\ley-1}$ which is isomorphic to $V(h_{\ley-1}(u)\mu^{\circ (\ley-1)}(u))$. By Proposition \ref{P:so3class}, there exists a monic polynomial $Q_1(u)$ together with $\alpha^\circ \in \C\setminus Z(Q_1(u))$ such that $Q_1(u)=Q_1(-u+\frac{3}{2})$ and 
\begin{equation*}
 \frac{\wt \mu_0(u+\frac{\ley-1}{2})}{\wt \mu_1(u+\frac{\ley-1}{2})}=\frac{Q_1(u+\tfrac{1}{2})}{Q_1(u)}\cdot \frac{\alpha^\circ-u}{\alpha^\circ+u-1}.
\end{equation*}
After setting $P_1(u)=Q_1(u-\frac{\ley-1}{2})$, $\alpha=\alpha^\circ+\frac{\ley-1}{2}$ and substituting $u\mapsto u-\frac{\ley-1}{2}$, we obtain the first equality in \eqref{nec.2}. 
Since we also have $P_1(u)=P_1(-u+\ka+2^\delta)$ and $\alpha\in \C\setminus Z(P_1(u))$, this completes the proof of the existence of $(\alpha,P_1(u),\ldots,P_n(u))$. 
\end{proof}
The previous proposition indicates that finite-dimensional irreducible $X(\mfg_N,\mcG)^{tw}$-modules are intimately connected to tuples $(\alpha,P_1(u),\ldots,P_n(u))$ 
which satisfy certain relations. In light of this, it is desirable to have terminology which enables us to quickly translate between the language of highest weights and that of finite sequences $(\alpha,P_1(u),\ldots,P_n(u))$ of the form described in Proposition \ref{P:necessary}. 
 \begin{defn}\label{D:assoc}
  Suppose that $P_1(u),\ldots,P_n(u)$ are monic polynomials in $u$ such that 
  \begin{equation}
   P_1(u)=P_1(-u+\ka+2^\delta) \quad \text{ and }\quad P_i(u)=P_i(-u+n-i+2) \; \text{ for all }\; 2\leq i\leq n. \label{P-sym}
  \end{equation}
Then, given $\alpha \in \C\setminus Z(P_{\key+1}(u))$, we say that $\mu(u)$ is associated to the tuple $(\alpha,P_1(u),\ldots,P_n(u))$ 
if the relations \eqref{nec.0} and \eqref{nec.2} of Proposition \ref{P:necessary} are satisfied and additionally \eqref{nontriv.2} holds when $N=2n+1$.
 \end{defn}
Note that it also follows from the relations \eqref{nec.0}, \eqref{nec.2} and \eqref{P-sym} that \eqref{nontriv.1} holds, and hence that if $\mu(u)$ 
is associated to a tuple $(\alpha,P_1(u),\ldots,P_n(u))$ then the irreducible module $V(\mu(u))$ exists.

 In the case where $V(\mu(u))$ is also finite-dimensional,  we follow the convention in the literature and call $(\alpha,P_1(u),\ldots,P_n(u))$ the \textit{Drinfeld tuple} associated to $V(\mu(u))$ and the polynomials 
 $P_1(u),\ldots,P_n(u)$ the \textit{Drinfeld polynomials}. 
 \begin{lemma}\label{L:unique}
  Suppose that $\mu(u)$ is associated to $(\alpha,P_1(u),\ldots,P_n(u))$. Then this is the unique tuple associated to $\mu(u)$. Moreover, 
  if $\mu^\sharp(u)$ is also associated to $(\alpha,P_1(u),\ldots,P_n(u))$ then there exists $g(u)\in 1+u^{-2}\C[[u^{-2}]]$ such that $V(\mu^\sharp(u))\cong V(\mu(u))^{\nu_g}$.
 \end{lemma}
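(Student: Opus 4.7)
The plan divides into (a) uniqueness of the associated tuple, and (b) construction of the twisting series $g(u)$.

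For (a), I would fix two tuples $(\alpha,P_1,\ldots,P_n)$ and $(\alpha',P_1',\ldots,P_n')$ both associated to $\mu(u)$ and systematically compare the ratios $\wt\mu_{i-1}/\wt\mu_i$ predicted by each. For every $i\in\{2,\ldots,n\}$ with $i\ne \key+1$, \eqref{nec.0} yields $P_i(u+1)/P_i(u)=P_i'(u+1)/P_i'(u)$, so the rational function $P_i'/P_i$ is $1$-periodic, hence a nonzero constant, hence equal to $1$ by monicity. The same periodicity argument disposes of $P_1$ in every remaining configuration except $(N,\key)=(2n+1,0)$. In the two exceptional situations where $\alpha$ enters a polynomial relation, namely $i=\key+1$ when $\key\geq 1$ and the $P_1$-relation when $(N,\key)=(2n+1,0)$, the equation has exactly the shape of \eqref{poly1}, so Lemma~\ref{L:poly1} simultaneously yields equality of polynomials and $\alpha=\alpha'$.

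For (b), I would define $f(u):=\mu^\sharp_n(u)/\mu_n(u)$; since $\mu_n(u),\mu^\sharp_n(u)\in g_{nn}+u^{-1}\C[[u^{-1}]]$ with $g_{nn}=\pm 1$, this is a well-defined element of $1+u^{-1}\C[[u^{-1}]]$. Because both weights are associated to the same tuple, the ratios $\wt\mu_{i-1}/\wt\mu_i$ and $\wt\mu^\sharp_{i-1}/\wt\mu^\sharp_i$ coincide, so starting from the base case $\wt\mu^\sharp_n(u)/\wt\mu_n(u)=f(u)$ (valid because $\wt\mu_n=2u\mu_n$), a downward induction on $i$ yields $\wt\mu^\sharp_i(u)=f(u)\wt\mu_i(u)$ for every $i\in \mcI_N^+$. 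Inverting the triangular relation \eqref{tilde-mu(u)} and running a second downward induction then promotes this to $\mu^\sharp_i(u)=f(u)\mu_i(u)$ for all $i$.

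Finally, I would verify $f(u)=f(\ka-u)$: for $N=2n+1$, apply the symmetry \eqref{nontriv.2} to both $\wt\mu_0$ and $\wt\mu^\sharp_0$ and divide; for $N=2n$, perform the analogous manoeuvre with \eqref{nec.2}. Setting $g(u):=f(u+\ka/2)$, the identity $f(u)=f(\ka-u)$ becomes $g(-u)=g(u)$, so $g(u)\in 1+u^{-2}\C[[u^{-2}]]$. By \eqref{nu_g}, $V(\mu(u))^{\nu_g}$ is then an irreducible highest weight module whose highest weight is $(g(u-\ka/2)\mu_i(u))_i=(f(u)\mu_i(u))_i=\mu^\sharp(u)$, whence $V(\mu^\sharp(u))\cong V(\mu(u))^{\nu_g}$. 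I do not anticipate a serious obstacle; the only minor subtlety is checking that Lemma~\ref{L:poly1} applies in the $(N,\key)=(2n+1,0)$ case, where the shift parameter $l$ is half-integral, which reduces to observing that the proof of that lemma does not genuinely require $l\in\Z$.
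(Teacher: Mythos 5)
Your argument is correct and follows the paper's own route: uniqueness is deduced from Lemma~\ref{L:poly1}, and the twisting series is defined as the ratio of the $\mu_n$-components (the paper sets $g(u)=\mu_n^\sharp(u+\ka/2)\mu_n(u+\ka/2)^{-1}$, which is your $f(u+\ka/2)$), with evenness of $g$ obtained from the relation at $\ka-u$. Your closing observation -- that the $(\mfso_{2n+1},\mfso_{2n})$ case forces $l=n+\tfrac12\notin\Z$ and $m=\tfrac12$ in Lemma~\ref{L:poly1}, whose statement assumes $l\in\Z$ -- is a genuine subtlety the paper leaves implicit, and your resolution (the induction in that lemma's proof never uses integrality of $l$) is correct.
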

\begin{proof}
The uniqueness of $(\alpha,P_1(u),\ldots,P_n(u))$ is an immediate consequence of Lemma \ref{L:poly1}. Let's turn to the second statement of the lemma. Suppose that $\mu^\sharp(u)$ is also associated to $(\al,P_1(u),\ldots,P_n(u))$. We need to show there is $g(u)\in 1+u^{-2}\C[[u^{-2}]]$ such that $\mu_i^\sharp(u)=g(u-\ka/2)\mu_i(u)$ for all 
$i\in \mcI_{N}^+$. If this is true, then $V(\mu^\sharp(u))$ and $V(\mu(u))^{\nu_g}$ will have the same highest weight and hence be isomorphic. The proof that there exists $g(u)$ satisfying $\mu_i^\sharp(u)=g(u-\ka/2)\mu_i(u)$ for all $i\in \mcI_{N}^+$ follows from the same type of arguments as given in the $(\Longleftarrow)$ direction of the proof of Theorem 6.2 in \cite{GRW2}: see (6.9) therein.  For the sake of completeness we recall some of the details here. 

From \eqref{nec.0} and \eqref{nec.2} we obtain the equalities 
\begin{equation}
 \frac{\wt \mu_{i-1}(u)}{\wt \mu_i(u)}=\frac{\wt \mu_{i-1}^{\sharp}(u)}{\wt \mu_i^{\sharp}(u)} \; \text{ for all }\; 2\leq i\leq n \quad \text{ and }\quad \frac{\wt \mu_a(\ka-u)}{\wt \mu_b(u)}=\frac{\wt \mu_a^\sharp(\ka-u)}{\wt \mu_b^\sharp(u)}, \label{P-un}
\end{equation}
where $(a,b)=(1,1)$ if $\mfg_N = \mfsp_N$, $(a,b)=(1,2)$ if $\mfg_N=\mfso_{2n}$ and $(a,b)=(0,1)$ if $\mfg_N=\mfso_{2n+1}$. In the $\mfso_{2n+1}$ case this follows 
from \eqref{nec.2} together with the relation \eqref{nontriv.2}. Setting $g(u)=\mu_n^\sharp(u+\tfrac{\ka}{2})(\mu_n(u+\tfrac{\ka}{2}))^{-1}$, we obtain from the first 
relations in \eqref{P-un} that $\mu_i^\sharp(u)=g(u-\ka/2)\mu_i(u)$ for all $i\in \mcI_N^+$. The second relation in \eqref{P-un} then yields that $g(u)=g(-u)$, which completes the proof.
\end{proof}
The proof of Proposition \ref{P:necessary} provides a relation between the tuples associated to $\mu(u)$ and to the highest weight of $V(\mu(u))_{\ley}$ (if $\key\neq 0$) or $V(\mu(u))_{\ley-1}$ (if $\key=0$) under the assumption that $V(\mu(u))$ is finite-dimensional. The following corollary generalizes this result. 
\begin{crl}\label{C:poly-low}
Suppose that $\mu(u)$ is associated to $(\al,P_1(u),\ldots,P_n(u))$, and let $1\leq m \leq \ley-\delta_{\ley,n}$. Then the highest weight of the 
 $X(\mfg_{N-2m},\mfg_{N-2\ley}\oplus \mfg_{2(\ley-m)})^{tw}$-module $V(\mu(u))_m$ is associated to the tuple 
\[
(\al-\tfrac{m}{2},P_1(u+\tfrac{m}{2}),\ldots,P_{n-m}(u+\tfrac{m}{2})).
\]
\end{crl}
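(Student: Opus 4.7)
The plan is to compute the highest weight of $V(\mu(u))_m$ directly from Propositions \ref{P:ind} and \ref{P:nat-h}, and then verify the relations of Definition \ref{D:assoc} termwise by substituting the association of $\mu(u)$. Let $\nu(u) = (\nu_i(u))_{i \in \mcI_{N-2m}^+}$ denote this highest weight. By Proposition \ref{P:ind}, $\nu_i(u) = h_m(u)\,\mu_i^{\circ m}(u)$ with $\wt{\mu_i^{\circ m}}(u) = \wt\mu_i(u + m/2)$, and since the factor $h_m(u)$ does not depend on $i$ it pulls through \eqref{tilde-mu(u)} to give
\[
\wt\nu_i(u) = h_m(u)\cdot\wt\mu_i\bigl(u + \tfrac{m}{2}\bigr) \quad\text{for all}\quad i \in \mcI_{N-2m}^+ .
\]

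Setting $Q_i(u) = P_i(u + m/2)$ and writing $\ka_m = \ka - m$, I would first check the symmetry conditions \eqref{P-sym} for the new pair: substituting $v = u + m/2$ into the symmetries $P_i(v) = P_i(-v + n - i + 2)$ and $P_1(v) = P_1(-v + \ka + 2^\delta)$ yields $Q_i(u) = Q_i(-u + (n-m) - i + 2)$ and $Q_1(u) = Q_1(-u + \ka_m + 2^\delta)$. Since the new pair satisfies $\ley' = \ley - m$ and hence $\key' = (n-m) - (\ley-m) = \key$, the position $i = \key + 1$ at which the factor involving $\al$ appears is unchanged, and the condition $\al \notin Z(P_{\key+1}(u))$ translates to $\al - m/2 \notin Z(Q_{\key+1}(u))$. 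For $2 \le i \le n-m$ and, in the $N$ odd case, for $i=0$, the factor $h_m(u)$ cancels in the ratios $\wt\nu_{i-1}(u)/\wt\nu_i(u)$; substituting \eqref{nec.0} and \eqref{nec.2} for $\mu(u)$ at the argument $u + m/2$ and using $\alpha - (u + m/2) = (\alpha - m/2) - u$ then immediately produces the required relations.

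The main technical step is the $N$ even case, in which $h_m$ does not cancel from the ratio $\wt\nu_1(\ka_m - u)/\wt\nu_{2^{1-\delta}}(u)$. Using $\ka_m - u + m/2 = \ka - (u + m/2)$ together with the association of $\mu(u)$, this ratio equals
\[
\frac{h_m(\ka_m - u)}{h_m(u)}\cdot\frac{\mathscr{g}(\ka - (u+m/2))}{\mathscr{g}(u+m/2)}\cdot\frac{P_1(u+m/2+2^\delta)}{P_1(u+m/2)}\cdot\frac{\ka - (u+m/2)}{u + m/2},
\]
and the claim reduces to the identity
\[
\frac{h_m(\ka_m - u)}{h_m(u)}\cdot\frac{\mathscr{g}(\ka - (u+m/2))}{\mathscr{g}(u+m/2)}\cdot\frac{\ka - (u+m/2)}{u + m/2} = \frac{\mathscr{g}_m(\ka_m - u)}{\mathscr{g}_m(u)}\cdot\frac{\ka_m - u}{u}.
\]
This follows by direct substitution of the explicit formula \eqref{h_m:triv} for $h_m$: the four $\mathscr{g}$-factors pair up to give $\mathscr{g}_m(\ka_m - u)/\mathscr{g}_m(u)$, while the rational factors $u/(u + m/2)$ in $h_m(u)$ and $(\ka_m - u)/(\ka - (u+m/2))$ in $h_m(\ka_m - u)$ combine with $(\ka - (u+m/2))/(u+m/2)$ to yield $(\ka_m - u)/u$.

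Finally, the nontriviality conditions \eqref{nontriv.1} and, when $N$ is odd, \eqref{nontriv.2} for $\nu(u)$ need not be checked by hand: Proposition \ref{P:ind} asserts that $V(\mu(u))_m$ is a highest weight module over $X(\mfg_{N-2m},\mcG_m)^{tw}$, so its highest weight $\nu(u)$ automatically satisfies the hypotheses of Proposition \ref{P:nontriv}. Combined with the verifications above, this shows that $\nu(u)$ is associated to the tuple $(\alpha - m/2,\, P_1(u + m/2),\, \ldots,\, P_{n-m}(u + m/2))$, completing the proof.
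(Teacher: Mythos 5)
Your proof is correct and follows essentially the same route as the paper's: apply Propositions \ref{P:ind} and \ref{P:nat-h} to get $\wt\nu_i(u) = h_m(u)\,\wt\mu_i(u+m/2)$, observe that $h_m$ cancels from all ratios except the BDI(a) boundary relation, and verify that relation by a direct substitution of the explicit formula \eqref{h_m:triv}. Your final remark that the nontriviality condition \eqref{nontriv.2} for $\nu(u)$ is automatic because $V(\mu(u))_m$ is already a highest weight module (Proposition \ref{P:ind}) is a slightly cleaner way to dispatch that point than leaving it implicit as the paper does, and your explicit check that $\key$ is unchanged under the passage to $(\mfg_{N-2m},\mfg_{N-2\ley}\oplus\mfg_{2(\ley-m)})$ — so that the position $i=\key+1$ where the $\alpha$-factor sits is stable — is a detail the paper glosses over but worth spelling out.
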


\begin{proof}
By Proposition \ref{P:ind}, $V(\mu(u))_{m}$ has highest weight $\mu^\sharp(u)=h_m(u)\mu^{\circ m}(u)$ with $h_m(u)$ given by \eqref{h_m:triv} and $\mu^\circ(u)=(\mu_i^{\circ m}(u))_{i\in \mcI_{N-2m}^+}$ determined by \eqref{mu-circ}. For each  $2\leq i\leq n-m$, \eqref{nec.0} implies the relation
\begin{equation*}
  \frac{\wt \mu_{i-1}^\sharp(u)}{\wt \mu_{i}^\sharp(u)}=\frac{\wt \mu_{i-1}(u+\tfrac{m}{2})}{\wt \mu_{i}(u+\tfrac{m}{2})}=\frac{P_i(u+\tfrac{m}{2}+1)}{P_i(u+\tfrac{m}{2})}\left(\frac{\alpha-\tfrac{m}{2}-u}{\alpha+\tfrac{m}{2}+u-\ley} \right)^{\del_{i,\key+1}} =\frac{Q_i(u+1)}{Q_i(u)}\left(\frac{\al^\sharp-u}{\al^\sharp+u-(\ley-m)} \right)^{\del_{i,\key+1}} \!,
 \end{equation*}
where $Q_i(u)=P_i(u+\tfrac{m}{2})$ and $\al^\sharp=\al-\tfrac{m}{2}$. It also follows from \eqref{nec.0} that $Q_i(u)=Q_i(-u+(n-m)-i+2)$ and $\al^\sharp\in \C\setminus Z(Q_{\key+1}(u))$. If $N=2n+1$, similar observations imply that the first relation in \eqref{nec.2} is satisfied with $(\wt \mu_0(u),\wt\mu_1(u))$ replaced by 
$(\wt \mu_0^\sharp(u),\wt \mu_1^\sharp(u))$, $P_1(u)$ replaced by $Q_1(u)=P_1(u+\tfrac{m}{2})$ and $\alpha$ by $\alpha^\sharp=\alpha-\tfrac{m}{2}$. 

It remains to show \eqref{nec.2} holds when $N=2n$ and  $(\wt \mu_1(u),\wt\mu_{2^{1-\delta}}(u),\mathscr{g}(u),P_1(u),\ka)$ is replaced by 
\begin{equation*}
(\wt \mu_1^\sharp(u),\wt\mu_{2^{1-\delta}}^\sharp(u),\mathscr{g}_m(u),P_1(u+\tfrac{m}{2}),\ka-m).
\end{equation*}
By definition of $\mu^\sharp(u)$ and of  $h_m(u)$, we have 
\begin{align*}
\frac{\wt \mu_1^\sharp(\ka-m-u)}{\wt \mu^\sharp_{2^{1-\delta}}(u)}&=\frac{h_m(\ka-m-u)}{h_m(u)}\cdot \frac{\mathscr{g}(\ka-\tfrac{m}{2}-u)}{\mathscr{g}(u+\tfrac{m}{2})}\cdot \frac{P_1(u+2^\delta+\tfrac{m}{2})}{P_1(u+\tfrac{m}{2})}\cdot \frac{\ka-\tfrac{m}{2}-u}{u+\tfrac{m}{2}}\\
                                                                  &=\frac{\mathscr{g}_m(\ka-m-u)}{\mathscr{g}_m(u)}\cdot \frac{P_1(u+2^\delta+\tfrac{m}{2})}{P_1(u+\tfrac{m}{2})}\cdot \frac{\ka-m-u}{u}. \qedhere
\end{align*}
\end{proof}

Our aim for the rest of this section is to show that if $(\alpha,P_1(u),\ldots,P_n(u))$ is a Drinfeld tuple then $\alpha-N/4$ must take half integer values unless $\mfg_N=\mfso_N$ and $q=2$: this will be made precise in Proposition \ref{P:int}. First, we proceed with two lemmas. 

\begin{lemma}\label{L:QxP=QP}
Suppose that  $L(\lambda(u))$ is finite-dimensional with Drinfeld polynomials $Q_1(u),\ldots,Q_n(u)$ and that $\mu(u)$ is associated to a tuple $(\alpha,P_1(u),\ldots,P_n(u))$. Assume also that $\alpha$ is not a root of $Q_{\key+1}(u-\ka/2)$ or $Q_{\key+1}(-u+\ley+2-\ka/2)$ and  let $\xi\in L(\lambda(u))$ and $\eta\in V(\mu(u))$ be highest weight vectors. Then the highest weight $\gamma(u)$ of the $X(\mfg_N,\mcG)^{tw}$-module $X(\mfg_N,\mcG)^{tw}(\xi\otimes \eta)\subset L(\lambda(u))\otimes V(\mu(u))$ is associated to the tuple $(\alpha,(Q_1\odot P_1)(u),\ldots,(Q_n\odot P_n)(u))$, where
\[
  (Q_i\odot P_i)(u)=\begin{cases}
                     (-1)^{\deg\, Q_i(u)}\,Q_i(u-\ka/2)\,Q_i(-u+n-i+2-\ka/2)\,P_i(u) & \text{if }\; i\ne1,\\
                     (-1)^{\deg\, Q_1(u)}\,Q_1(u-\ka/2)\,Q_1(-u+\ka/2+2^\del)\,P_i(u) & \text{if }\; i=1.
                    \end{cases}
\]
\end{lemma}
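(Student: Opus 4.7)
The plan is to verify directly that $\ga(u)$ satisfies the conditions of Definition \ref{D:assoc} with respect to the tuple $(\al,(Q_1\odot P_1)(u),\ldots,(Q_n\odot P_n)(u))$. By Proposition 4.10 of \cite{GRW2}, the components of $\ga(u)$ are determined by
\[
\wt\ga_i(u) \;=\; \wt\mu_i(u)\,\la_i(u-\ka/2)\,\la_{-i}(-u+\ka/2) \qu \text{for all}\qu i\in\mcI_N^+,
\]
so every ratio or symmetry appearing in Definition \ref{D:assoc} splits into a contribution from $\mu(u)$ (controlled by the hypothesis that $\mu(u)$ is associated to $(\al,P_1(u),\ldots,P_n(u))$) and a contribution from $\la(u)$ (controlled by Theorem \ref{T:X-class} together with the symmetry \eqref{HWT:Ext.non-trivial}).

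First I would dispatch the elementary preliminaries. Each $(Q_i\odot P_i)(u)$ is monic because the leading coefficient $(-1)^{\deg Q_i}$ of $Q_i(-u+c)$ is cancelled by the sign prefactor in the definition. The required symmetry of $(Q_i\odot P_i)(u)$ follows upon substituting $u\mapsto -u+n-i+2$ (resp.\ $u\mapsto -u+\ka+2^\del$ for $i=1$): the two $Q_i$-factors are swapped while $P_i(u)$ is preserved by hypothesis. The assumption that $\al$ avoids the roots of both $Q_{\key+1}$-factors, combined with $P_{\key+1}(\al)\ne 0$, yields $(Q_{\key+1}\odot P_{\key+1})(\al)\ne 0$.

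The main step is the verification of the ratio relations. For $i\ge 2$,
\[
\frac{\wt\ga_{i-1}(u)}{\wt\ga_i(u)} \;=\; \frac{\wt\mu_{i-1}(u)}{\wt\mu_i(u)}\cdot\frac{\la_{i-1}(u-\ka/2)}{\la_i(u-\ka/2)}\cdot\frac{\la_{-i+1}(-u+\ka/2)}{\la_{-i}(-u+\ka/2)}.
\]
The first factor is given by \eqref{nec.0} applied to $\mu(u)$; the second equals $Q_i(u-\ka/2+1)/Q_i(u-\ka/2)$ by Theorem \ref{T:X-class}. For the third I would apply \eqref{HWT:Ext.non-trivial} with index $i-1$ to rewrite it as a ratio involving $\la_{i-1}$ and $\la_i$ at the shifted argument $-u+n-i+1-\ka/2$, and then invoke Theorem \ref{T:X-class} once more to obtain $Q_i(-u+n-i+1-\ka/2)/Q_i(-u+n-i+2-\ka/2)$. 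Multiplying these three factors recovers precisely $\frac{(Q_i\odot P_i)(u+1)}{(Q_i\odot P_i)(u)}\cdot\Big(\frac{\al-u}{\al+u-\ley}\Big)^{\del_{i,\key+1}}$, confirming \eqref{nec.0}.

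The analogous analysis of \eqref{nec.2} and of \eqref{nontriv.2} when $N=2n+1$ is the main obstacle, as it requires careful case-work. When $N=2n+1$, \eqref{nontriv.2} is immediate since $\la_0(u-\ka/2)\la_0(-u+\ka/2)$ is manifestly invariant under $u\mapsto\ka-u$, reducing the required symmetry for $\wt\ga_0$ to that of $\wt\mu_0$; the ratio $\wt\ga_0/\wt\ga_1$ is handled as above, with \eqref{HWT:Ext.non-trivial} at $i=0$ and the $\mfso_{2n+1}$ case of Theorem \ref{T:X-class} yielding the factor $Q_1(u+1/2-\ka/2)Q_1(-u+\ka/2+1/2)/\big(Q_1(u-\ka/2)Q_1(-u+\ka/2+1)\big)$. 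When $N=2n$ one instead analyzes $\wt\ga_1(\ka-u)/\wt\ga_{2^{1-\del}}(u)$: the $\mathscr{g}(\ka-u)/\mathscr{g}(u)$ factor is inherited unchanged from $\mu(u)$, and in the $\mfso_{2n}$ case one additionally uses the identity $\la_1(u)/\la_2(u) = \la_{-2}(u)/\la_{-1}(u)$, which is immediate from \eqref{HWT:Ext.non-trivial} at $i=1$ because $\ka=n-1$. The delicate part is keeping track of the exponents $2^\del$ and $2^{1-\del}$ and ensuring that the $\la$-contribution telescopes correctly into $(Q_1\odot P_1)(u+2^\del)/(Q_1\odot P_1)(u)$.
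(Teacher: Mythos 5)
Your proposal is correct and follows exactly the approach the paper indicates (its proof is just a pointer to \eqref{HWT:tensors}, Proposition \ref{P:X-nontriv}, and the analogous computation in \cite{GRW2}): you apply the tensor-product highest weight formula $\wt\ga_i(u)=\wt\mu_i(u)\la_i(u-\ka/2)\la_{-i}(-u+\ka/2)$, use Theorem \ref{T:X-class} for the $\la$-ratios, and convert the negative-index ratios via \eqref{HWT:Ext.non-trivial}; the telescoping of the $\la$-contribution into $(Q_i\odot P_i)(u+1)/(Q_i\odot P_i)(u)$ (and the $i=1$ and boundary cases) all check out. You have essentially written out the computation the paper delegates to \cite{GRW2}.
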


\begin{proof}
Apply the formulas \eqref{HWT:tensors} in conjunction with the relations of Proposition \ref{P:X-nontriv}: see the argument following (6.8) in the proof of Theorem 6.2 from \cite{GRW2}. 
\end{proof}

\begin{rmk}\label{R:QxP=QP}
 If $\alpha$ is a root of $Q_{\key+1}(u-\ka/2)$ or $Q_{\key+1}(-u+\ley+2-\ka/2)$ then $(\alpha,(Q_1\odot P_1)(u),\ldots,(Q_n\odot P_n)(u))$ will still satisfy the relations of Proposition \ref{P:necessary}, except that the condition 
 $\alpha\notin Z((Q_{\key+1}\odot P_{\key+1})(u))$ will fail to hold. In this case one must replace $(\alpha,(Q_{\key+1}\odot P_{\key+1})(u))$ by 
 $(\alpha-m\,\ell_\al^m ,(Q_1\odot P_1)_\al^m(u))$ where $m=\tfrac{1}{2}$ if $\key=0$ and $m=1$ otherwise: see Lemma \ref{L:poly2}. 
\end{rmk}

To state the next lemma we will need the following terminology: a $\mfg_N^\rho$-module $V$ is said to be a highest weight module with the highest weight $(\mu_i)_{i=1}^n$ if it is generated by a nonzero vector $\xi$ such that $F_{ij}^\rho\xi=0$  for all $i<j\in \mcI_N$ and $F_{ii}\xi=\mu_i\xi$ for all $1\leq i\leq n$ (see \eqref{grho->X}).
\begin{lemma}\label{L:gtw}
Suppose that $\mu(u)$ is associated to $(\alpha,P_1(u),\ldots,P_n(u))$, and let $\xi$ be a highest weight vector of $V(\mu(u))$. Then the $\mfg_N^\rho$-module $\mfU\mfg_N^\rho\xi$ is a highest weight module with highest weight $\mu=(\mu_i)_{i=1}^n$ given~by
\begin{equation} \label{gtw-action}
\mu_i= -\tfrac{1}{2}A(\mathbf{P},u)-\tfrac{1}{2}\sum_{a=2}^i\deg\, P_a(u) +\delta_{i>\key}(\al-\tfrac{N}{4}) \qu \text{for all}\qu 1\leq i\leq n, 
\end{equation}
where $A(\mathbf{P},u)$ is as in \eqref{Ag_N} with $\mathbf{P}=(P_1(u),\ldots,P_n(u))$ and $i\mapsto \delta_{i>\key}$ is the indicator function of the set $\{\key+1,\ldots,n\}$. 
\end{lemma}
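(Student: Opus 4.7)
The lemma has two parts: (i) verifying that $\xi$ is a highest weight vector for the induced $\mfg_N^\rho$-action, and (ii) identifying the weight. Part (i) is an essentially formal consequence of the embedding \eqref{grho->X}, $F_{ij}^\rho \mapsto s_{ij}^{(1)} - \bar g_{ij}$. Since $\mcG$ is diagonal, $\bar g_{ij} = 0$ for $i \neq j$, so the highest weight condition $s_{ij}(u)\xi = 0$ for $i < j \in \mcI_N$ forces $F_{ij}^\rho\xi = 0$ for all such pairs, showing that $\mfU\mfg_N^\rho\xi$ is a highest weight module. From $F_{ii}^\rho = 2g_{ii}F_{ii}$ and $\bar g_{ii} = (g_{ii}-1)d$ one then reads off the Cartan eigenvalues
\[
\mu_i = \tfrac{1}{2g_{ii}}\bigl(\mu_i^{(1)} - (g_{ii}-1)d\bigr), \qquad 1 \leq i \leq n,
\]
where $\mu_i^{(1)}$ denotes the coefficient of $u^{-1}$ in $\mu_i(u)$. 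The substance of the lemma is therefore the determination of $\mu_i^{(1)}$ from the associated tuple $(\alpha, P_1(u), \ldots, P_n(u))$.

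My plan is to extract $\mu_i^{(1)}$ by asymptotic expansion at $u = \infty$. From \eqref{tilde-mu(u)} one has
\[
\wt\mu_i(u) = 2g_{ii}u + b_i + O(u^{-1}), \qquad b_i := (-n+i)g_{ii} + 2\mu_i^{(1)} + \sum_{\ell>i}g_{\ell\ell}.
\]
Substituting into \eqref{nec.0} and comparing the $u^{-1}$ terms yields, for $i \in\{2,\ldots,n\}\setminus\{\key+1\}$ (where $g_{i-1,i-1} = g_{ii}$), the recursion $\mu_{i-1}^{(1)} - \mu_i^{(1)} = g_{ii}\,\deg P_i$, and for $i = \key+1$ (the unique index where the sign of $g_{ii}$ jumps) the coupling relation
\[
g_{\key+1,\key+1}\bigl(\mu_\key^{(1)} + \mu_{\key+1}^{(1)}\bigr) = 2\alpha - 2\ley - \deg P_{\key+1}.
\]
An analogous expansion of \eqref{nec.2} at $i = 1$ produces one further linear equation for $\mu_1^{(1)}$ (together with $\mu_0^{(1)}$ when $\mfg_N = \mfso_{2n+1}$ and $\key = 0$); in the $\mfg_{2n}$ case this uses \eqref{g(u):exp} to expand $\mathscr{g}(\ka-u)/\mathscr{g}(u)$ at $u = \infty$.

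The resulting system is triangular on the two blocks $\{1,\ldots,\key\}$ and $\{\key+1,\ldots,n\}$, which are coupled only through the $i = \key+1$ equation, and it is solved by telescoping. Substituting the resulting $\mu_i^{(1)}$ into the formula for $\mu_i$ above, and invoking the identity $\ley - d/g_{\key+1,\key+1} = N/4$, which is verified case-by-case from Table \ref{Table:G}, one obtains precisely \eqref{gtw-action}. I expect the main obstacle to be the bookkeeping for the $i = 1$ equation: the form of \eqref{nec.2} changes between $\mfg_N = \mfso_{2n+1}$ and $\mfg_N = \mfg_{2n}$, the sign $g_{00}$ depends on whether $\key = 0$ and on the BI subtype (BI(a) or BI(b)), and in the $\mfg_{2n}$ case the $u^{-1}$ contribution of $\mathscr{g}(\ka-u)/\mathscr{g}(u)$ must combine coherently with those of $P_1(u+2^\delta)/P_1(u)$ and $(\ka - u)/u$ so as to reproduce the expression $A(\mathbf{P}, u)$ prescribed by \eqref{Ag_N}.
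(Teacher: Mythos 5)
Your approach is genuinely different from the paper's. The paper realizes $V(\mu(u))$, up to a twist by $\nu_g$, as the irreducible quotient of $X(\mfg_N,\mcG)^{tw}(\xi\otimes\eta)\subset L(\lambda(u))\otimes V(\mu_\al(u))$, where $V(\mu_\al(u))$ is the module whose tilde-weights are prescribed by \eqref{mu-al} (i.e.\ the module associated to $(\al,1,\ldots,1)$); the $F_{ii}$-weight of $\eta$ is read off directly from the $u^{-1}$-coefficients in \eqref{mu-al}, and the $F_{ii}$-weight of $\xi$ comes from Corollary \ref{C:g-action}, so the result follows by additivity without any asymptotic analysis. Your route instead extracts $\mu_i^{(1)}$ by comparing $u^{-1}$-coefficients in the defining relations and solving a linear system. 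Both are legitimate; the paper's method leverages Lemma \ref{L:QxP=QP} to outsource the bookkeeping to \eqref{mu-al} and \eqref{g-action}, while yours is more elementary and self-contained at the cost of case analysis. The recursion $\mu_{i-1}^{(1)}-\mu_i^{(1)}=g_{ii}\deg P_i$ and the coupling $g_{\key+1,\key+1}(\mu_\key^{(1)}+\mu_{\key+1}^{(1)})=2\al-2\ley-\deg P_{\key+1}$ that you derive are correct, as is the identity $\ley-d/g_{\key+1,\key+1}=N/4$ (it reduces to $\ley+d=N/4$ in types BI(a), CII, DI(a) and to $\ley-d=N/4$ in type BI(b), both checked from Table \ref{Table:G}).

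There is, however, a gap in your plan for $N=2n+1$. The relations \eqref{nec.0} with $i=2,\ldots,n$ give $n-1$ equations among $\mu_1^{(1)},\ldots,\mu_n^{(1)}$, and \eqref{nec.2} at $i=1$ gives one more --- but this last equation also introduces the additional unknown $\mu_0^{(1)}$, so the system from \eqref{nec.0}--\eqref{nec.2} alone is underdetermined: $n+1$ unknowns, $n$ equations. You need a further constraint on $\wt\mu_0(u)$, and it is exactly \eqref{nontriv.2}, which is built into Definition \ref{D:assoc} (a tuple is ``associated'' only if \eqref{nontriv.2} holds when $N=2n+1$). Expanding \eqref{nontriv.2} at $u=\infty$ pins down $b_0$ in terms of $g_{00}$ and the $u^{-1}$-coefficient of $\mathscr{g}(\ka-u)$, which then lets the $i=1$ equation determine $\mu_1^{(1)}$. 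Your proposal omits this ingredient. Also a minor imprecision: $\mu_0^{(1)}$ enters the $i=1$ equation for \emph{every} $N=2n+1$, not only when $\key=0$ (the $\key=0$ condition only governs whether the $\al$-factor appears).
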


\begin{proof}
Let $\mu_\al(u)=(\mu_{\al,i}(u))_{i\in \mathcal{I}_N^+}$ be the $X(\mfg_N,\mcG)^{tw}$-highest weight determined by
 \begin{equation}
  \wt \mu_{\al,i}(u)= 2u \cdot \mathscr{g}(u) \; \text{ for all }\; 0\leq i\leq \key , \quad \wt \mu_{\al,i}(u)=2u\cdot \mathscr{g}(u)\left(\frac{\ley-\al-u}{u-\al} \right) \; \text{ for }\; \key+1\leq i\leq n, \label{mu-al}
 \end{equation}
where the index $i=0$ is omitted when $N=2n$. Since $P_i(u)=P_i(-u+n-i+2)$ for all $i\geq 2$, there exists monic polynomials $Q_2(u),\ldots,Q_n(u)$ such that 
\begin{equation*}
 P_i(u)=(-1)^{\deg\, Q_i(u)}Q_i(u-\ka/2)Q_i(-u+n-i+2-\ka/2),
\end{equation*}
and similarly since $P_1(u)=P_1(-u+\ka+2^\delta)$, there is a monic polynomial $Q_1(u)$ such that $P_1(u)=(-1)^{\deg\, Q_1(u)}Q_1(u-\ka/2)Q_1(-u+\ka/2+2^\delta)$. Let 
$L(\lambda(u))$ be a finite-dimensional irreducible $X(\mfg_N)$-module with Drinfeld polynomials $Q_1(u),\ldots,Q_n(u)$, and suppose $\xi\in L(\lambda(u))$ and $\eta \in V(\mu_\al(u))$ are highest weight vectors. Then, as a consequence of Lemmas \ref{L:unique} and \ref{L:QxP=QP}, there is an even series $g(u)\in 1+u^{-2}\C[[u^{-2}]]$ such that $V(\mu(u))^{\nu_g}$ is isomorphic to the irreducible quotient of 
$X(\mfg_N,\mcG)^{tw}(\xi\otimes \eta)$. As $\mfg^\rho_N$ acts identically in  $V(\mu(u))^{\nu_g}$ and $V(\mu(u))$, we can assume without loss of generality that $g(u)=1$.

Since $\tfrac{1}{2u}\wt \mu_{\al,i}(u)$ has $u^{-1}$ coefficient $\tfrac{1}{2}(2\cdot \mu_{\al,i}^{(1)}-\ley g_{ii}+[\mp]\ley)$, $g(u)=[\pm]1+[\mp]\tfrac{q}{2}u^{-1}+O(u^{-2})$, and $\mfrac{\ell-\al-u}{u-\al}=-1+(\ley-2\al)u^{-1}+O(u^{-2})$, we obtain from \eqref{mu-al} the relations
\begin{equation}
 \mu_{\al,i}^{(1)}=[\mp](\tfrac{q}{2}-\ell) \; \text{ for }\; 0\leq i\leq \key, \quad \mu_{\al,i}^{(1)}=[\pm](\tfrac{q}{2}+\ley-2\al) \; \text{ for }\; i\geq \key+1. \label{mu-al:2}
\end{equation}
Since $F_{ii}^{  \rho}=2g_{ii}F_{ii}$ and the embedding $\mfU\mfg_N^\rho\into X(\mfg_N,\mcG)^{tw}$ sends $F_{ii}^{  \rho}$ to $s_{ii}^{(1)}-(g_{ii}-1)\tfrac{p-q}{4}$ (see \eqref{grho->X}), we obtain 
\begin{equation*}
 2g_{ii}\mu_{\al,i}+(g_{ii}-1)\tfrac{p-q}{4}=\mu_{\al,i}^{(1)} \; \text{ for all }\; i\in\mathcal{I}_N^+,
\end{equation*}
where $\mu_{\al,i}$ denotes the $F_{ii}$-weight of $\eta$. Combining this with \eqref{mu-al:2} yields 
\begin{equation*}
 \mu_{\al,i}=0 \; \text{ for }\; 0\leq i\leq \key,\quad \mu_{\al,i}=\al-\tfrac{N}{4}\;\text{ for }\; \key+1\leq i\leq n.
\end{equation*}
Since $F_{ii}(\xi\otimes \eta)=F_{ii}\xi\otimes \eta +\mu_{\al,i}(\xi\otimes \eta)$, \eqref{gtw-action} now follows immediately from the formulas
\eqref{g-action} of Corollary \ref{C:g-action} and the fact that $\deg\, P_i(u)=2\deg\, Q_i(u)$ for each $i$.
\end{proof}

\begin{rmk}
 Lemma \ref{L:gtw} (and its proof) also applies for the twisted Yangians of type BCD0. In this case, $\alpha$ should be removed from the tuple 
 $(\alpha,P_1(u),\ldots,P_n(u))$. Note that, since $\key=n$, the term $\delta_{i>\key}(\al-\tfrac{N}{4})$ does not actually make an appearance in \eqref{gtw-action}, and that the definition of $\mu_\al(u)$ given in \eqref{mu-al} reduces to $(g_{ii}(u))_{i\in \mcI_{N}^+}$, and hence $V(\mu_\al(u))\cong V(\mcG)$.
\end{rmk}

\begin{prop}\label{P:int}
Suppose that $N\geq 4$ if $\mfg_N=\mfsp_N$ and $N\geq 5$ with $q\neq 2$ if $\mfg_N=\mfso_N$. Let $\mu(u)$, $V(\mu(u))$ and $(\alpha,\mathbf{P})$ be as in Lemma \ref{L:gtw} and assume that $V(\mu(u))$ is finite-dimensional. Then $2^{1-\delta}(\alpha-\tfrac{N}{4})$ is an integer satisfying
\begin{equation*}
 2^{1-\delta}(\alpha-\tfrac{N}{4})\leq A(\mathbf{P},u)+\sum_{a=2}^{\key+1}\deg\, P_a(u)+(1-2^{\delta-1})\deg\, P_{\key+2}(u),
\end{equation*}
where we recall that $\delta=1$ if $\mfg_N=\mfsp_{2n}$ and $\delta=0$ if $\mfg_N=\mfso_N$. 
\end{prop}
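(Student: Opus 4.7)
The strategy will be to transfer the finite-dimensionality hypothesis on $V(\mu(u))$ into a dominance-integrality assertion about an explicit $\mfg_N^\rho$-weight via Lemma \ref{L:gtw}. If $\xi$ denotes a highest weight vector of $V(\mu(u))$, then $\mfU\mfg_N^\rho\,\xi\subset V(\mu(u))$ is a finite-dimensional highest weight $\mfg_p\oplus\mfg_q$-module, and by Lemma \ref{L:gtw} its highest weight is precisely the $n$-tuple $(\mu_i)_{i=1}^n$ given by \eqref{gtw-action}. The proposition should drop out by imposing on $(\mu_i)$ the dominance-integrality conditions of \eqref{g-class} separately on its $\mfg_p$- and $\mfg_q$-components.

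First, I would apply the chain conditions $\mu_{i-1}-\mu_i\in\Z_{\geq 0}$ within each block, together with the block-specific lowest-index condition on the block \emph{not} containing the $\delta_{i>\key}(\alpha-N/4)$ shift, to deduce $\deg P_a\in 2\Z_{\geq 0}$ for every $a\in\{1,\ldots,n\}\setminus\{\key+1\}$. Controlling $\deg P_1$ in this step requires attention because the definition of $A(\mathbf{P},u)$ from \eqref{Ag_N} treats $\mfsp_{2n}$, $\mfso_{2n+1}$, and $\mfso_{2n}$ differently, but in each case the appropriate special condition on the unshifted block pins down $\tfrac{1}{2}\deg P_1$ (or the relevant combination with $\tfrac{1}{2}\deg P_2$) as a non-negative integer.

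Next, I would invoke the lowest-index dominance condition on the block that does carry the shift: this block is $\mfg_q$ in types BI(a), CII, DI(a) and $\mfg_p$ in type BI(b). In the orthogonal cases the hypothesis $q\neq 2$ guarantees the shifted block is $\mfso_{2\ley}$ with $\ley\geq 2$, so the two-weight dominance $-\mu_{\key+1}-\mu_{\key+2}\in\Z_{\geq 0}$ applies. Substituting \eqref{gtw-action} and invoking the parities established above should collapse this into precisely the inequality $2(\alpha-N/4)\leq A+\sum_{a=2}^{\key+1}\deg P_a+\tfrac{1}{2}\deg P_{\key+2}$ together with integrality $2(\alpha-N/4)\in\Z$, which is the assertion for $\delta=0$. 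In the CII case the shifted block is $\mfsp_{2\ley}$ and the relevant condition is the single-weight dominance $-\mu_{\key+1}\in\Z_{\geq 0}$, which after substitution should yield $\alpha-N/4\leq A+\sum_{a=2}^{\key+1}\deg P_a$ with integrality $\alpha-N/4\in\Z$, matching the assertion for $\delta=1$.

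The main obstacle will be the uniform case analysis: the four types (BI(a), BI(b), CII, DI(a)) differ in which simple summand of $\mfg_N^\rho$ carries the $\alpha$-shift, in the shape of $A(\mathbf{P},u)$, and in the form of the applicable lowest-index dominance condition. Particular care will be needed in the CII case, where the one-weight dominance condition produces a priori only half-integer information, so securing integrality of $\alpha-N/4$ (and not merely of $2(\alpha-N/4)$) will require combining all of the parity constraints from the first step precisely and using the explicit shape of $A(\mathbf{P},u)=\deg P_1(u)$ in the $\mfsp_{2n}$ setting.
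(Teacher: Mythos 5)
Your overall route matches the paper's: invoke Lemma \ref{L:gtw} to identify the $\mfg_N^\rho$-highest weight of $\mfU\mfg_N^\rho\xi$, then impose the dominance-integrality criterion \eqref{g-class}. The paper is considerably more economical, however — it restricts attention only to the subalgebra $\mfg_{2\ley}$ (whose Cartan occupies the indices $\{\key+1,\ldots,n\}$) and applies only the lowest-index condition of \eqref{g-class} there, namely $-\mu_{\key+1}\in\Z_{\geq 0}$ in type CII and $-\mu_{\key+1}-\mu_{\key+2}\in\Z_{\geq 0}$ in types BI and DI(a), where the hypotheses force $\ley\geq 2$. Substituting \eqref{gtw-action} into this single condition already yields the entire proposition; the unshifted block and the chain conditions never enter.

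The genuine gap is in your first step. You propose to establish $\deg P_a\in 2\Z_{\geq 0}$ only for $a\in\{1,\ldots,n\}\setminus\{\key+1\}$ via dominance, and you then claim the CII integrality of $\alpha-\tfrac{N}{4}$ will follow by "combining all of the parity constraints from the first step." But in CII one has $-\mu_{\key+1}=\tfrac{1}{2}\deg P_1+\tfrac{1}{2}\sum_{a=2}^{\key+1}\deg P_a-(\alpha-\tfrac{N}{4})\in\Z_{\geq 0}$, and every degree appearing is covered by your parity list \emph{except} $\deg P_{\key+1}$; without its evenness you deduce only $\alpha-\tfrac{N}{4}\in\tfrac{1}{2}\Z$, not $\alpha-\tfrac{N}{4}\in\Z$. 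Moreover $\deg P_{\key+1}$ even cannot be extracted from dominance, since the chain condition at $i=\key+1$ carries the unknown $\alpha$-shift. The missing fact comes for free: Proposition \ref{P:necessary} already supplies $P_i(u)=P_i(-u+n-i+2)$ for $2\leq i\leq n$ and $P_1(u)=P_1(-u+\ka+2^\delta)$, and comparing leading coefficients in any identity $P(u)=P(-u+c)$ forces $(-1)^{\deg P}=1$. This one observation both closes the integrality gap and renders your entire first step superfluous, reducing the argument to the paper's.
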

\begin{proof}
Since $V(\mu(u))$ is finite-dimensional, so is the $\mfg_{2\ell}$ highest weight module $\mfU\mfg_{2\ell}\xi\subset \mfU\mfg_N^\rho \xi$. The highest weight 
of this module is $(\mu_{\key+1},\ldots,\mu_n)$ with each $\mu_i$ as in the relation \eqref{gtw-action} of Lemma \ref{L:gtw}. 

If $\mfg_N=\mfsp_{2n}$, then $\mfg_{2\ell}=\mfsp_{2\ell}$ and \eqref{g-class} implies that $-\mu_{\key+1}\in \Z_{\geq 0}$. Otherwise, $\mfg_{2\ell}=\mfso_{2\ell}$ and 
\eqref{g-class} yields $-\mu_{\key+1}-\mu_{\key+2}\in \Z_{\geq 0}$. Taking into account that $\mu_{\key+1}$ and $\mu_{\key+2}$ are given by \eqref{gtw-action} we obtain the statement of the proposition.
\end{proof}

\section{Classification of one-dimensional representations} \label{sec:1dim}

In this section we classify the one-dimensional representations of $X(\mfg_N,\mfg_p\oplus\mfg_q)^{tw}$ and $Y(\mfg_N,\mfg_p\oplus\mfg_q)^{tw}$ when $(\mfg_N,\mfg_p\oplus\mfg_q)$ is a symmetric pair of type BDI or CII: see Propositions \ref{P:no1dim} and \ref{P:1dimso2}. In fact, Proposition \ref{P:no1dim} also holds for the twisted Yangians of type BCD0. These results prove that $X(\mfg_N,\mfg_p\oplus\mfg_q)^{tw}$ (and thus  $Y(\mfg_N,\mfg_p\oplus\mfg_q)^{tw}$) admits non-trivial one-dimensional representations if and only if $\mfg_p\oplus\mfg_q$ does (which occurs precisely when $\mfg_p$ or $\mfg_q$ is isomorphic to the one-dimensional Lie algebra $\mfso_2$). 

The main difficulty in proving Proposition \ref{P:1dimso2} is the construction of a one-parameter family $\{V(a)\}_{a\in \C}$ of one-dimensional representations 
for $X(\mfso_N,\mfso_{N-2}\oplus \mfso_2)^{tw}$ when $N\geq 5$: this is proven in Lemma \ref{L:K-1dim}, and stated explicitly in Corollary \ref{C:V(a)}. This one-parameter family of representations will play a crucial role in the proof of the classification of finite-dimensional irreducible representations of $X(\mfso_N,\mfso_{N-2}\oplus \mfso_2)^{tw}$ given in Theorem \ref{T:DI(a)-Class} of the next section.

\subsection{Twisted Yangians for the symmetric pairs \texorpdfstring{$(\mfg_N,\mfg_p\op\mfg_q)$}{} when \texorpdfstring{$\mfg_p,\mfg_q\ncong \mfso_2$}{}}\label{subsec:1dim-gen}

We begin  by classifying the one-dimensional representations of $X(\mfg_N,\mfg_N^\rho)^{tw}$ in all cases where $\mfg_N^\rho=\mfg_p\oplus \mfg_q$ with $\mfg_p \ncong \mfso_2 \ncong \mfg_q$. 
We also relax the requirement that $q>0$ so as to include the twisted Yangians of type BCD0, which correspond to $q=0$. 
\begin{prop}\label{P:no1dim} Assume that the symmetric pair $(\mfg_N,\mfg_{p}\oplus \mfg_q)$ is of type BCD0, BI, CII, or DI(a), and in addition that $\mfg_p\ncong \mfso_2\ncong \mfg_q$. Then, a representation $V$ of $X(\mfg_N,\mfg_p\oplus \mfg_q)^{tw}$ is one-dimensional if and only if $V\cong V(\mcG)^{\nu_g}$ for some $g(u)\in 1+u^{-2}\C[[u^{-2}]]$. 
\end{prop}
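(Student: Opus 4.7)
The plan is to identify the Drinfeld tuple associated to an arbitrary one-dimensional representation $V$ and show it coincides with that of the trivial representation $V(\mcG)$. Since $V$ is finite-dimensional and irreducible, Theorem 4.5 of \cite{GRW2} gives $V\cong V(\mu(u))$ for some highest weight $\mu(u)$. By Proposition \ref{P:necessary} in the non-BCD0 cases, or Theorem \ref{T:BCD0-class} in the BCD0 case, $\mu(u)$ is associated to a unique tuple $(\alpha,P_1(u),\ldots,P_n(u))$, the parameter $\alpha$ being absent in the BCD0 setting.

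The key input is Lemma \ref{L:gtw}: the $\mfg_N^{\rho}$-module $\mfU\mfg_N^\rho\cdot\xi$ is a highest weight module with weight $(\mu_1,\ldots,\mu_n)$ given by $\mu_i=-\tfrac12 A(\mathbf{P},u)-\tfrac12\sum_{a=2}^i\deg P_a(u)+\delta_{i>\key}(\alpha-N/4)$. Under our hypothesis, each of $\mfg_p$ and $\mfg_q$ is zero or semisimple (noting $\mfso_1=0$, that $\mfso_m$ with $m\ge 3$ and $\mfsp_{2m}$ with $m\ge 1$ are semisimple, and that $\mfso_2$ is excluded), so $\mfg_N^{\rho}$ is semisimple. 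Since $V$ is one-dimensional, $\mfU\mfg_N^{\rho}\cdot\xi\subseteq V$ is a one-dimensional, hence trivial, representation of $\mfg_N^{\rho}$, forcing $\mu_i=0$ for every $1\le i\le n$.

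Taking successive differences $\mu_i-\mu_{i-1}=0$ yields $\deg P_i=2\delta_{i,\key+1}(\alpha-N/4)$ for $i\ge 2$, so $P_i=1$ for $i\in\{2,\ldots,n\}\setminus\{\key+1\}$; then $\mu_1=0$ combined with the expression for $A(\mathbf{P},u)$ in \eqref{Ag_N} forces $P_1=1$ in every case (the borderline subcase $\mfg_N=\mfso_{2n}$ with $\key=1$ would require $\mfg_p=\mfso_2$, which is excluded). In the BCD0 case $\key=n$, so $(P_1,\ldots,P_n)=(1,\ldots,1)$ already coincides with the trivial tuple and Theorem \ref{T:BCD0-class} completes the proof. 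The main obstacle is the non-BCD0 case with $\key\ge 1$: it remains to show $\alpha=N/4$, equivalently $\deg P_{\key+1}=0$. The boundary case $\key=0$, arising only for $(\mfso_{2n+1},\mfso_{2n})$ in type BI(b) with $q=1$, is immediate, since $A(\mathbf{P},u)=\tfrac12\deg P_1=(\alpha-N/4)$ and $\mu_1=-\tfrac12 A(\mathbf{P},u)+(\alpha-N/4)=0$ together force $\alpha=N/4$. For $\key\ge 1$, I would argue by induction on $q$: applying Proposition \ref{P:ind} and Corollary \ref{C:poly-low} with $m=1$ produces a one-dimensional module $V_1$ over $X(\mfg_{N-2},\mfg_p\oplus\mfg_{q-2})^{tw}$ whose tuple has first entry $\alpha-\tfrac12$, and the inductive hypothesis yields $\alpha-\tfrac12=(N-2)/4$, hence $\alpha=N/4$. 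The base cases $q\in\{2,3,4\}$ (where reducing further would either produce an $\mfso_2$-summand or exit the scope of the proposition) must be handled directly, the most delicate being CII with $q=2$; these rely on the explicit defining relations \eqref{TX-RE}--\eqref{TX-symm} together with the formulas \eqref{eq:wtgii1}--\eqref{eq:wtgii2} for $\wt g_{ii}(u)$, and constitute the principal technical obstacle of the proof.

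Once $\alpha=N/4$ and $(P_1(u),\ldots,P_n(u))=(1,\ldots,1)$ are established, the Drinfeld tuple of $V$ coincides with that of $V(\mcG)$, which is readily computed to be $(N/4,1,\ldots,1)$ from \eqref{eq:wtgii1}--\eqref{eq:wtgii2}. Lemma \ref{L:unique} then yields $V\cong V(\mcG)^{\nu_g}$ for some $g(u)\in 1+u^{-2}\C[[u^{-2}]]$, as required.
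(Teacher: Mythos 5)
Your opening move coincides with the paper's: invoke Lemma \ref{L:gtw}, use semisimplicity of $\mfg_N^\rho$ to force $\mu_i=0$ for all $i$, and deduce that $P_a(u)=1$ for $a\neq\key+1$. But the crucial remaining step --- showing $\deg P_{\key+1}(u)=0$, equivalently $\al=N/4$ --- is not established. Two things go wrong. First, the $\key=0$ case is not immediate: the condition $\mu_1=-\tfrac12 A(\mathbf{P},u)+(\al-N/4)=0$ alone gives $A(\mathbf{P},u)=2(\al-N/4)$, hence (since $A(\mathbf{P},u)=\tfrac12\deg P_1(u)$) $\deg P_1(u)=4(\al-N/4)$, which forces only $\al\geq N/4$; the asserted intermediate identity $A(\mathbf{P},u)=\al-N/4$ has no derivation and does not follow from Lemma \ref{L:gtw}. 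Second, the proposed induction on $q$ for $\key\geq 1$ does not close. When the reduction lands on an $\mfso_2$-summand (e.g.\ $q=4\to q=2$ for BI(a) or DI(a)), the reduced algebra lies outside the proposition's scope and its one-dimensional modules carry a free parameter (this is Proposition \ref{P:1dimso2}, whose proof in turn appeals to the argument of Proposition \ref{P:no1dim}, so invoking it here relocates rather than resolves the burden). When the reduction lands on type BCD0 (CII with $q=2\to q=0$), the Drinfeld datum of Theorem \ref{T:BCD0-class} carries no $\al$ at all, so there is no ``$\al'=(N-2)/4$'' for the inductive step to feed off of. You flag these base cases as ``the principal technical obstacle,'' but that is precisely the content a proof must supply, and none is given.

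The ingredient that actually carries the paper's argument is absent from your sketch. After reducing $V$ in a single jump (not by induction) to a one-dimensional module $V_{\ley-1}$ over $X(\mfg_M,\mfg_{M-2}\oplus\mfg_2)^{tw}$ with $M=N-2\ley+2$, the paper twists by a suitable $\nu_g$ so that $w(u)$ acts trivially, realizes the result as a module over the Molev--Ragoucy reflection algebra $\mcB(\key+1,1)$ via Proposition 4.14 of \cite{GRW2}, restricts to the corner copy $\mcB(2,1)\cong Y^+(2)$, and finally invokes the classification of one-dimensional $SY^+(2)$-modules (\cite[Corollary 4.4.5]{Mobook}), whose Drinfeld polynomial is necessarily $1$, to conclude $\deg P_{\key+1}(u)=0$. (The $\key=0$ case is handled analogously through $X(\mfso_3,\mfso_2)^{tw}\cong Y^+(2)$.) Without this passage to $\mcB(\key+1,1)$ and $SY^+(2)$, or a genuine substitute for it, the conclusion $\al=N/4$ is not reached.
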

\begin{rmk}
 The proof of the Proposition exploits the relationship between $X(\mfg_N,\mfg_p\oplus \mfg_q)^{tw}$ and the Molev-Ragoucy reflection algebra $\mcB(n,\ley)$ which was studied in Subsection 4.3 of \cite{GRW2}. For the definition and main properties of $\mcB(n,\ley)$, we refer the reader to \cite{MR} and Subsection 3.6 of \cite{GRW2}. 
\end{rmk}
\begin{proof}
Suppose that  $V$ is a one-dimensional representation of $X(\mfg_N,\mfg_p\oplus \mfg_q)^{tw}$.  By Proposition \ref{P:necessary} (if $q\neq 0$) and Theorem \ref{T:BCD0-class} (if $q=0$), $V$ can be associated to a tuple 
$(\alpha,P_1(u),\ldots,P_n(u))$, where the scalar $\alpha$ should be omitted if $q=0$. If $q=1$ or $q=0$, then $\mfg_p\oplus \mfg_q=\mfg_p$. Otherwise, both $\mfg_p$ and $\mfg_q$ are complex semisimple Lie algebras. In either case, $\mfg_p\oplus\mfg_q$ is semisimple and thus admits no nontrivial one-dimensional representations. Consequently, $V$ is isomorphic to the trivial representation of $\mfg_p\oplus \mfg_q$ when viewed as a module of this Lie algebra. Therefore, relation \eqref{gtw-action} of Lemma \ref{L:gtw} becomes equivalent to
\begin{equation*}
 A(\mathbf{P},u)+\sum_{a=2}^i\deg\, P_a(u)=\delta_{i>\key}(2\al-\tfrac{N}{2})  \quad \text{ for all }\quad 1\leq i\leq n,
\end{equation*}
from which it can be deduced that $P_a(u)=1$ for all $a\neq \key+1$, and $\deg\, P_{\key+1}(u)=2\al-\tfrac{N}{2}$. In the $q=0$ case, this completes the proof 
as $(P_1(u),\ldots,P_n(u))$ is equal to $(1,\ldots,1)$, the Drinfeld tuple corresponding to the trivial representation. 
To complete the proof in the $q\neq 0$ case, it suffices to show that $\deg\, P_{\key+1}(u)=0$, as this will imply $(\alpha,P_1(u),\ldots,P_n(u))=(\tfrac{N}{4},1,\ldots,1)$. Since $V(\mcG)$ is also associated to this tuple, the desired conclusion will follow from Lemma \ref{L:unique}. 

\noindent\textit{Case 1}: $\key>0$.

Suppose first that $\key>0$, and set $M=N-2\ley+2$. Consider the one-dimensional representation $V_{\ley-1}$ of $X(\mfg_{M},\mfg_{M-2}\oplus \mfg_2)^{tw}$ furnished by Proposition \ref{P:ind}. By Corollary \ref{C:poly-low}, this module is associated to $(\gamma,Q_1(u),\ldots,Q_{\key+1}(u))$, where $\gamma=\al-\frac{\ley-1}{2}$, 
$Q_a(u)=1$ for $1\leq a\leq \key$, and $Q_{\key+1}(u)=P_{\key+1}(u+\tfrac{\ley-1}{2})$. The central series $w(u)$ of $X(\mfg_{M},\mfg_{M-2}\oplus \mfg_2)^{tw}$ (see \eqref{Y=X/(w-1)}) operates as multiplication by a scalar series $\mathsf{w}(u)$ in $V_{\ley-1}$. Let $\mathsf{q}(u)\in 1+u^{-1}\C[[u^{-1}]]$ be the unique series such that 
$\mathsf{w}(u)=\mathsf{q}(u)\mathsf{q}(u+\ka)$. Since $\mathsf{w}(u)$ is even, the uniqueness of this expansion forces the relation $\mathsf{q}(u)=\mathsf{q}(\ka-u)$. In particular, the series $g(u)=\mathsf{q}(u+\ka/2)^{-1}$ is even and in the twisted module $W=(V_{\ley-1})^{\nu_g}$ the series $w(u)$ operates as $g(u-\ka/2)g(u+\ka/2)\mathsf{w}(u)=1$. Since $W$ is one-dimensional, Proposition 4.14  and Remark 4.15 of \cite{GRW2} imply that $W$ can be regarded as a representation of the Molev-Ragoucy reflection algebra $\mcB(\key+1,1)$, which is necessarily associated to the tuple $(\gamma,Q_2(u),\ldots,Q_{\key+1}(u))$ in the sense of Theorem 4.6 of \cite{MR} (see also (4.74) -- (4.76) of \cite{GRW2}). 

For $1\leq i,j\leq \key+1$, we let $b_{ij}(u)$ denote the standard generating series of $\mcB(\key+1,1)$, and for $1\leq i,j\leq 2$ we let $b_{ij}^\circ(u)$ denote the standard generating series of $\mcB(2,1)$ (see \cite[Definition 3.25]{GRW2}). Since $W$ is one-dimensional, it inherits the structure of a  $\mcB(2,1)$-module by allowing $b_{ij}^\circ(u)$ to operate as $b_{i+\key-1,j+\key-1}(u)$ for $i,j\in \{1,2\}$. This can be verified directly, but it also follows from a more general result observed in the first part of the proof of Theorem 4.6 of \cite{MR}. The resulting $\mcB(2,1)$-module has Drinfeld tuple $(\gamma, Q_{\key+1}(u))$. The reflection algebra $\mcB(2,1)$ is isomorphic to the twisted Yangian $Y^+(2)$ (see \cite[Proposition 4.3]{MR} as well as the remarks concluding Section 4.2 of  \textit{loc. cit.}), hence $W$ can also be viewed as a one-dimensional representation of $Y^+(2)$. The arguments used to prove \cite[Proposition 4.4]{MR} show that this irreducible $Y^+(2)$-module corresponds to the pair $(\gamma-\tfrac{1}{2},Q_{\key+1}(u+\tfrac{1}{2}))$ (see \eqref{Y+2:findim}). 
On the other hand, by Corollary 4.4.5 of \cite{Mobook}, as a $SY^+(2)$-module $W$ must be isomorphic to the module
%
\if0
{\color{orange} the }{\red a} module {\red of the form} 
{\red \begin{equation*}
 L(\alpha_1,\beta_1)\otimes L(\alpha_2,\beta_2)\otimes \cdots \otimes L(\alpha_k,\beta_k)\otimes V(\delta), 
\end{equation*}
where $\alpha_i-\beta_i\in \Z_{\geq 0}$ for all $1\leq i\leq k$. Here $L(\alpha,\beta)$ denotes the irreducible representation of $\mfgl_2$ with the highest weight $(\alpha,\beta)$, which is made into a $Y(2)$-module via the evaluation homomorphism $Y(2)\onto U(\mfgl_2)$. It is generated by a nonzero vector $\xi$ satisfying $E_{11}\xi=\alpha\xi$, $E_{22}\xi=\beta\xi$ and $E_{12}\xi=0$, and it has Drinfeld polynomial $P_{\alpha,\beta}(u)=\prod_{c=0}^{\alpha-\beta-1}(u-\beta+c)$, and dimension equal to $\alpha-\beta+1$. Conversely, 
$V(\delta)$ denotes the one-dimensional representation of $Y^+(2)$ with the highest weight 
\begin{equation*}
 \delta(u)=\frac{1+(\delta+1/2)u^{-1}}{1+1/2u^{-1}},
\end{equation*}
which may be viewed as a $SY^+(2)$ module by restriction (see \cite[(4.21)]{Mobook}). Proposition 4.4.9 of \cite{Mobook} implies that $\delta$ must be equal $\gamma-1$, while $Q_{\key+1}(u+\tfrac{1}{2})$ must equal $\prod_{c=1}^kP_{\alpha_c,\beta_c}(u)$. However, since 
$W$ has dimension $1$ we must have  $\alpha_c=\beta_c$ for all $1\leq c\leq k$, and hence $\deg\, P_{\key+1}(u)=\deg\, Q_{\key+1}(u)=0$. 
}
\fi
%
$V(\gamma-1)$, which is the one-dimensional representation of $Y^+(2)$ with the highest weight 
\begin{equation*}
 \gamma(u)=\frac{1+(\gamma-1/2)u^{-1}}{1+1/2u^{-1}},
\end{equation*}
and may be viewed as a $SY^+(2)$ module by restriction (see equation (4.21) in \cite{Mobook}). Since this module corresponds to the pair $(\gamma-\tfrac{1}{2},1)$, we obtain 
$\deg P_{\key+1}(u)=\deg Q_{\key+1}(u)=0$.

\noindent \textit{Case 2}: $\key=0$. 

In this case, $V_{\ley-1}$ is a one-dimensional representation of $X(\mfso_3,\mfso_2)^{tw}$ which, by Corollary \ref{C:poly-low}, is associated to the pair $(\gamma,Q_1(u))=(\alpha-\frac{\ley-1}{2}, P_{1}(u+\tfrac{\ley-1}{2}))$. Moreover, it can be made into a $Y^+(2)$-module via the isomorphism \eqref{iso:so3so2}, and the proof of Proposition \ref{P:so3class} shows that, as a $Y^+(2)$-module, $V_{\ley-1}$ corresponds to the pair $(P^\circ(u),\gamma^\circ)=(2^{\deg Q_1(u)} Q(\tfrac{u+1}{2}),2\gamma-1)$. Repeating the last part of the argument of Case 1, we are able to conclude that $\deg P_1(u)=\deg P^\circ(u)=0$, which completes the proof of the Proposition. \qedhere
\end{proof}
The below corollary of Proposition \ref{P:no1dim} now follows immediately from the fact that $Y(\mfg_N,\mfg_p\oplus \mfg_q)^{tw}$ is the $\nu_g$-stable subalgebra of $X(\mfg_N,\mfg_p\oplus \mfg_q)^{tw}$.
\begin{crl}
 Let $(\mfg_N,\mfg_{p}\oplus \mfg_q)$ satisfy the conditions of Proposition \ref{P:no1dim}. Then, up to isomorphism, $V(\mcG)$ is the unique one-dimensional representation 
 of $Y(\mfg_N,\mfg_p\oplus \mfg_q)^{tw}$. 
\end{crl}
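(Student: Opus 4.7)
The plan is to reduce the statement to Proposition \ref{P:no1dim} by extending any one-dimensional representation of $Y(\mfg_N,\mfg_p\oplus \mfg_q)^{tw}$ to a one-dimensional representation of the extended twisted Yangian $X(\mfg_N,\mfg_p\oplus \mfg_q)^{tw}$, applying the classification in the extended setting, and then restricting back.

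First, given any one-dimensional representation $W$ of $Y(\mfg_N,\mfg_p\oplus \mfg_q)^{tw}$, I would use the tensor decomposition \eqref{X=Z*Y}, namely $X(\mfg_N,\mcG)^{tw}\cong Z(\mfg_N,\mcG)^{tw}\ot Y(\mfg_N,\mcG)^{tw}$, to construct an extension $\widetilde W$ of $W$ to $X(\mfg_N,\mcG)^{tw}$. Concretely, since $Z(\mfg_N,\mcG)^{tw}$ is a polynomial algebra generated by the central coefficients $w_{2r}$ with $r\ge 1$, I can let $\widetilde W$ be the one-dimensional $X(\mfg_N,\mcG)^{tw}$-module in which $Z(\mfg_N,\mcG)^{tw}$ acts through any fixed character (for instance, the one sending every $w_{2r}$ to $0$) and $Y(\mfg_N,\mcG)^{tw}$ acts as it does on $W$.

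Next, I would apply Proposition \ref{P:no1dim} to the one-dimensional module $\widetilde W$: this gives a series $g(u)\in 1+u^{-2}\C[[u^{-2}]]$ such that $\widetilde W\cong V(\mcG)^{\nu_g}$ as representations of $X(\mfg_N,\mfg_p\oplus \mfg_q)^{tw}$. Restricting both sides to $Y(\mfg_N,\mfg_p\oplus \mfg_q)^{tw}$, I would then invoke the fact (see Corollary 3.1 of \cite{GR}, quoted in Subsection \ref{subsec:twYa}) that $Y(\mfg_N,\mcG)^{tw}$ is precisely the $\nu_g$-stable subalgebra of $X(\mfg_N,\mcG)^{tw}$. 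This means that $\nu_g(y)=y$ for every $y\in Y(\mfg_N,\mcG)^{tw}$, so the action of any such $y$ on $V(\mcG)^{\nu_g}$ coincides with its action on $V(\mcG)$. Therefore the restrictions of $\widetilde W$ and of $V(\mcG)$ to $Y(\mfg_N,\mfg_p\oplus \mfg_q)^{tw}$ are isomorphic, and since the restriction of $\widetilde W$ to $Y(\mfg_N,\mcG)^{tw}$ is by construction $W$, this yields $W\cong V(\mcG)$ as $Y(\mfg_N,\mfg_p\oplus\mfg_q)^{tw}$-modules.

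There is no genuine obstacle here: all the substantive work is already encoded in Proposition \ref{P:no1dim}, and the corollary amounts to packaging together the tensor decomposition \eqref{X=Z*Y} with the defining $\nu_g$-stability of $Y(\mfg_N,\mcG)^{tw}$ inside $X(\mfg_N,\mcG)^{tw}$. The only point one should verify carefully is that the extension $\widetilde W$ exists as a \emph{one-dimensional} $X(\mfg_N,\mcG)^{tw}$-module, which is clear from the tensor decomposition.
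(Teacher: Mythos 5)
Your proposal is correct and follows essentially the same route the paper intends: the paper's one-line justification (that the corollary ``follows immediately from the fact that $Y(\mfg_N,\mfg_p\oplus \mfg_q)^{tw}$ is the $\nu_g$-stable subalgebra of $X(\mfg_N,\mfg_p\oplus \mfg_q)^{tw}$'') is exactly what you make explicit by extending $W$ to a one-dimensional $X(\mfg_N,\mcG)^{tw}$-module via the decomposition \eqref{X=Z*Y}, invoking Proposition \ref{P:no1dim}, and restricting back using $\nu_g$-stability. The only cosmetic difference is that one could equally inflate $W$ along the quotient map $X(\mfg_N,\mcG)^{tw}\twoheadrightarrow Y(\mfg_N,\mcG)^{tw}$ of \eqref{Y=X/(w-1)} instead of picking a character of $Z(\mfg_N,\mcG)^{tw}$; these amount to the same thing, so no correction is needed.
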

%

\subsection{Twisted Yangians for the symmetric pairs \texorpdfstring{$(\mfg_N,\mfg_p\op\mfg_q)$}{} when \texorpdfstring{$\mfg_q\cong \mfso_2$}{}} \label{subsec:1dim-so2}

We now turn to the twisted Yangians associated to pairs of the form $(\mfg_N,\mfg_N^\rho)=(\mfso_N,\mfso_{N-2}\oplus\mfso_2)$ with $N\geq 5$.  
 
The following technical lemma provides a one-parameter family of matrix solutions to \eqref{TX-RE} and \eqref{TX-symm} associated to the pair $(\mfso_N,\mfso_{N-2}\oplus \mfso_{2})$.

\begin{lemma} \label{L:K-1dim}
Let $a\in\C$ and let $\mcG$ be of type BI or DI with $q=2$ and $N\geq 5$. Then the matrix
\eq{
K(u;a) = k(u) \left( I - \frac{2u}{u-a} E_{-n,-n} - \frac{2u}{u+a-2d} E_{nn} \right), \qu\text{where}\qu k(u)=\frac{(u-a)(u+a-2d)}{(u-d)^2} \label{K-1dim}
}
and $d=N/4-1$, is a one-parameter solution of the reflection equation \eqref{TX-RE}. Moreover, it satisfies the symmetry relation 
\eq{
K^t(u;a) = K(\ka-u;a) + \frac{K(u;a)-K(\ka-u;a)}{2u-\ka} + \frac{\Tr(\mcG(u))\,K(\ka-u;a) - \Tr(K(u;a))\cdot I }{2u-2\ka}. \label{K-1dim:Sym}
}
where $\mcG(u)$ is the is the matrix defined in \eqref{G(u)}.
\end{lemma}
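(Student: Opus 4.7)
The plan is to verify both \eqref{TX-RE} and \eqref{K-1dim:Sym} for $K(u;a)$ by direct computation, exploiting the fact that $K(u;a)$ is diagonal and that its entries coincide with the scalar $k(u)$ at every index $i\in \mcI_N$ with $|i|<n$, differing only at the two positions $i=\pm n$. Throughout, I shall use that $\ka = 2d+1$ (since $d=N/4-1$) and that $\Tr(\mcG)=p-q=N-4=4d$, from which the rational functions $\Tr(\mcG(u))$ and $\Tr(K(u;a))$ are easily computed in closed form.

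For the symmetry relation \eqref{K-1dim:Sym}, I first observe that every matrix appearing in the equation is diagonal: the $\theta$-transpose of a diagonal matrix is diagonal (with entries permuted by $i\mapsto -i$, since $\theta_{ii}=1$ in the orthogonal case), while $\mcG(u)$ and $I$ are visibly diagonal by \eqref{G(u)}. Hence \eqref{K-1dim:Sym} reduces to a family of scalar identities, one for each $i\in\mcI_N$, of the form
\begin{equation*}
k_{-i}(u) = k_i(\ka-u) + \frac{k_i(u)-k_i(\ka-u)}{2u-\ka} + \frac{\Tr(\mcG(u))\,k_i(\ka-u)-\Tr(K(u;a))}{2u-2\ka},
\end{equation*}
where $k_i(u)$ denotes the $i$-th diagonal entry of $K(u;a)$. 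The three cases $|i|<n$, $i=n$ and $i=-n$ can then be checked separately as rational function identities in $u$, with the last case following from the second via the involution $u\mapsto\ka-u$.

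For the reflection equation \eqref{TX-RE}, I note that rescaling $K(u;a)$ by a scalar function of $u$ leaves the equation unaffected on both sides (both sides acquire the factor $k(u)k(v)$), so it suffices to verify \eqref{TX-RE} for the simpler matrix $\tilde K(u;a) := k(u)^{-1}K(u;a) = I - \tfrac{2u}{u-a}E_{-n,-n} - \tfrac{2u}{u+a-2d}E_{nn}$. Expanding $R(w) = I - w^{-1}P + (w-\ka)^{-1}Q$ and writing both sides of \eqref{TX-RE} in the basis $\{E_{ij}\ot E_{kl}\}$ produces a system of scalar identities. Because $\tilde K$ differs from $I$ only at the two positions $\pm n$, and because both $P$ and $Q$ respect the pairing $i\mapsto -i$, the only non-trivial identities involve basis vectors indexed from $\{\pm n\}$; all other matrix element equations reduce to the tautology that $I$ itself satisfies \eqref{TX-RE}. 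Each remaining identity is an elementary rational relation in $u$, $v$, and $a$, whose validity hinges on the precise combination of boundary parameters $a$ and $2d-a$ appearing in the $(-n,-n)$ and $(n,n)$ entries of $\tilde K$.

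The main technical obstacle is the book-keeping required for the reflection equation: although every individual identity is elementary, there are enough cases that care is needed in enumerating them. The underlying structural observation is that the particular pairing of parameters $a$ and $2d-a$ in the two non-trivial entries of $\tilde K(u;a)$ is precisely what makes both \eqref{TX-RE} and \eqref{K-1dim:Sym} hold simultaneously; this interplay is what restricts the space of diagonal K-matrices to the single one-parameter family asserted by the lemma.
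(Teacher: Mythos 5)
The overall strategy — verify both relations by brute-force expansion — can in principle be made to work, and the observations that rescaling by $k(u)$ is innocuous for \eqref{TX-RE} and that one should split off the identity solution $I$ from $\tilde K(u;a)=I+G(u)$ are both sound and coincide with the paper's starting point. However, there are two real gaps.

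For the symmetry relation \eqref{K-1dim:Sym}, your reduction to diagonal scalar identities is valid, but the stated symmetry between the $i=n$ and $i=-n$ cases is wrong. Substituting $u\mapsto\ka-u$ into the $i=n$ identity does not produce the $i=-n$ identity; what does is the substitution $a\mapsto 2d-a$, which exchanges the two special diagonal entries $k_{\pm n}(u)$ while leaving $k(u)$, $\Tr(K(u;a))$, and $\Tr(\mcG(u))$ unchanged. Moreover, even with the correct symmetry, each scalar identity is non-trivial and depends on a non-obvious fact about the traces; the paper avoids this case analysis entirely by first noting that $K(u;a)$ is an $\wt X(\mfg_N,\mcG)^{tw}$-module, so that by Theorem 5.2 of \cite{GR} the symmetry relation is equivalent to $\phi_a(c(u))=1$, which Corollary \ref{C:c(u)} turns into a single rational identity about $\Tr(K(u;a))/\Tr(K(\ka-u;a))$. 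Your approach is a genuinely different and more elementary route, but it trades the paper's one identity for several and must be carried through carefully rather than waved off via an incorrect involution.

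For the reflection equation, the claim that \emph{all} matrix-element identities in which no index lies in $\{\pm n\}$ ``reduce to the tautology that $I$ satisfies \eqref{TX-RE}'' is false, and this is precisely the subtle point of the computation. The $Q$-part of $R(w)$ is the rank-one operator $Q=P^{t_1}$, whose image is spanned by $\xi=\sum_j e_{-j}\ot e_j$. Because $G(u)$ has nonzero entries at $\pm n$, the combination $R(u-v)G_1(u)R(u+v)G_2(v)$ (and its cousins) produces $\xi$-components, i.e., components at $e_{-j}\ot e_j$ for every $j$, not only $j=\pm n$. In the paper this surviving contribution is packaged as the commutator $[Q,\,1\ot F(u,v)]$, where $F(u,v)$ turns out to be a diagonal matrix supported at $\pm n$; but $[Q,\,1\ot F]$ does not vanish for such an $F$ unless $F=0$, precisely because $Q$ transports the $\pm n$ data to all the $(e_{-j}\ot e_j)$-slots. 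So the non-trivial content of the verification is exactly at indices outside $\{\pm n\}$, contrary to your claim, and the rational identity $F(u,v)=0$ (which uses $\ka=2d+1$) is where the constraint $d=N/4-1$ and the pairing of parameters $a$ and $2d-a$ actually enter. To complete your proposal you would need to track these $Q$-induced couplings and show they collapse — in effect reproducing the paper's decomposition rather than avoiding it.
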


\begin{proof}
We begin by showing that $K(u;a)$ satisfies \eqref{TX-RE}, that is
\[
 R(u-v)\, K_1(u;a)\, R(u+v)\, K_2(v;a) = K_2(v;a)\,R(u+v) \,K_1(u;a)\,R(u-v). 
\]
Notice that we only need to show this for $k(u)^{-1} K(u;a)$. Denote 
\eq{
G(u) = - \frac{2u}{u-a} E_{-n,-n} - \frac{2u}{u+a-2d} E_{nn},  \label{L:K-1dim:a0}
}
so that $K(u;a)=k(u)(I + G(u))$. Since $I$ is a solution to \eqref{TX-RE}, our task is to show that 
\spl{
& R(u-v)\,G_1(u)\,R(u+v) + R(u-v)\, R(u+v)\,G_2(v) + R(u-v)\,G_1(u)\, R(u+v)\,G_2(v) \\
& \qq = G_2(v)\,R(u+v)\,R(u-v) + R(u+v)\,G_1(u)\,R(u-v)  + G_2(v)\,R(u+v)\,G_1(u)\,R(u-v). \label{L:K-1dim:a}
}
We first show that 
\spl{
& \left(1-\frac{P}{u-v}\right)\left( G_1(u)\left(1-\frac{P}{u+v}\right) + \left(1-\frac{P}{u+v}\right)G_2(v) + G_1(u)\left(1-\frac{P}{u+v}\right)G_2(v) \right) + H(u,v)
\\
& \qq = \left( G_2(v)\left(1-\frac{P}{u+v}\right)+ \left(1-\frac{P}{u+v}\right)G_1(u) + G_2(v)\left(1-\frac{P}{u+v}\right)G_1(u) \right) \left(1-\frac{P}{u-v}\right) , \label{L:K-1dim:b}
}
where 
\eq{
H(u,v) = \frac{8 u v (a-d)}{(u-a) (v-a) (u+a-2d) (v+a-2d)} \left( E_{-n,n}\ot E_{n,-n} - E_{n,-n}\ot E_{-n,n}\right ).  \label{L:K-1dim:b1}
}
The equality \eqref{L:K-1dim:b} reduces to
\[
\left[ P , 2v\,G_2(u) - 2u\,G_2(v) - (u-v)\,G_2(u)\,G_2(v) \right] = 0 .
\]
This follows from the following computations:
\eqn{
2v\,G(u) - 2u & \,G(v) - (u-v)\,G(u)\,G(v) \\
& \qq = -\left(\frac{4uv}{u-a} - \frac{4uv}{v-a}\right) E_{-n,-n} - \left( \frac{4uv}{u+a-2d} - \frac{4uv}{v+a-2d}\right) E_{nn} \\
& \qq\qq - \frac{4uv\,(u-v)}{(u-a)(v-a)}\, E_{-n,-n} - \frac{4uv\,(u-v)}{(u+a-2d)(v+a-2d)} E_{nn} = 0,
}
thus implying \eqref{L:K-1dim:b}. Next we use
\[
Q^2 = NQ, \qu (1-u^{-1}P)\,Q = (1-u^{-1})\,Q, \qu Q\,G(u)\,Q=g(u)\,Q \qu\text{where}\qu g(u)=  - \frac{2u}{u-a} - \frac{2u}{u+a-2d} \label{L:K-1dim:b2}
\]
and subtract \eqref{L:K-1dim:b} from \eqref{L:K-1dim:a}. Then \eqref{L:K-1dim:a} holds if and only if the following equality is verified:
%
\if0 
\spl{
& \frac{G_1(u)\,Q + Q\,G_2(v) + G_1(u)\,Q\,G_2(v)}{u+v-\ka} -\frac{ G_2(u)\,Q + Q\,G_2(v) + G_2(u)\,Q\,G_2(v)}{(u-v)(u+v-\ka)}  \\
& +\frac{Q\,( G_1(u)+ G_2(v)+G_1(u)\,G_2(v))}{u-v-\ka} - \frac{Q\,(G_2(u) + G_2(v) + G_2(u)\,G_2(v)}{(u+v)(u-v-\ka)} \\
& + \frac{ g(u)\,Q + N\,Q\,G_2(v) + g(u)\, Q\,G_2(v) }{(u-v-\ka)(u+v-\ka)} 
- H(u,v) \\
& \qq = \frac{G_2(v)\,Q + Q\,G_1(u) + G_2(v)\,Q\,G_1(u)}{u+v-\ka} - \frac{G_2(v)\,Q + Q\,G_2(u) + G_2(v)\,Q\,G_2(u)}{(u-v)(u+v-\ka)} \\
& \qq + \frac{(G_2(v) + G_1(u) + G_2(v)\,G_1(u))\,Q}{u-v-\ka} - \frac{(G_2(v)+ G_2(u) + G_2(v)\,G_2(u))\,Q}{(u+v)(u-v-\ka)} \\
& \qq + \frac{N\,G_2(v)\,Q + g(u)\,Q + g(u)\,G_2(v)\,Q}{(u-v-\ka)(u+v-\ka)}.
}
\fi
\eqn{
& \frac{G_1(u)\,Q + Q\,G_2(v) + G_1(u)\,Q\,G_2(v)}{u+v-\ka} - \frac{G_2(v)\,Q + Q\,G_1(u) + G_2(v)\,Q\,G_1(u)}{u+v-\ka} \\
& -\frac{ G_2(u)\,Q + Q\,G_2(v) + G_2(u)\,Q\,G_2(v)}{(u-v)(u+v-\ka)} + \frac{G_2(v)\,Q + Q\,G_2(u) + G_2(v)\,Q\,G_2(u)}{(u-v)(u+v-\ka)} \\
& +\frac{Q\,( G_1(u)+ G_2(v)+G_1(u)\,G_2(v))}{u-v-\ka} - \frac{(G_2(v) + G_1(u) + G_2(v)\,G_1(u))\,Q}{u-v-\ka} \\
& - \frac{Q\,(G_2(u) + G_2(v) + G_2(u)\,G_2(v))}{(u+v)(u-v-\ka)} + \frac{(G_2(v)+ G_2(u) + G_2(v)\,G_2(u))\,Q}{(u+v)(u-v-\ka)}\\
& + \frac{ g(u)\,Q + N\,Q\,G_2(v) + g(u)\, Q\,G_2(v) }{(u-v-\ka)(u+v-\ka)} - \frac{N\,G_2(v)\,Q + g(u)\,Q + g(u)\,G_2(v)\,Q}{(u-v-\ka)(u+v-\ka)} = H(u,v) .  
}
Since $[G(u),G(v)]=0$ we can simplify the equation above to
\spl{
& \frac{[G_1(u)-G_2(v),Q] + G_1(u)\,Q\,G_2(v)-G_2(v)\,Q\,G_1(u)}{u+v-\ka} +\frac{[Q, G_1(u)+ G_2(v)+G_1(u)\,G_2(v)]}{u-v-\ka} \\
& + \frac{[Q,G_2(u)-G_2(v)] + G_2(v)\,Q\,G_2(u)-G_2(u)\,Q\,G_2(v)}{(u-v)(u+v-\ka)} - \frac{[Q,G_2(u) + G_2(v) + G_2(u)\,G_2(v)]}{(u+v)(u-v-\ka)} \\
& + \frac{ (N+ g(u))\, [Q,G_2(v)] }{(u-v-\ka)(u+v-\ka)}  = H(u,v).   \label{L:K-1dim:d}
}
Now observe that $G^t(u) = -u\,(2d-u)^{-1}G(2d-u)$ which implies that
\[
G_1(u)\,Q = -\frac{u}{2d-u}\,G_2(2d-u)\,Q, \qq Q\,G_1(u) = -\frac{u}{2d-u}\,Q\,G_2(2d-u) .
\]
Moreover,
\begin{align}
\begin{split}
G_2(u)\,Q\,G_2(v) &= \frac{4 u v}{(u-a) (v-a)} E_{nn}\ot E_{-n,-n} + \frac{4 u v}{(u-a) (v+a-2d)} E_{n,-n}\ot E_{-n,n} \\
&+ \frac{4 u v}{(u+a-2d) (v-a)} E_{-n,n}\ot E_{n,-n} + \frac{4 u v}{(u+a-2d) (v+a-2d)} E_{-n,-n}\ot E_{nn}. \label{L:K-1dim:GQG}
\end{split}
\end{align}
%
%
Using \eqref{L:K-1dim:b1}, \eqref{L:K-1dim:GQG} and $\ka=2d+1$ we compute the following identity 
\eqn{
& \frac{ G_1(u)\,Q\,G_2(v)-G_2(v)\,Q\,G_1(u)}{u+v-\ka} + \frac{G_2(v)\,Q\,G_2(u)-G_2(u)\,Q\,G_2(v)}{(u-v)(u+v-\ka)} \\
& \qu = \frac{4uv}{u+v-\ka} \left(\frac{1}{(u-a) (v-a)}-\frac{1}{(u+a-2d)(v+a-2d)}\right) \left( E_{-n,n}\ot E_{n,-n} - E_{n,-n}\ot E_{-n,n} \right) \\
& \qu + \frac{4 u v}{(u-v)(u+v-\ka)} \left(\frac{1}{(u-a) (v+a-2d)}-\frac{1}{(v-a) (u+a-2d)}\right) \left( E_{-n,n}\ot E_{n,-n} - E_{n,-n}\ot E_{-n,n} \right) \\
& \qu = \frac{8 u v (a-d)}{(u-a)(v-a) (u+a-2d) (v+a-2d)} \left( E_{-n,n}\ot E_{n,-n} - E_{n,-n}\ot E_{-n,n} \right) = H(u,v) . 
}
By combining the identities above and denoting $\wt u = u\,(2d-u)^{-1}$ we rewrite \eqref{L:K-1dim:d} as
\eqn{
& \bigg[ Q, \frac{\wt u\,G_2(2d-u)+G_2(v)}{u+v-\ka} + \frac{G_2(u)-G_2(v)}{(u-v)(u+v-\ka)} + \frac{ (N+ g(u))\, G_2(v) }{(u-v-\ka)(u+v-\ka)}  \\
& \qq -\frac{\wt u\,G_2(2d-u) - G_2(v)+\wt u\, G_2(2d-u)\,G_2(v)}{u-v-\ka} - \frac{G_2(u) + G_2(v) + G_2(u)\,G_2(v)}{(u+v)(u-v-\ka)} \bigg] = 0.   
}
Denoting the commutator above by $[Q,1\ot F(u,v)]$ we only need to verify that $F(u,v)=0$, which follows by a direct computation using \eqref{L:K-1dim:a0}, the explicit form of $g(u)$ and $\ka=2d+1=N/2-1$, as we now illustrate. After reorganizing the various terms and multiplying by $(u-v-\ka) (u+v-\ka)$, we obtain, with $F'(u,v) = (u-v-\ka) (u+v-\ka) F(u,v)$:
\eqn{
F'(u,v) &= -\frac{ 2 \ka v\, G(u)}{u^2-v^2} -\frac{2 u v\, G(2d-u)}{2d-u} + 2 u \bigg(1-\frac{1}{u-a}-\frac{1}{u+a-2d}+\frac{\ka}{u^2-v^2}\bigg)G(v) \\ & \hspace{7.3cm} -(u+v-\ka) \bigg(\frac{u\,G(2d-u)\,G(v)}{2d-u}+\frac{G(u)\,G(v)}{u+v}\bigg) \\
& = 4uv\bigg( -\bigg(\frac{1}{u+a-2d} - \frac{\ka}{(u-a)(u^2-v^2)} \bigg) E_{-n,-n} - \bigg( \frac{1}{u-a} - \frac{\ka}{(u+a-2d)(u^2-v^2)} \bigg) E_{nn} \\
& \hspace{1.3cm} - \bigg(1-\frac{1}{u-a}-\frac{1}{u+a-2d}+\frac{\ka}{u^2-v^2}\bigg)\bigg( \frac{1}{v-a} E_{-n,-n} + \frac{1}{v+a-2d} E_{nn} \bigg) \\
& \hspace{1.3cm} -\frac{u+v-\ka}{(u-a)(u+a-2d)(u+v) }\bigg( \frac{(u+a-2d)-(u-a)(u+v)}{v-a} E_{-n,-n} \\ & \hspace{8.5cm}+  \frac{(u-a)-(u+a-2d)(u+v)}{v+a-2d} E_{nn}\bigg)\bigg) 
\\
& = \frac{2d+1-\ka}{(v-a)(u+a-2d)} E_{-n,-n} +\frac{2d+1-\ka}{(u-a)(v+a-2d)} E_{nn} = 0 . 
}
This completes the proof that $K(u;a)$ is a solution to~\eqref{TX-RE}.

We now turn to proving \eqref{K-1dim:Sym}. Our work thus far shows that the assignment $\wt S(u)\mapsto K(u;a)$ extends to a homomorphism of algebras $\phi_a:\wt X(\mfg_N,\mcG)^{tw}\to \C$ where $\wt X(\mfg_N,\mcG)^{tw}$ is the extended reflection algebra (see Subsection \ref{subsec:twYa}). By Theorem 5.2 of \cite{GR}, $K(u;a)$ satisfies \eqref{K-1dim:Sym} if and only if $\phi_a(c(u))=1$: see  \eqref{c(u)}. Since $N\geq 5$, we have $p=N-2>2=q$ and we may therefore apply Corollary \ref{C:c(u)} which, by \eqref{g(u):exp}, implies that 
\begin{equation}
 \phi_a(c(u))=\frac{\mathscr{g}(\ka-u)}{\mathscr{g}(u)}\cdot \frac{\Tr(K(u;a))}{\Tr(K(\ka-u;a))}=\bigg(\frac{u+1-N/4}{u-N/4}\bigg)^2\frac{\Tr(K(u;a))}{\Tr(K(\ka-u;a))}. \label{K-1dim:phi}
\end{equation}
By definition of $K(u;a)$, 
\[
 \Tr(K(u;a))=k(u)\left(N-\frac{2u}{u-a}-\frac{2u}{u+a-2d}\right)=\frac{N(u-a)(u+a-2d)-2u(2u-2d)}{(u-d)^2}.
\]
Let $P(u)$ be the numerator of the right-hand side. Using that $N=2\ka+2$ and $2d=\ka-1$ we find that 
\[
 P(u)=N(u-a)(u+a-\ka+1)-2u(2u-\ka+1)=N(u-a)(u+a-\ka)-2u(2u-2\ka)-Na, 
\]
and hence $P(u)$ is invariant under the substitution $u\mapsto \ka-u$. This implies that 
\[
 \frac{\Tr(K(u;a))}{\Tr(K(\ka-u;a))}=\bigg(\frac{\ka-u-d}{u-d} \bigg)^2=\bigg(\frac{u-N/4}{u+1-N/4}\bigg)^2,
\]
which by \eqref{K-1dim:phi} proves that $\phi_a(c(u))=1$ for any $a\in \C$, as required. 
\end{proof}

As a consequence of this lemma we obtain the existence of a family of one-dimensional representations $\{V(a)\}_{a\in \C}$:
\begin{crl}\label{C:V(a)}
Let $a\in \C$. Then the assignment $S(u)\mapsto K(u;a)$ yields a one-dimensional representation $V(a)$ of $X(\mfso_N,\mfso_{N-2}\oplus \mfso_{2})^{tw}$ with the highest weight 
$\gamma^a(u)$ given by 
\begin{equation}
\gamma_i^a(u)=\frac{(u-a)((-1)^{\delta_{in}}u+a-2d)}{(d-u)^2} \; \text{ for all }\; i\in \mcI_N^+, \label{K-1dim:hw}
\end{equation}
where $d=N/4-1$. 
\end{crl}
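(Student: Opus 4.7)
The plan is to deduce this corollary as a direct consequence of Lemma \ref{L:K-1dim}. By Theorem 4.2 of \cite{GR}, the two relations \eqref{TX-RE} and \eqref{TX-symm} form a complete set of defining relations for $X(\mfg_N,\mcG)^{tw}$. Lemma \ref{L:K-1dim} establishes that the scalar matrix $K(u;a)$ satisfies precisely these two relations (the latter in the form \eqref{K-1dim:Sym}, which is exactly \eqref{TX-symm} applied to $K(u;a)$). Consequently, the assignment $S(u)\mapsto K(u;a)$ extends to an algebra homomorphism $\phi_a : X(\mfso_N,\mfso_{N-2}\oplus \mfso_2)^{tw}\to \C$, and the resulting one-dimensional module is what we define to be $V(a)$.

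Next I would verify that $V(a)$ is a highest weight module with highest weight $\gamma^a(u)$ given by \eqref{K-1dim:hw}. Since $K(u;a)$ is a diagonal matrix, $\phi_a(s_{ij}(u))=0$ for all $i\ne j$, so the highest weight condition $s_{ij}(u)\cdot 1 = 0$ for $i<j$ is automatically satisfied with the spanning vector $1\in V(a)$ as the highest weight vector. Thus the highest weight is obtained by reading off the diagonal entries of $K(u;a)$: for $i\in \mcI_N^+$ with $i<n$, the $(i,i)$-entry equals $k(u) = (u-a)(u+a-2d)/(u-d)^2$, while the $(n,n)$-entry equals
\begin{equation*}
k(u)\left(1-\frac{2u}{u+a-2d}\right) = \frac{(u-a)(u+a-2d)}{(u-d)^2}\cdot\frac{-u+a-2d}{u+a-2d} = \frac{(u-a)(-u+a-2d)}{(d-u)^2}.
\end{equation*}
Using the Kronecker delta to consolidate these two cases yields exactly the expression $\gamma_i^a(u) = (u-a)((-1)^{\delta_{in}}u+a-2d)/(d-u)^2$ of \eqref{K-1dim:hw}, as $(u-d)^2 = (d-u)^2$.

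Since Lemma \ref{L:K-1dim} has already done the computational heavy lifting, there is no substantive obstacle remaining: the corollary is a straightforward unpacking. The only mild subtlety worth remarking on is that $V(a)$ is automatically a highest weight module in the sense of Subsubsection \ref{subsub:HWtw} because $K(u;a)$ is diagonal; the non-trivial content — namely that the prescribed diagonal ansatz is compatible with the reflection equation and the symmetry relation — is entirely contained in Lemma \ref{L:K-1dim}.
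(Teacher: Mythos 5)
Your proof is correct and matches the paper's argument; the paper simply states the corollary is ``an immediate consequence of Lemma~\ref{L:K-1dim}, the formula for each $\gamma_i^a(u)$ following from \eqref{K-1dim},'' and your write-up is precisely the expected unpacking of that remark, including the correct diagonal reading of $K(u;a)$ for $i<n$ versus $i=n$.
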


This corollary is an immediate consequence of Lemma \ref{L:K-1dim}, the formula for each $\gamma_i^a(u)$ following from \eqref{K-1dim}. 
Note that \eqref{K-1dim:hw} implies that the auxiliary tuple $\wt \gamma^a(u)$ is determined by 
\begin{equation*}
 \wt \gamma_i^a(u)=2u\cdot\frac{(u-a)((-1)^{\delta_{in}}u+a-2d-1+\delta_{in})}{(d-u)^2} \; \text{ for all }\; i\in \mcI_N^+,
\end{equation*}
and thus that $V(a)$ has Drinfeld tuple $(\alpha,P_1(u),\ldots,P_n(u))$ equal to 
\begin{equation}
(\ka-a,1,\ldots,1) \label{V(a):tuple} 
\end{equation}
because $1+2d=\kappa$. The last two results of this subsection provide classifications of the one-dimensional representations of $X(\mfso_N,\mfso_{N-2}\oplus \mfso_2)^{tw}$ and of $Y(\mfso_N,\mfso_{N-2}\oplus \mfso_2)^{tw}$, respectively, when $N\geq 5$. 
\begin{prop}\label{P:1dimso2}
 Let $N\geq 5$. Then a representation $V$ of $X(\mfso_N,\mfso_{N-2}\oplus \mfso_2)^{tw}$ is one-dimensional if and only if $V\cong V(a)^{\nu_g}$ for some $g(u)\in 1+u^{-2}\C[[u^{-2}]]$. 
\end{prop}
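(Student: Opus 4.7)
The plan is to mirror the strategy from Case 1 of the proof of Proposition \ref{P:no1dim}, which already handles the combinatorial core of the argument when the reduced algebra has a $\mfg_2 = \mfso_2$ factor. The ``if'' direction is immediate: $V(a)$ is one-dimensional by Corollary \ref{C:V(a)}, and twisting by the automorphism $\nu_g$ preserves this property. For the converse, let $V$ be a one-dimensional $X(\mfso_N, \mfso_{N-2} \oplus \mfso_2)^{tw}$-module. Since $V$ is finite-dimensional and irreducible, Proposition \ref{P:necessary} attaches to it a Drinfeld tuple $(\alpha, P_1(u), \ldots, P_n(u))$. The goal is to show that $P_i(u) = 1$ for every $i$, since then \eqref{V(a):tuple} shows that $V(\ka - \alpha)$ shares the same Drinfeld tuple and Lemma \ref{L:unique} produces a $g \in 1 + u^{-2} \C[[u^{-2}]]$ with $V \cong V(\ka - \alpha)^{\nu_g}$.

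First, I would reduce to a type BCD0 setting to control $P_1(u), \ldots, P_{n-1}(u)$. Because $\ley = 1 < n$, Proposition \ref{P:ind} produces a module $V_1$ over $X(\mfso_{N-2}, \mfso_{N-2})^{tw}$, which is one-dimensional since $V$ is. Proposition \ref{P:no1dim} applies to this type BCD0 twisted Yangian (using $N - 2 \geq 3$, so $\mfso_{N-2} \not\cong \mfso_2$), giving $V_1 \cong V(\mcG_1)^{\nu_{g'}}$ for some $g'$; in particular, $V_1$ has trivial Drinfeld tuple. By Corollary \ref{C:poly-low}, the Drinfeld tuple of $V_1$ equals $(P_1(u + \tfrac{1}{2}), \ldots, P_{n-1}(u + \tfrac{1}{2}))$, so $P_i(u) = 1$ for $1 \leq i \leq n - 1$.

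The main obstacle is to show $P_n(u) = 1$, which I would handle by replicating the end of Case 1 in the proof of Proposition \ref{P:no1dim}. Twisting $V$ by a suitable $\nu_g$ yields a one-dimensional module $W$ on which the central series $w(u)$ acts as the identity; equivalently, $W$ descends to a one-dimensional module over $Y(\mfso_N, \mfso_{N-2} \oplus \mfso_2)^{tw}$ with Drinfeld tuple $(\alpha, 1, \ldots, 1, P_n(u))$ unchanged. Proposition~4.14 and Remark~4.15 of \cite{GRW2} then equip $W$ with the structure of a module over the Molev--Ragoucy reflection algebra $\mcB(n, 1)$ with the same associated data. Restricting along the embedding $\mcB(2, 1) \hookrightarrow \mcB(n, 1)$ given by $b^\circ_{ij}(u) \mapsto b_{n - 1 + i,\, n - 1 + j}(u)$ produces a one-dimensional $\mcB(2, 1)$-module, and the isomorphism $\mcB(2, 1) \cong Y^+(2)$ of \cite[Proposition 4.3]{MR} turns this into a one-dimensional $Y^+(2)$-module. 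Restricting further to $SY^+(2)$ and invoking \cite[Corollary 4.4.5]{Mobook}, the associated polynomial must be trivial, which on unwinding the shifts forces $P_n(u) = 1$.

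Combining both steps gives $P_1(u) = \cdots = P_n(u) = 1$, so Lemma \ref{L:unique} produces the desired $g$ and completes the argument. The subtle point in the second step is keeping careful track of how the pair $(\alpha, P_n(u))$ transforms under the chain of isomorphisms into a $Y^+(2)$-pair $(P^\circ(u), \gamma^\circ)$, so that the triviality of $P^\circ(u)$ cleanly implies $P_n(u) = 1$.
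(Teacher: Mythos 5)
Your ``if'' direction and the overall reduction to showing all $P_i(u)=1$ match the paper's framework exactly (via Lemma \ref{L:unique} and \eqref{V(a):tuple}), and your treatment of $P_n(u)$ also mirrors Case~1 of the proof of Proposition \ref{P:no1dim}, which is precisely what the paper's proof invokes. The genuine difference is in how you kill $P_1(u),\ldots,P_{n-1}(u)$: the paper applies Lemma \ref{L:gtw} directly to $V$ --- semisimplicity of $\mfso_{N-2}$ forces the $\mfso_{N-2}\oplus\mfso_2$-weight of $V$ to be $(0,\ldots,0,\gamma)$, and \eqref{gtw-action} then gives both $\deg P_i(u)=0$ for $i<n$ \emph{and} the extra relation $\deg P_n(u)=2\alpha-\tfrac{N}{2}-2\gamma$ in one stroke --- while you pass through the rank-lowering machinery, producing a one-dimensional $X(\mfso_{N-2},\mfso_{N-2})^{tw}$-module $V_1$ and invoking Proposition \ref{P:no1dim} (type BCD0) plus Corollary \ref{C:poly-low}. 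Both routes work; yours is slightly longer (Proposition \ref{P:no1dim}'s $q=0$ case itself leans on Lemma \ref{L:gtw}) and yields strictly less information about $P_n(u)$, but since both approaches must in any case run the $\mcB(n,1)$ argument to pin down $P_n(u)$, that is harmless. Two small corrections to your second step: the corner map has an off-by-one error --- it should be $b^\circ_{ij}(u)\mapsto b_{i+\key-1,\,j+\key-1}(u)=b_{n-2+i,\,n-2+j}(u)$, since your shift by $n-1$ would produce the out-of-range index $n+1$ in $\mcB(n,1)$ --- and one should avoid calling this an algebra ``embedding'' $\mcB(2,1)\hookrightarrow\mcB(n,1)$: no such morphism exists in general, and the $\mcB(2,1)$-module structure on $W$ is available only because $W$ is one-dimensional (more generally, consists of vectors annihilated by the relevant raising operators), which is why the paper says $W$ ``inherits'' a $\mcB(2,1)$-module structure and cites the proof of Theorem~4.6 of \cite{MR}.
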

\begin{proof}
 By Corollary \ref{C:V(a)}, for any $a\in \C$, $V(a)$ provides a one-dimensional representation of $X(\mfso_N,\mfso_{N-2}\oplus \mfso_2)^{tw}$. Hence, to prove the proposition we need to establish that, if $V$ is a one-dimensional representation of $X(\mfso_N,\mfso_{N-2}\oplus \mfso_2)^{tw}$, then it can be associated to a tuple of the form 
 \begin{equation*}
  (\alpha,P_1(u),\ldots,P_n(u))=(\al,1,\ldots,1). 
 \end{equation*}
By Lemma \ref{L:unique}, this will imply that $V\cong V(a)^{\nu_g}$ for some $g(u)\in 1+u^{-2}\C[[u^{-2}]]$, where $a$ is determined by $\al=1+2d-a$.  

Let $V$ be a one-dimensional representation. Since $N\geq 5$, $\mfso_{N-2}$ is semisimple and thus $V$ is equal to the trivial representation when viewed as a $\mfso_{N-2}$-module. 
Since $\mfso_2$ is one-dimensional, $F_{11}$ operates in $V$ as multiplication by a scalar $\gamma$. In particular, the highest weight of $V$ as a $(\mfso_{N-2}\oplus \mfso_{2})$-module is 
$(\mu_i)_{i=1}^n=(0,\ldots,0,\gamma)$. The relation \eqref{gtw-action} of Lemma \ref{L:gtw} therefore implies that $P_i(u)=0$ for all $1\leq i\leq n-1$ and 
\begin{equation*}
 \deg P_n(u)=2\alpha-\tfrac{N}{2}-2\gamma.
\end{equation*}
To complete the proof, we need to see that $\deg P_n(u)=0$. This can shown using the same argument as given in the $\key>0$ case of the proof of Proposition \ref{P:no1dim}. 
\end{proof}
\begin{crl}\label{C:1dimso2}
 A representation $V$ of $Y(\mfso_N,\mfso_{N-2}\oplus \mfso_2)^{tw}$ (with $N\geq 5$) is one-dimensional if and only if there is $a\in \C$ such that $V\cong V(a)$. 
\end{crl}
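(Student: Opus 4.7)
The plan is to deduce this corollary directly from Proposition \ref{P:1dimso2}, exploiting the two structural facts recalled in Subsection \ref{subsec:twYa}: first, $Y(\mfso_N,\mfso_{N-2}\oplus\mfso_2)^{tw} = X(\mfso_N,\mfso_{N-2}\oplus\mfso_2)^{tw}/(w(u)-1)$ (see \eqref{Y=X/(w-1)}); second, $Y(\mfso_N,\mfso_{N-2}\oplus\mfso_2)^{tw}$, viewed as a subalgebra of $X(\mfso_N,\mfso_{N-2}\oplus\mfso_2)^{tw}$, is the $\nu_g$-stable subalgebra for every $g(u)\in 1+u^{-2}\C[[u^{-2}]]$.

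The forward direction is immediate: each $V(a)$ is a one-dimensional $X(\mfso_N,\mfso_{N-2}\oplus\mfso_2)^{tw}$-module by Corollary \ref{C:V(a)}, so its restriction to $Y(\mfso_N,\mfso_{N-2}\oplus\mfso_2)^{tw}$ is also one-dimensional. For the converse, I would start with a one-dimensional representation $V$ of $Y(\mfso_N,\mfso_{N-2}\oplus\mfso_2)^{tw}$ and pull it back along the canonical surjection $X(\mfso_N,\mfso_{N-2}\oplus\mfso_2)^{tw}\onto Y(\mfso_N,\mfso_{N-2}\oplus\mfso_2)^{tw}$ to obtain a one-dimensional representation $\tilde V$ of $X(\mfso_N,\mfso_{N-2}\oplus\mfso_2)^{tw}$ on which $w(u)$ acts as the identity. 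Proposition \ref{P:1dimso2} then yields $a\in\C$ and an even series $g(u)\in 1+u^{-2}\C[[u^{-2}]]$ for which $\tilde V\cong V(a)^{\nu_g}$ as $X(\mfso_N,\mfso_{N-2}\oplus\mfso_2)^{tw}$-modules.

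To finish, I would observe that, because $\nu_g$ fixes $Y(\mfso_N,\mfso_{N-2}\oplus\mfso_2)^{tw}$ pointwise, the action of any $y\in Y(\mfso_N,\mfso_{N-2}\oplus\mfso_2)^{tw}$ on $V(a)^{\nu_g}$ coincides with its action on $V(a)$; hence $V(a)^{\nu_g}\big|_{Y(\mfso_N,\mfso_{N-2}\oplus\mfso_2)^{tw}} = V(a)\big|_{Y(\mfso_N,\mfso_{N-2}\oplus\mfso_2)^{tw}}$. Combining this with the previous step gives $V\cong \tilde V\big|_{Y(\mfso_N,\mfso_{N-2}\oplus\mfso_2)^{tw}}\cong V(a)\big|_{Y(\mfso_N,\mfso_{N-2}\oplus\mfso_2)^{tw}}$, as required.

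No substantial obstacle is expected, as the argument is a formal consequence of Proposition \ref{P:1dimso2} together with the algebraic relation between $X(\mfso_N,\mfso_{N-2}\oplus\mfso_2)^{tw}$ and $Y(\mfso_N,\mfso_{N-2}\oplus\mfso_2)^{tw}$; the only point requiring any care is the well-definedness of pulling $V$ back to an $X(\mfso_N,\mfso_{N-2}\oplus\mfso_2)^{tw}$-module, which is immediate from the quotient presentation of $Y(\mfso_N,\mfso_{N-2}\oplus\mfso_2)^{tw}$.
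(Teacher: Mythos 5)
Your proof is correct and takes essentially the same approach as the paper. The paper omits an explicit proof of this corollary, but the analogous corollary following Proposition~\ref{P:no1dim} is justified there by precisely the observation driving your final step, namely that $Y(\mfso_N,\mfso_{N-2}\oplus\mfso_2)^{tw}$ is the $\nu_g$-stable subalgebra of $X(\mfso_N,\mfso_{N-2}\oplus\mfso_2)^{tw}$, so twisting by $\nu_g$ does not alter the restricted module; your pullback along the quotient $X(\mfso_N,\mfso_{N-2}\oplus\mfso_2)^{tw}\onto Y(\mfso_N,\mfso_{N-2}\oplus\mfso_2)^{tw}$ is a transparent substitute for invoking Proposition~\ref{P:Y^tw-fd}.
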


\begin{rmk}\label{R:1dim}
$ $ 
\begin{enumerate}[leftmargin=1.8em,topsep=-3pt]
 \item If $V$ is a one-dimensional representation of $X(\mfso_4,\mfso_2\oplus \mfso_2)^{tw}$ (resp. of $X(\mfso_3,\mfso_2)^{tw}$) then there exists 
 $(\mu_1,\mu_2)\in \C^2$ (resp. $\mu \in \C)$ and $g(u)\in 1+u^{-2}\C[[u^{-2}]]$ such that $V\cong V(\mu_1,\mu_2)^{\nu_g}$ (resp. $V\cong V(\mu)^{\nu_g}$): see 
 Corollaries \ref{C:so4} and \ref{C:so3} for the definitions of $V(\mu_1,\mu_2)$ and $V(\mu)$, respectively. This follows from the isomorphisms \eqref{iso:so4} and \eqref{iso:so3so2} together with the fact that the family $\{V(\gamma)\}_{\gamma\in \C}$, which appeared in the proof of Proposition \ref{P:no1dim}, is a complete list of the isomorphisms classes of one-dimensional representations of $Y^+(2)$ up to twisting by automorphisms of the form $S^\circ(u)\mapsto h(u)S^\circ(u)$ with $h(u)\in 1+u^{-2}\C[[u^{-2}]]$. This fact follows from Corollary 4.4.5 of \cite{Mobook}. 
 
 \item By Lemma 6.1 of \cite{GRW2}, the twisted Yangians associated to the symmetric pairs $(\mfg_{2n},\mfgl_n)$ of type CI and DIII (see \cite{GR,GRW2}) admit a family
 $\{V(a)\}_{a\in \C}$ of one-dimensional representations. The arguments used in this section can also be applied for twisted Yangians of these types, yielding analogues of Proposition \ref{P:1dimso2} and Corollary \ref{C:1dimso2} for $X(\mfg_{2n},\mfgl_n)^{tw}$ and $Y(\mfg_{2n},\mfgl_n)^{tw}$, respectively. 
\end{enumerate}
\end{rmk}


\section{Classification of finite-dimensional irreducible representations} \label{sec:main}

In this section, we prove the main results of this paper: the classification of finite-dimensional irreducible modules of the twisted Yangians associated to the pairs $(\mfso_N,\mfso_{N-2}\oplus \mfso_2)$ with $N\geq 5$ and $(\mfso_{2n+1},\mfso_{2n})$  with $n\geq 2$. 
 
The one-dimensional representations $\{V(a)\}_{a\in\C}$ constructed in Lemma \ref{L:K-1dim} and Corollary \ref{C:V(a)} provide the last ingredient 
necessary to classify the finite-dimensional irreducible representations of $X(\mfso_N,\mfso_{N-2}\oplus \mfso_2)^{tw}$ ($N\geq 5$) and its unextended counterpart, as is proven in Theorem \ref{T:DI(a)-Class} and Corollary \ref{C:DI(a)class}. In particular, Theorem \ref{T:DI(a)-Class} proves that for these twisted Yangians the necessary conditions of Proposition \ref{P:necessary} are in fact sufficient.
 
In the general setting the necessary conditions of Section \ref{sec:Nec} are not sufficient for determining exactly when the irreducible module $V(\mu(u))$ is finite-dimensional. We will illustrate this in Subsection \ref{subsec:q=1} by classifying the finite-dimensional irreducible representations of $X(\mfso_{2n+1},\mfso_{2n})^{tw}$ and of $Y(\mfso_{2n+1},\mfso_{2n})^{tw}$ in Theorem \ref{T:BI(b)-Class} and Corollary \ref{C:BI(b)class}, respectively. In the process we will obtain a stronger version of Proposition \ref{P:int} for these twisted Yangians which is proven without directly using the representation theory of $\mfso_{2n}$: see Proposition \ref{P:q=1-nec}.


\subsection{Twisted Yangians for the symmetric pairs \texorpdfstring{$(\mfso_N,\mfso_{N-2}\oplus \mfso_{2})$}{}}\label{subsec:q=2}

The following theorem provides a classification of the finite-dimensional irreducible representations of $X(\mfso_N,\mfso_{N-2}\oplus \mfso_2)^{tw}$ with $N\geq 5$. 
\begin{thrm}\label{T:DI(a)-Class}
Let $N\geq 5$ and suppose that $\mu(u)$ satisfies the conditions of Proposition \ref{P:nontriv} so that the Verma module $M(\mu(u))$ is non-trivial. Then
the irreducible $X(\mfso_{N},\mfso_{N-2}\oplus \mfso_2)^{tw}$-module $V(\mu(u))$ is finite-dimensional if and only if there are  monic polynomials $P_1(u),\ldots,P_n(u)$ 
together with a scalar $\alpha\in \C\setminus Z(P_n(u))$ such that
\begin{equation}
\frac{\wt \mu_{i-1}(u)}{\wt \mu_i(u)}=\frac{P_i(u+1)}{P_i(u)}\left(\frac{\al-u}{\al+u-1}\right)^{\del_{i,n}} \;\text{ with }\;  P_i(u)=P_i(-u+n-i+2) \label{T:DI(a)-Class.1}
\end{equation}
for all $2\leq i\leq n$, while $P_1(u)$ must satisfy $P_1(u)=P_1(-u+\tfrac{N}{2})$ and the relation 
\begin{equation}\label{T:DI(a)-Class.2}
 \frac{\wt \mu_{\mathscr{a}}(\ka-u)}{\wt \mu_{\mathscr{a}+1}(u)}=\left(\frac{u+1-\tfrac{N}{4}}{u-\tfrac{N}{4}}\right)^2\frac{P_1(u+2^{\mathscr{a}-1})}{P_1(u)}\cdot \frac{\ka-u}{u} \quad \text{ where }\quad \mathscr{a}=2n+1-N.
\end{equation}
Additionally, when $V(\mu(u))$ is finite-dimensional the corresponding tuple $(\al,P_1(u),\ldots, P_n(u))$ is unique. 
\end{thrm}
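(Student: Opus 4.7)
The plan is to prove the theorem by establishing necessity and sufficiency separately. Necessity will follow almost directly from Proposition \ref{P:necessary}, while sufficiency will be established by realizing $V(\mu(u))$ as (a twist of) the irreducible quotient of the highest weight submodule of a tensor product $L(\lambda(u)) \otimes V(a)$, where $L(\lambda(u))$ is a suitable finite-dimensional $X(\mfso_N)$-module and $V(a)$ is one of the one-dimensional $X(\mfso_N,\mfso_{N-2}\op\mfso_2)^{tw}$-modules supplied by Corollary \ref{C:V(a)}.

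For necessity, the hypotheses on $(\alpha,P_1,\ldots,P_n)$ will be identified with those produced by Proposition \ref{P:necessary} for the pair $(\mfso_N,\mfso_{N-2}\op\mfso_2)$, for which $\delta=0$, $\ley=1$ and $\key=n-1$. The condition \eqref{T:DI(a)-Class.1} is exactly \eqref{nec.0}. To get \eqref{T:DI(a)-Class.2} from \eqref{nec.2} in the $N=2n$ case, I will substitute $q=2$ into \eqref{g(u):exp} and simplify $\mathscr{g}(\ka-u)/\mathscr{g}(u)$ to $\bigl((u+1-N/4)/(u-N/4)\bigr)^2$. In the $N=2n+1$ case, the first equation in \eqref{nec.2} will be combined with the invariance \eqref{nontriv.2} to convert $\wt\mu_0(u)/\wt\mu_1(u)$ into the expression for $\wt\mu_0(\ka-u)/\wt\mu_1(u)$ required by \eqref{T:DI(a)-Class.2}. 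Uniqueness of the tuple is immediate from Lemma \ref{L:unique}.

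For sufficiency, given a tuple $(\alpha,P_1,\ldots,P_n)$ satisfying the hypotheses, my first step will be to exploit the symmetries $P_i(u)=P_i(-u+n-i+2)$ and $P_1(u)=P_1(-u+N/2)$ to choose monic polynomials $Q_1,\ldots,Q_n$ such that
\[
P_i(u) = (-1)^{\deg Q_i}\,Q_i(u-\ka/2)\,Q_i(-u+n-i+2-\ka/2) \quad \text{for}\quad 2\le i\le n,
\]
and $P_1(u)=(-1)^{\deg Q_1}\,Q_1(u-\ka/2)\,Q_1(-u+\ka/2+1)$. The existence of such factorizations reduces, after shifting, to factoring an even polynomial $R(v)$ as $(-1)^{\deg Q}Q(v)Q(-v)$, which is always possible because the symmetry forces $R$ to have even degree with roots paired as $\{\gamma,-\gamma\}$. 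Next, Theorem \ref{T:X-class} furnishes a finite-dimensional irreducible $X(\mfso_N)$-module $L(\lambda(u))$ with Drinfeld polynomials $(Q_1,\ldots,Q_n)$. Setting $a=\ka-\alpha$, so that $V(a)$ is associated to $(\alpha,1,\ldots,1)$ by \eqref{V(a):tuple}, I will take the submodule $W$ of $L(\lambda(u))\otimes V(a)$ generated by $\xi\otimes v_a$ (a highest weight module by Proposition 4.10 of \cite{GRW2}) and pass to its irreducible quotient. By Lemma \ref{L:QxP=QP} the associated tuple of this quotient will be $(\alpha,P_1,\ldots,P_n)$, so Lemma \ref{L:unique} identifies it with $V(\mu(u))^{\nu_g}$ for some even series $g(u)$. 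Since $W$ is finite-dimensional, $V(\mu(u))$ is too.

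The main technical obstacle lies in the application of Lemma \ref{L:QxP=QP}: its hypothesis requires $\alpha\notin Z(Q_n(u-\ka/2))\cup Z(Q_n(-u+\ley+2-\ka/2))=Z(Q_n(u-\ka/2))\cup Z(Q_n(-u+3-\ka/2))$, whereas our hypothesis $\alpha\notin Z(P_n)$ only rules out $\alpha\in Z(Q_n(u-\ka/2))\cup Z(Q_n(-u+2-\ka/2))$. These forbidden sets differ by a shift of one unit, so the lemma does not always apply directly. When the extra condition fails, I plan to use the flexibility in the factorization of $P_n$: each pair of roots $\{\beta,2-\beta\}$ of $P_n$ can be split into $Q_n$ in two ways, and I expect to show that at least one such choice always satisfies the stronger condition. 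In the remaining degenerate situations (essentially when $P_n$ has roots coinciding with $\alpha$ or $2-\alpha$ in the problematic sense), I will enlarge $Q_n$ by auxiliary symmetric linear factors and then cancel them off using Remark \ref{R:QxP=QP} together with the shift procedure of Lemma \ref{L:poly2}, landing ultimately on the prescribed tuple. Carrying out this adjustment in full generality is the principal technical step of the argument.
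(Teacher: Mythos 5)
Your plan mirrors the paper's proof: necessity from Proposition \ref{P:necessary} and the simplification $\mathscr{g}(\ka-u)/\mathscr{g}(u)=\bigl((u+1-N/4)/(u-N/4)\bigr)^2$ at $q=2$; sufficiency by realizing $V(\mu(u))$, up to a twist by some $\nu_g$, as the irreducible quotient of $X(\mfso_N,\mfso_{N-2}\oplus\mfso_2)^{tw}(\xi\otimes\eta)\subset L(\lambda(u))\otimes V(\ka-\alpha)$, using the factorizations $P_i(u)=(-1)^{\deg Q_i}Q_i(u-\ka/2)Q_i(-u+n-i+2-\ka/2)$ together with Lemmas \ref{L:QxP=QP} and \ref{L:unique}. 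The one place you deviate --- the claimed obstacle in applying Lemma \ref{L:QxP=QP} --- is not a real obstacle: it is an artifact of a misprint in that lemma's hypothesis. The role of the root condition there is to guarantee $\alpha\notin Z\bigl((Q_{\key+1}\odot P_{\key+1})(u)\bigr)$ so that the conclusion makes sense per Definition \ref{D:assoc}; Remark \ref{R:QxP=QP} says as much. By the lemma's own formula, the shifted factor of $(Q_{\key+1}\odot P_{\key+1})(u)$ is $Q_{\key+1}\bigl(-u+n-(\key+1)+2-\ka/2\bigr)$, and $n-(\key+1)+2=\ley+1$, so the printed $\ley+2$ should read $\ley+1$. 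With this correction, taking the second module to be $V(\ka-\alpha)$ with tuple $(\alpha,1,\ldots,1)$ gives the hypothesis $\alpha\notin Z(Q_n(u-\ka/2))\cup Z(Q_n(-u+2-\ka/2))=Z(P_n(u))$, which is exactly what the theorem assumes, and the paper's argument closes with no further work.

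Consequently the re-splitting and padding strategy in your last paragraph is unnecessary --- and it would not go through as described even if it were needed. Re-assigning a root pair $\{\beta,2-\beta\}$ of $P_n$ amounts to applying the involution $\gamma\mapsto 2-\gamma-\ka$ to the corresponding root of $Q_n$; this involution has the fixed point $\gamma=1-\ka/2$, so when $\alpha=2$ and $1$ is a (necessarily self-paired) root of $P_n$ the offending root of $Q_n$ cannot be flipped away. Moreover the reduction of Lemma \ref{L:poly2} replaces $\alpha$ by $\alpha-m\,\ell_\al^m$, so applying Remark \ref{R:QxP=QP} followed by that reduction would land you on a module associated to a different scalar, not the one prescribed in the theorem. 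Neither complication arises once Lemma \ref{L:QxP=QP} is read with the corrected shift.
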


\begin{proof}
$(\Longrightarrow)$ If $V(\mu(u))$ is finite-dimensional, then the existence of $\alpha$ and $P_1(u),\ldots,P_n(u)$ satisfying  the conditions of the theorem 
 is guaranteed by Proposition \ref{P:necessary}. Here we note that 
as a consequence of \eqref{nontriv.2}, the $\mathscr{a}=0$ version of \eqref{T:DI(a)-Class.2} is equivalent to the $N=2n+1$ version of \eqref{nec.2} with $\key=n-1$. 
Moreover, the uniqueness assertion has been proven in Lemma \ref{L:unique}. 
 
 $(\Longleftarrow)$ Conversely, assume that $\mu(u)$ satisfies the conditions of Proposition \ref{P:nontriv} and there exists a scalar $\al$ and monic polynomials $P_1(u),\ldots,P_n(u)$ satisfying the relations of the theorem. Since $P_1(u)=P_1(-u+\tfrac{N}{2})$ and $P_i(u)=P_i(-u+n-i+2)$ for all $2\leq i\leq n$, there exists monic polynomials $Q_1(u),\ldots,Q_n(u)$ such that 
  \begin{gather*}
   P_1(u)=(-1)^{\deg Q_1(u)}Q_1(u-\ka/2)Q_1(-u+\tfrac{N}{2}-\ka/2),\\ P_i(u)=(-1)^{\deg Q_i(u)}Q_i(u-\ka/2)Q_i(-u+n-i+2-\ka/2) \; \text{ for each }\; 2\leq i\leq n. 
  \end{gather*}
Let $L(\lambda(u))$ be a finite-dimensional irreducible $X(\mfso_N)$-module with Drinfeld polynomials $Q_1(u),\ldots,Q_n(u)$. It follows from Theorem \ref{T:X-class} that such a module exists and is uniquely determined up to twisting by automorphisms of the form $\mu_f:T(u)\mapsto f(u)T(u)$ (see also Corollary 5.19 of \cite{AMR}). Let $\xi\in L(\lambda(u))$ be a highest weight vector. Consider the one-dimensional module $V(a)$ from Corollary \ref{C:V(a)} with $a=\ka-\al$, and let $\eta\in V(a)$ be any nonzero vector. Since, by \eqref{V(a):tuple}, $V(\ka-\al)$ is associated to the tuple $(\alpha,1,\ldots,1)$, Lemma~\ref{L:QxP=QP} implies that the finite-dimensional $X(\mfso_N,\mfso_{N-2}\oplus \mfso_2)^{tw}$ highest weight module 
\begin{equation*}
   X(\mfso_N,\mfso_{N-2}\oplus \mfso_2)^{tw}(\xi\otimes \eta)\subset L(\lambda(u))\otimes V(a)
\end{equation*}
has highest weight $\mu^\sharp(u)$ which is also associated to $(\alpha,P_1(u),\ldots,P_n(u))$. By Lemma \ref{L:unique}, there exists $g(u)\in 1+u^{-2}\C[[u^{-2}]]$ such that $V(\mu^\sharp(u))\cong V(\mu(u))^{\nu_g}$. Since $V(\mu^\sharp(u))$ is finite-dimensional, we can conclude that the same is true for $V(\mu(u))$.
\end{proof}
 
\begin{rmk}
Before reinterpreting the above theorem as a classification result for finite-dimensional irreducible representations of $Y(\mfso_{N},\mfso_{N-2}\oplus \mfso_{2})^{tw}$, we give a few remarks on the relation \eqref{T:DI(a)-Class.2}: 
\begin{enumerate}[leftmargin=1.8em,topsep=-2pt]
  \item As has been pointed out in the proof of Theorem \ref{T:DI(a)-Class},  when $N$ is odd the relation \eqref{nontriv.2} implies that \eqref{T:DI(a)-Class.2} is equivalent to the relation 
  \begin{equation*}
   \frac{\wt \mu_0(u)}{\wt \mu_1(u)}=\frac{P_1(u+\tfrac{1}{2})}{P_1(u)}. 
  \end{equation*}
 \item If $N=2n$ then the existence of $P_1(u)$ satisfying $P_1(u)=P_1(-u+n)$ and condition \eqref{T:DI(a)-Class.2} can be replaced by the equivalent requirement that there exists
 a monic polynomial $Q_1(u)$ such that $Q_1(u)=Q_1(-u+n)$, $n/2\in Z(Q_1(u))$ and 
 \begin{equation*}
  \frac{\wt \mu_1(\ka-u)}{\wt \mu_2(u)}=\frac{Q_1(u+1)}{Q_1(u)}\cdot \frac{\ka-u}{u}. 
 \end{equation*}
 \end{enumerate}
\end{rmk}
By Theorem 4.5 of \cite{GRW2}, the isomorphism class of any finite-dimensional irreducible module $V$ has a unique representative of the form $V(\mu(u))$, and by Theorem \ref{T:DI(a)-Class} we 
can assign to this representative a unique tuple $(\al,P_1(u),\ldots,P_n(u))$. This assignment is not injective: the second statement of Lemma \ref{L:unique} shows that $(\al,P_1(u),\ldots,P_n(u))$ determines $V(\mu(u))$ only up to twisting by automorphisms of the form $\nu_g$.  Note, however, that there is a unique series $g(u)\in 1+u^{-2}\C[[u^{-2}]]$ such that the central series $w(u)$ operates as the identity in $V(\mu(u))^{\nu_g}$. Indeed, by  Proposition 4.6 of \cite{GRW2} $w(u)$ acts in $V(\mu(u))^{\nu_g}$ as multiplication by the series
$h(u)h(u+\ka)\nu(u)$, where $h(u)=g(u-\ka/2)$ and $\nu(u)=\mu_n(u)\mu_n(-u)$. Therefore $w(u)$ will act as the identity in $V(\mu(u))^{\nu_g}$ precisely when $h(u)h(u+\ka)=\nu(u)^{-1}$. A simple argument shows that there exists a unique series $a(u)\in 1+u^{-1}\C[[u^{-1}]]$ such that $a(u)a(u+\ka)=\nu(u)^{-1}$, and this series satisfies $a(u)=a(\ka-u)$ since otherwise $b(u)=a(\ka-u)$ would be a distinct solution of $b(u)b(u+\ka)=\nu(u)^{-1}$ (as $\nu(u)$ is even). Hence, $w(u)$ will operate as multiplication by $1$ in $V(\mu(u))^{\nu_g}$ exactly when $g(u)=a(u+\ka/2)$. 

This discussion shows that we have an injective correspondence between isomorphism classes of finite-dimensional irreducible modules
of $X(\mfso_N,\mfso_{N-2}\oplus \mfso_2)^{tw}$ and finite sequences $(g(u); \al,P_1(u),\ldots,P_n(u))$, where $P_1(u),\ldots,P_n(u)$ are monic polynomials satisfying \eqref{P-sym}, $\alpha$ is a complex number which is not contained in $Z(P_{n}(u))$, and $g(u)\in 1+u^{-2}\C[[u^{-2}]]$. A straightforward modification of the $(\Longleftarrow)$ direction of the proof of Theorem \ref{T:DI(a)-Class} shows that this is a bijective correspondence. 

The next corollary provides the analogous classification result for finite-dimensional irreducible modules of the twisted Yangians of type BDI(a) with $q=2$ and $N\geq 5$.
\begin{crl}\label{C:DI(a)class}
The isomorphism classes of finite-dimensional irreducible $Y(\mfso_{N},\mfso_{N-2}\oplus \mfso_2)^{tw}$-modules are parameterized by
tuples of the form $ (\al,P_1(u),\ldots,P_n(u))$, 
where $P_1(u),\ldots,P_n(u)$ are monic polynomials in $u$ satisfying 
  \begin{equation*}
   P_1(u)=P_1(-u+\tfrac{N}{2}) \quad \text{ and }\quad P_i(u)=P_i(-u+n-i+2) \; \text{ for all }\; 2\leq i\leq n,
  \end{equation*}
and $\alpha$ is a complex scalar which is not contained in $Z(P_n(u))$. 
\end{crl}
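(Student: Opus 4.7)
The plan is to deduce the corollary directly from Theorem \ref{T:DI(a)-Class}, Lemma \ref{L:unique}, and Proposition \ref{P:Y^tw-fd}, together with the observations made in the paragraphs preceding the corollary statement. By Proposition \ref{P:Y^tw-fd}, the isomorphism classes of finite-dimensional irreducible $Y(\mfso_N,\mfso_{N-2}\oplus \mfso_2)^{tw}$-modules are in bijection with those finite-dimensional irreducible $X(\mfso_N,\mfso_{N-2}\oplus \mfso_2)^{tw}$-modules on which the central series $w(u)$ operates as the identity. Theorem \ref{T:DI(a)-Class} attaches to each such $X^{tw}$-module a unique tuple $(\al,P_1(u),\ldots,P_n(u))$ of the desired form.

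The first step is to establish that this assignment, restricted to $Y^{tw}$-modules, is injective. Suppose two finite-dimensional irreducible $X^{tw}$-modules $V(\mu(u))$ and $V(\mu^\sharp(u))$, each with $w(u)$ acting as $I$, are associated to the same tuple $(\al,P_1(u),\ldots,P_n(u))$. By Lemma \ref{L:unique}, there is $g(u)\in 1+u^{-2}\C[[u^{-2}]]$ with $V(\mu^\sharp(u))\cong V(\mu(u))^{\nu_g}$. Using Proposition 4.6 of \cite{GRW2}, $w(u)$ acts on $V(\mu(u))^{\nu_g}$ as $g(u-\ka/2)\,g(u+\ka/2)$, so requiring $w(u)=1$ on both modules forces $g(u-\ka/2)\,g(u+\ka/2)=1$; as in the discussion preceding the corollary, this equation has $g(u)=1$ as its unique solution in $1+u^{-2}\C[[u^{-2}]]$. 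Hence $V(\mu^\sharp(u))\cong V(\mu(u))$ as $X^{tw}$-modules, so the corresponding $Y^{tw}$-modules coincide.

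The second step is surjectivity: given any tuple $(\al,P_1(u),\ldots,P_n(u))$ satisfying the stated symmetry and non-vanishing conditions, we must produce a finite-dimensional irreducible $Y^{tw}$-module whose associated tuple is $(\al,P_1(u),\ldots,P_n(u))$. By the $(\Longleftarrow)$ direction of Theorem \ref{T:DI(a)-Class}, there exists a finite-dimensional irreducible $X^{tw}$-module $V(\mu(u))$ associated to this tuple. The series $\nu(u)=\mu_n(u)\mu_n(-u)$ is even and lies in $1+u^{-1}\C[[u^{-1}]]$, so there is a unique $a(u)\in 1+u^{-1}\C[[u^{-1}]]$ with $a(u)\,a(u+\ka)=\nu(u)^{-1}$, and this $a(u)$ necessarily satisfies $a(u)=a(\ka-u)$; setting $g(u)=a(u+\ka/2)$ we obtain an even series with $g(u-\ka/2)\,g(u+\ka/2)=\nu(u)^{-1}$. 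Then $V(\mu(u))^{\nu_g}$ is a finite-dimensional irreducible $X^{tw}$-module on which $w(u)$ acts as the identity, and by Lemma \ref{L:unique} it is still associated to $(\al,P_1(u),\ldots,P_n(u))$. Restricting to $Y^{tw}$ via Proposition \ref{P:Y^tw-fd} completes the argument.

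The only subtle point is the uniqueness of $g(u)\in 1+u^{-2}\C[[u^{-2}]]$ solving $g(u-\ka/2)\,g(u+\ka/2)=\nu(u)^{-1}$; this reduces to a recursive computation of coefficients that is already invoked in the discussion preceding the corollary, so no new obstacle arises. All other ingredients are direct consequences of results established earlier in the paper.
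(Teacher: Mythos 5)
Your proof is correct and follows essentially the same route as the paper, which simply cites the discussion preceding the corollary together with Proposition \ref{P:Y^tw-fd}; you have unpacked that discussion as explicit injectivity and surjectivity arguments. One small imprecision worth noting: Proposition 4.6 of \cite{GRW2} actually gives that $w(u)$ acts on $V(\mu(u))^{\nu_g}$ as $g(u-\ka/2)\,g(u+\ka/2)\,\nu(u)$ with $\nu(u)=\mu_n(u)\mu_n(-u)$, so in your injectivity step you should first observe that $\nu(u)=1$ (apply the formula with $g=1$ and use that $w(u)$ already acts as the identity on $V(\mu(u))$) before the expression collapses to $g(u-\ka/2)\,g(u+\ka/2)$.
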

\begin{proof}
The corollary follows from the discussion in the paragraphs preceding its statement combined with Proposition \ref{P:Y^tw-fd}. It can also be proven using the same arguments as employed to prove Corollary~6.3 of \cite{GRW2}. 
\end{proof}

Recall from Subsection \ref{subsec:Ya}  the definition of the fundamental representation $L(i:\alpha)$ of the Yangian $Y(\mfg_N)$. 
For each $a\in \C$, the one-dimensional irreducible representation $V(a)$ of $X(\mfso_N,\mfso_{N-2}\oplus \mfso_2)^{tw}$ from Corollary \ref{C:V(a)} can be regarded as an irreducible representation of $Y(\mfso_N,\mfso_{N-2}\oplus \mfso_2)^{tw}$ via restriction. The following result is the analogue of Corollary 6.4 of \cite{GRW2} for 
$Y(\mfso_N,\mfso_{N-2}\oplus \mfso_2)^{tw}$ and can be proven in the exact same way. A similar argument is given in the proof of Corollary \ref{C:BI(b)fun} below. 
\begin{crl}\label{C:DI(a)fun}
 Let $V$ be a finite-dimensional irreducible representation of $Y(\mfso_N,\mfso_{N-2}\oplus \mfso_2)^{tw}$. Then there is $m\geq 0$, $1\leq i_1,\ldots,i_m\leq n$, and 
 $a,\alpha_{i_1},\ldots,\alpha_{i_m}\in \C$ such that $V$ is isomorphic to the unique irreducible quotient of the $Y(\mfso_{N},\mfso_{N-2}\oplus \mfso_2)^{tw}$-module 
 \begin{equation*}
  Y(\mfso_{N},\mfso_{N-2}\oplus \mfso_2)^{tw}(\xi_1\otimes \cdots \otimes \xi_m \otimes \eta)\subset L(i_1:\al_{i_1})\otimes \cdots \otimes L(i_m: \al_{i_m})\otimes V(a),
 \end{equation*} 
 where $\eta\in V(a)$ is any nonzero vector and for each $1\leq k\leq m$, $\xi_k\in L(i_k:\al_{i_k})$ is a highest weight vector.  
\end{crl}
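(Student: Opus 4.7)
The plan is to combine Corollary \ref{C:DI(a)class} with the classical fact (Corollary 12.1.13 of \cite{CP}) that every finite-dimensional irreducible $Y(\mfso_N)$-module is a subquotient of a tensor product of fundamental representations, mirroring the approach used for Corollary 6.4 of \cite{GRW2}. First, by Corollary \ref{C:DI(a)class}, $V$ is associated to a unique Drinfeld tuple $(\alpha,P_1(u),\ldots,P_n(u))$. Exploiting the symmetry relations $P_1(u)=P_1(-u+N/2)$ and $P_i(u)=P_i(-u+n-i+2)$ for $i\geq 2$, I would write each $P_i(u)$ as $(-1)^{\deg Q_i(u)}Q_i(u-\ka/2)Q_i(c_i-u)$ for monic polynomials $Q_1(u),\ldots,Q_n(u)$, with $c_1=N/2-\ka/2$ and $c_i=n-i+2-\ka/2$ for $i\geq 2$, exactly as in the $(\Longleftarrow)$ direction of the proof of Theorem \ref{T:DI(a)-Class}.

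Next, let $L(\lambda(u))$ be the finite-dimensional irreducible $X(\mfso_N)$-module (viewed also as a $Y(\mfso_N)$-module by restriction) with Drinfeld polynomials $Q_1(u),\ldots,Q_n(u)$, and enumerate the pairs $(i_k,\alpha_{i_k})$ corresponding to the roots of the $Q_i(u)$, so that $L(\lambda(u))$ arises as a subquotient of the tensor product $L(i_1:\alpha_{i_1})\otimes \cdots \otimes L(i_m:\alpha_{i_m})$. Set $a=\ka-\alpha$ so that, by \eqref{V(a):tuple}, the module $V(a)$ has Drinfeld tuple $(\alpha,1,\ldots,1)$. Via the coideal property $\Delta(X^{tw})\subset X(\mfso_N)\otimes X^{tw}$, the tensor product
\begin{equation*}
 T=L(i_1:\alpha_{i_1})\otimes \cdots \otimes L(i_m:\alpha_{i_m})\otimes V(a)
\end{equation*}
inherits an $X(\mfso_N,\mfso_{N-2}\oplus \mfso_2)^{tw}$-module structure, and by \eqref{X=Z*Y} the $X^{tw}$-submodule of $T$ generated by $\xi_1\otimes \cdots \otimes \xi_m\otimes \eta$ coincides with the $Y^{tw}$-submodule $W$ in the statement (the centre $Z^{tw}$ acting on this submodule by scalars).

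Since the $X(\mfso_N)$-submodule of $T$ generated by $\xi_1\otimes \cdots \otimes \xi_m$ is a highest weight module sharing the highest weight of $L(\lambda(u))$, combining Proposition 4.10 of \cite{GRW2} with Lemma \ref{L:QxP=QP} shows that the highest weight of $W$ is associated to $(\alpha,P_1(u),\ldots,P_n(u))$. The hypothesis on $\alpha$ in Lemma \ref{L:QxP=QP} is automatic here because $\alpha\notin Z(P_n(u))$ forces $\alpha\notin Z(Q_n(u-\ka/2))\cup Z(Q_n(-u+2-\ka/2))$. Lemma \ref{L:unique} then implies that the unique irreducible $X^{tw}$-quotient of $W$ is isomorphic to $V(\mu(u))^{\nu_g}$ for some $g(u)\in 1+u^{-2}\C[[u^{-2}]]$; restricting to $Y^{tw}$ eliminates this twist and yields $V$ as the unique irreducible $Y^{tw}$-quotient of $W$.

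The main obstacle is correctly executing the highest weight computation under the iterated tensor product structure, since Lemma \ref{L:QxP=QP} is phrased for a single $X(\mfso_N)$-module tensored with a single $X^{tw}$-module. This is handled cleanly by replacing the full tensor product of fundamentals with the highest weight submodule of $T$ that it generates from $\xi_1\otimes \cdots \otimes \xi_m$, which carries the same highest weight as $L(\lambda(u))$. Once this identification is made, the remainder of the argument is formal and the conclusion follows directly from Corollary \ref{C:DI(a)class} and Lemma \ref{L:unique}.
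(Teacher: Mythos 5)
Your proposal is correct and follows the paper's intended approach: the paper defers to Corollary 6.4 of \cite{GRW2} and to the $(\Longleftarrow)$ direction of the proof of Corollary \ref{C:BI(b)fun}, which is exactly the strategy you execute—factor the Drinfeld polynomials through the symmetry relations to produce an $X(\mfso_N)$-module $L(\lambda(u))$, realize it as a subquotient of a tensor product of fundamentals, tensor with $V(a)$ for $a=\ka-\al$, and match highest weights via Proposition 4.10 of \cite{GRW2}, Lemma \ref{L:QxP=QP}, and Lemma \ref{L:unique}. Your remark that the hypothesis of Lemma \ref{L:QxP=QP} is automatic since $\al\notin Z(P_n(u))$, and that replacing the full tensor product of fundamentals by the cyclic highest weight $Y(\mfso_N)$-submodule makes the highest weight computation go through, is exactly the right way to handle the one nontrivial point.
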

%

\subsection{Twisted Yangians for the symmetric pairs \texorpdfstring{$(\mfso_{2n+1},\mfso_{2n})$}{}}\label{subsec:q=1}

In this subsection we study the finite-dimensional irreducible representations of $X(\mfso_{2n+1},\mfso_{2n})^{tw}$ and $Y(\mfso_{2n+1},\mfso_{2n})^{tw}$ with $n\geq 2$ (which are of type BI(b) with $q=1$), the culmination of our effort being a classification of all such representations in Theorem \ref{T:BI(b)-Class} and Corollary \ref{C:BI(b)class}, respectively. Our first step towards proving this theorem is to study how a certain automorphism $\psi_\sigma^n$ interacts with highest weight modules of $X(\mfso_{2n+1},\mfso_{2n})^{tw}$: this will play a critical role in the rest of this section, one that is similar to the role played in the analogous classification for the twisted Yangian of the symmetric pair $(\mfgl_{2n},\mfso_{2n})$ by the automorphism (4.69) of \cite{Mobook}. 


\subsubsection{The automorphism $\psi_\sigma^n$}
For any $n\geq 1$, let $\mfS_N$ denote the symmetric group on the $N=2n+1$ symbols $\{-n,\ldots,-1,0,1,\ldots,n\}$ and let $\sigma$ be the transposition $(1,-1)$. Set $A_\sigma=\sum_{i=-n}^nE_{i,\sigma(i)}$, and note that $A_{\sigma}^t=A_\sigma=A_{\sigma}^{-1}$. Since we also have $A_\sigma\mcG A_\sigma^t=\mcG$, \eqref{al_A} implies that the assignment
$\psi_\sigma^n(S(u))=A_\sigma S(u)A_\sigma^t$ extends to an automorphism of $X(\mfso_{2n+1},\mfso_{2n})^{tw}$.
Our present goal is to determine the highest weight of the twisted module 
$V(\mu(u))^{\psi_{\sigma}^n}$. To this end, we return to the low rank setting. 
\begin{lemma}\label{BI:L-LR}
Suppose that the $X(\mfso_3,\mfso_2)^{tw}$-module $V(\mu(u))$ is finite-dimensional with associated Drinfeld tuple $(\al,P(u))$ as in Proposition \ref{P:so3class}. Then $V(\mu(u))^{\psi_{\sigma}^1}$ is isomorphic to $V(\mu^\sharp(u))$, where the components of $\mu^\sharp(u)$ are determined by the relations 
\begin{equation}\label{BI:sh}
 \wt\mu_0^\sharp(u)=\wt \mu_0(u)\cdot \frac{3-2u-2\al}{2\al-2u}\cdot \frac{2u-2\al+2}{2u+2\al-1},\qquad \wt \mu_1^\sharp(u)=\wt \mu_1(u)\cdot \frac{2u-2\al+1}{2u+2\al-2}\cdot \frac{2u-2\al+2}{2u+2\al-1}.
\end{equation}
In particular, $V(\mu(u))^{\psi_{\sigma}^1}$ has the Drinfeld tuple $(\tfrac{3}{2}-\al,P(u))$. 
\end{lemma}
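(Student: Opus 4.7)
The strategy would be to transfer the problem to $Y^+(2)$ using the isomorphism $\varphi$ of \eqref{iso:so3so2}, identify the effect of $\psi_\sigma^1$ there, and translate back. First I would verify that $\varphi\circ\psi_\sigma^1=\tau\circ\varphi$, where $\tau$ is the automorphism of $Y^+(2)$ given by $\tau(S^\circ(u))=A_\tau\,S^\circ(u)\,A_\tau$ with $A_\tau=E_{-1,1}+E_{1,-1}$. This is a direct check on generators, since $\psi_\sigma^1$ sends $s_{ij}(u)\mapsto s_{-i,-j}(u)$ while $\tau$ sends $s^\circ_{ij}(u)\mapsto s^\circ_{-i,-j}(u)$, and both swaps act identically on every product $s^\circ_{ab}(2\wt u)\,s^\circ_{cd}(2u)$ appearing on the right-hand side of \eqref{map:so3so2}.

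Let $V(\mu^\circ(u))$ denote the $Y^+(2)$-module corresponding to $V(\mu(u))$ via $\varphi$; by the proof of Proposition~\ref{P:so3class} its Drinfeld pair $(P^\circ(u),\gamma^\circ)$ satisfies $\gamma^\circ=2\al-1$ and $P^\circ(u)=2^{\deg P(u)}\,P\bigl(\tfrac{u+1}{2}\bigr)$. A new highest weight vector of the twisted module $V(\mu^\circ(u))^\tau$ is any vector $\zeta\in V(\mu^\circ(u))$ annihilated by $s^\circ_{1,-1}(u)$, and its new highest weight equals the ``lowest weight'' $\nu^\circ(u)$ of $V(\mu^\circ(u))$ (i.e.~the eigenvalue of $s^\circ_{-1,-1}(u)$ on $\zeta$). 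The key claim is that
\[
\nu^\circ(u)=\mu^\circ(u)\cdot\tfrac{u+1-\gamma^\circ}{u+\gamma^\circ},
\]
so that $V(\mu^\circ(u))^\tau$ carries the Drinfeld pair $(P^\circ(u),1-\gamma^\circ)$. This formula is straightforward to verify directly on the family of one-dimensional evaluation $Y^+(2)$-modules $V_\beta$ (where $V_\beta^\tau\cong V_{-\beta}$), in which case both sides reduce to elementary rational expressions; consistency of the formula with the ratio relation \eqref{Y+2:findim} for the pair $(P^\circ(u),1-\gamma^\circ)$ can also be checked by a direct computation that is independent of $P^\circ(u)$.

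Once this key identity is established, translating back through the identifications $\al=(\gamma^\circ+1)/2$ and $P(u)=2^{-\deg P^\circ(u)}P^\circ(2u-1)$ of Proposition~\ref{P:so3class} converts $(P^\circ(u),\gamma^\circ)\mapsto(P^\circ(u),1-\gamma^\circ)$ into $(\al,P(u))\mapsto(\tfrac{3}{2}-\al,P(u))$, giving the last assertion of the lemma. The explicit formulas \eqref{BI:sh} then follow by substituting $\mu^\sharp_\circ(u)=\nu^\circ(u)$ into the relations \eqref{so3so2:mu01} applied to $\mu^\sharp(u)$ and simplifying: a short calculation shows
\[
\frac{\wt\mu^\sharp_1(u)}{\wt\mu_1(u)}=\frac{\nu^\circ(2\wt u)\,\nu^\circ(2u)}{\mu^\circ(2\wt u)\,\mu^\circ(2u)}=\frac{(2u-\gamma^\circ)(2u+1-\gamma^\circ)}{(2u-1+\gamma^\circ)(2u+\gamma^\circ)},
\]
and analogously for $\wt\mu^\sharp_0(u)/\wt\mu_0(u)$; after substituting $\gamma^\circ=2\al-1$ these match precisely the factors in \eqref{BI:sh}.

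The main obstacle I anticipate is justifying the key identity $\nu^\circ(u)/\mu^\circ(u)=(u+1-\gamma^\circ)/(u+\gamma^\circ)$ for \emph{all} finite-dimensional irreducible $Y^+(2)$-modules, not just for the one-dimensional evaluation modules. Although the formula is dictated by the Drinfeld pair alone and does not involve $P^\circ(u)$, its proof in general seems to require either a careful computation with the Sklyanin determinant of $Y^+(2)$ (exploiting centrality to equate its eigenvalues on $\xi$ and on $\zeta$, which pins down $\nu^\circ(u)$ in terms of $\mu^\circ(u)$ and $\gamma^\circ$) or a reduction to the evaluation-module case via Molev's tensor-product realization of finite-dimensional irreducible $Y^+(2)$-modules; both routes are technical but standard given the machinery of \cite{Mobook}.
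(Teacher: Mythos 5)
Your overall strategy is the same as the paper's: transfer through the isomorphism $\varphi$ to $Y^+(2)$, identify the automorphism of $Y^+(2)$ corresponding to $\psi_\sigma^1$, apply the known formula for the twist of a finite-dimensional irreducible $Y^+(2)$-module by that automorphism, and translate back. However, your intertwining claim is not correct as stated. The identity $\varphi\circ\psi_\sigma^1=\tau\circ\varphi$ with $\tau\colon s^\circ_{ij}(u)\mapsto s^\circ_{-i,-j}(u)$ fails; the automorphism of $Y^+(2)$ that actually satisfies $\varphi\circ\psi_\sigma^1=(\text{aut})\circ\varphi$ is $\beta_A\colon s^\circ_{ij}(u)\mapsto(-1)^{\delta_{i,-1}+\delta_{j,-1}}s^\circ_{-i,-j}(u)$, coming from $A=E_{1,-1}-E_{-1,1}$. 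The sign is forced by the sign in the basis vector $v_1=-e_1\otimes e_1$ of $V\subset\C^2\otimes\C^2$: one finds that $A\otimes A$ acts on $V$ as $-A_\sigma$, whereas $A_\tau\otimes A_\tau$ (with $A_\tau=E_{-1,1}+E_{1,-1}$) sends $v_{-1}\mapsto -v_1$, $v_0\mapsto v_0$, $v_1\mapsto -v_{-1}$, which is not a scalar multiple of $A_\sigma$. Concretely, your assertion that "both swaps act identically on every product $s^\circ_{ab}(2\wt u)\,s^\circ_{cd}(2u)$" already fails for $s_{-1,0}(u)$: applying $\tau$ to the right-hand side of the formula for $\varphi(s_{-1,0}(u))$ in \eqref{map:so3so2} yields $-\varphi(s_{10}(u))$, not $\varphi(s_{10}(u))$. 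So the "direct check" you describe would not go through. Fortunately this does not damage the final conclusion, because $\beta_A=\tau\circ\alpha_K$ with $K=E_{11}-E_{-1,-1}$, and $\alpha_K$ fixes the diagonal entries $s^\circ_{\pm1,\pm1}(u)$, hence fixes highest weights and Drinfeld pairs; thus $V^{\beta_A}\cong V^\tau$ for every irreducible highest-weight $Y^+(2)$-module $V$. You simply landed on the wrong automorphism, and a wrong justification, while the answer is unaffected.

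On the second point, the identity $\nu^\circ(u)=\mu^\circ(u)\cdot\frac{u+1-\gamma^\circ}{u+\gamma^\circ}$ that you flag as the main unresolved obstacle is precisely Lemma 4.4.13 of \cite{Mobook}, which the paper cites outright; no new argument is needed (your sketch via the Sklyanin determinant or Molev's tensor-product realization is a reasonable description of how that lemma is proved, but it is already available). With the correct automorphism $\beta_A$ and this citation in hand, the remainder of your computation — in particular the final ratio $\wt\mu^\sharp_1(u)/\wt\mu_1(u)=\frac{(2u-\gamma^\circ)(2u+1-\gamma^\circ)}{(2u-1+\gamma^\circ)(2u+\gamma^\circ)}$ and its translation into \eqref{BI:sh} — agrees with the paper.
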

\begin{proof}
 We appeal to the isomorphism $\varphi:X(\mfso_3,\mfso_2)^{tw}\to Y^+(2)$ given in \eqref{iso:so3so2}, which we recall is given by 
 \begin{equation}
  S(u)\mapsto \tfrac{1}{2}R_{12}^\circ(-1)S_1^\circ(2u-1)R_{12}^\circ(-4u+1)^{t_+}S_2^\circ(2u)K_1K_2, \label{vphi}
 \end{equation}
where $K=E_{11}-E_{-1,-1}$. Let $A=E_{1,-1}-E_{-1,1}$, and let $\beta_A$ be the automorphism of $Y^+(2)$ given by $S^\circ(u)\mapsto -AS^\circ(u)A^t$. More explicitly, 
$\beta_A$ is defined on generators by the assignment $s_{ij}^\circ(u)\mapsto (-1)^{\delta_{i,-1}+\delta_{j,-1}}s_{-i,-j}^\circ(u)$ for $i,j\in \{-1,1\}$. 
We claim that $\varphi^{-1}\circ \beta_A \circ \varphi=\psi_{\sigma}^1$. 
Applying $\beta_A$ to the right hand side of \eqref{vphi} and performing a few straightforward manipulations, we obtain 
\begin{equation*}
 \tfrac{1}{2}R_{12}^\circ(-1)(A_1A_2)S_1^\circ(2u-1)R_{12}^\circ(-4u+1)^{t_+}S_2^\circ(2u)K_1K_2(A_1A_2).
\end{equation*}
Hence, $(\varphi^{-1}\circ \beta_A \circ \varphi)(S(u))=\tilde A S(u) \tilde A$ where $\tilde A=\tfrac{1}{4}R_{12}^\circ(-1)A_1A_2 R_{12}^\circ(-1)$ (which 
is an element of $\End(V)\cong \End(\C^3)$). We have $\tilde A v_{i}=-v_{-i}$ for all $-1\leq i\leq 1$, so $\tilde A=-\sum_{i=-1}^1 E_{i,\sigma(i)}=-A_\sigma$. Since $\tilde AS(u)\tilde A=A_{\sigma}S(u)A_{\sigma}$, $\varphi^{-1}\circ \beta_A \circ \varphi=\psi_{\sigma}^1$. 

Now let $V(\mu(u))$ be as in the statement of the lemma. We have already seen in the proof of Proposition \ref{P:so3class} that there exists $\mu^\circ(u)\in 1+u^{-1}\C[[u^{-1}]]$ such that 
$\wt \mu_0(u)=-2u\left(\frac{4u-3}{4u-1}\right)\mu^\circ(2u)\mu^\circ(1-2u)$, $\wt \mu_1(u)=2u\mu^\circ(2u)\mu^\circ(2u-1)$, and that viewed as a $Y^+(2)$-module 
$V(\mu(u))$ is isomorphic to $V(\mu^\circ(u))$. Moreover, $V(\mu^\circ(u))$ has the Drinfeld tuple $(2\al-1,Q(u))$, where $Q(u)=2^{\deg P(u)}P(\tfrac{u+1}{2})$.
By Lemma 4.4.13 of \cite{Mobook}, the twisted module $V(\mu^\circ(u))^{\beta_A}$ is isomorphic to $V(\mu^\bullet(u))$, where $\mu^\bullet(u)$ is given by 
\begin{equation*}
 \mu^\bullet(u)=\mu^\circ(u)\cdot \frac{u-2\al+2}{u+2\al-1}.
\end{equation*}
As a $X(\mfso_3,\mfso_2)^{tw}$-module $V(\mu^\circ(u))^{\beta_A}$ is isomorphic to $V(\mu_0^\sharp(u),\mu_1^\sharp(u))$ where  
\begin{gather*}
 \wt \mu_0^\sharp(u)=-2u\left(\frac{4u-3}{4u-1}\right)\mu^\bullet(2u)\,\mu^\bullet(1-2u)=\wt \mu_0(u)\cdot \frac{3-2u-2\al}{2\al-2u}\cdot \frac{2u-2\al+2}{2u+2\al-1},\\
 \wt \mu_1^\sharp(u)=2u\cdot\mu^\bullet(2u)\,\mu^\bullet(2u-1)=\wt \mu_1(u)\cdot \frac{2u-2\al+1}{2u+2\al-2}\cdot \frac{2u-2\al+2}{2u+2\al-1}.
\end{gather*}
As $\varphi^{-1}\circ \beta_A \circ \varphi=\psi_{\sigma}^1$, we can conclude that $V(\mu(u))^{\psi_{\sigma}^1}$ is isomorphic to $V(\mu^\sharp(u))$ with $\mu^\sharp(u)$ as in 
\eqref{BI:sh}. Consequently, we have 
\begin{equation*}
\frac{\wt \mu_0^\sharp(u)}{\wt \mu_1^\sharp(u)}=\frac{\wt \mu_0(u)}{\wt \mu_1(u)}\cdot \frac{(\tfrac{3}{2}-\al)-u}{\al-u}\cdot \frac{\al+u-1}{(\tfrac{3}{2}-\al)+u-1}=\frac{P_1(u+\tfrac{1}{2})}{P_1(u)}\cdot \frac{(\tfrac{3}{2}-\al)-u}{(\tfrac{3}{2}-\al)+u-1}. \qedhere 
\end{equation*}
\end{proof}

We now consider the case where $n>1$: 

\begin{prop}\label{BI:P-psi}
 Suppose that $n>1$ and that the $X(\mfso_{2n+1},\mfso_{2n})^{tw}$-module $V(\mu(u))$ is finite-dimensional with Drinfeld tuple $(\al,P_1(u),\ldots,P_n(u))$. Then $V(\mu(u))^{\psi_\sigma^n}$ is isomorphic to 
 $V(\mu^\sharp(u))$, where $\mu_i^\sharp(u)=\mu_i(u)$ for all $2\leq i\leq n$, while $\mu_0(u)$ and $\mu_1(u)$ are determined by the relations
\begin{gather}
 \wt\mu_0^\sharp(u)=\wt \mu_0(u)\cdot \frac{N-2u-2\al}{2\al-2u}\cdot \frac{2u-2\al+2}{2u+2\al-N+2}, \label{mu^sh:0}\\ 
 \wt \mu_1^\sharp(u)=\wt \mu_1(u)\cdot  \frac{2u-2\al+1}{2u+2\al-N+1}\cdot \frac{2u-2\al+2}{2u+2\al-N+2}. \label{mu^sh:1}
\end{gather}
\end{prop}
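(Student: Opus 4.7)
The plan is to reduce the statement to the $n=1$ case, which is Lemma \ref{BI:L-LR}, via the restriction from Proposition \ref{P:ind} taken with $m = n-1$. For our symmetric pair $(\mfso_{2n+1}, \mfso_{2n})$ we have $\ley = n$, so the condition $1 \leq m \leq \ley - \delta_{\ley, n}$ becomes $m \leq n-1$, which is satisfied for $m = n-1$ precisely because $n \geq 2$. Let $\xi$ denote the highest weight vector of $V(\mu(u))$.

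The first step is to show that the twist by $\psi_\sigma^n$ is compatible with this restriction, in the sense that
\begin{equation*}
V(\mu(u))^{\psi_\sigma^n}_{(n-1)} = \bigl(V(\mu(u))_{(n-1)}\bigr)^{\psi_\sigma^1}
\end{equation*}
as $X(\mfso_3, \mfso_2)^{tw}$-modules. Because $\sigma$ fixes every index $i$ with $|i| \geq 2$, the defining conditions of $V_{(+, n-1)}$ are preserved by $\psi_\sigma^n$, so the underlying subspaces coincide. The formula \eqref{scirc} for $s_{ij}^{\circ (n-1)}(u)$ and the identity $\delta_{ij} = \delta_{\sigma(i), \sigma(j)}$ immediately give $\psi_\sigma^n(s_{ij}^{\circ (n-1)}(u)) = s_{\sigma(i), \sigma(j)}^{\circ (n-1)}(u)$ for $i, j \in \mcI_3$, which is exactly the $\psi_\sigma^1$-twist of the $X(\mfso_3, \mfso_2)^{tw}$-action.

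Since $V(\mu(u))^{\psi_\sigma^n}$ is finite-dimensional irreducible, Corollary \ref{C:nosub} applied to both $V(\mu(u))$ and $V(\mu(u))^{\psi_\sigma^n}$ forces
\begin{equation*}
V(\mu(u))^{\psi_\sigma^n}_{n-1} = \bigl(V(\mu(u))_{n-1}\bigr)^{\psi_\sigma^1},
\end{equation*}
as both are the unique highest weight $X(\mfso_3,\mfso_2)^{tw}$-submodules of $V(\mu(u))^{\psi_\sigma^n}_{(n-1)}$. Corollary \ref{C:poly-low} identifies the Drinfeld tuple of $V(\mu(u))_{n-1}$ as $(\al - \tfrac{n-1}{2}, P_1(u + \tfrac{n-1}{2}))$, and Lemma \ref{BI:L-LR} gives the transformation of this tuple under $\psi_\sigma^1$. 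Writing $\wt{\eta_i}(u) = h_{n-1}(u)\wt\mu_i(u + \tfrac{n-1}{2})$ for $i \in \{0, 1\}$, substituting into the formulas \eqref{BI:sh} (with $\al$ replaced by $\al - \tfrac{n-1}{2}$) and shifting $u \mapsto u - \tfrac{n-1}{2}$, the factor $h_{n-1}(u)$ cancels and we recover exactly \eqref{mu^sh:0} and \eqref{mu^sh:1} for $\wt\mu_0^\sharp$ and $\wt\mu_1^\sharp$ after the routine simplification using $\ka = N/2 - 1$.

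The remaining task, and what I expect to be the main obstacle, is to verify that $\mu_i^\sharp(u) = \mu_i(u)$ for $2 \leq i \leq n$. Let $\eta$ denote the highest weight vector of $V(\mu(u))^{\psi_\sigma^n}$. Since $\psi_\sigma^n(s_{ii}(u)) = s_{ii}(u)$ for $i \geq 2$, the eigenvalue $\mu_i^\sharp(u)$ is the $s_{ii}(u)$-eigenvalue of $\eta$ regarded as a vector of $V(\mu(u))$. By the previous step, $\eta$ lies in the subspace $(V(\mu(u))_{n-1})^{\psi_\sigma^1} = V(\mu(u))_{n-1} = X(\mfso_3, \mfso_2)^{tw} \xi$. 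It therefore suffices to show that on $V(\mu(u))_{(+, n-1)}$ each operator $s_{ii}(u)$ with $i \geq 2$ commutes with the action of $X(\mfso_3, \mfso_2)^{tw}$, for then $V(\mu(u))_{n-1}$ lies entirely in the $\mu_i(u)$-eigenspace of $s_{ii}(u)$. This commutation is a direct consequence of the defining reflection equation \eqref{TX-RE}, expanded for the pairs of indices $(i, i)$ and $(j, k)$ with $j, k \in \mcI_3$, combined with the vanishing of $s_{ab}(u)$ on $V_{(+, n-1)}$ for $a < b$ with $b \geq 2$ (Lemma 4.12 of \cite{GRW2}); the point is that on $V_{(+, n-1)}$ all the extra terms in these commutation relations involve raising generators that annihilate the space, leaving $s_{ii}(u)$ central with respect to the restricted action. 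Once this commutation is established the claim $\mu_i^\sharp(u) = \mu_i(u)$ follows immediately, completing the proof.
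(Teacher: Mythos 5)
Your proposal is correct and reproduces the paper's proof in its entirety: you reduce to $n=1$ via the restriction with $m=n-1$, use the compatibility of the restriction with $\psi_\sigma$ (paper's Step 1), invoke Corollary \ref{C:nosub} to identify the two irreducible highest weight submodules and locate the highest weight vector in $V(\mu(u))_{n-1}$ (paper's Step 2), compute the $i\geq 2$ weights by a commutation argument modulo the ideal generated by the annihilating raising operators (paper's Step 3), and finish with Corollary \ref{C:poly-low} and Lemma \ref{BI:L-LR} (paper's Step 4). The only difference is that in the commutation step the paper more economically proves $[s_{kk}(u),s_{ij}(v)]\equiv 0 \mod J_{n-1}$ only for the lowering generators $-1\leq j<i\leq 1$ and uses the monomial spanning set of $V(\mu(u))_{n-1}$, rather than asserting centrality of $s_{kk}(u)$ with respect to the full restricted $X(\mfso_3,\mfso_2)^{tw}$-action; this is a minor presentational point, not a gap.
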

\begin{proof}
Since $V(\mu(u))^{\psi_\sigma^n}$ is finite-dimensional and irreducible, it is isomorphic to $V(\mu^\sharp(u))$ for some $\mu^\sharp(u)$. Throughout the proof
we fix highest weight vectors $\xi\in V(\mu(u))$ and $\xi_\sigma\in V(\mu(u))^{\psi_\sigma^n}$. 

 Since $\psi_\sigma^n(s_{ij}(u))=s_{\sigma(i)\sigma(j)}(u)$ for all $i,j\in \mcI_N$, $V(\mu(u))_{(+,n-1)}$ and $(V(\mu(u))^{\psi_\sigma^n})_{(+,n-1)}$ are equal as subspaces 
 of $V(\mu(u))$ (see \eqref{V(+,m)}). This implies that the identity map provides a linear isomorphism between the $X(\mfso_3,\mfso_2)^{tw}$-modules
 $(V(\mu(u))_{(n-1)})^{\psi_\sigma^1}$ and $(V(\mu(u))^{\psi_\sigma^n})_{(n-1)}$. The first step of our proof is to show that this is also a module homomorphism. 
 
 \noindent \textit{Step 1: } The identity map $\mathrm{id}:(V(\mu(u))_{(n-1)})^{\psi_\sigma^1}\to (V(\mu(u))^{\psi_\sigma^n})_{(n-1)}$ is an isomorphism of $X(\mfso_3,\mfso_2)^{tw}$-modules. 
 
 To prove that this is the case it suffices to show that, for each $i,j\in \mcI_{3}$, the generating series $s_{ij}^{n-1}(u)$ of $X(\mfso_3,\mfso_2)^{tw}$ operates the same in both of these modules. Since $s_{ij}^{n-1}(u)$ acts in $V(\mu(u))_{(n-1)}$ as the operator $h_{n-1}(u)(s_{ij}^{\circ (n-1)}(u))$  (see \eqref{scirc} and \eqref{h_m:triv}), it operates in 
 $(V(\mu(u))_{(n-1)})^{\psi_\sigma^1}$ as the operator 
 \begin{equation*}
  h_{n-1}(u)\left(s_{\sigma(i)\sigma(j)}(u+\tfrac{n-1}{2})+\frac{\delta_{ij}}{2u}\sum_{a=2}^n s_{aa}(u+\tfrac{n-1}{2})\right).
 \end{equation*}
As $\sigma(a)=a$ for all $a\geq 2$, this is also equal to $h_{n-1}\psi_\sigma^n(s_{ij}(u))^{\circ (n-1)}$ which is precisely the operator by which $s_{ij}^{n-1}(u)$ acts in 
$(V(\mu(u))^{\psi_\sigma^n})_{(n-1)}$. 
 
 \noindent \textit{Step 2:} $\xi_\sigma$ is contained in $V(\mu(u))_{n-1}$. Moreover  $(V(\mu(u))_{n-1})^{\psi_\sigma^1}\cong (V(\mu(u))^{\psi_\sigma^n})_{n-1}$. 
 
 Let $\xi_{\sigma}^1$ be a highest weight vector of the irreducible $X(\mfso_3,\mfso_2)^{tw}$-module $(V(\mu(u))_{n-1})^{\psi_{\sigma}^1}$. Since this
 is a submodule of $(V(\mu(u))_{(n-1)})^{\psi_\sigma^1}$, Step 1 shows that $\xi_{\sigma}^1$ is also contained in $(V(\mu(u))^{\psi_\sigma^n})_{(n-1)}$ and generates a highest weight submodule. By Corollary \ref{C:nosub}, this submodule must be equal to $(V(\mu(u))^{\psi_\sigma^n})_{n-1}$ and thus $\xi_{\sigma}^1$ must be proportional to $\xi_\sigma$. 
 This implies that $\xi_\sigma$, being a scalar multiple of $\xi_\sigma^1$, is contained in $V(\mu(u))_{n-1}$. 
 
 Next, Let $W$ be the image of the module $(V(\mu(u))_{n-1})^{\psi_{\sigma}^1}$ under the isomorphism $\mathrm{id}: (V(\mu(u))_{(n-1)})^{\psi_\sigma^1}\to (V(\mu(u))^{\psi_\sigma^n})_{(n-1)}$. As $W$ is the irreducible submodule of $(V(\mu(u))^{\psi_\sigma^n})_{(n-1)}$ generated by $\xi_\sigma$, it is equal to 
 $(V(\mu(u))^{\psi_\sigma^n})_{n-1}$.
 
 \noindent \textit{Step 3: } $\mu^\sharp_i(u)=\mu_i(u)$ for all $2\leq i\leq n$. 
 
 Since $V(\mu(u))_{n-1}$ is a $X(\mfso_3,\mfso_2)^{tw}$ highest weight module, it is generated by monomials of the form 
 $(s_{i_1j_1}^{n-1 (r_1)}\cdots s_{i_c j_c}^{n-1 (r_c)}) \cdot \xi$  where $-1\leq j_a<i_a \leq 1$ and $r_k\geq 1$ for all $1\leq a\leq c$, $c$ being a non-negative integer. By definition of the action of $X(\mfso_3,\mfso_2)^{tw}$, this implies  $V(\mu(u))_{n-1}$ is also spanned by monomials of the form 
 \begin{equation}
  s_{i_1j_1}^{(r_1)}\cdots s_{i_c j_c}^{(r_c)}\xi \label{V_n-1:mon}
 \end{equation}
with the same restrictions on the indices. In particular, by Step 2 the highest weight vector $\xi_\sigma$ must be a linear combination of such monomials.  Let $J_{n-1}$ be the left ideal of $X(\mfso_{2n+1},\mfso_{2n})^{tw}$ generated by all elements $s_{ij}^{(r)}$ with 
$r\geq 1$ and $i<j$ for $2\leq j\leq n$. In particular, $V(\mu(u))_{(n-1)}$ is, as a vector space, equal to the subspace of $V(\mu(u))$ annihilated by $J_{n-1}$. For every
$2\leq k\leq n$ and pair $(i,j)$ with $-1\leq j<i\leq 1$ the defining reflection equation \eqref{TX-RE} (see also \cite[(3.11)]{GRW2}) implies that 
\begin{equation*}
 [s_{kk}(u),s_{ij}(v)]=0 \mod J_{n-1},
\end{equation*}
and hence the action of $s_{kk}(u)$ on a monomial of the form \eqref{V_n-1:mon} is given by
\begin{equation*}
 s_{kk}(u)(s_{i_1j_1}^{(r_1)}\cdots s_{i_c j_c}^{(r_c)}\xi)=s_{i_1j_1}^{(r_1)}\cdots s_{i_c j_c}^{(r_c)}(s_{kk}(u)\xi)=\mu_k(u)(s_{i_1j_1}^{(r_1)}\cdots s_{i_c j_c}^{(r_c)}\xi).
\end{equation*}
Since $s_{\sigma(k)\sigma(k)}(u)=s_{kk}(u)$ for all $2\leq k\leq n$, the above observation yields that $\psi_\sigma^n(s_{kk}(u))\xi_\sigma=\mu_k(u)\xi_\sigma$ for all these values of $k$. Hence, $\mu_k(u)=\mu_k^\sharp(u)$ for all  $2\leq k\leq n$.

\noindent \textit{Step 4: } The formulas \eqref{mu^sh:0} and \eqref{mu^sh:1} hold.

 To compute $(\mu_0^\sharp(u),\mu_1^\sharp(u))$, we use Step 3 in conjunction with Lemma \ref{BI:L-LR}. Since $V(\mu(u))_{n-1}$ has Drinfeld tuple $(\al-\tfrac{n-1}{2},P_1(u+\tfrac{n-1}{2}))$ (by Corollary \ref{C:poly-low}) and $(V(\mu(u))_{n-1})^{\psi_\sigma^1}\cong (V(\mu(u))^{\psi_\sigma^n})_{n-1}\cong V(\mu^\sharp(u))_{n-1}$, Lemma \ref{BI:L-LR} together with 
 \eqref{mu-circ} gives 
\begin{gather*}
 \wt\mu_0^\sharp(u+\tfrac{n-1}{2})=\wt \mu_0(u+\tfrac{n-1}{2})\cdot \frac{3-2u-2\al+n-1}{2\al-n+1-2u}\cdot \frac{2u-2\al+n+1}{2u+2\al-n},\\ \wt \mu_1^\sharp(u+\tfrac{n-1}{2})=\wt \mu_1(u+\tfrac{n-1}{2})\cdot \frac{2u-2\al+n}{2u+2\al-n-1}\cdot \frac{2u-2\al+n+1}{2u+2\al-n}.
\end{gather*}
Substituting $u\mapsto u-\tfrac{n-1}{2}$ we obtain the formulas \eqref{mu^sh:0} and \eqref{mu^sh:1}. \qedhere
\end{proof}
\begin{prop}\label{P:q=1-nec}
 Suppose that $V(\mu(u))$ is finite-dimensional with Drinfeld tuple $(\al,P_1(u),\ldots,P_n(u))$. It follows that $\al\in \frac{1}{2}\Z+\tfrac{N}{4}$ and $S(\al,\tfrac{N}{2}-\al)\cup S(\al+\tfrac{1}{2},\tfrac{N}{2}-\al+\tfrac{1}{2})\subset Z(P_2(u))$.
\end{prop}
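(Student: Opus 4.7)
Proposition \ref{P:int} applies here (since $q=1\neq 2$) and yields $2(\alpha-N/4)\in\Z$, so $\alpha\in\tfrac{1}{2}\Z+N/4$. Set $\beta=N/2-\alpha$ and $m=\alpha-\beta=2(\alpha-N/4)\in\Z$. If $m\le 0$ both strings are empty and there is nothing to prove, so I assume $m>0$.

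The strategy is to compare Drinfeld data of $V(\mu(u))$ with that of its twist under the involution $\psi_\sigma^n$. Since $\psi_\sigma^n$ is an automorphism, $V(\mu(u))^{\psi_\sigma^n}\cong V(\mu^\sharp(u))$ is again finite-dimensional, with $\wt\mu_i^\sharp=\wt\mu_i$ for $i\ge 2$ and $\wt\mu_1^\sharp$ given by \eqref{mu^sh:1}. Factoring $2$ from each linear factor in the latter gives the cleaner identity
\[
\wt\mu_1^\sharp(u)\;=\;\wt\mu_1(u)\cdot\frac{(u-\alpha+1/2)(u-\alpha+1)}{(u-\beta+1/2)(u-\beta+1)}.
\]
By Proposition \ref{P:necessary}, $V(\mu^\sharp(u))$ has a Drinfeld tuple $(\alpha^\sharp,P_1^\sharp,\ldots,P_n^\sharp)$, and combining the $i=2$ Drinfeld relations for $\mu(u)$ and $\mu^\sharp(u)$ (noting that $\delta_{2,\key+1}=\delta_{2,1}=0$ since $\key=0$ here, so no $\alpha$-factor appears) produces
\[
\frac{P_2^\sharp(u+1)}{P_2^\sharp(u)}\;=\;\frac{P_2(u+1)}{P_2(u)}\cdot\frac{(u-\alpha+1/2)(u-\alpha+1)}{(u-\beta+1/2)(u-\beta+1)}.
\]

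To turn this ratio identity into a divisibility statement, I would introduce the monic polynomials
\[
R_1(u)=\prod_{k=0}^{m-1}(u-\beta-k), \qquad R_2(u)=\prod_{k=0}^{m-1}(u-\beta-\tfrac{1}{2}-k),
\]
whose zero sets are precisely $S(\alpha,\beta)$ and $S(\alpha+\tfrac{1}{2},\beta+\tfrac{1}{2})$. A short telescoping gives $R_1(u)/R_1(u+1)=(u-\alpha+1)/(u-\beta+1)$ and $R_2(u)/R_2(u+1)=(u-\alpha+1/2)/(u-\beta+1/2)$, which exactly cancels the extra factor above. Hence $\tilde P(u):=P_2^\sharp(u)R_1(u)R_2(u)$ satisfies $\tilde P(u+1)/\tilde P(u)=P_2(u+1)/P_2(u)$, so $\tilde P/P_2$ is a $1$-periodic rational function and is therefore constant (any pole would propagate to infinitely many poles under $u\mapsto u+1$). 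Comparing leading coefficients — all three of $P_2^\sharp,R_1,R_2,P_2$ are monic — forces this constant to equal $1$, so $P_2=P_2^\sharp R_1 R_2$. Polynomiality of $P_2^\sharp$ then yields $R_1 R_2\mid P_2$, that is $S(\alpha,\beta)\cup S(\alpha+\tfrac{1}{2},\beta+\tfrac{1}{2})=Z(R_1)\cup Z(R_2)\subset Z(P_2)$.

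The most delicate step is extracting the clean key ratio identity for $P_2^\sharp/P_2$ from the rather bulky formula \eqref{mu^sh:1}, together with checking that $\delta_{2,\key+1}=0$ so that no $\alpha^\sharp$-correction intrudes. Once that identity is in hand the $R_1R_2$-telescoping and the one-periodicity argument are essentially automatic, and the $i=1$ relation (which would determine $\alpha^\sharp$ and $P_1^\sharp$) is not needed for the conclusion.
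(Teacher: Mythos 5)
Your argument is correct, and its backbone is exactly the paper's: twist $V(\mu(u))$ by the involution $\psi_\sigma^n$ and compare the $i=2$ Drinfeld relation of $\mu(u)$ with that of $\mu^\sharp(u)$, noting that $\key=0$ here so no $\alpha$-factor intrudes at $i=2$. Where you depart is in the bookkeeping. First, you cite Proposition~\ref{P:int} for the integrality $2(\alpha-\tfrac{N}{4})\in\Z$; the paper deliberately re-derives it from the polynomial comparison itself (via Lemmas~\ref{L:poly2} and~\ref{L:poly1}), the point being to get a proof independent of the $\mfso_{2n}$-module theory underlying Proposition~\ref{P:int}, as is emphasized in the lead-in to Subsection~\ref{subsec:q=1} and in the remark following Proposition~\ref{P:q=1-nec}. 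Second, for the divisibility you build the telescoping polynomials $R_1,R_2$ by hand and close with the standard $1$-periodicity-of-a-rational-function argument; the paper instead applies Lemma~\ref{L:poly2} to both sides of the ratio identity to extract $(\ell_P,P_2^\bullet)$ and $(\ell_Q,Q_2^\bullet)$, then Lemma~\ref{L:poly1} to force $Q_2^\bullet=P_2^\bullet$ and $n-\alpha-\ell_Q=\alpha-\tfrac12-\ell_P$, and finally reads off the divisor from~\eqref{poly1'} (which is precisely your $R_1R_2=P_\al(u)$). These two routes are the same telescoping argument in different clothing; the paper's lemma-based version is heavier here but reuses machinery deployed elsewhere, while your explicit computation is shorter and more transparent, at the mild cost of needing to spell out the periodicity and monicity steps you describe. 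Your observation that the $i=1$ relation and the value of $\alpha^\sharp$ play no role is also correct, and the paper likewise uses only the $i=2$ comparison.
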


\begin{rmk}
 By definition, the strings $S(\al,\tfrac{N}{2}-\al)$ and $S(\al+\tfrac{1}{2},\tfrac{N}{2}-\al+\tfrac{1}{2})$ are empty unless $\al>\tfrac{N}{4}$. Therefore, 
 the condition $S(\al,\tfrac{N}{2}-\al)\cup S(\al+\tfrac{1}{2},\tfrac{N}{2}-\al+\tfrac{1}{2})\subset Z(P_2(u))$ is vacuous whenever $\al\leq \tfrac{N}{4}$. 
\end{rmk}

\begin{proof}[Proof of Proposition \ref{P:q=1-nec}]
 It was shown in Proposition \ref{P:int} that $\al\in \frac{1}{2}\Z+\tfrac{N}{4}$. However, we will not assume this in our proof. As a consequence of Proposition \ref{BI:P-psi}
 we have $V(\mu(u))^{\psi_\sigma^n}\cong V(\mu^\sharp(u))$ where $\mu_{k}^\sharp(u)=\mu_k(u)$ for all $2\leq k\leq n$ and the pair $(\mu_0^\sharp(u),\mu_1^\sharp(u))$ is determined from the relations \eqref{mu^sh:0} and \eqref{mu^sh:1}. Since $\mu(u)$ is associated to $(\al,P_1(u),\ldots,P_n(u))$, the components of $\wt \mu^\sharp(u)$ satisfy the relations 
 \begin{gather}\label{sigmaPoly1}
 \frac{\wt \mu_0^\sharp(u)}{\wt \mu_1^\sharp(u)}=\frac{P_1(u+\tfrac{1}{2})}{P_1(u)}\cdot \frac{(\tfrac{N}{2}-\al)-u}{u+(\tfrac{N}{2}-\al)-n}, \\
  \frac{\wt \mu_1^\sharp(u)}{\wt \mu_2^\sharp(u)}=\frac{P_2(u+1)}{P_2(u)}\cdot  \frac{2u-2\al+1}{2u+2\al-N+1}\cdot \frac{2u-2\al+2}{2u+2\al-N+2}, \label{sigmaPoly1a} \\
 \frac{\wt \mu_{i-1}^\sharp(u)}{\wt \mu_i^\sharp(u)}=\frac{P_i(u+1)}{P_i(u)}\quad  \text{ for all }\quad  3\leq i\leq n. \label{sigmaPoly2}
 \end{gather}
 On the other hand, since $V(\mu^\sharp(u))$ is finite-dimensional Proposition \ref{P:necessary} implies that $\mu^\sharp(u)$ can be associated to a Drinfeld tuple 
 $(\al^\sharp,Q_1(u),\ldots,Q_n(u))$. Consequently, from relation \eqref{sigmaPoly1a} we obtain the equality 
 \begin{equation*}
 \frac{Q_2(u+1)}{Q_2(u)}\cdot \frac{(n-\al)-u}{u+(n-\al)-n+1}=\frac{P_2(u+1)}{P_2(u)}\cdot \frac{(\al-\tfrac{1}{2})-u}{u+(\al-\tfrac{1}{2})-n+1}.
 \end{equation*}
 Applying Lemma \ref{L:poly2} to both sides (with $m=1$ and $l=n$) we find that there exists monic polynomials $Q_2^\bullet(u)$ and $P_2^\bullet(u)$, together with non-negative integers 
 $\ell_P$ and $\ell_Q$ such that $P_2^\bullet(u)=P_2^\bullet(-u+n)$, $Q_2^\bullet(u)=Q_2^\bullet(-u+n)$, and 
  \begin{equation*}
 \frac{Q_2^\bullet(u+1)}{Q_2^\bullet(u)}\cdot \frac{(n-\al-\ell_Q)-u}{u+(n-\al-\ell_Q)-n+1}=\frac{P_2^\bullet(u+1)}{P_2^\bullet(u)}\cdot \frac{(\al-\tfrac{1}{2}-\ell_P)-u}{u+(\al-\tfrac{1}{2}-\ell_P)-n+1}, 
 \end{equation*}
 with $Q_2^\bullet(n-\al-\ell_Q)\neq 0$ and $P_2^\bullet(\al-\tfrac{1}{2}-\ell_P)\neq 0$. By Lemma \ref{L:poly1}, we must have $Q_2^\bullet(u)=P_2^\bullet(u)$ and 
 $n-\al-\ell_Q=\al-\tfrac{1}{2}-\ell_P$. The latter relation implies that $2\al-\tfrac{N}{2}=\ell_P-\ell_Q\in \Z$, and thus that $\al\in \frac{1}{2}\Z+\tfrac{N}{4}$.
 
 If in addition $\al>\tfrac{N}{4}$, then $\ell_P\geq \ell_P-\ell_Q=2\al-\tfrac{N}{2}>0$. Since $(\ell_P,P_2^\bullet(u))$ is the pair $(\ell_{\al-1/2}^1,P_{\al-1/2}^1(u))$ from Lemma \ref{L:poly2} (where $P(u)=P_2(u)$), $P_2^\bullet(u)$ is equal to $P_2(u)$ divided by the polynomial $Q(u)$ from \eqref{poly1'} with $m=1$ and $\al$ replaced by $\al-1/2$. Therefore
 $P_2(u)$ is divisible by the polynomial 
 \begin{equation}\label{P-gamma}
  P_\al(u)=\prod_{k=0}^{2\al-\tfrac{N}{2}-1}(u-\al+1/2+k)(u-\tfrac{N}{2}+\al-k)=\prod_{k=0}^{2\al-\tfrac{N}{2}-1}(u-\al+1/2+k)(u-\al+1+k).
 \end{equation}
 The proof of the proposition is completed by observing that the roots of  $P_\al(u)$ are precisely the elements of $S(\al,\tfrac{N}{2}-\al)\cup S(\al+\tfrac{1}{2},\tfrac{N}{2}-\al+\tfrac{1}{2})$.
\end{proof}
 \begin{rmk}
The statement of Proposition \ref{P:q=1-nec} is much stronger than that of Proposition \ref{P:int} (in the case $(\mfg_N,\mfg_N^\rho)=(\mfso_{2n+1},\mfso_{2n})$). The latter 
tells us that $\al\in \frac{1}{2}\Z+\tfrac{N}{4}$ and that 
$
\al-\tfrac{N}{4}\leq \tfrac{1}{4}(\deg P_1(u)+\deg P_2(u))
$
but says nothing about the roots of $P_2(u)$. In fact, since the strings $S(\al,\tfrac{N}{2}-\al)$ and  $S(\al+\tfrac{1}{2},\tfrac{N}{2}-\al+\tfrac{1}{2})$
are disjoint and both have length $2\al-\tfrac{N}{2}$, Proposition \ref{P:q=1-nec} implies that $\al-\tfrac{N}{4}\leq \tfrac{1}{4}\deg P_2(u)$. 
\end{rmk}

Provided $\al>\tfrac{N}{4}$, the polynomial $P_\al(u)$ from \eqref{P-gamma} satisfies the relation 
\begin{equation}
 \frac{P_\al(u)}{P_\al(u+1)}= \frac{2u-2\al+1}{2u+2\al-N+1}\cdot \frac{2u-2\al+2}{2u+2\al-N+2}. \label{P-gamma.2}
\end{equation}
If instead $\al\leq \tfrac{N}{4}$, let $P_\al^-(u)$ be the polynomial 
\begin{equation*}
 P_\al^-(u)=\prod_{k=0}^{\frac{N}{2}-2\al-1}(u-n+\al+k)(u-\al-k)=\prod_{k=0}^{\frac{N}{2}-2\al-1}(u-\al-\tfrac{1}{2}-k)(u-\al-k),
\end{equation*}
where the equality $P_\al^-(u)=1$ is understood to hold if $\al=\tfrac{N}{4}$. Then $P_\al^-(u)$ satisfies the relation 
\begin{equation*}
 \frac{P_\al^-(u+1)}{P_\al^-(u)}= \frac{2u-2\al+1}{2u+2\al-N+1}\cdot \frac{2u-2\al+2}{2u+2\al-N+2}.
\end{equation*}
These observations together with the relations \eqref{sigmaPoly1} and \eqref{sigmaPoly2} imply the following corollary. 
\begin{crl}\label{C:Vtw-poly}
 Suppose that $V(\mu(u))$ is finite-dimensional with Drinfeld tuple $(\al,P_1(u),\ldots,P_n(u))$. Then the Drinfeld tuple of the finite-dimensional irreducible 
 module $V(\mu(u))^{\psi_\sigma^n}$ is $(\tfrac{N}{2}-\al,P_1^\sharp(u),\ldots,P_n^\sharp(u))$, where $P_i^\sharp(u)=P_i(u)$ for all $i\neq 2$ and 
\begin{equation}\label{P2-sharp}
P_2^\sharp(u)=
 \begin{cases}
  P_2(u)P_\al^-(u) \; & \text{ if }\; \al\leq \tfrac{N}{4},\\
  P_2(u)/P_\al(u) \; & \text{ if }\; \al> \tfrac{N}{4}.
 \end{cases}
\end{equation}
\end{crl}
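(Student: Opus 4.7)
The plan is to derive this corollary as a direct consequence of Proposition \ref{BI:P-psi} together with the two identities satisfied by $P_\al(u)$ and $P_\al^-(u)$. By Proposition \ref{BI:P-psi} we have $V(\mu(u))^{\psi_\sigma^n}\cong V(\mu^\sharp(u))$ where the components of $\mu^\sharp(u)$ agree with those of $\mu(u)$ for indices $i\geq 2$ and are given by the formulas \eqref{mu^sh:0}--\eqref{mu^sh:1} for $i\in\{0,1\}$. From these formulas one obtains \eqref{sigmaPoly1}--\eqref{sigmaPoly2} by a direct computation using that $\mu(u)$ is associated to $(\al,P_1(u),\ldots,P_n(u))$. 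Thus the proof reduces to translating each of the ratios $\wt\mu_{i-1}^\sharp(u)/\wt\mu_i^\sharp(u)$ into the canonical form required by Definition~\ref{D:assoc}, with $\al$ replaced by $\al^\sharp=\tfrac{N}{2}-\al$.

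For $i\geq 3$, the relation in \eqref{sigmaPoly2} is already of the desired form with $P_i^\sharp(u)=P_i(u)$. For the ratio $\wt\mu_0^\sharp(u)/\wt\mu_1^\sharp(u)$ of \eqref{sigmaPoly1}, the factor $((\tfrac{N}{2}-\al)-u)/(u+(\tfrac{N}{2}-\al)-n)$ is exactly the factor appearing in the first relation of \eqref{nec.2} (here $\key=0$, so $\delta_{\key,0}=1$) with $\al$ replaced by $\tfrac{N}{2}-\al$. This simultaneously identifies $\al^\sharp=\tfrac{N}{2}-\al$ and $P_1^\sharp(u)=P_1(u)$. The second relation in \eqref{sigmaPoly1}, which involves the extra factor $\tfrac{2u-2\al+1}{2u+2\al-N+1}\cdot\tfrac{2u-2\al+2}{2u+2\al-N+2}$, must be absorbed into $P_2^\sharp(u)$ since the canonical relation \eqref{nec.0} for $i=2$ carries no $\al$-dependent factor (as $\key+1=1\neq 2$).

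To carry out this absorption, I would invoke the identity \eqref{P-gamma.2} in the case $\al>\tfrac{N}{4}$, which rewrites the extra factor as $P_\al(u)/P_\al(u+1)$ and therefore yields $P_2^\sharp(u)=P_2(u)/P_\al(u)$; in the case $\al\le\tfrac{N}{4}$, the analogous identity stated just above the corollary in the excerpt rewrites the extra factor as $P_\al^-(u+1)/P_\al^-(u)$ and gives $P_2^\sharp(u)=P_2(u)\cdot P_\al^-(u)$. In the first case one must check that $P_2/P_\al$ is actually a polynomial: this is precisely where Proposition \ref{P:q=1-nec} is used, since it guarantees that every root of $P_\al(u)$ (which are exactly the elements of $S(\al,\tfrac{N}{2}-\al)\cup S(\al+\tfrac{1}{2},\tfrac{N}{2}-\al+\tfrac{1}{2})$) is a root of $P_2(u)$ of at least the needed multiplicity.

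It remains to verify two compatibility conditions. First, one must check that $P_2^\sharp(u)$ satisfies the symmetry $P_2^\sharp(u)=P_2^\sharp(-u+n)$; this follows from a short computation showing that the sets of roots of both $P_\al(u)$ and $P_\al^-(u)$ are stable under $u\mapsto -u+n$, combined with the analogous symmetry for $P_2(u)$. Second, one must verify $\al^\sharp\notin Z(P_1^\sharp(u))$, which reduces to $P_1(\tfrac{N}{2}-\al)\neq 0$; since $P_1(u)=P_1(-u+n+\tfrac{1}{2})$, this value equals $P_1(\al)$, which is nonzero by assumption. No step here is expected to be a serious obstacle: the whole argument is a book-keeping exercise powered by the nontrivial input of Proposition \ref{P:q=1-nec} (needed only in the $\al>\tfrac{N}{4}$ case to ensure that dividing by $P_\al(u)$ leaves a polynomial).
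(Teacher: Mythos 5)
Your proof is correct and follows essentially the same route the paper intends: it applies Proposition \ref{BI:P-psi} to get the twisted highest weight, rewrites the resulting ratios in canonical form using the identities for $P_\al(u)$ and $P_\al^-(u)$ stated immediately before the corollary, and appeals to Proposition \ref{P:q=1-nec} to guarantee $P_\al(u)$ divides $P_2(u)$ when $\al > \tfrac{N}{4}$. Your two compatibility checks (the palindromic symmetry of $P_2^\sharp(u)$ under $u\mapsto -u+n$, and $P_1(\tfrac{N}{2}-\al)=P_1(\al)\neq 0$) correctly fill in the details the paper leaves implicit.
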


Observe that these formulas together with those of Proposition \ref{BI:P-psi} imply that, under the assumption that $V(\mu(u))$ finite-dimensional, 
the isomorphism $V(\mu(u))\cong V(\mu(u))^{\psi_{\sigma}^n}$ will hold if and only if the scalar $\al$ corresponding to $V(\mu(u))$ is equal to $\tfrac{N}{4}$. 

%


\subsubsection{Classification}

With Proposition \ref{P:q=1-nec} at our disposal we can now classify the finite-dimensional irreducible representations of the extended twisted Yangian 
$X(\mfso_{2n+1},\mfso_{2n})^{tw}$. 
\begin{thrm}\label{T:BI(b)-Class}
Suppose that $\mu(u)$ satisfies the conditions of Proposition \ref{P:nontriv} so that the Verma module $M(\mu(u))$ is non-trivial. Then
the irreducible $X(\mfso_{2n+1},\mfso_{2n})^{tw}$-module $V(\mu(u))$ is finite-dimensional if and only if there are  monic polynomials $P_1(u),\ldots,P_n(u)$ in $u$, with
\begin{equation}
 P_i(u)=P_i(-u+n-i+2) \; \text{ for all }\; i\geq 2 \; \text{ and }\;  P_1(u)=P_1(-u+\tfrac{N}{2}), \label{T:BI(b).0}
\end{equation}
together with a scalar $\al\in \tfrac{1}{2}\Z+\tfrac{N}{4}$ such that the following relations are satisfied:
\begin{gather}
\al \notin Z(P_1(u)),\quad S(\al,\tfrac{N}{2}-\al)\cup S(\al+\tfrac{1}{2},\tfrac{N}{2}-\al+\tfrac{1}{2})\subset Z(P_2(u)),  \label{T:BI(b).1}\\[.5em]
\frac{\wt \mu_{i-1}(u)}{\wt \mu_i(u)}=\frac{P_i(u+1-\tfrac{\delta_{i1}}{2})}{P_i(u)}\left(\frac{\al-u}{\al+u-n}\right)^{\del_{i,1}}\text{ for all }\;1\leq i\leq n. \label{T:BI(b).2}
\end{gather}
Additionally, when $V(\mu(u))$ is finite-dimensional the corresponding tuple $(\al,P_1(u),\ldots, P_n(u))$ is unique. 
\end{thrm}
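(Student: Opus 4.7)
The forward implication follows directly from Proposition \ref{P:necessary} (applied with $\key=0$ and $\ley=n$), which supplies monic polynomials $P_1(u),\ldots,P_n(u)$ satisfying \eqref{T:BI(b).0} and a scalar $\alpha\in\C\setminus Z(P_1(u))$ obeying \eqref{T:BI(b).2}, combined with Proposition \ref{P:q=1-nec}, which refines these to $\alpha\in\tfrac{1}{2}\Z+\tfrac{N}{4}$ together with the string containment in $Z(P_2(u))$. Uniqueness of the tuple is Lemma \ref{L:unique}.

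\textbf{Reduction step.} For the converse I would first reduce to the case $\alpha\leq\tfrac{N}{4}$ via the automorphism $\psi_\sigma^n$. When $\alpha>\tfrac{N}{4}$, the string condition \eqref{T:BI(b).1} forces the polynomial $P_\alpha(u)$ of \eqref{P-gamma} to divide $P_2(u)$. Setting $\alpha^\sharp:=\tfrac{N}{2}-\alpha<\tfrac{N}{4}$ and $P_2^\sharp:=P_2/P_\alpha$, the reduced tuple $(\alpha^\sharp,P_1,P_2^\sharp,P_3,\ldots,P_n)$ satisfies the hypotheses of the theorem with vacuous string condition. Granting the construction of a finite-dimensional $V(\mu^\sharp(u))$ associated to the reduced tuple, the identity $P_{\alpha^\sharp}^-=P_\alpha$ (immediate by comparison of zero sets) together with Corollary \ref{C:Vtw-poly} will show that $V(\mu^\sharp(u))^{\psi_\sigma^n}$ is finite-dimensional with Drinfeld tuple $(\alpha,P_1,\ldots,P_n)$, and Lemma \ref{L:unique} will identify it with a $\nu_g$-twist of $V(\mu(u))$, giving finite-dimensionality.

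\textbf{Construction for $\alpha\leq\tfrac{N}{4}$.} My plan is to realize $V(\mu(u))$ as a subquotient of $L(\lambda(u))\otimes W$, where the base $W$ is chosen according to whether $\tfrac{N}{4}-\alpha$ is an integer or a half-integer. If $\alpha\in\tfrac{N}{4}-\Z_{\geq 0}$, I take $W:=V(\mcG)$, the trivial representation, whose Drinfeld tuple $(\tfrac{N}{4},1,\ldots,1)$ follows from Lemma \ref{L:gtw} together with formulas \eqref{eq:wtgii1}--\eqref{eq:wtgii2}. If instead $\alpha\in\tfrac{N}{4}-\tfrac{1}{2}-\Z_{\geq 0}$, I take $W$ to be the restriction to $X(\mfso_{2n+1},\mfso_{2n})^{tw}$ of a finite-dimensional spinor representation of $Y(\mfso_{2n+1})$ supplied by Lemma 5.18 of \cite{AMR}, which has Drinfeld tuple $(\tfrac{N}{4}-\tfrac{1}{2},1,\ldots,1)$ (again by Lemma \ref{L:gtw}, the associated $\mfso_{2n}$-module being a spinor representation). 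In either case $L(\lambda(u))$ is a finite-dimensional irreducible $X(\mfso_{2n+1})$-module---whose existence for any prescribed monic Drinfeld polynomials is guaranteed by Theorem \ref{T:X-class}---with $(Q_1(u),\ldots,Q_n(u))$ chosen so that, for each $i\geq 2$, $(-1)^{\deg Q_i}Q_i(u-\ka/2)Q_i(-u+n-i+2-\ka/2)=P_i(u)$ (solvable by the symmetry of $P_i$), and $Q_1(u)$ is engineered so that the scalar shift from the base $\alpha_0\in\{\tfrac{N}{4},\tfrac{N}{4}-\tfrac{1}{2}\}$ down to $\alpha$, effected via Remark \ref{R:QxP=QP} and Lemma \ref{L:poly2} (with $m=\tfrac{1}{2}$), leaves residual polynomial exactly $P_1(u)$. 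Fixing highest weight vectors $\xi\in L(\lambda(u))$ and $\eta\in W$, Lemma \ref{L:QxP=QP} combined with Remark \ref{R:QxP=QP} will identify the Drinfeld tuple of the submodule $X(\mfso_{2n+1},\mfso_{2n})^{tw}(\xi\otimes\eta)\subset L(\lambda(u))\otimes W$ as $(\alpha,P_1,\ldots,P_n)$, and Lemma \ref{L:unique} will then conclude the finite-dimensionality of $V(\mu(u))$.

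\textbf{Main obstacle.} The most delicate technical point will be the explicit construction of $Q_1(u)$: its roots must be positioned so that the iterated division of Lemma \ref{L:poly2} terminates at exactly $\ell_{\alpha_0}^{1/2}=2(\alpha_0-\alpha)$ with residual polynomial equal to $P_1(u)$, while the constraint $\alpha\notin Z(P_1(u))$ remains consistent with the choice of $Q_1$. An additional subtlety concerns the behaviour at the self-mirror position $u=\tfrac{N}{4}$, where the product $(Q_1\odot 1)(u)$ automatically has a root of even multiplicity (as the two $Q_1$-factors coincide there), whereas $P_1(u)$ may in principle have $\tfrac{N}{4}$ as a root of odd multiplicity; I expect resolving this to require either iterating with multiple $X(\mfso_{2n+1})$-tensor factors or a switch of base module via $\psi_\sigma^n$ to generate the missing odd-multiplicity contribution.
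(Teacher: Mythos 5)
Your necessity argument, uniqueness claim, and the reduction via $\psi_\sigma^n$ to the regime $\alpha\leq\tfrac{N}{4}$ all match the paper and are sound (the paper uses Proposition \ref{BI:P-psi} directly where you use Corollary \ref{C:Vtw-poly}, but these are interchangeable). However, your construction for $\alpha\leq\tfrac{N}{4}$ is substantially more complicated than it needs to be, and the ``main obstacle'' you flag does not actually exist.

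On the case split: you do not need the spinor representation of $Y(\mfso_{2n+1})$, and you do not need two cases. Because $\key=0$ for the pair $(\mfso_{2n+1},\mfso_{2n})$, Remark \ref{R:QxP=QP} applies Lemma \ref{L:poly2} with step $m=\tfrac{1}{2}$, so a single tensor factor $L(\lambda(u))$ over the trivial base $V(\mcG)$ can shift the scalar from $\tfrac{N}{4}$ down to \emph{any} $\alpha\in\tfrac{N}{4}-\tfrac{1}{2}\Z_{\geq 0}$. The paper simply sets $P_1^\circ(u)=P_1(u)\cdot\prod_{k=0}^{N/2-2\al-1}(u-\tfrac{N}{4}+\tfrac{k}{2})(u-\tfrac{N}{4}-\tfrac{k}{2})$, factors $P_1^\circ(u)=(-1)^{\deg Q_1}Q_1(u-\tfrac{\ka}{2})Q_1(-u+\tfrac{N}{2}-\tfrac{\ka}{2})$, takes $L(\lambda(u))$ with Drinfeld polynomials $(Q_1,\ldots,Q_n)$, and verifies directly that the resulting tuple is $(\alpha,P_1,\ldots,P_n)$ via the identity \eqref{<N/4.1}. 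This subsumes both of your subcases at once; in particular the half-integer case is handled with $V(\mcG)$ alone. (Your spinor-module route also raises issues you would have to address, e.g.\ irreducibility of the restriction to $X(\mfso_{2n+1},\mfso_{2n})^{tw}$.)

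On the ``main obstacle'' at $u=\tfrac{N}{4}$: this is illusory. Any monic polynomial with $P(u)=P(-u+l)$ has even degree (compare leading coefficients), and its roots pair off under the involution $r\mapsto l-r$, whose unique fixed point is $l/2$. Writing $P(u)=(u-l/2)^m Q(u)$ with $Q(l/2)\neq 0$, the symmetry forces $Q(u)=(-1)^m Q(-u+l)$; evaluating at $u=l/2$ gives $Q(l/2)=(-1)^m Q(l/2)$, whence $m$ is even. So $P_1(u)$ can never have $\tfrac{N}{4}$ as a root of odd multiplicity, and the factorization of $P_1^\circ$ into two mirrored copies of a single monic $Q_1$ always exists. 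No extra tensor factors, no switch of base module via $\psi_\sigma^n$, and no additional argument is required at this point.
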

\begin{proof}
 $(\Longrightarrow)$ If $V(\mu(u))$ is finite-dimensional, then it follows from Propositions \ref{P:necessary} and \ref{P:q=1-nec} that $\mu(u)$ can be associated to 
 a (Drinfeld) tuple $(\al,P_1(u),\ldots,P_n(u))$ satisfying the relations of the theorem. The uniqueness statement has also been proven in Lemma \ref{L:unique}. 
 
 $(\Longleftarrow)$ Conversely, suppose that $\mu(u)$ can be associated to a tuple $(\al,P_1(u),\ldots,P_n(u))$ as in Definition \ref{D:assoc} which also satisfies $\al\in \tfrac{1}{2}\Z+\tfrac{N}{4}$ and the relation \eqref{T:BI(b).1}. We will show that $V(\mu(u))$ is finite-dimensional, splitting our proof into two cases. 
 
 \noindent \textit{Case 1:} $\al\leq \tfrac{N}{4}$. 
 
 Let $P_i^\circ(u)=P_i(u)$ for all $i\neq 1$ and set $P_1^\circ(u)$ to be the polynomial obtained by multiplying $P_1(u)$ with $\prod_{k=0}^{N/2-2\al-1}(u-\tfrac{N}{4}+\frac{k}{2})(u-\tfrac{N}{4}-\frac{k}{2})$.
 Then $P_1^\circ(u)$ satisfies $P_1^\circ(u)=P_1^\circ(-u+\tfrac{N}{2})$ and the relation 
 \begin{equation}
  \frac{P_1^\circ(u+\tfrac{1}{2})}{P_1^\circ(u)}\cdot\frac{\tfrac{N}{4}-u}{\tfrac{N}{4}+u-n}=\frac{P_1(u+\tfrac{1}{2})}{P_1(u)}\cdot \frac{\al-u}{\al+u-n}. \label{<N/4.1}
 \end{equation}
Since $P_1^\circ(u)=P_1^\circ(-u+\tfrac{N}{2})$, there exists a monic polynomial $Q_1(u)$ such that $P_1(u)=(-1)^{\deg Q_1(u)}Q_1(u-\ka/2)Q_1(-u+\tfrac{N}{2}-\ka/2)$, and similarly, 
since $P_i^\circ(u)=P_i^\circ(-u+n-i+2)$ for all $i\geq 2$, there exists monic polynomials 
$Q_2(u),\ldots,Q_n(u)$ satisfying 
\begin{equation*}
 P_i(u)=(-1)^{\deg Q_i(u)}Q_i(u-\ka/2)Q_i(-u+n-i+2-\ka/2) \; \text{ for all }\; 2\leq i\leq n. 
\end{equation*}
Let $L(\lambda(u))$ be a finite-dimensional irreducible $X(\mfso_N)$-module with Drinfeld polynomials $Q_1(u),\ldots,Q_n(u)$, and let $\xi\in L(\lambda(u))$ be a highest weight vector. 
Then Lemma \ref{L:QxP=QP} applied with $V(\mu(u))=V(\mcG)$, together with Remark \ref{R:QxP=QP} and the relation \eqref{<N/4.1} imply that the finite-dimensional highest weight module 
$X(\mfso_{2n+1},\mfso_{2n})^{tw}\xi \subset L(\lambda(u))$ has highest weight $\nu(u)$ which is also associated to $(\al,P_1(u),\ldots,P_n(u))$. By Lemma \ref{L:unique}, there exists $g(u)\in 1+u^{-2}\C[[u^{-2}]]$ such that $V(\nu(u))\cong V(\mu(u))^{\nu_g}$. Since $V(\nu(u))$ is finite-dimensional, this proves that $V(\mu(u))$ also is. 

\noindent \textit{Case 2:} $\al>\tfrac{N}{4}$.

Let $\mu^\sharp(u)=(\mu^\sharp_i(u))_{i\in \mcI_N^+}$ be the tuple determined by $\mu_i^\sharp(u)=\mu_i(u)$ for all $2\leq i\leq n$ and with 
$\wt \mu_0^\sharp(u)$, $\wt \mu_1^\sharp(u)$ given by the formulas \eqref{mu^sh:0}, \eqref{mu^sh:1}, respectively.  Since $S(\al,\tfrac{N}{2}-\al)\cup S(\al+\tfrac{1}{2},\tfrac{N}{2}-\al+\tfrac{1}{2})\subset Z(P_2(u))$, the polynomial $P_\al(u)$ from \eqref{P-gamma} divides $P_2(u)$. By \eqref{sigmaPoly1}, \eqref{sigmaPoly2} and \eqref{P-gamma.2}, $\mu^\sharp(u)$ is associated to $(\tfrac{N}{2}-\al,P_1^\sharp(u),\ldots,P_n^\sharp(u))$, where $P_i^\sharp(u)=P_i(u)$ for all $i\neq 2$ and 
$P_2^\sharp(u)=P_2(u)/P_\al(u)$. 

Since $\tfrac{N}{2}-\al<\tfrac{N}{4}$, the argument of Case 1 implies that $V(\mu^\sharp(u))$ is finite-dimensional. By Proposition 4.7, 
$V(\mu^\sharp(u))^{\psi_{\sigma}^n}$ is isomorphic to $V(\mu(u))$, and thus $V(\mu(u))$ is also finite-dimensional. 
\end{proof}

A similar argument to that given after Theorem \ref{T:DI(a)-Class} shows that, as a consequence of Theorem \ref{T:BI(b)-Class}, the isomorphism classes of 
finite-dimensional irreducible $X(\mfso_{2n+1},\mfso_{2n})^{tw}$-modules are in bijective correspondence with tuples 
$(g(u); \al,P_1(u),\ldots,P_n(u))$, where 
\begin{enumerate}
 \item $P_1(u),\ldots,P_n(u)$ are monic polynomials satisfying \eqref{T:BI(b).0}, \label{Xclassi}
 \item $\al\in \tfrac{1}{2}\Z+\tfrac{N}{4}$ is not a root of $P_1(u)$, \label{Xclassii}
 \item $S(\al,\tfrac{N}{2}-\al)\cup S(\al+\tfrac{1}{2},\tfrac{N}{2}-\al+\tfrac{1}{2})\subset Z(P_2(u))$, \label{Xclassiii}
 \item $g(u)$ is an element of $1+u^{-2}\C[[u^{-2}]]$. \label{Xclassiv}
\end{enumerate}
More explicitly, the correspondence assigns to $V(\mu(u))$ the finite sequence $(g(u); \al,P_1(u),\ldots,P_n(u))$ where $(\al,P_1(u),\ldots,P_n(u))$ is the Drinfeld tuple 
associated to $\mu(u)$ and $g(u)$ is the unique series in $1+u^{-2}\C[[u^{-2}]]$ such that the central series $w(u)$ acts as the identity operator in $V(\mu(u))^{\nu_g}$. 

The next corollary provides a classification of finite-dimensional irreducible  $Y(\mfso_{2n+1},\mfso_{2n})^{tw}$-modules, and it follows from the above classification of finite-dimensional irreducible $X(\mfso_{2n+1},\mfso_{2n})^{tw}$-modules together with Proposition \ref{P:Y^tw-fd}.
\begin{crl}\label{C:BI(b)class} The isomorphism classes of finite-dimensional irreducible $Y(\mfso_{2n+1},\mfso_{2n})^{tw}$-modules are parameterized by
tuples $(\al,P_1(u),\ldots,P_n(u))$ satisfying conditions \eqref{Xclassi}-\eqref{Xclassiii}.
\end{crl}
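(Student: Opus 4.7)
The plan is to deduce the classification from Theorem~\ref{T:BI(b)-Class} together with Proposition~\ref{P:Y^tw-fd}. By Proposition~\ref{P:Y^tw-fd}, the isomorphism classes of finite-dimensional irreducible $Y(\mfso_{2n+1},\mfso_{2n})^{tw}$-modules are in bijection with the isomorphism classes of finite-dimensional irreducible $X(\mfso_{2n+1},\mfso_{2n})^{tw}$-modules on which the central series $w(u)$ acts as the identity. The strategy is therefore to cut out, within the parameterization of finite-dimensional irreducible $X^{tw}$-modules furnished by Theorem~\ref{T:BI(b)-Class}, exactly those modules satisfying this extra condition.

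Recall from the discussion immediately following Theorem~\ref{T:BI(b)-Class} that the isomorphism classes of finite-dimensional irreducible $X(\mfso_{2n+1},\mfso_{2n})^{tw}$-modules are in bijection with tuples $(g(u);\al,P_1(u),\ldots,P_n(u))$ satisfying conditions (1)--(4), where $g(u)\in 1+u^{-2}\C[[u^{-2}]]$ is the unique series such that, for any $V(\mu(u))$ with Drinfeld tuple $(\al,P_1(u),\ldots,P_n(u))$, the twisted module $V(\mu(u))^{\nu_g}$ has the property that $w(u)$ acts as the identity. Thus, once the Drinfeld tuple $(\al,P_1(u),\ldots,P_n(u))$ is fixed, there exists, up to isomorphism, exactly one finite-dimensional irreducible $X^{tw}$-module on which $w(u)$ acts as the identity. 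Conversely, given a tuple $(\al,P_1(u),\ldots,P_n(u))$ satisfying conditions \eqref{Xclassi}--\eqref{Xclassiii}, Theorem~\ref{T:BI(b)-Class} produces a finite-dimensional irreducible $X^{tw}$-module, and twisting it by the appropriate $\nu_g$ yields one on which $w(u)$ acts as the identity.

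Assembling these observations, the map sending an isomorphism class of finite-dimensional irreducible $X^{tw}$-modules on which $w(u)$ acts as the identity to its Drinfeld tuple $(\al,P_1(u),\ldots,P_n(u))$ is a bijection onto the set of tuples satisfying \eqref{Xclassi}--\eqref{Xclassiii}. Combined with the bijection of Proposition~\ref{P:Y^tw-fd}, this yields the desired parameterization of finite-dimensional irreducible $Y(\mfso_{2n+1},\mfso_{2n})^{tw}$-modules. There is no serious obstacle here: all representation-theoretic work has been carried out at the level of $X^{tw}$, and the corollary is essentially bookkeeping that removes the central parameter $g(u)$ when passing to the quotient $Y^{tw}=X^{tw}/(w(u)-1)$.
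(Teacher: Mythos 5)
Your proposal is correct and takes essentially the same route as the paper: the paper states the corollary follows from the classification of finite-dimensional irreducible $X(\mfso_{2n+1},\mfso_{2n})^{tw}$-modules (via the bijective correspondence with tuples $(g(u);\al,P_1(u),\ldots,P_n(u))$ established after Theorem~\ref{T:BI(b)-Class}) together with Proposition~\ref{P:Y^tw-fd}, which is exactly what you carry out. You simply spell out the bookkeeping that the paper leaves implicit: passing to $Y^{tw}$ forces $g(u)=1$, leaving the parameterization by $(\al,P_1(u),\ldots,P_n(u))$.
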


We now turn towards obtaining a result analogous to Corollary \ref{C:DI(a)fun} for  $Y(\mfso_{2n+1},\mfso_{2n})^{tw}$. 

Since the automorphism $\psi_\sigma^n$ of $X(\mfso_{2n+1},\mfso_{2n})^{tw}$ fixes $w(u)$, it induces an automorphism of the Yangian $Y(\mfso_{2n+1},\mfso_{2n})^{tw}$ which is given by the assignment $\Sigma(u)\mapsto A_\sigma \Sigma(u) A_\sigma^t$ - see \eqref{al_A}. We also denote this automorphism by $\psi_\sigma^n$. It is not difficult to see that the restriction of the irreducible $X(\mfso_{2n+1},\mfso_{2n})^{tw}$-module $V(\mu(u))^{\psi_\sigma^n}$ to the subalgebra $Y(\mfso_{2n+1},\mfso_{2n})^{tw}$ is isomorphic to $V^{\psi_\sigma^n}$, where 
$V$ is $V(\mu(u))$ viewed as a $Y(\mfso_{2n+1},\mfso_{2n})^{tw}$-module. 

Given $m\geq 0$, $1\leq i_1,\ldots,i_m\leq n$, and $\al_{i_1},\ldots,\al_{i_m}\in \C$, we can consider the tensor product of $Y(\mfso_{2n+1})$ fundamental representations 
\begin{equation}
 L(i_1:\al_{i_1})\otimes \cdots \otimes L(i_m:\al_{i_m}). \label{Y-fun}
\end{equation}
If $m=0$ then we we will identify this tensor product with the trivial representation of $Y(\mfso_{2n+1})$. For each $1\leq k\leq m$, let 
$\xi_i$ be a highest weight vector of $L(i_k:\al_{i_k})$ and set 
\begin{equation*}
 \boldsymbol{\xi}=\xi_1\otimes \cdots \otimes \xi_{m}. 
\end{equation*}
If $m=0$ the vector $\boldsymbol{\xi}$ is understood to be equal to $1\in \C$, where $\C$ is viewed as the space of the trivial representation of $Y(\mfso_{2n+1})$. We can then consider the 
$Y(\mfso_{2n+1},\mfso_{2n})^{tw}$ highest weight module 
\begin{equation}
 Y(\mfso_{2n+1},\mfso_{2n})^{tw}\boldsymbol{\xi}\subset L(i_1:\al_{i_1})\otimes \cdots \otimes L(i_m:\al_{i_m}). \label{BI(b)fun.1}
\end{equation}

\begin{crl}\label{C:BI(b)fun}	
 Let $V$ be a finite-dimensional irreducible representation of $Y(\mfso_{2n+1},\mfso_{2n})^{tw}$ with Drinfeld tuple $(\al,P_1(u),\ldots,P_n(u))$. Then  
 \begin{enumerate}[label=(\alph*)]
  \item  $V$ is isomorphic to the unique irreducible quotient of a module of the form \eqref{BI(b)fun.1} if and only if $\al\leq \tfrac{N}{4}$, \label{(a)}
  \item $V^{\psi_\sigma^n}$ is isomorphic to the unique irreducible quotient of a module of the form \eqref{BI(b)fun.1} if and only if $\al \geq \tfrac{N}{4}$. \label{(b)}
  \end{enumerate}
\end{crl}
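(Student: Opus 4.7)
The plan is to reduce (b) to (a) by invoking Corollary \ref{C:Vtw-poly}: if $V$ has Drinfeld tuple $(\al, P_1(u), \ldots, P_n(u))$, then $V^{\psi_\sigma^n}$ has associated scalar $\tfrac{N}{2}-\al$. Thus, applying (a) to $V^{\psi_\sigma^n}$ immediately gives (b), since $\tfrac{N}{2}-\al \leq \tfrac{N}{4}$ is equivalent to $\al \geq \tfrac{N}{4}$. It therefore suffices to focus on part (a).

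For the forward direction of (a), assume $\al \leq \tfrac{N}{4}$. Case 1 of the proof of Theorem \ref{T:BI(b)-Class} constructs monic polynomials $Q_1(u), \ldots, Q_n(u)$ obeying the symmetries required by Theorem \ref{T:X-class}, together with a finite-dimensional irreducible $X(\mfso_{2n+1})$-module $L(\lambda(u))$ having these as Drinfeld polynomials, such that $V$ is isomorphic to the $Y(\mfso_{2n+1}, \mfso_{2n})^{tw}$-irreducible quotient of $Y(\mfso_{2n+1}, \mfso_{2n})^{tw} \xi$, where $\xi \in L(\lambda(u))$ is a highest weight vector. By Corollary 12.1.13 of \cite{CP}, upon restriction to $Y(\mfso_{2n+1})$, $L(\lambda(u))$ is isomorphic to the irreducible quotient of $Y(\mfso_{2n+1}) \boldsymbol{\xi} \subset L(i_1{:}\al_{i_1}) \otimes \cdots \otimes L(i_m{:}\al_{i_m})$, where the pairs $(i_k, \al_{i_k})$ arise from factorizing $Q_i(u) = \prod_k (u - \al_{i,k})$. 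Composing the resulting surjections of $Y(\mfso_{2n+1}, \mfso_{2n})^{tw}$-modules exhibits $V$ as the irreducible quotient of the module \eqref{BI(b)fun.1}.

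For the reverse direction of (a), assume $V$ is the irreducible quotient of a submodule of the form \eqref{BI(b)fun.1}. By Corollary 5.19 of \cite{AMR}, each $L(i_k{:}\al_{i_k})$ lifts to a finite-dimensional irreducible $X(\mfso_{2n+1})$-module, whence the tensor product becomes an $X(\mfso_{2n+1})$-module on which $\boldsymbol{\xi}$ is still a highest weight vector. Let $L(\lambda(u))$ denote the irreducible quotient of $X(\mfso_{2n+1}) \boldsymbol{\xi}$ and $\xi$ the image of $\boldsymbol{\xi}$ therein. A direct computation with the coideal formula $\Delta(S(u)) = T^{[1]}(u-\ka/2)\, S^{[2]}(u)\, (T^{[1]})^t(-u+\ka/2)$, together with $S^{[2]}(u)\,\eta = \mcG(u)\,\eta$ for $\eta$ a generator of $V(\mcG)$, shows that the assignment $v \mapsto v \otimes \eta$ is an isomorphism $L(\lambda(u)) \iso L(\lambda(u)) \otimes V(\mcG)$ of $X(\mfso_{2n+1}, \mfso_{2n})^{tw}$-modules. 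Applying Lemma \ref{L:QxP=QP} to $L(\lambda(u)) \otimes V(\mcG)$ with $V(\mu(u)) = V(\mcG)$, whose Drinfeld tuple is $(\tfrac{N}{4}, 1, \ldots, 1)$ by Proposition \ref{P:no1dim}, yields that the $X(\mfso_{2n+1}, \mfso_{2n})^{tw}$-irreducible quotient of $X(\mfso_{2n+1}, \mfso_{2n})^{tw} \xi$ has scalar $\al' = \tfrac{N}{4}$; if the hypothesis of the lemma fails, Remark \ref{R:QxP=QP} gives $\al' = \tfrac{N}{4} - \tfrac{1}{2}\ell$ for some $\ell \geq 1$, using that $\key = 0$ forces $m = \tfrac{1}{2}$. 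Either way $\al' \leq \tfrac{N}{4}$, and the $Y(\mfso_{2n+1}, \mfso_{2n})^{tw}$-irreducible quotient of $Y(\mfso_{2n+1}, \mfso_{2n})^{tw}\boldsymbol{\xi}$ coincides with that of $Y(\mfso_{2n+1}, \mfso_{2n})^{tw} \xi$ via the $X$-surjection $X(\mfso_{2n+1})\boldsymbol{\xi} \onto L(\lambda(u))$ (both are highest weight modules with the same highest weight, determined by $\lambda(u)$), so $\al = \al' \leq \tfrac{N}{4}$. The principal technical point—and the step most likely to require care when writing out the full proof—is establishing the $X(\mfso_{2n+1}, \mfso_{2n})^{tw}$-isomorphism $L(\lambda(u)) \cong L(\lambda(u)) \otimes V(\mcG)$; once this is in place, the conclusion follows in one shot, with no iterative argument required.
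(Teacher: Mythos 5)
Your proposal is correct and follows essentially the same route as the paper's proof: Corollary \ref{C:Vtw-poly} to transfer between $V$ and $V^{\psi_\sigma^n}$, Cases 1 and 2 of Theorem \ref{T:BI(b)-Class} together with \cite[Corollary 12.1.13]{CP} for the $(\Longleftarrow)$ directions, and Lemma \ref{L:QxP=QP} with Remark \ref{R:QxP=QP} applied to $V(\mcG)$ for the $(\Longrightarrow)$ directions. The main presentational differences are that you reduce (b) wholesale to (a) up front rather than running the two in parallel, and that you make explicit the $X(\mfso_{2n+1},\mfso_{2n})^{tw}$-module isomorphism $L(\lambda(u))\cong L(\lambda(u))\otimes V(\mcG)$ underlying the paper's terse citation of Lemma \ref{L:QxP=QP}—a clarification, not a gap.
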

\begin{proof}
 If $V$ is isomorphic to the irreducible quotient of the module \eqref{BI(b)fun.1}, Lemma \ref{L:QxP=QP} and Remark \ref{R:QxP=QP} with $V(\mu(u))=V(\mcG)$ imply that  $\al=\tfrac{N}{4}-\tfrac{\ell_\al}{2}$ where $\ell_\al$ is non-negative integer. This proves the 
$(\Longrightarrow)$ direction of \ref{(a)}. 

Suppose now that $V^{\psi_\sigma^n}$ is isomorphic to the irreducible quotient of the module \eqref{BI(b)fun.1}. By Corollary \ref{C:Vtw-poly}, $V^{\psi_\sigma^n}$ is associated to the Drinfeld tuple $(\tfrac{N}{2}-\al,P_1^\sharp(u),\ldots,P_n^\sharp(u))$ where $P_i^\sharp(u)=P_i(u)$ for all $i\neq 2$ and $P_2^\sharp(u)$ is given by \eqref{P2-sharp}. Hence, the same argument as given in the previous paragraph shows that $\tfrac{N}{2}-\al\leq \tfrac{N}{4}$, thus proving the $(\Longrightarrow)$ direction of \ref{(b)}. 

Now let us turn to proving the $(\Longleftarrow)$ direction of \ref{(a)} and \ref{(b)}, beginning with the former. 
Assume that $\al\leq \tfrac{N}{4}$. Then the argument used to treat Case 1 of the $(\Longleftarrow)$ direction of Theorem \ref{T:BI(b)-Class}'s proof shows that there is a tuple of monic polynomials $\mathbf{Q}=(Q_1(u),\ldots,Q_n(u))$ such that $V$ is isomorphic to the irreducible quotient of $Y(\mfso_{2n+1},\mfso_{2n})^{tw}\xi_\mathbf{Q}\subset L(\mathbf{Q})$, where $L(\mathbf{Q})$ is the unique up to isomorphism $Y(\mfso_{2n+1})$-module with the Drinfeld tuple $\mathbf{Q}$ and $\xi_\mathbf{Q}\in L(\mathbf{Q})$ is a highest weight vector: see Theorem \ref{T:X-class} and the two paragraphs immediately following it. The conclusion that $V$ is isomorphic to the unique irreducible quotient of a module of the form \eqref{BI(b)fun.1} now follows from the well-known result that $L(\mathbf{Q})$ must be isomorphic to a subquotient of a $Y(\mfso_{2n+1})$-module of the form \eqref{Y-fun}: see \cite[Lemma 5.17]{AMR} and \cite[Corollary 12.1.13]{CP}.

If instead $\al \geq \tfrac{N}{4}$, then the argument of the previous paragraph can be used after replacing $V$ by $V^{\psi_\sigma^n}$ and using instead Case 2 of the  $(\Longleftarrow)$ direction of the proof of Theorem \ref{T:BI(b)-Class}. This yields that $V^{\psi_\sigma^n}$ is isomorphic to the irreducible quotient of a module of the form \eqref{BI(b)fun.1}. \end{proof}


\end{document}